\documentclass[12pt]{amsart}

\usepackage[utf8]{inputenc}
\usepackage[T1]{fontenc}
\usepackage{amsmath,amssymb,amsthm}
\usepackage{hyperref}

\theoremstyle{plain}
\newtheorem{theorem}{Theorem}[section]
\newtheorem{proposition}[theorem]{Proposition}
\newtheorem{lemma}[theorem]{Lemma}
\newtheorem{corollary}[theorem]{Corollary}
\newtheorem{fact}[theorem]{Fact}

\newtheorem*{thm*}{Theorem}

\theoremstyle{definition}
\newtheorem{definition}[theorem]{Definition}

\theoremstyle{remark}

\newcommand{\AWstar}{$\mathrm{AW}^*$}
\newcommand{\cstar}{$\mathrm{C}^*$}
\newcommand{\wstar}{$\mathrm{W}^*$}
\newcommand{\cU}{\mathcal{U}}

\newcommand{\bA}{\mathbf{A}}
\newcommand{\bM}{\mathbf{M}}
\newcommand{\bN}{\mathbf{N}}
\newcommand{\dt}{d_\tau}
\newcommand{\sa}{\mathrm{sa}}
\newcommand{\Lqt}{L_{\mathrm{qt}}}
\newcommand{\Kqt}{K_{\mathrm{qt}}}

\newcommand{\Th}{\operatorname{Th}}
\newcommand{\Proj}{\operatorname{Proj}}
\newcommand{\ord}{\operatorname{ord}}
\newcommand{\pos}[1]{(#1)_+}
\newcommand{\negp}[1]{(#1)_-}
\newcommand{\bbQ}{\mathbb{Q}}
\newcommand{\bbR}{\mathbb{R}}
\newcommand{\bbC}{\mathbb{C}}
\newcommand{\bbN}{\mathbb{N}}
\newcommand{\bbD}{\mathbb{D}}

\newcommand{\bv}[1]{[\![#1]\!]}
\newcommand{\Tqt}{T_{\mathrm{qt}}}
\newcommand{\Tfac}{T_{\mathrm{fac}}}
\newcommand{\tre}{\tau^{\mathrm{re}}}
\newcommand{\tim}{\tau^{\mathrm{im}}}
\newcommand{\rea}{\operatorname{Re}}

\newcommand{\dist}{\operatorname{dist}}
\newcommand{\dminus}{ 
\buildrel\textstyle\ .\over{\hbox{ 
\vrule height3pt depth0pt width0pt}{\smash-} 
}}

\begin{document}

\title[Model theory of quasitracial \AWstar-algebras]{Model theory of quasitracial \AWstar-algebras and the absoluteness of monotone completeness}
\author{Jananan Arulseelan}

\address{Department of Mathematics, Iowa State University, 396 Carver Hall, 411 Morrill Road, Ames, IA 50011, USA}
\email{jananan@iastate.edu}
\urladdr{https://sites.google.com/view/jananan-arulseelan}

\subjclass[2020]{03C66, 46L05, 46L10}
\keywords{\AWstar-algebra, real \AWstar-algebra, continuous model theory, quasitrace, monotone completeness, absoluteness, forcing, ultraproducts}

\begin{abstract}
    We examine various applications of logic to $\mathrm{AW}^*$-algebras, particularly the monotone completeness conjecture.  We develop a model theory for $\mathrm{AW}^*$-algebras equipped with a faithful normalized quasitrace.  Using this language, we provide a separable coding which allows us to reduce the finite case of the monotone completeness conjecture to monotone sequential completeness of $\mathrm{II}_1$ $\mathrm{AW}^*$-factors.  Combining an explicit recursive axiomatization with the completeness theorem for continuous logic, we show that this case of the conjecture, and Kaplansky's problem whether every $\mathrm{AW}^*$-factor of type $\mathrm{II}_1$ is a von Neumann algebra, are each equivalent in ZFC to a $\Pi^0_2$ sentence of arithmetic, so that their answers are the same in every $\omega$-model of ZFC.  We also consider real $\mathrm{AW}^*$-algebras.  Here the Baer-$*$-ring notion of an abelian projection must be distinguished from commutativity; following the real-operator-algebra terminology we call the former semi-abelian.  Handelman's matrix theorem implies that a real $\mathrm{AW}^*$-algebra with no nonzero central semi-abelian projection has $\mathrm{AW}^*$ complexification.  We answer a number of other open problems in real \AWstar-algebras.
\end{abstract}

\maketitle

\tableofcontents

\section{Introduction}\label{sec:intro}

Two longstanding problems in the theory of \AWstar-algebras are the Kaplansky conjecture and the monotone completeness conjecture.  The first, due to Kaplansky~\cite{Kap51}, asks whether every \AWstar-factor of type $\mathrm{II}_1$ is a von Neumann algebra.  Equivalently, it asks whether the unique normalized quasitrace on such a factor is linear~\cite{BH82, Haa14}, and it is equivalent to the quasitrace conjecture and to the existence of tracial states on unital stably finite \cstar-algebras~\cite{BH82, MR23}.  The second asks whether every \AWstar-algebra is monotone complete.  Von Neumann algebras and commutative \AWstar-algebras are monotone complete, and Christensen and Pedersen showed that properly infinite \AWstar-algebras are monotone sequentially complete~\cite{CP84}; see~\cite{SW15} for the theory of monotone complete \cstar-algebras.  The related normality conjecture was proved for factors by Sait\^o and Wright~\cite{SW91} and in general by the author and Hanson~\cite{AH26}.

In \cite[Proposition 30]{GS17}, Goldbring and Sinclair observe that the class of structures $(A, \tau)$ where $A$ is a unital \cstar-algebra and $\tau$ is a quasitrace on $A$ is a universally axiomatizable class.  Inspired by this, we develop and study the model theory of quasitracial \AWstar-algebras, that is, of pairs $(A,\tau)$ where $\tau$ is a faithful normalized quasitrace whose closed unit ball is complete for Haagerup’s metric
\[
    \dt(x,y) = \tau\bigl((x - y)^*(x - y)\bigr)^{1/3}.
\]
By results of Haagerup~\cite{Haa14}, these are exactly the finite \AWstar-algebras equipped with a faithful normal normalized quasitrace.  We give applications to the monotone completeness conjecture restricted to finite \AWstar-algebras.  Our first result is an explicit recursive axiomatization.

\begin{thm*}
    There are explicit recursive theories $\Tqt \subseteq \Tfac$ in a recursive continuous language $\Lqt$ such that the models of $\Tqt$ are exactly the structures associated to quasitracial \AWstar-algebras, and the models of $\Tfac$ are exactly the structures associated to finite \AWstar-factors with their canonical quasitraces.
\end{thm*}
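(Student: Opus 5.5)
The plan follows the template of the axiomatization of tracial von Neumann algebras (Farah--Hart--Sherman), with the trace replaced by a quasitrace and the $2$-norm replaced by Haagerup's metric $\dt$.

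\textbf{The language $\Lqt$.} It has one sort for each operator-norm ball $\{x : \|x\|\le n\}$, with inclusion maps between the sorts. Each sort is equipped with the metric $\dt$. The function symbols are $+$, scalar multiplication by elements of $\bbQ(i)$, multiplication, ${}^*$, and the constants $0$ and $1$. There are predicate symbols $\tre$ and $\tim$ for the real and imaginary parts of $\tau$, together with a predicate for the operator norm.

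\textbf{Uniform continuity.} The moduli of uniform continuity with respect to $\dt$ are to be derived from Haagerup's inequalities for quasitraces on bounded balls. The key ingredient is a quasi-triangle inequality of the form
\[
\tau\bigl((x+y)^*(x+y)\bigr)^{1/3}\le \tau(x^*x)^{1/3}+\tau(y^*y)^{1/3},
\]
which is what makes $\dt$ a metric (cf.~\cite{Haa14}). It should also give the bound $\dt(xz,yz)\le \|z\|^{2/3}\dt(x,y)$, and a matching bound for left multiplication using $\tau(a^*a)=\tau(aa^*)$. Each of these moduli is explicit and recursive.

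\textbf{The axioms of $\Tqt$.} I would take:
\begin{itemize}
\item the universal \cstar-axioms, following \cite{GS17};
\item the quasitrace axioms: $\tau$ is linear on commuting self-adjoint pairs (expressed through $\|xy-yx\|$ in a uniform way), $\tau(x^*x)=\tau(xx^*)\ge 0$, $\tau(1)=1$, and $\tau(a+ib)=\tau(a)+i\tau(b)$;
\item faithfulness, in the quantitative form $\dt(x,0)^3=\tau(x^*x)$;
\item the requirement that the norm predicate be compatible with $\dt$, so that each operator-norm ball is $\dt$-closed.
\end{itemize}
Completeness of each ball in $\dt$ is automatic for metric structures.

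The real content is showing that $(A,\tau)\models\Tqt$ exactly when $A$ is a quasitracial \AWstar-algebra. One direction holds by definition. For the converse, I would invoke Haagerup's result quoted in the introduction: a faithful normalized quasitrace whose unit ball is $\dt$-complete makes $A$ a finite \AWstar-algebra, with $\tau$ normal.

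\textbf{Main obstacle: the norm.} The norm is not $\dt$-continuous in the usual sense, but it is lower semicontinuous. I would axiomatize it by the \cstar-identity together with a formula such as
\[
\|x\|^2=\sup_{y}\,\tau(y^*x^*xy)/\tau(y^*y),
\]
with $y$ ranging over projections. A supremum over projections cannot be quantified directly, so it has to be approximated by a definable set. Concretely, I would prove that projections form a definable set in models of $\Tqt$, using a uniform spectral perturbation that moves almost-projections to projections with $\dt$-control. This should use the \AWstar-structure through commutative sub-\AWstar-algebras, where $\tau$ is a genuine trace.

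This definability is also needed for the factor axioms. For $\Tfac$, I would add a center-triviality axiom:
\[
\sup_x \inf_{\lambda}\, \dt(x,\lambda)\le C\sup_{y}\dt(xy,yx)
\]
for a fixed constant $C$, with $\lambda$ ranging over rational scalars. This requires a uniform Dixmier-type averaging estimate for finite \AWstar-factors. I would obtain it by writing the center-valued quasitrace as a limit of unitary averages, which is available in finite \AWstar-factors. Alternatively, I would axiomatize the comparison property of projections via $\tau$: for projections $p,q$ with $\tau(p)\le\tau(q)$ there is a partial isometry $v$ with $v^*v=p$ and $vv^*\le q$. This is first order once projections are definable, and in the presence of $\Tqt$ it characterizes factors. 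The canonical quasitrace is the unique one, so it agrees with $\tau$.

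I expect the main difficulty to be making the definability of projections, and hence these estimates, uniform and recursive.
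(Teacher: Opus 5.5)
\paragraph{Verdict.} The proposal has a genuine gap at its foundation: the operator norm cannot be handled the way you propose, and the missing idea is to carry the norm entirely by the sort structure.

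\paragraph{The norm.} In continuous logic every predicate symbol, and every formula, is uniformly continuous for the sort metric, with a modulus that is uniform over all structures. The norm is not uniformly continuous for $\dt$: a projection $p$ with $\tau(p)=\epsilon$ has $\dt(p,0)=\epsilon^{1/3}$ but $\|p\|=1$. So $\Lqt$ can contain no norm predicate, and no formula defines the norm either. Making projections definable does not rescue $\sup_y \tau(y^*x^*xy)/\tau(y^*y)$: the function you are trying to define is itself not uniformly continuous, and the quotient is not an admissible connective near $\tau(y^*y)=0$. For the same reason, the norm-metric \cstar-axioms of Goldbring--Sinclair are not sentences of your language. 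Neither is your linearity axiom ``expressed through $\|xy-yx\|$''. Replacing $\|xy-yx\|$ by $\dt(xy,yx)$ would need a uniform stability estimate for $\dt$-almost commuting pairs, which you do not supply; the paper notes that $\dt$-definability of commuting pairs is unclear and deliberately avoids it.

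The paper's fix is as follows. Declare $\|x\|\le q$ iff $q^{-1}x\in D_1$, and add axioms forcing this to be a \cstar-norm:
\begin{itemize}
\item convexity ($tx+(1-t)y\in\iota(D_1)$ for $x,y\in D_1$), giving the triangle inequality;
\item a quantitative truncation axiom $\dist(x,\iota(D_1))\le 2\dist(x^*x,\iota(D_1))$, giving $\|x\|^2\le\|x^*x\|$;
\item the module inequality $\tau(y^*x^*xy)\le n^2\tau(y^*y)$ for $x\in D_n$, tying $\tau$ to the norm.
\end{itemize}
Completeness of the sorts together with isometric inclusions then gives $\dt$-closed balls and norm completeness. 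Linearity of $\tau$ is imposed only on rational polynomials in a single $h=\rea x$, and these are terms.

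\paragraph{The converse direction.} Your axioms yield at most a $1$-quasitrace. Haagerup's result (a $\dt$-complete unit ball gives an \AWstar-algebra with normal $\tau$) concerns quasitraces that extend to $M_2(A)$, so invoking it here is circular. The paper instead proves the \AWstar{} property inside the model via Kaplansky's characterization:
\begin{itemize}
\item $\dt$-completeness gives a Borel calculus $L^\infty(\mu_h)\to A$ for each self-adjoint $h$, so maximal abelian subalgebras are generated by projections;
\item by faithfulness, orthogonal families of projections are countable and have $\dt$-convergent partial sums, hence suprema.
\end{itemize}
Only after that does the argument upgrade single-generator linearity to all abelian subalgebras. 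Countably many commuting projections generate $C(K)$ with $K$ embedded in the Cantor set, which is singly generated. Blackadar--Handelman then upgrades the $1$-quasitrace to a $2$-quasitrace on the \AWstar-algebra, and Haagerup gives normality.

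\paragraph{The factor axiom.} Your Dixmier route gives no uniform constant. The von Neumann argument uses that $\|\cdot\|_2$ is a convex norm. Here $\dt(\cdot,0)$ is only $\tfrac23$-homogeneous, so $\dt\bigl(\sum_{i\le N}N^{-1}z_i,0\bigr)\le N^{1/3}\max_i\dt(z_i,0)$, and norm-approximating scalars by unitary averages yields nothing uniform. Your comparison-axiom alternative does characterize factors among models of $\Tqt$. However, it needs definability of partial isometries between prescribed projections, which you have not supplied.

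The paper's axiom avoids both problems. Put $\varphi(x)=\dt(x,x^*)+\dt(x,x^2)$, so that $\dist(x,\Proj(A))\le 5\varphi(x)$. The axiom bounds $2\min\{\tre(x),1-\tre(x)\}$ by $\sup_{y\in D_1}\tre([x,y]^*[x,y])$ plus an explicit error in $\varphi(x)$. It holds in finite factors by the self-adjoint swap unitary $v+v^*+(1-e-f)$, and it fails for any nontrivial central projection.
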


This combines Theorems~\ref{thm:elementary} and~\ref{thm:elementary-factors}.  The axioms of $\Tqt$ ask for linearity of the quasitrace only on singly generated \cstar-subalgebras.  Linearity on all abelian \cstar-subalgebras is recovered from the \AWstar-structure of the models, which we obtain from Kaplansky’s characterization of \AWstar-algebras~\cite{Kap51}, and the extension to $2 \times 2$ matrices required of a quasitrace is then automatic~\cite[Corollary~II.1.10]{BH82}.

Using a downward Löwenheim–Skolem argument in this language, we reduce the finite case of the monotone completeness conjecture to separable factors.  The language also gives a uniform reformulation in terms of monotone chains.  For $\eta > 0$ and $\varepsilon \geq 0$, an $(\eta,\varepsilon)$-chain of length $k$ in a quasitracial \AWstar-algebra is a tuple $c_1, \ldots, c_k$ of positive contractions with
\[
    \tau\bigl(\pos{c_i - c_j}\bigr) \leq \varepsilon \qquad \text{and} \qquad \tau\bigl(\pos{c_j - c_i}\bigr) \geq \eta \qquad (i < j).
\]
In a von Neumann algebra the trace bounds the length of such chains, and the following theorem shows that a uniform bound of this kind is equivalent to monotone completeness.

\begin{thm*}
    ZFC proves that the following are equivalent.
    \begin{enumerate}
        \item[(1)] Every finite \AWstar-algebra is monotone complete.
        \item[(2)] Every $\dt$-separable \AWstar-factor of type $\mathrm{II}_1$ is monotone sequentially complete.
        \item[(3)] For every $\eta > 0$ there are $\varepsilon > 0$ and $k$ such that no finite \AWstar-factor contains an $(\eta,\varepsilon)$-chain of length $k$.
    \end{enumerate}
\end{thm*}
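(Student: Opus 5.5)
The plan is the cycle (1)$\Rightarrow$(2)$\Rightarrow$(3)$\Rightarrow$(1), working throughout in the language $\Lqt$ with the axiomatizations $\Tqt\subseteq\Tfac$ of Theorems~\ref{thm:elementary} and~\ref{thm:elementary-factors}.

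\textbf{(1)$\Rightarrow$(2).} This is immediate, since a $\mathrm{II}_1$ \AWstar-factor is finite, and monotone completeness implies monotone sequential completeness.

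\textbf{(2)$\Rightarrow$(3).} I argue by contraposition. Suppose some $\eta>0$ is such that for every $\varepsilon>0$ and every $k$ some finite \AWstar-factor contains an $(\eta,\varepsilon)$-chain of length $k$.
\begin{enumerate}
\item[(a)] Take such factors $M_n$ carrying $(\eta,1/n)$-chains of length $n$.
\item[(b)] Form the ultraproduct $\prod_\cU (M_n,\tau_n)$. It is a model of $\Tfac$, hence a finite \AWstar-factor.
\item[(c)] Because finite-dimensional factors cannot carry $(\eta,\varepsilon)$-chains of unbounded length, we may assume it has type $\mathrm{II}_1$. It also satisfies, for every $k$, the closed condition ``there is an $(\eta,0)$-chain of length $k$'', since $\tau(\pos{c_i-c_j})$ is definable.
\item[(d)] By saturation, in fact by countable saturation of ultraproducts, we get an infinite sequence $c_1,c_2,\dots$ of positive contractions with $\tau(\pos{c_i-c_j})=0$ and $\tau(\pos{c_j-c_i})\ge\eta$ for $i<j$.
\item[(e)] Faithfulness of $\tau$ gives $c_i\le c_j$, so the sequence is increasing and bounded.
\item[(f)] Apply downward Löwenheim--Skolem to obtain a $\dt$-separable elementary substructure $N$ containing the $c_i$. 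By Theorem~\ref{thm:elementary-factors}, $N$ is a $\mathrm{II}_1$ \AWstar-factor.
\item[(g)] If $N$ had a supremum $c$ for $(c_i)$, then I expect to need the fact that in a finite \AWstar-algebra with faithful normal quasitrace, the monotone supremum is the $\dt$-limit, i.e.\ that $\tau$ is normal (Haagerup~\cite{Haa14}). Then $\tau(c-c_i)\to0$, contradicting $\tau(\pos{c_j-c_i})\ge\eta$.
\end{enumerate}
Hence $N$ is not monotone sequentially complete, contradicting (2). I would double-check step (g), the normality, as a cited fact.

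\textbf{(3)$\Rightarrow$(1).} This is the main obstacle. Let $A$ be a finite \AWstar-algebra and $(a_\lambda)$ a bounded increasing net of self-adjoints.
\begin{enumerate}
\item[(a)] Reduce to the factor case. I would use a type decomposition: the type $\mathrm{I}$ part is monotone complete (it is a direct sum of matrices over commutative \AWstar-algebras). For the $\mathrm{II}_1$ part, I would try a central decomposition/ultraproduct argument. Alternatively I would apply (3) directly, arguing that chains in a quasitracial \AWstar-algebra $(A,\tau)$ with $\tau$ a suitable extremal quasitrace reduce to factors via Theorem~\ref{thm:elementary}, since the chain condition is expressed by a sentence.
\item[(b)] Now fix a quasitrace $\tau$. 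Condition (3) makes the net $\dt$-Cauchy. Otherwise there is $\eta>0$ with a subsequence $a_{\lambda_1}\le a_{\lambda_2}\le\cdots$ satisfying $\tau(a_{\lambda_j}-a_{\lambda_i})\ge\eta$, i.e.\ an $(\eta,0)$-chain of arbitrary length, after rescaling to contractions and using the $\dt$-comparison $\tau(\pos{x})$.
\item[(c)] By $\dt$-completeness of the unit ball, $a_\lambda\to a$.
\item[(d)] Show $a=\sup a_\lambda$. The positive cone is $\dt$-closed, which follows from continuity of $x\mapsto\tau(\pos{x})$, and this gives upper bound and leastness.
\end{enumerate}
The delicate points are making the reduction to factors in step (a) rigorous and obtaining enough continuity of $\tau$ on nonlinear expressions.
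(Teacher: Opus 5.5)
Your (1)$\Rightarrow$(2), your ultraproduct-and-saturation construction of a monotone half-graph, and your factor-level argument (b)--(d) in (3)$\Rightarrow$(1) all match the paper (Propositions~\ref{prop:half-graph} and~\ref{prop:uniform-chains}). There are, however, two genuine gaps.

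\textbf{Step (g) of (2)$\Rightarrow$(3).} Step (g) needs the following: if an increasing sequence in a quasitracial \AWstar-algebra has a supremum, then it converges to that supremum in $\dt$. Equivalently, a monotone half-graph can never have a supremum. The normality of $\tau$ available here (complete additivity on projections, i.e.\ normality on abelian subalgebras) does not give this for a noncommuting sequence. The paper proves the half-graph statement only in $\aleph_1$-saturated algebras (Lemma~\ref{lem:saturated-half-graph}), and your separable $N$ is not saturated.

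Since ``$(c_i)$ has no supremum'' is not expressed by a formula, it does not pass from $M$ to an arbitrary separable $N\preceq M$ containing the $c_i$. An upper bound $s\in N$ may fail to be least in $M$ only because of upper bounds lying outside $N$. The missing ingredient is the bookkeeping L\"owenheim--Skolem construction of Proposition~\ref{prop:reflection}:
\begin{enumerate}
\item[(i)] Use saturation, as in the proof of Lemma~\ref{lem:saturated-half-graph}, to see that $(c_i)$ has no supremum in $M$.
\item[(ii)] Build $N_0\preceq N_1\preceq\cdots\preceq M$ as follows. For each $s$ in a countable dense subset of the self-adjoint unit ball of $N_n$ and each $m\leq n$, put into $N_{n+1}$ an upper bound $y$ of $(c_i)$ with $\|y\|\leq m$ for which $\ord(s,y)$ is within $2^{-n}$ of its supremum over all such upper bounds in $M$.
\end{enumerate}
Uniform continuity of $\ord$ then shows that every upper bound $s\in N$ of $(c_i)$ satisfies $s\not\leq y$ for some upper bound $y\in N$. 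So $(c_i)$ has no supremum in the completed union $N$.

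\textbf{Reduction to factors in (3)$\Rightarrow$(1).} Neither of your suggestions works.
\begin{enumerate}
\item[(i)] \AWstar-algebras have no direct-integral decomposition over the center.
\item[(ii)] A type decomposition leaves a $\mathrm{II}_1$ summand that need not be a factor.
\item[(iii)] A finite \AWstar-algebra whose center has no faithful normal state carries no faithful normal scalar quasitrace at all. So there is no $\tau$, extremal or otherwise, for which (b)--(d) can be run on $A$.
\end{enumerate}

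The paper's missing idea is Boolean-valued. Put $\mathbb{B}=\Proj(Z(A))$. Ozawa's representation exhibits $A$ as the bounded part of an internal \AWstar-factor $\mathcal{M}$ in $V^{\mathbb{B}}$. This factor is internally finite because $A$ is finite (Lemma~\ref{lem:finite-BV}), and $A$ is monotone complete iff $\bv{\mathcal{M}\text{ is monotone complete}}=1$.

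To run the factor case inside $V^{\mathbb{B}}$, you need (3) to hold there. But (3) quantifies over all finite factors, so it is not evidently absolute. This is what the arithmetic encoding provides. The recursive axiomatization $\Tfac$, Lemma~\ref{lem:chain-attain} and the Ben Yaacov--Pedersen completeness theorem make (3) ZFC-equivalent to the $\Pi^0_2$ sentence $\sigma_{\mathrm{MC}}$. Hence $\sigma_{\mathrm{MC}}$ is true, and being arithmetic it has Boolean value $1$. Since ZFC proves that $\sigma_{\mathrm{MC}}$ implies every finite \AWstar-factor is monotone complete, this statement holds in $V^{\mathbb{B}}$. So $\mathcal{M}$ is monotone complete, and hence so is $A$.
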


This combines Corollary~\ref{cor:separable-reduction}, Proposition~\ref{prop:uniform-chains} and Theorem~\ref{thm:finite-algebras}.  The passage from factors to arbitrary finite \AWstar-algebras uses Ozawa’s Boolean-valued representation of \AWstar-algebras~\cite{Oz86}.

Combining the explicit axioms with the completeness theorem of Ben Yaacov and Pedersen for continuous logic~\cite{BYP10}, we obtain arithmetic encodings of the finite monotone completeness conjecture and of Kaplansky’s problem.

\begin{thm*}
    There are $\Pi^0_2$ sentences $\sigma_{\mathrm{MC}}$ and $\sigma_K$ of arithmetic such that ZFC proves that $\sigma_{\mathrm{MC}}$ holds if and only if every finite \AWstar-algebra is monotone complete, and that $\sigma_K$ holds if and only if every \AWstar-factor of type $\mathrm{II}_1$ is a von Neumann algebra.  Each of them has the form $\forall n\,\pi(n)$, where every instance $\pi(n)$ is a $\Sigma^0_1$ sentence asserting the existence of a proof of a condition from an explicit recursive list.
\end{thm*}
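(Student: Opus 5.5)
Both sentences come from the same scheme: replace each conjecture by a family of conditions on models of the recursive theories, and then use the completeness theorem of Ben Yaacov and Pedersen~\cite{BYP10} to turn ``this condition holds in all models'' into ``a proof of it exists.''

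\emph{The monotone completeness sentence.} Start from the equivalence of (1) and (3) in the preceding theorem. Then every finite \AWstar-algebra is monotone complete if and only if for every rational $\eta>0$ there are a rational $\varepsilon>0$ and a $k\in\bbN$ such that no model of $\Tfac$ contains an $(\eta,\varepsilon)$-chain of length $k$. For fixed $(\eta,\varepsilon,k)$, the existence of such a chain is expressed by the existential $\Lqt$-formula
\[
  \varphi_{\eta,\varepsilon,k}=\inf_{c_1,\dots,c_k}\max_{i<j}\max\bigl(\tau(\pos{c_i-c_j})\dminus\varepsilon,\ \eta\dminus\tau(\pos{c_j-c_i})\bigr),
\]
with the $c_i$ ranging over positive contractions. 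This is definable because positivity and $\pos{\cdot}$ are given by continuous functional calculus, which is available in $\Lqt$ as built into the axiomatization.

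A chain of length $k$ exists in some model iff the value is $0$ there, up to compactness. Two approximations need care. First, the infimum may not be attained; this is handled by slightly shrinking $\eta$ and enlarging $\varepsilon$. Second, ``no model has value $0$'' must be replaced by ``$\Tfac\vdash \varphi_{\eta,\varepsilon,k}\geq r$'' for some rational $r>0$. This uses compactness (via ultraproducts of models of $\Tfac$) together with the interchange of quantifiers that condition (3) allows. The completeness theorem says $\Tfac\models \varphi\geq r$ exactly when, for each rational $r'<r$, the condition $\varphi\geq r'$ is provable from $\Tfac$.

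Because $\Tfac$ is recursive, ``there exist $\varepsilon,k,r$ and a proof of $\varphi_{\eta,\varepsilon,k}\geq r$'' is $\Sigma^0_1$. Quantifying over the rational $\eta$, coded by $n$, gives $\sigma_{\mathrm{MC}}=\forall n\,\pi(n)$. The explicit recursive list consists of the conditions $\varphi_{\eta,\varepsilon,k}\geq r$.

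\emph{Kaplansky's sentence.} By~\cite{BH82, Haa14}, a $\mathrm{II}_1$ \AWstar-factor is a von Neumann algebra iff its canonical quasitrace is linear. Linearity is the universal condition
\[
  \sup_{x,y}\bigl|\tau(x+y)-\tau(x)-\tau(y)\bigr|=0
\]
over self-adjoint contractions, checked in models of $\Tfac$ together with a recursive axiom scheme expressing type $\mathrm{II}_1$. Non-existence of minimal projections can be written using projections of trace $1/2$, which is definable in $\Lqt$. The finite type $\mathrm{I}_n$ factors already have linear traces, so including them changes nothing.

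The conjecture therefore says that the sup is $0$ in all models. By completeness this holds iff for every $n$, $\Tfac\vdash \sup_{x,y}|\cdots|\leq 1/n$. Each instance is $\Sigma^0_1$, so $\sigma_K=\forall n\,\pi(n)$.

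\emph{Main obstacle.} The hardest step is the monotone completeness case: making sure the implication ``no chain exists in any model'' $\Rightarrow$ ``a positive lower bound is provable'' is sound. Condition (3) speaks only of exact chains, while provability requires a strictly positive bound $r$. I would handle this by first proving a ``gap'' version of (3) through an ultraproduct argument: if the values of $\varphi_{\eta,\varepsilon,k}$ tended to $0$ along a sequence of models, their ultraproduct, which is again a model of $\Tfac$ by Theorem~\ref{thm:elementary-factors}, would contain an $(\eta,\varepsilon')$-chain for a slightly larger $\varepsilon'$. So the quantification in (3) should be read with $\varepsilon$ adjusted, which (3) permits.
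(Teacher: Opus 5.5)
Your argument is correct. For factors it is the paper's proof of Theorem~\ref{thm:absoluteness}: the chain sentence (your $\varphi_{\eta,\varepsilon,k}$ is the paper's $\psi_{\eta,\varepsilon,k}$), a strictly positive lower bound valid in all models of $\Tfac$, and the Ben Yaacov--Pedersen completeness theorem to turn that bound into a $\Sigma^0_1$ provability statement. $\sigma_K$ is handled the same way through the additivity defect, and the type $\mathrm{I}_n$ factors are harmless.

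Your parameter shift on its own already makes compactness unnecessary. Suppose no finite factor has an $(\eta/2,\varepsilon)$-chain of length $k$. Then every model of $\Tfac$ satisfies $\varphi_{\eta,\varepsilon/2,k}\geq\min(\eta/2,\varepsilon/2)$, because a tuple of smaller value would itself be such a chain. Since (3) has the form $\forall\eta\,\exists\varepsilon,k$, this is enough. The paper's Lemma~\ref{lem:chain-attain} instead works at fixed parameters and needs $\aleph_1$-saturation of an ultraproduct to attain the infimum.

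The substantive difference is that you get the passage from factors to arbitrary finite AW$^*$-algebras by citing (1)$\Leftrightarrow$(3). In the paper that equivalence is not independent of the present construction. Theorem~\ref{thm:finite-algebras} writes $A$ as the bounded part of an internal finite factor $\mathcal{M}$ in $V^{\mathbb{B}}$, with $\mathbb{B}=\Proj(Z(A))$, using Ozawa's representation and Lemma~\ref{lem:finite-BV}. It then transfers $\sigma_{\mathrm{MC}}$ into $V^{\mathbb{B}}$, which works precisely because $\sigma_{\mathrm{MC}}$ is arithmetic. So your citation avoids circularity only because you prove (3)$\Leftrightarrow\sigma_{\mathrm{MC}}$ on your own first. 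A self-contained proof of the statement about algebras must include this Boolean-valued transfer.

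Two minor points:
\begin{itemize}
\item Since $\pos{\cdot}$ and functional calculus are derived rather than basic symbols, ``a proof of $\varphi\geq r$'' has to be read through recursive basic-syntax approximants with controlled error, as the paper's $\operatorname{Prv}_T$ does.
\item A projection of trace $1/2$ does not exclude type $\mathrm{I}_{2m}$. Your optional type-$\mathrm{II}_1$ axioms are not needed anyway.
\end{itemize}
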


This combines Theorems~\ref{thm:absoluteness} and~\ref{thm:finite-algebras}.

Since an $\omega$-model of ZFC has the standard natural numbers, it agrees with $V$ on arithmetic sentences.

\begin{thm*}
    The finite monotone completeness conjecture and Kaplansky’s problem each have the same answer in every $\omega$-model of ZFC.  In particular, neither answer can be changed by forcing or by passing to an inner model.
\end{thm*}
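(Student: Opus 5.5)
The plan is to derive this from the preceding theorem: ZFC proves the finite monotone completeness conjecture equivalent to a $\Pi^0_2$ sentence $\sigma_{\mathrm{MC}}$, and Kaplansky's problem equivalent to $\sigma_K$. Fix an $\omega$-model $M$ of ZFC. The equivalence is a theorem of ZFC, so it holds inside $M$. There, ``every finite \AWstar-algebra is monotone complete'' holds if and only if $M \models \sigma_{\mathrm{MC}}$. The same is true for Kaplansky's problem and $\sigma_K$.

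The second step is that arithmetic sentences are absolute between $V$ and $\omega$-models. The natural numbers of $M$ are the standard natural numbers, and $+$ and $\cdot$ are computed there as in $V$. The arithmetic formulas are interpreted in the structure $(\omega^M,+,\cdot)$, which is isomorphic to the standard $(\bbN,+,\cdot)$. An induction on formula complexity then gives $M \models \sigma$ if and only if $\sigma$ holds in $V$. In fact only $\Pi^0_2$ absoluteness is needed. Recall that $\sigma_{\mathrm{MC}} = \forall n\,\pi(n)$ with $\pi(n)$ of $\Sigma^0_1$ form. The inner quantifier asks for a proof, which is a natural number. The recursive predicate ``$p$ codes a proof of the $n$-th condition'' is evaluated identically in $M$ and $V$.

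So $M$ and $V$ agree on $\sigma_{\mathrm{MC}}$ and on $\sigma_K$. Applying this to two $\omega$-models $M,N$ shows they give the same answer to each problem, namely the answer in $V$.

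For the ``in particular'' clause I would argue as follows:
\begin{enumerate}
\item[(a)] A forcing extension $V[G]$ of a model has the same ordinals, and in particular the same $\omega$, so it is an $\omega$-model relative to the ground model.
\item[(b)] An inner model contains all ordinals and has the same $\omega$.
\end{enumerate}
In both cases the arithmetic sentence, and hence the answer, is unchanged. To be careful I would phrase this within the ground model $V$. The relation $V[G] \models \sigma$ is defined via the forcing relation, and $\sigma$ is arithmetic with parameters in $\omega$. The forcing relation for $\Delta_0$ formulas over $\omega$ is trivial, so every condition decides $\sigma$ exactly as $V$ does. For an inner model $W$, use $\omega^W = \omega$ and $\Delta_0$ absoluteness, applied to the relativized sentence.

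The main obstacle is not this step, which is routine. It lies entirely in the preceding theorem (the explicit recursive axiomatization together with the completeness theorem). The one subtle point here is to make sure the equivalences are genuinely ZFC-provable, uniformly, with the sentences $\sigma_{\mathrm{MC}},\sigma_K$ fixed in advance. The recursive lists of conditions must be defined by the same formula in every model, so that nothing depends on $V$-specific objects.
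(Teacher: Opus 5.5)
Your proposal is correct and is essentially the paper's argument. The ZFC-provable equivalences of Theorems~\ref{thm:absoluteness} and~\ref{thm:finite-algebras} with the fixed $\Pi^0_2$ sentences $\sigma_{\mathrm{MC}}$ and $\sigma_K$ hold inside every model of ZFC, and an $\omega$-model agrees with $V$ on arithmetic sentences; your extra detail on forcing extensions and inner models only spells out why they have the same $\omega$, which the paper leaves implicit.
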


Consequently, an independence result for either problem cannot be obtained by forcing or inner model arguments, and hypotheses that hold in $L$, such as $V = L$, $\diamondsuit$ or CH, may be assumed without loss of generality in a proof of either answer.  For Kaplansky’s problem, an alternative $\Pi^0_2$ formulation can be extracted from an observation of Milh{\o}j and R{\o}rdam~\cite[Remark~4.7]{MR23}, and the corresponding trace problem was shown by Blackadar and Farah to be absolute between transitive models of ZFC containing all countable ordinals~\cite{BF26}.  We know of no precedent for the encoding of the monotone completeness conjecture.  In~\cite{AH26}, the author and Hanson show by forcing that if some \AWstar-algebra is not monotone complete, then in a generic extension some \AWstar-factor is not monotone complete.  The results here are of a different kind: for finite \AWstar-algebras they identify the conjecture with a fixed arithmetic sentence, whose truth value is the same in every $\omega$-model of ZFC.

Finally, we consider real \AWstar-algebras.  A point which is harmless over the complex numbers becomes essential here: in the Baer $*$-ring literature an algebra is called abelian when all its projections are central, and a projection is abelian when its corner has that property.  Over the reals this need not mean commutative; the quaternion algebra $\mathbb H$ is the basic example.  To avoid ambiguity we call this property \emph{semi-abelian}.  Handelman's real matrix-ring theorem~\cite{Han78}, together with the fixed-point realization of complexification inside $M_2(R)$, gives the following result.

\begin{thm*}
    Let $R$ be a real \AWstar-algebra with no nonzero central semi-abelian projection.  Then the complexification $R_{\bbC}$ is a complex \AWstar-algebra.
\end{thm*}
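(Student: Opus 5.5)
The plan is to realize $R_{\bbC}$ as a corner-type fixed-point subalgebra of $M_2(R)$ and transfer the \AWstar-property from $R$ through matrix rings and fixed points. Consider
\[
J = \begin{pmatrix} 0 & -1 \\ 1 & 0 \end{pmatrix} \in M_2(R).
\]
It is a skew-adjoint unitary with $J^2 = -1$. The map $a + ib \mapsto \begin{pmatrix} a & -b \\ b & a \end{pmatrix}$ is a real $*$-isomorphism of $R_{\bbC}$ onto the commutant $\{J\}' \cap M_2(R)$, with multiplication by $i$ corresponding to multiplication by $J$. This identification is isometric for the unique \cstar-norms, since a $*$-isomorphism of real \cstar-algebras is isometric. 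So it suffices to show two things: that $M_2(R)$ is a real \AWstar-algebra, and that the relative commutant of a single normal element (here the unitary $J$) in a real \AWstar-algebra is again \AWstar.

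For the first step I would use Handelman's matrix theorem~\cite{Han78}. It says that a Baer $*$-ring satisfying suitable hypotheses has Baer $*$-matrix rings $M_n$, and I recall that the hypotheses include existence of a suitable square root/generalized comparability or the absence of abelian (here: semi-abelian) central summands. Here the assumption of no nonzero central semi-abelian projection is precisely what makes Handelman's theorem apply, yielding that $M_2(R)$ is a Baer $*$-ring. Combined with the fact that $M_2(R)$ is a real \cstar-algebra, Kaplansky's characterization (real version: a real \cstar-algebra that is a Baer $*$-ring, equivalently whose projections form a complete lattice and every maximal abelian self-adjoint subalgebra is generated by projections) gives that $M_2(R)$ is a real \AWstar-algebra. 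I expect this to be the main obstacle. One must check carefully that Handelman's hypotheses match: his theorem is phrased for Baer $*$-rings with conditions such as the ``(EP)/(SR)'' axioms and the absence of abelian summands, and the real \AWstar-setting must supply these. The positive-square-root and existence-of-projection axioms come from the \cstar-structure via functional calculus. The semi-abelian vs.\ commutative distinction is exactly where care is needed.

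For the second step, let $B = \{J\}' \cap M_2(R)$. I would verify the Baer condition directly. For $S \subseteq B$, the right annihilator of $S$ in $M_2(R)$ is $pM_2(R)$ for a projection $p$. Conjugation by $J$ fixes $S$ and hence maps this annihilator onto itself, so $JpJ^* = p$ by uniqueness of the annihilating projection. Thus $p \in B$, and the annihilator of $S$ in $B$ is $pM_2(R) \cap B = pB$. Then $B$, a real \cstar-subalgebra that is a Baer $*$-ring, is a real \AWstar-algebra. Transporting back, $R_{\bbC}$ is a real \AWstar-algebra with complex structure. Finally, a complex \cstar-algebra whose underlying real algebra is Baer is complex \AWstar, since annihilator projections do not depend on the scalar field. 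This gives the theorem.
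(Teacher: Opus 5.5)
Your proposal is correct and follows the paper's proof essentially step for step. The paper likewise cites Handelman's Corollary~3.12, whose hypothesis in Berberian--Handelman's ``abelian'' terminology is exactly the absence of nonzero central semi-abelian projections, to make $M_2(R)$ a real \AWstar-algebra; it then shows that the fixed-point algebra $C_J$ of $\operatorname{Ad}J$ is Baer by the same invariance-of-the-annihilating-projection argument you give, and finally declares $i\cdot x = Jx$. Your appeal to Kaplansky's characterization is unnecessary: in this paper a real \cstar-algebra that is a Baer $*$-ring is a real \AWstar-algebra by definition.
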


This is Theorem~\ref{thm:handelman}.  The exceptional factor case is completely explicit: a semi-abelian real \AWstar-factor is $\bbR$ or $\mathbb H$.  Consequently every real \AWstar-factor has a complex \AWstar-complexification of the same refined type, and every real \AWstar-factor is normal.  Chepukhalin and Rakhimov prove the latter conclusion under the additional hypothesis that the complexification is an \AWstar-algebra~\cite[Theorem~2]{CR25}; Corollary~\ref{cor:factor-complex} shows that this hypothesis is automatic.  Combining this with Wright's and Christensen-Pedersen's faithful-state criteria~\cite{Wri76,CP84}, we obtain that every semifinite real \AWstar-factor possessing a faithful state is a real \wstar-factor.  The same complexification theorem also gives the real analogue of Christensen and Pedersen's theorem~\cite{CP84}: every properly infinite real \AWstar-algebra is monotone sequentially complete.

For a general real \AWstar-algebra, let $z$ be the maximal central semi-abelian projection.  Then $(1 - z)R$ always has \AWstar-complexification, while the obstruction is concentrated in $zR$.  We prove
\[
    R_{\bbC}\text{ is \AWstar } \Longleftrightarrow M_n(R)\text{ is real \AWstar\ for some }n \geq 2 \Longleftrightarrow (zR)_{\bbC}\text{ is \AWstar},
\]
and equivalently every nontrivial matrix amplification is real \AWstar.  Thus the obstruction is central and semi-abelian, but it need not be commutative.

The distinction produces a concrete counterexample.  Put
\[
    R = \ell^{\infty}(\bbN,\bbR) + c_0(\bbN,\mathbb H).
\]
Then $R$ is a finite semi-abelian real \AWstar-algebra with $R_{\sa} = Z(R) = \ell^{\infty}(\bbN, \bbR)$, its center is a real \wstar-algebra, but $R$ is not a real \wstar-algebra.  Moreover $R_{\bbC}$ is not even Rickart, and for a fixed imaginary quaternionic unit $I$ the bounded derivation
\[
    \delta((q_n)) = (Iq_n - q_nI)
\]
is outer.  Hence the complexification hypothesis in Ayupov's type $\mathrm{I}$ center criterion~\cite{Ayu04typeI} cannot be removed, and the innerness question for derivations posed by Dadakhodjaev and Rakhimov~\cite{DR21} has a negative answer.

The order-theoretic and quasitracial consequences are nevertheless recoverable because a semi-abelian real \AWstar-algebra $B$ satisfies
\[
    B_{\sa} = Z(B)_{\sa}.
\]
This gives monotone completeness and normality on the semi-abelian band and yields the equivalence, in ZFC, of the real and complex monotone-completeness conjectures.  On the finite side it also supplies the missing semi-abelian part of the center-valued quasitrace theory: scalar $1$-quasitraces factor through the canonical self-adjoint-center-valued quasitrace and extend to $n$-quasitraces for all $n$.  Combining this with Katz's Boolean-valued interpretation~\cite{Katz13} and the complementary complexifiable band, we retain the arithmetic characterization of the real center-valued trace problem and a corrected norm-one projection criterion.

Finally, universal innerness must be discarded, but the local-derivation conclusions survive.  We prove that every local derivation and every $2$-local derivation on a real \AWstar-algebra is a derivation.  Every derivation is inner on the complementary band $(1 - z)R$; all possible outer behavior is concentrated on the central semi-abelian band.

We obtain, in particular, the following corrected summary.

\begin{thm*}
    Let $R$ be a real \AWstar-algebra.
    \begin{enumerate}
        \item[(1)] $R$ is normal.
        \item[(2)] Every local derivation and every $2$-local derivation on $R$ is a derivation.  Derivations need not be inner.
    \end{enumerate}
    Moreover, ZFC proves that every real \AWstar-algebra is monotone complete if and only if every complex \AWstar-algebra is monotone complete.
\end{thm*}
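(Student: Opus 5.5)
We argue through the band decomposition $R = zR \oplus (1-z)R$, where $z$ is the maximal central semi-abelian projection. Each property in the statement is checked separately on the two bands. The band $(1-z)R$ has no nonzero central semi-abelian projection, so Theorem~\ref{thm:handelman} makes $((1-z)R)_{\bbC}$ a complex \AWstar-algebra. On that band we transfer the complex results through the complexification. On $zR$ we use the identity $B_{\sa} = Z(B)_{\sa}$ for semi-abelian $B$. This reduces every order-theoretic question on $zR$ to the commutative real \AWstar-algebra $Z(zR)_{\sa}$. That algebra is the self-adjoint part of a commutative complex \AWstar-algebra, namely its complexification. It is therefore monotone complete, and every monotone net converges to its supremum in the order of $zR$.

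For (1), normality means that for bounded increasing nets of self-adjoint elements, a state preserving suprema is automatic in the appropriate sense. On $zR$ the nets live in the center, where normality is the commutative case. On $(1-z)R$ we embed it as the fixed points of the conjugate-linear involution $x+iy \mapsto x - iy$ on $R_{\bbC}$. Suprema of self-adjoint nets in $R$ are then suprema in $R_{\bbC}$, since the involution is an order automorphism that fixes the supremum. Normality of complex \AWstar-algebras~\cite{AH26} then gives normality of $(1-z)R$. We patch the two bands with the central projection $z$: suprema split coordinatewise.

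For (2), let $\delta$ be a local or $2$-local derivation. We first show that $\delta$ commutes with multiplication by central projections, so it splits over the bands. For $(1-z)R$, extend $\delta$ complex-linearly to $R_{\bbC}$. We check that the extension is again local ($2$-local), since the implementing derivations extend too. The known complex \AWstar{} results then make it a derivation, and inner. On $zR$, a local derivation kills the center, because derivations of a semi-abelian algebra vanish on $Z$: for central self-adjoint $a$, $\delta(a)$ commutes with $a$, and $B_{\sa}=Z$. Additivity can then be checked pointwise via the Boolean-valued (Ozawa/Katz) representation over $Z(zR)$. Fibrewise the algebra is $\bbR$ or $\mathbb H$, where local derivations are derivations by finite dimensionality. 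Non-innerness is witnessed by the example $\ell^\infty(\bbN,\bbR)+c_0(\bbN,\mathbb H)$ already described.

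For the final equivalence, suppose first that all complex \AWstar-algebras are monotone complete. Then $(1-z)R$ is monotone complete as a fixed-point algebra of $((1-z)R)_{\bbC}$, and $zR$ is monotone complete by the central reduction. Conversely, a complex \AWstar-algebra is a real \AWstar-algebra by restriction of scalars, and monotone completeness depends only on the real order, which gives the other direction. The main obstacle I expect is the $2$-local derivation case on $zR$. Additivity is not given, and the Boolean-valued fibrewise argument must be made rigorous. The first thing I would try is to show $\delta(x)$ is determined by the compressions $e\,\delta(x)$ for central projections $e$, and to work on those compressions.
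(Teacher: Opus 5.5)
\emph{Parts of your proposal that match the paper.} Your band decomposition, your proof of (1) and your proof of the final monotone-completeness equivalence follow the paper's arguments: use $B_{\sa}=Z(B)_{\sa}$ on $zR$, apply Handelman's theorem and fixed-point descent on $(1-z)R$, and recombine. Two corrections in (1). First, normality here means that the projection join of an upward-directed family of projections is its order supremum. Second, the descent runs from $R_{\bbC}$ to $R$, not the other way. The join computed in $((1-z)R)_{\bbC}$ is its supremum there by complex normality. It is fixed by conjugation, so it lies in $(1-z)R$ and is the join and supremum there (Lemma~\ref{lem:fixed}(ii)).

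\emph{Gap on $(1-z)R$.} You complexify the local map itself, $\delta_{\bbC}(x+iy)=\delta(x)+i\delta(y)$, and claim it is again local because the witnesses extend. They do extend, but a witness $D_x$ implements $\delta$ only at the real point $x$. Locality of $\delta_{\bbC}$ at $x+iy$ asks for a single derivation $D$ of $R_{\bbC}$ with $D(x+iy)=\delta(x)+i\delta(y)$. This is a joint condition on $x$ and $y$, and separate witnesses for $x$ and for $y$ do not supply it. Likewise, $2$-locality at a complex pair is a joint condition on four real elements. For a $2$-local map there is not even a complex-linear extension until real-linearity has been proved. So the complex theorems cannot be applied to $\delta_{\bbC}$.

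The paper complexifies only genuine derivations. On $(1-z)R$ it relies on the real-algebra arguments of Karimov and Dadakhodjaev--Rakhimov on the complementary types. On the finite type~$\mathrm{I}$ band, where quaternionic coefficients occur, it argues directly:
\begin{itemize}
\item it complexifies each witness and applies Olesen's theorem to see that the witness is inner;
\item it uses the conjugation-invariant faithful center-valued trace $T$ to get $T(\delta(x)x)=T(\delta(x)x^2)=0$;
\item it polarizes to obtain the Jordan identity, and for $2$-local maps obtains additivity from a common witness for a pair;
\item it concludes with Bre\v{s}ar's theorem.
\end{itemize}

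\emph{Gap on $zR$.} The Boolean-valued representation you invoke need not exist. By Theorem~\ref{thm:central-mixing}, $R\cong\operatorname{bd}(\mathcal M)$ holds exactly when $R$ has bounded central mixing, that is, when $R_{\bbC}$ is \AWstar. This fails for the semi-abelian algebra of Proposition~\ref{prop:quat-example}: mixing the family $e_nI$ over the partition $(e_n)$ would produce the constant sequence $I\notin R$. That is precisely the algebra carrying the outer derivation. The paper instead uses the point fibres $B_t=B/I_t$ over the Stone space of $Z(B)_{\sa}$ (Lemma~\ref{lem:semi-fibers}). These always exist, separate points, and are $\bbR$, $\bbC$ or $\mathbb H$; you omit $\bbC$.

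Even with fibres available, a local map descends to them only after you know three things. You need $\delta(hx)=h\delta(x)$ for $h\in Z(B)_{\sa}$, you need continuity, and for $2$-local maps you need additivity. This is the step you flag and leave open.

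The missing idea is the nondegenerate $Z(B)_{\sa}$-valued form $\langle x,y\rangle=-\frac12(xy+yx)$ on skew elements. Every derivation of $B$ vanishes on $B_{\sa}$. The reason is that $D(e)=2eD(e)$ forces $D(e)=0$ for central projections, together with boundedness. Your reason, that $\delta(a)$ commutes with $a$, is vacuous. Every derivation is also skew for this form (Lemma~\ref{lem:deriv-basic}). Hence a local map satisfies $\langle\delta(x),x\rangle=0$. Polarization, or a common witness for a pair in the $2$-local case, gives $\langle\delta(x),y\rangle+\langle x,\delta(y)\rangle=0$. Nondegeneracy, $\langle x,x\rangle=x^*x$, then yields additivity and central linearity, and the closed graph theorem yields boundedness.

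Lemma~\ref{lem:fiber-der} finishes the argument. On each fibre the map is a skew-symmetric operator on $\operatorname{Im}\mathbb H\cong\bbR^3$, hence of the form $\operatorname{ad}a$. ``Finite dimensionality'' alone would not give this. Local derivations of finite-dimensional algebras need not be derivations; on $\bbC[x]/(x^3)$ the linear map with $1,x\mapsto 0$ and $x^2\mapsto x^2$ is a local derivation but not a derivation.
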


The paper is organized as follows.  Section~\ref{sec:prelims} collects preliminaries on quasitraces, and Section~\ref{sec:ultraproducts} treats quasitracial ultraproducts.  The language and its axioms are described in Section~\ref{sec:language}, which also proves the axiomatization results.  Reflection to separable factors is proved in Section~\ref{sec:reflection}.  The arithmetic encodings and the passage to finite \AWstar-algebras are given in Section~\ref{sec:absolute}.  Section~\ref{sec:real} treats real \AWstar-algebras, including the real Boolean-valued reduction and its trace and quasitrace applications.

Since the initial preparation of this paper, OpenAI has released a manuscript proposing a counterexample to the quasitrace conjecture, and subsequent joint work of the author and Hanson has produced a proposed counterexample to the monotone completeness conjecture. These developments may settle the questions that originally motivated our absoluteness results. Indeed, the latter construction relies on the model-theoretic machinery developed here, particularly the preservation of factoriality under quasitracial ultraproducts and the reflection of monotone incompleteness to separable factors. We retain the original perspective of the paper, which develops these tools and their arithmetic consequences in a general setting.

\subsection*{AI disclosure}

Errors in a previous version were found by GPT 6 Astra.

\section{Preliminaries}\label{sec:prelims}

We use the following conventions for quasitraces.  Let $A$ be a unital complex \cstar-algebra.  A \textbf{$1$-quasitrace} on $A$ is a function $\tau : A \to \bbC$ such that
\begin{enumerate}
    \item[(i)] $\tau(x^*x) = \tau(xx^*) \geq 0$ for every $x \in A$;
    \item[(ii)] $\tau(a + ib) = \tau(a) + i\tau(b)$ for $a,b \in A_{\sa}$;
    \item[(iii)] the restriction of $\tau$ to every abelian \cstar-subalgebra of $A$ is linear.
\end{enumerate}
A \textbf{$2$-quasitrace}, or simply a \textbf{quasitrace}, is a $1$-quasitrace $\tau$ for which there is a $1$-quasitrace $\tau_2$ on $M_2(A)$ satisfying $\tau_2(x\otimes e_{11}) = \tau(x)$.  We call $\tau$ normalized if $\tau(1) = 1$.  On an \AWstar-algebra the distinction between $1$- and $2$-quasitraces disappears: every $1$-quasitrace is a $2$-quasitrace by~\cite[Corollary~II.1.10]{BH82}.

For a unital real \cstar-algebra $R$, a \textbf{scalar $1$-quasitrace} is a function $\tau: R\to\bbR$ such that $\tau(x^*x) = \tau(xx^*) \geq 0$, $\tau(a + b) = \tau(a)$ for $a\in R_{\sa}$ and $b^* = -b$, and $\tau$ is linear on every abelian real \cstar-subalgebra.  This is the convention used in Section~\ref{sec:real}; see~\cite{RR24}.  For $n \geq 1$, such a map is called an \textbf{$n$-quasitrace} if there is a scalar $1$-quasitrace $\tau_n$ on $M_n(R)$ satisfying $\tau_n(x\otimes e_{11}) = \tau(x)$ for every $x \in R$.

\begin{lemma}\label{lem:abelian-comparison}
    Let $\tau$ be a normalized quasitrace on $A$ and put $\dt(x,0) = \tau(x^*x)^{1/3}$.  If $0 \leq h \leq 1$, then
    \[
        \dt(h,0)^3 \leq \tau(h) \leq \dt(h,0)^{3/2}.
    \]
\end{lemma}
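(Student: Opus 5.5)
The plan is to work entirely inside the abelian \cstar-subalgebra $C^*(1,h) \cong C(\sigma(h))$. Property (iii) of a $1$-quasitrace says $\tau$ is linear there. Since $\tau$ is also positive by (i), its restriction to this subalgebra is a positive linear functional. Because $\tau(1)=1$, the Riesz representation theorem gives a probability measure $\mu$ on $\sigma(h) \subseteq [0,1]$ with $\tau(f(h)) = \int f\,d\mu$ for continuous $f$. Note that $h^*h = h^2$ lies in this subalgebra, so $\dt(h,0)^3 = \tau(h^2) = \int t^2\,d\mu$. Both inequalities then become scalar statements about $\mu$.

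For the lower bound, $0 \leq h \leq 1$ gives $t^2 \leq t$ on $\sigma(h)$. By positivity and linearity on the subalgebra,
\[
\tau(h^2) \leq \tau(h).
\]
Equivalently, $h - h^2 = h^{1/2}(1-h)h^{1/2} \geq 0$ is an element of the subalgebra and $\tau$ is positive on it. This is exactly $\dt(h,0)^3 \leq \tau(h)$.

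For the upper bound, apply Cauchy--Schwarz, or Jensen, to $\mu$:
\[
\tau(h) = \int t\,d\mu \leq \left(\int t^2\,d\mu\right)^{1/2}\left(\int 1\,d\mu\right)^{1/2} = \tau(h^2)^{1/2}.
\]
Since $\tau(h^2) = \dt(h,0)^3$, this gives $\tau(h) \leq \dt(h,0)^{3/2}$. Alternatively, one can avoid measures by using the Cauchy--Schwarz inequality for the positive linear functional $\tau|_{C^*(1,h)}$ applied to $h$ and $1$.

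There is no real obstacle. The only point needing care is that linearity of $\tau$ is used only on the single abelian subalgebra $C^*(1,h)$, which contains $1$, $h$ and $h^2$. Restricted there, $\tau$ is a genuine state, so positivity and Cauchy--Schwarz are available without any global linearity.
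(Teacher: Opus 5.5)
Your proposal is correct and matches the paper's proof: both restrict $\tau$ to the abelian algebra $C^*(1,h)$, where it is a state, obtain the lower bound from $h^2 \leq h$, and obtain the upper bound from the Cauchy--Schwarz inequality $\tau(h)^2 \leq \tau(h^2)$. The Riesz measure is optional extra scaffolding. As you note, Cauchy--Schwarz for the state applied to $h$ and $1$ already suffices.
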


\begin{proof}
    The algebra $\text{\cstar}(1,h)$ is abelian, so the quasitrace is a state there.  The first inequality follows from $h^2 \leq h$, and the second from the Cauchy–Schwarz inequality $\tau(h)^2 \leq \tau(h^2)$.
\end{proof}

\begin{fact}[{\cite[Corollary~3.4, Lemma~3.5, Definition~3.6, Lemma~3.7]{Haa14}}]\label{fact:haagerup-metric}
    Let $\tau$ be a faithful normalized quasitrace on a unital \cstar-algebra $A$. Then:
    \begin{enumerate}
        \item[(a)] $\tau$ is order preserving on $A_{\sa}$, and extends uniquely to a quasitrace $\tau_n$ on $M_n(A)$ with $\tau_n(x\otimes e_{11}) = \tau(x)$;
        \item[(b)] $\dt(x,y) = \tau((x - y)^*(x - y))^{1/3}$ is a metric on $A$, with $\dt(x + y,x_0 + y_0) \leq \dt(x,x_0) + \dt(y,y_0)$ and $\dt(x^*,y^*) = \dt(x,y)$;
        \item[(c)] $\dt(xy,0) \leq \|x\|^{2/3} \dt(y,0)$ and $\dt(xy,0) \leq \dt(x,0)\|y\|^{2/3}$;
        \item[(d)] $\tau$ is continuous on $A_{+}$ for $\dt$.
    \end{enumerate}
\end{fact}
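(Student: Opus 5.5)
Since this Fact is quoted from Haagerup~\cite{Haa14}, the plan is to reconstruct his argument in the order (a), (c), (b), (d). The elementary parts rest on two tools: the identity $\tau(x^*x) = \tau(xx^*)$, and linearity on abelian subalgebras, applied both to $\tau$ and to its matrix extension $\tau_2$.

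For (a), I would first obtain the extension $\tau_n$. For $n=2$ it is part of the definition of a quasitrace, and larger $n$ come from the Blackadar–Handelman induction~\cite[Corollary~II.1.10]{BH82}. For uniqueness I would note two things. First, $\tau_n$ is determined on positive elements of the form $x^*x$ with $x$ supported in the first row, since $\tau_n(x^*x) = \tau_n(xx^*)$ and $xx^* \in A \otimes e_{11}$. Second, every positive element of $M_n(A)$ is a finite sum of mutually orthogonal pieces of that form, up to arbitrarily small error, and $\tau_n$ is additive on orthogonal, hence commuting, positive elements.

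For order preservation, given $0 \leq a \leq b$, I would take the row $x = (a^{1/2}, (b-a)^{1/2})$. Then $xx^* = b \otimes e_{11}$, so $\tau(b) = \tau_2(x^*x)$. The inequality $\tau(a) \leq \tau(b)$ then reduces to showing that $\tau_2$ of a positive $2\times 2$ matrix dominates $\tau$ of its $(1,1)$ corner. The cleanest route I see is to prove first the special case $a \leq b$ with $a,b$ in a common abelian subalgebra, which is immediate from linearity there. The general case would then be reduced to it via a polar-type factorization $a^{1/2} = \lim_k u_k b^{1/2}$ with $\|u_k\| \leq 1$, together with $\tau(b^{1/2}u_k^*u_kb^{1/2}) = \tau(u_k b u_k^*)$. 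I expect some care is needed to avoid circularity; if this fails I will follow Haagerup's Corollary~3.4 directly.

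Part (c) is then immediate from monotonicity. We have $\dt(xy,0)^3 = \tau(y^*x^*xy) \leq \|x\|^2 \tau(y^*y)$ because $y^*x^*xy \leq \|x\|^2 y^*y$. Also $\tau(y^*x^*xy) = \tau(xyy^*x^*) \leq \|y\|^2\tau(xx^*) = \|y\|^2\tau(x^*x)$. Taking cube roots gives both inequalities. The identity $\dt(x^*,y^*) = \dt(x,y)$ is just $\tau(zz^*) = \tau(z^*z)$. Faithfulness gives $\dt(x,y) = 0 \Rightarrow x = y$.

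The main obstacle is the triangle inequality in (b), which is where the exponent $1/3$ enters. I would reduce it to $\dt(x+y,0) \leq \dt(x,0) + \dt(y,0)$. The strategy is to write $|x+y|^2$ as the corner of $z^*z$ for the $2\times 2$ matrix $z$ with first row $(x, y)$. After passing through $zz^*$ in the other corner, one needs an estimate of $\tau_2(a+b)$ for positive, not necessarily commuting, $a$ and $b$. Here I would invoke Haagerup's key inequality (his Lemma~3.5), proved by spectral truncation in an abelian subalgebra: non-commutative additivity fails, but it holds up to an error controlled by the function $t \mapsto t^{2/3}$, and the power $1/3$ is exactly the one making the resulting quasi-norm subadditive. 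The statement $\dt(x+y,x_0+y_0) \leq \dt(x,x_0) + \dt(y,y_0)$ then follows by applying this to $x-x_0$ and $y-y_0$.

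For (d), given positive contractions $a$ and $b$, I would set $c = a-b$ and use monotonicity on the decomposition $a \leq b + \pos{c}$, together with the symmetric bound. This reduces continuity to the estimate $\tau(b + \pos{c}) - \tau(b) \lesssim \tau(\pos{c})^{\theta}$ for some $\theta>0$, which again comes from Haagerup's quasi-additivity inequality. Lemma~\ref{lem:abelian-comparison} converts $\tau(\pos{c})$ into a power of $\dt(\pos{c},0) \leq \dt(a,b)$, and the latter bound is itself obtained via (c).
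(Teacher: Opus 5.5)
The paper does not prove this Fact; it quotes Haagerup, so your proposal has to stand on its own. Your derivations of (c) from monotonicity, of $\dt(x^*,y^*) = \dt(x,y)$, and of definiteness are fine. The two substantive ingredients, however, are missing.

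The first is order preservation, and your route to it is circular. With $u_k = a^{1/2}(b+1/k)^{-1/2}$ you get $\tau(a) = \lim_k \tau(b^{1/2}u_k^*u_kb^{1/2}) = \lim_k \tau(u_kbu_k^*)$. Even this limit presupposes norm continuity of $\tau$, which is normally derived from monotonicity. Bounding the limit by $\tau(b)$ is again the inequality $\tau(c) \leq \tau(b)$ for a non-commuting pair $0 \leq c \leq b$. Likewise, ``$\tau_2(P) \geq \tau(P_{11})$ for $P \geq 0$'' is just $\tau_2(P^{1/2}e_{11}P^{1/2}) \leq \tau_2(P)$, i.e.\ monotonicity of $\tau_2$. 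Nothing ever lands in a common abelian subalgebra, and falling back on Haagerup's Corollary~3.4 is not an argument.

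A route that does work compares level sets, as in Lemma~\ref{lem:monotone-calculus}. Put $\operatorname{rk}_\tau(x) = \lim_n \tau(x^{1/n})$. Linearity on $\mathrm{C}^*(1,a)$ gives $\tau(a) = \int_0^\infty \operatorname{rk}_\tau(\pos{a - t})\,dt$, and $a \leq b$ gives $\pos{a-t} \precsim \pos{b-t}$ in the Cuntz sense. Then $\operatorname{rk}_\tau$ respects Cuntz subequivalence, using the usual $\varepsilon$-cut-down approximation, for two reasons. First, $\tau(f(zz^*)) = \tau(f(z^*z))$ for continuous $f \geq 0$ with $f(0) = 0$. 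Second, $ge = e$ with $g \geq 0$ forces $e$ and $g$ to commute, so $\operatorname{rk}_\tau(e) \leq \operatorname{rk}_\tau(g)$ is decided inside an abelian subalgebra. This uses only the $1$-quasitrace axioms, so it also gives monotonicity of $\tau_2$, which you need later.

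Your uniqueness argument for $\tau_n$ also fails. The Bott projection in $M_2(C(S^2))$ is not a norm limit of finite sums of mutually orthogonal elements $x^*x$ with $x$ supported in the first row: a close approximant of that form would give the Bott line bundle a nowhere-vanishing section. Also, Blackadar--Handelman's Corollary~II.1.10, as the paper uses it, concerns \AWstar-algebras, so it does not produce $\tau_n$ over a general unital $A$.

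The second missing ingredient is the triangle inequality. You delegate it to an unspecified ``Lemma~3.5'' with only a heuristic description. The set-up you describe also does not produce $|x+y|^2$. For $z$ with first row $(x,y)$, the $(1,1)$ corner of $z^*z$ is $x^*x$. The element $|x+y|^2$ is instead the compression of $z^*z$ by the column $(1,1)^T$, which costs a factor $2$ and destroys the sharp constant. What is needed is the square-root quasi-additivity
\[
    \tau(a + b) \leq \bigl(\tau(a)^{1/2} + \tau(b)^{1/2}\bigr)^2 \qquad (a, b \in A_+).
\]
To prove it, write $(a + b) \otimes e_{11} = w^*w$ for the column $w$ with entries $a^{1/2}, b^{1/2}$. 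Then use $ww^* \leq (1 + s)(a \oplus 0) + (1 + s^{-1})(0 \oplus b)$, monotonicity of $\tau_2$, linearity on diagonal matrices, and optimize over $s > 0$.

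Now combine this with the operator inequality $|x + y|^2 \leq (1 + s)|x|^2 + (1 + s^{-1})|y|^2$; the difference is $|s^{1/2}x - s^{-1/2}y|^2$. With $\alpha = \tau(x^*x)^{1/2}$ and $\beta = \tau(y^*y)^{1/2}$, this gives $\tau(|x + y|^2) \leq \bigl((1 + s)^{1/2}\alpha + (1 + s^{-1})^{1/2}\beta\bigr)^2$. The choice $s = (\beta/\alpha)^{2/3}$ makes the right side $(\alpha^{2/3} + \beta^{2/3})^3$, which is exactly $\dt(x + y, 0) \leq \dt(x, 0) + \dt(y, 0)$. This optimization is where the exponent $1/3$ comes from. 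The same square-root inequality makes your sketch of (d) go through with $\theta = 1/2$.
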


Let $(A, \tau)$ be a pair consisting of a unital \cstar-algebra $A$ and a faithful normalized quasitrace.  Every norm ball is $\dt$-closed~\cite[Lemma~3.8]{Haa14}.  If the unit ball is $\dt$-complete, then $A$ is an \AWstar-algebra and $\tau$ is normal \cite[Lemma 3.9]{Haa14}. Conversely, if $A$ is a finite \AWstar-algebra and $\tau$ is faithful and normal, then every norm ball is $\dt$-complete \cite[Proposition 3.10]{Haa14}.

\begin{definition}\label{def:quasitracial}
    A \textbf{quasitracial \AWstar-algebra} is a pair $(A,\tau)$, where $A$ is a unital \cstar-algebra and $\tau$ is a faithful normalized quasitrace on $A$ whose closed unit ball is complete for $\dt$.
\end{definition}

\begin{fact}[{\cite[Lemma~3.9]{Haa14}}]\label{fact:completion}
    If $(A, \tau)$ is a quasitracial \AWstar-algebra, then $A$ is an \AWstar-algebra and $\tau$ is normal.
\end{fact}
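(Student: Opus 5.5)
The plan is to verify Kaplansky's characterization of \AWstar-algebras~\cite{Kap51}. It requires two things: every maximal abelian self-adjoint subalgebra (masa) $C\subseteq A$ is an \AWstar-algebra, and every family of mutually orthogonal projections in $A$ has a supremum in $A$. The approach is to produce the required suprema as $\dt$-limits, using completeness of the unit ball, and then to deduce normality of $\tau$ from the $\dt$-continuity in Fact~\ref{fact:haagerup-metric}(d).

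\emph{Step 1: closedness.} First I show that several subsets of the unit ball of $A$ are $\dt$-closed, and hence $\dt$-complete.
\begin{itemize}
\item The set of self-adjoint contractions is closed, since $\dt(x^*,y^*)=\dt(x,y)$ by Fact~\ref{fact:haagerup-metric}(b).
\item The positive contractions form a closed set. Here I would write $x=x_+-x_-$ and use that $\dt$-convergence on norm-bounded sets is preserved by multiplication, by Fact~\ref{fact:haagerup-metric}(c) with the splitting $xy-x_0y_0=(x-x_0)y+x_0(y-y_0)$. Then the condition $x\geq 0$, i.e.\ $x=(x^{1/2})^2$, or alternatively $\|1-x\|\leq 1$ for contractions, passes to limits because norm balls are $\dt$-closed.
\item The relative commutant of a set is closed, again by continuity of multiplication on bounded sets. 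In particular the unit ball of a masa $C=C'\cap A$ is $\dt$-closed.
\end{itemize}

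\emph{Step 2: masas are monotone complete.} Let $C$ be a masa. Since $C$ is abelian, $\tau|_C$ is a faithful state, and on $C$ the metric satisfies $\dt(x,y)^3=\|x-y\|_{2,\tau}^2$. Let $(h_\alpha)$ be an increasing net of positive contractions in $C$. For $\alpha\leq\beta$, linearity on $C$ and Lemma~\ref{lem:abelian-comparison} give
\[
\dt(h_\beta,h_\alpha)^3\leq \tau(h_\beta-h_\alpha)=\tau(h_\beta)-\tau(h_\alpha).
\]
The numbers $\tau(h_\alpha)$ increase and are bounded, so the net is $\dt$-Cauchy. By Step 1 it converges to some $h\in C$. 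Closedness of the positive cone shows that $h\geq h_\alpha$ for every $\alpha$. If $k\in C_{\sa}$ satisfies $k\geq h_\alpha$ for all $\alpha$, then $k-h_\alpha\geq 0$ converges to $k-h$, so $k\geq h$. Hence $h=\sup h_\alpha$ in $C$, and $C$ is monotone complete, so in particular it is an \AWstar-algebra.

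\emph{Step 3: orthogonal families.} Let $(p_i)$ be mutually orthogonal projections and put $p_F=\sum_{i\in F}p_i$ for finite $F$. The partial sums commute, so $\tau$ is additive on them, and
\[
\dt(p_{F'},p_F)^3=\tau(p_{F'}-p_F)=\sum_{i\in F'\setminus F}\tau(p_i)
\]
for $F\subseteq F'$. Thus the net $(p_F)$ is $\dt$-Cauchy and converges to some $p$. The relations $p^2=p$ and $p^*=p$ pass to the limit by continuity of multiplication on bounded sets. Likewise $pp_i=p_i$ passes to the limit, so $p\geq p_i$ for each $i$. If $q$ is a projection with $q\geq p_i$ for all $i$, then $qp_F=p_F$ for every $F$, hence $qp=p$ in the limit, i.e.\ $p\leq q$. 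Therefore $p=\sup_i p_i$ in $A$, and Kaplansky's theorem shows that $A$ is an \AWstar-algebra.

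\emph{Step 4: normality.} Let $h_\alpha\nearrow h$ in $A_{\sa}$ be a bounded increasing net, and fix a masa $C$ containing the $h_\alpha$. In an \AWstar-algebra, suprema of increasing nets in a masa computed in $C$ agree with those computed in $A$ (Kaplansky). So $h$ is the $\dt$-limit found in Step 2, and Fact~\ref{fact:haagerup-metric}(d) gives $\tau(h_\alpha)\to\tau(h)$.

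The main obstacle is the bookkeeping in Step 1, namely showing that order and commutation relations survive $\dt$-limits. The metric is not a norm and $\tau$ is only quasilinear, so every passage to a limit must stay inside a norm-bounded set, where Fact~\ref{fact:haagerup-metric}(c) applies. The second delicate point is the identification of $C$-suprema with $A$-suprema in Step 4. If needed, I would instead check directly that $h$ is an upper bound in $A$ and is below any upper bound $k\in A_{\sa}$, using the same limit argument with closedness of the positive cone of $A$.
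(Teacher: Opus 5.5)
Your Steps 1--3 are sound. They follow the route the paper itself takes when it re-derives the \AWstar-property for models of $\Tqt$ in Section~\ref{sec:language}. (For this Fact the paper only cites \cite[Lemma~3.9]{Haa14}, and for normality it defers to that lemma again.) That route is Kaplansky's criterion, with the supremum of an orthogonal family obtained as the $\dt$-limit of partial sums, essentially as in Lemma~\ref{lem:orthogonal-sups}.

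For masas the routes differ. The paper builds a Borel functional calculus by $\dt$-limits (Lemmas~\ref{lem:borel-calculus} and~\ref{lem:masas}) and shows that masas are spanned by their projections; this needs only $\dt$-continuity of multiplication. You instead show that masas are monotone complete. That costs you $\dt$-closedness of the positive part of each norm ball, but it gives the order-theoretic information you later use. For the closedness, the $\|1-x\|\leq 1$ criterion you mention, combined with closedness of norm balls, works directly. In Step 3, say explicitly that $\sum_i\tau(p_i)\leq\tau(1)=1$, since that is what makes the partial sums Cauchy.

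The genuine gap is in Step 4: a bounded increasing net in $A_{\sa}$ need not commute. For example, in $M_2(\bbC)$ one has $\tfrac{1}{4}q\leq\mathrm{diag}(1,\tfrac{1}{2})$ for the projection $q$ onto $\bbC(1,1)$, and these two elements do not commute. So in general no masa contains the net. The Cauchy estimate of Step 2 is then lost, because it used $\tau(h_\beta-h_\alpha)=\tau(h_\beta)-\tau(h_\alpha)$, i.e.\ linearity of $\tau$ on a commutative algebra.

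What your argument actually proves is $\tau(h_\alpha)\to\tau(\sup_\alpha h_\alpha)$ for commuting nets. This covers increasing nets of projections (which always commute) and nets inside an abelian \AWstar-subalgebra. The normality asserted in \cite[Lemma~3.9]{Haa14} concerns such commuting nets, namely normality of $\tau$ on abelian \AWstar-subalgebras or on increasing nets of projections. So restate Step 4 for them.

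For those nets your fallback is the right argument, and it removes any need to appeal to Kaplansky. If $k\in A_{\sa}$ dominates every $h_\alpha$, then $k-h_\alpha\to k-h'$ in $\dt$ inside a fixed norm ball, so $k\geq h'$. Hence the $\dt$-limit $h'$ from Step 2 is the supremum in $A_{\sa}$.

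The method cannot be pushed to arbitrary increasing nets. As in the last step of the proof of Proposition~\ref{prop:half-graph}, $\dt$-Cauchyness of every increasing net in the positive unit ball would already make $A$ monotone complete. That is precisely the open problem studied in Sections~\ref{sec:reflection} and~\ref{sec:absolute}.
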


Such algebras are finite: if $v^*v = 1$, then $vv^*$ and $1 - vv^*$ lie in an abelian \cstar-subalgebra, so $\tau(1 - vv^*) = 1 - \tau(vv^*) = 1 - \tau(v^*v) = 0$ and $vv^* = 1$.

\begin{fact}[{\cite[Theorem~II.1.7]{BH82}, \cite[Proposition~3.10 and proof of Corollary~5.13]{Haa14}, \cite[Corollary~7]{Wri76}}]\label{fact:finite-factors}
    Finite \AWstar-factors are of type $\mathrm{I}_n$ or $\mathrm{II}_1$.  Every finite \AWstar-factor $M$ admits a unique normalized quasitrace $\tau_{M}$, which is faithful and normal.  This quasitrace agrees with the dimension function on projections and makes $(M, \tau_{M})$ into a quasitracial \AWstar-algebra.  Moreover, $\tau_{M}$ is a trace if and only if $M$ is a von Neumann algebra.
\end{fact}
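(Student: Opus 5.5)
The plan is to assemble the statement from the cited sources, checking at each step that the hypotheses match. I will treat it in four parts: the type classification, existence of the quasitrace, uniqueness together with faithfulness and normality, and the von Neumann criterion.

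\emph{Type classification.} I will start from Kaplansky's type decomposition of the \AWstar-algebra $M$. Since $M$ is a factor, $M$ is of a single type $\mathrm{I}$, $\mathrm{II}$ or $\mathrm{III}$. Finiteness of $M$ rules out type $\mathrm{III}$, and it rules out type $\mathrm{II}_\infty$, since otherwise $1$ would be properly infinite. If $M$ is of type $\mathrm{I}$, take a nonzero abelian projection $e$. In a factor, any two abelian projections of full central support are equivalent. A maximal orthogonal family of projections equivalent to $e$ then sums to $1$; this is the standard Baer $*$-ring argument. If this family were infinite, $1$ would be equivalent to a proper subprojection, contradicting finiteness. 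Hence $M\cong M_n(\bbC)$, i.e.\ $M$ is of type $\mathrm{I}_n$.

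\emph{Existence.} I will invoke \cite[Theorem~II.1.7]{BH82}, which produces a center-valued quasitrace on a finite \AWstar-algebra. In a factor the center is $\bbC$, so this is a scalar normalized quasitrace $\tau_M$. On projections it restricts to the dimension function: it is additive on orthogonal projections, since two orthogonal projections lie in a common abelian subalgebra. It is also invariant under equivalence, since $p=v^*v$ and $q=vv^*$ give $\tau(p)=\tau(q)$ by axiom (i).

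\emph{Uniqueness.} Let $\tau$ be any normalized quasitrace. On projections, the two properties just noted together with halving in type $\mathrm{II}_1$ force $\tau(p)$ to equal the dimension of $p$. Indeed, $1$ splits into $2^k$ equivalent projections, each of value $2^{-k}$, and generalized comparability of projections in a factor then pins down $\tau(p)$ by dyadic approximation. The type $\mathrm{I}_n$ case is the same argument with $1/n$ in place of $2^{-k}$. For self-adjoint $a$, embed $\text{\cstar}(1,a)$ in a maximal abelian $*$-subalgebra $C$. This $C$ is a commutative \AWstar-algebra, so finite linear combinations of its projections are norm dense. On $C$, $\tau$ is a state, hence norm continuous, so $\tau(a)$ is determined by the values of $\tau$ on projections. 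Axiom (ii) then extends uniqueness to all of $M$.

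\emph{Faithfulness and normality.} Faithfulness follows since every nonzero projection contains a subprojection of positive dimension. For $a\geq 0$ with $a\neq0$, some spectral projection of $a$ in $C$ is nonzero and dominated by a multiple of $a$, so $\tau(a)>0$. Normality and completeness of the unit ball are \cite[Proposition~3.10 and proof of Corollary~5.13]{Haa14}: there the dimension function is normal and completeness in $\dt$ follows. This makes $(M,\tau_M)$ a quasitracial \AWstar-algebra.

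\emph{Von Neumann criterion.} Suppose first that $\tau_M$ is a trace. Then it is linear, hence a faithful normal state, and \cite[Corollary~7]{Wri76} shows that an \AWstar-algebra with a faithful normal state is a von Neumann algebra. Conversely, a finite von Neumann factor carries a faithful normal tracial state. This state is in particular a normalized quasitrace, so it equals $\tau_M$ by uniqueness.

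I expect the main obstacle to be existence, that is, \cite[Theorem~II.1.7]{BH82}. Producing a quasitrace from the dimension function requires showing linearity on every abelian subalgebra while controlling the non-abelian interactions. This is where I rely on the citation rather than reproving it; the remaining steps are routine comparison-theory arguments.
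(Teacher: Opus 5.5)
Your proposal is correct and takes the same route as the paper, which gives no argument beyond its three citations: as the paper does, you rely on Blackadar--Handelman for existence, Haagerup for normality and $\dt$-completeness, and Wright for the von Neumann criterion. The material you add is standard comparison theory and is sound as written: the type classification, uniqueness via dyadic comparison on projections extended through a maximal abelian subalgebra, and faithfulness via spectral projections.
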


\section{Quasitracial Ultraproducts}\label{sec:ultraproducts}

We use Haagerup's ultraproduct construction \cite[Section 4]{Haa14} extended to arbitrary families of quasitracial \AWstar-algebras.  Blackadar and Handelman consider an alternative ultrapower construction based on the operator norm \cite{BH82}.  Goldbring and Sinclair later extend that ultrapower construction to an ultraproduct construction which underlies their axiomatizability result \cite{GS17}.

\begin{lemma}\label{lem:ultraproduct}
    Let $(A_i, \tau_i)_{i \in I}$ be a family of quasitracial \AWstar-algebras and $\cU$ an ultrafilter on $I$.  Define
    \[
        J_{\cU} = \{ (x_i) \in \ell^{\infty}((A_i)_{i\in I}) \ : \ \lim_{i \to \cU} \tau_i(x_i^* x_i) = 0\}.
    \]
    Then $J_{\cU}$ is a norm-closed two-sided ideal of $\ell^{\infty}((A_i)_{i\in I})$.  Furthermore, 
    \[
        \prod_{\cU}(A_i, \tau_i) : = \ell^{\infty}((A_i)_{i\in I})/J_{\cU}
    \]
    with
    \[
        \tau_{\cU}([x_i]) = \lim_{i \to \cU} \tau_i(x_i)
    \]
    is a quasitracial \AWstar-algebra.  Its norm balls are the metric space ultraproducts of the corresponding norm balls of the $(A_i,\tau_i)$ equipped with the corresponding restriction of $\dt$. 
\end{lemma}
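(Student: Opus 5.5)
Write $\ell^\infty = \ell^{\infty}((A_i)_{i\in I})$ and $N(x) = \lim_{i\to\cU}\tau_i(x_i^*x_i)$; the plan has four steps.

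\emph{Step 1: $J_{\cU}$ is a closed ideal.} By Fact~\ref{fact:haagerup-metric}(b), $N(x)^{1/3}$ is subadditive, so $J_{\cU}$ is closed under addition. It is closed under adjoints since $\tau_i(x_i^*x_i) = \tau_i(x_ix_i^*)$, and under scalar multiples trivially. By Fact~\ref{fact:haagerup-metric}(c),
\[
    \dt(x_iy_i,0) \leq \|x_i\|^{2/3}\dt(y_i,0) \quad\text{and}\quad \dt(x_iy_i,0) \leq \dt(x_i,0)\|y_i\|^{2/3},
\]
so $J_{\cU}$ absorbs bounded multipliers on both sides. For norm closure, if $x^{(n)} \to x$ in norm with $x^{(n)} \in J_{\cU}$, then $\tau_i(a) \leq \|a\|$ for $a \geq 0$ (monotonicity and $\tau_i(1)=1$). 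Hence $\dt(x_i,x_i^{(n)}) \leq \|x - x^{(n)}\|^{2/3}$ uniformly in $i$, and subadditivity gives $x \in J_{\cU}$.

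\emph{Step 2: $\tau_{\cU}$ is a faithful normalized quasitrace.} Well-definedness needs $\lim \tau_i(x_i)$ to depend only on the class of $x$; this is the delicate point because $\tau_i$ is not linear. I would check the axioms on a candidate first and use continuity. Conditions (i) and (ii) pass to the limit directly, and normalization is clear. For well-definedness, take $x \equiv y$ modulo $J_{\cU}$. Split into self-adjoint parts using (ii), which reduces to self-adjoint $x,y$ with $x - y \in J_{\cU}$. Then use the uniform $\dt$-continuity of $\tau_i$ on bounded self-adjoint sets, obtained from Haagerup's estimates behind Fact~\ref{fact:haagerup-metric}(d), which I expect to be uniform in $i$ since they depend only on the quasitrace axioms. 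Linearity on abelian \cstar-subalgebras is the main obstacle, because an abelian subalgebra of the quotient need not lift to abelian subalgebras of the $A_i$. Following the paper's later remark, it suffices to verify linearity on singly generated subalgebras $\text{\cstar}(1,h)$ with $h = [h_i]$ self-adjoint, and to recover full $1$-quasitrace properties later from the \AWstar-structure. Elements of $\text{\cstar}(1,[h_i])$ are norm limits of $[p(h_i)]$ for polynomials $p$, on which $\tau_{\cU}$ is linear by linearity of each $\tau_i$ on $\text{\cstar}(1,h_i)$. Norm continuity then extends this. The $2$-quasitrace extension comes from performing the same construction for $M_2(A_i)$ with the $\tau_{i,2}$ of Fact~\ref{fact:haagerup-metric}(a), via $M_2(\prod_{\cU}A_i) \cong \prod_{\cU} M_2(A_i)$; alternatively it follows from \cite[Corollary~II.1.10]{BH82} once \AWstar\ is known. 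Faithfulness is immediate: $\tau_{\cU}([x]^*[x]) = 0$ means $x \in J_{\cU}$.

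\emph{Step 3: identify the norm balls.} Map the metric ultraproduct of the balls $(A_i)_{\leq r}$ to the $r$-ball of the quotient by $(x_i) \mapsto [x_i]$. It is isometric for $\dt$ by definition of $\tau_{\cU}$, and injective since $\dt$-distance zero means equality modulo $J_{\cU}$. The hard direction is surjectivity: each $[x]$ with $\|[x]\| \leq r$ must have a representative with $\sup\|x_i\| \leq r$. I would use the standard quotient fact that the norm of $[x]$ in $\ell^\infty/J_{\cU}$ is attained by some lift, since $J_{\cU}$ is a closed ideal and one can rescale by continuous functional calculus, replacing $x$ by $x f(x^*x)$ with $f$ cutting off at $r^2$. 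This changes $x$ only by an element of $J_{\cU}$, because $\|[x]\| \leq r$ means the spectral part above $r^2+\epsilon$ is $\tau$-small.

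\emph{Step 4: completeness.} The metric ultraproduct of complete metric spaces is complete. Combined with Step 3, this shows the unit ball is $\dt$-complete, which finishes the proof that $\prod_{\cU}(A_i,\tau_i)$ is a quasitracial \AWstar-algebra.
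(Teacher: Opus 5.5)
There is a genuine gap in Step~2, and it carries into Step~4. You correctly note that an abelian \cstar-subalgebra of $\ell^{\infty}/J_{\cU}$ need not lift to abelian subalgebras of the $A_i$, but you never get past this. You prove linearity only on singly generated subalgebras $\mathrm{C}^*(1,[h_i])$ and defer full linearity to ``the \AWstar-structure.'' Step~4 then declares the quotient a quasitracial \AWstar-algebra on the strength of $\dt$-completeness alone. Definition~\ref{def:quasitracial} requires $\tau_{\cU}$ to be a genuine quasitrace: linear on \emph{every} abelian \cstar-subalgebra, with a $1$-quasitrace extension to $M_2$. The only bridge you have from complete balls to the \AWstar\ property is Fact~\ref{fact:completion}, and it presupposes exactly that. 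So as written the argument is circular. Your $M_2$ variant does not help: $\prod_{\cU}M_2(A_i)$ with the $\tau_{i,2}$ again gives a function known to be linear only on singly generated subalgebras of a quotient, the same lifting problem one matrix level up. The appeal to Blackadar--Handelman's Corollary~II.1.10 needs \AWstar\ first.

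The missing idea, which is the paper's route, is to check the quasitrace axioms upstairs, where the lifting problem disappears. The map $(x_i)\mapsto\lim_{i\to\cU}\tau_i(x_i)$ is a normalized quasitrace on $\ell^{\infty}((A_i)_{i\in I})$. The coordinate maps are $*$-homomorphisms, so an abelian \cstar-subalgebra of $\ell^{\infty}$ lands in abelian subalgebras of each $A_i$, and linearity holds coordinatewise. The identification $M_2(\ell^{\infty}((A_i)))\cong\ell^{\infty}((M_2(A_i)))$, with $\lim_{\cU}\tau_{i,2}$, supplies the $2\times2$ extension. The null ideal $\{x:\tau(x^*x)=0\}$ of this quasitrace is exactly $J_{\cU}$. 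Haagerup's Proposition~3.2 then gives at once that $J_{\cU}$ is a closed ideal and that $\tau_{\cU}$ is a well-defined faithful quasitrace on the quotient. Your Step~1 and the well-definedness discussion are correct but become unnecessary; the uniform estimate you expect is $|\tau(h)-\tau(k)|\le 2\sqrt{2n}\,\dt(h,k)^{3/4}$ from the proof of Proposition~\ref{prop:correctness1}.

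If you want to keep your downstairs route, you must actually run the argument of Lemmas~\ref{lem:tau-properties}--\ref{lem:full-linearity} on the quotient. That means: Borel calculus from $\dt$-completeness of all balls, orthogonal suprema, masas spanned by projections, Kaplansky's criterion, then the Cantor-set trick and Blackadar--Handelman. These steps need only single-generator linearity and the module inequality, both of which pass to the ultralimit. Equivalently, the metric ultraproduct of the structures $\bA_i$ satisfies $\Tqt$ by \L o\'s. Theorem~\ref{thm:elementary}, whose proof does not use this lemma, then makes it a quasitracial \AWstar-algebra, and your Step~3 identifies it with $\ell^{\infty}/J_{\cU}$.

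Steps~3 and~4 are otherwise fine. The cleanest reason the closed ball maps onto the closed ball is that the quotient map commutes with continuous functional calculus. Hence $xf(x^*x)$ and $x$ have exactly the same image, and no $\tau$-smallness estimate is needed.
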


\begin{proof}
    Straightforward $2 \times 2$ matrix calculations show that $(x_i) \longmapsto \lim_{i \to \cU} \tau_i(x_i)$ defines a normalized quasitrace on $\ell^{\infty}((A_i)_{i\in I})$.  Hence~\cite[Proposition~3.2]{Haa14} gives us the ideal and quotient assertions and the induced quasitrace $\tau_{\cU}$.  On every fixed norm ball,
    \[
        d_{\tau_{\cU}}([x_i],[y_i]) = \lim_{i \to \cU} d_{\tau_i}(x_i,y_i),
    \]
    and the quotient map carries the closed ball onto the closed ball.  Thus the norm balls of the quotient are the corresponding metric ultraproducts.  Metric ultraproducts of complete metric spaces are complete, so the balls are complete, and Fact~\ref{fact:completion} finishes the proof.
\end{proof}

\begin{definition}
    We call $(\prod_{\cU}(A_i, \tau_i), \tau_{\cU})$ the \textbf{quasitracial ultraproduct} of the family $(A_i, \tau_i)$.  When $A_i = A$ and $\tau_i = \tau$ for all $i \in I$, we refer to the \textbf{quasitracial ultrapower} of $(A, \tau)$.
\end{definition}

\section{The quasitracial language and its axioms}\label{sec:language}

We use continuous first-order logic for many-sorted metric structures as in~\cite{BBHU08}.  Each sort carries a metric bounded by a declared constant, and each function and predicate symbol comes with a declared modulus of uniform continuity, which every structure is required to respect.  Besides the connectives of~\cite{BBHU08} we use finitely many explicit continuous connectives: rational polynomials restricted to bounded intervals, $t \mapsto |t|$, $t \mapsto t^{3/4}$, $\max$, $\min$ and $\dminus$.  Each of these is a uniform limit of a recursive sequence of connectives built from the connectives of~\cite{BYP10}, with explicit error bounds.  Consequently every formula or sentence we write is the uniform limit, uniformly over all structures, of a recursive sequence of formulas or sentences in the syntax of~\cite{BYP10}, with a recursive error bound.  For every displayed formula $\varphi$ below we fix, once and for all, basic-syntax approximants $\varphi^{[m]}$ satisfying
\[
    \sup_M\bigl|\varphi^M-(\varphi^{[m]})^M\bigr|\leq2^{-m}.
\]
Each axiom of the form $\varphi = 0$ may therefore be replaced by a recursive family of basic-syntax conditions with errors tending effectively to zero, without changing the class of models.  We use these replacements tacitly.

When we write an axiom as an equation $\varphi = \psi$ between formulas with free variables, we mean the sentence $\sup_{\bar x} |\varphi - \psi| = 0$, and an inequality $\varphi \leq \psi$ means $\sup_{\bar x} (\varphi \dminus \psi) = 0$.  For a sort $D$, a map $\iota : D \to D'$ and an element $w$ of $D'$, we write $\dist(w, \iota(D))$ for the formula $\inf_{z \in D} d'(w, \iota(z))$.

Let $\Lqt$ be the continuous many-sorted language consisting of symbols for the following, where $n, m$ range over positive integers.
\begin{itemize}
    \item[(L1)]  There are sorts for the operator norm balls $D_n$ of $A$, equipped with metrics $d_n$ bounded by $(2n)^{2/3}$.
    \item[(L2)]  For $n \leq m$, there are inclusions $\iota_{n, m} : D_n \to D_m$, whose modulus of uniform continuity is given by $\delta(\epsilon) = \epsilon$.
    \item[(L3)] There are constant symbols $0$ and $1$ in $D_1$.
    \item[(L4)]  There are function symbols for addition $+ : D_n \times D_m \to D_{n + m}$ with uniform modulus of continuity $\delta(\epsilon) = \dfrac{\epsilon}{2}$.
    \item[(L5)]  There are function symbols for multiplication $\times : D_n \times D_m \to D_{nm}$ with uniform modulus of continuity $\delta(\epsilon) = \dfrac{\epsilon}{m^{2/3} + n^{2/3}}$.
    \item[(L6)]  There are function symbols for adjoints $* : D_n \to D_n$ with uniform modulus of continuity $\delta(\epsilon) = \epsilon$.
    \item[(L7)]  There are function symbols for scalar multiplication $\lambda : D_n \to D_{\max(1, \lceil |\lambda| n \rceil)}$ for each $\lambda \in \bbQ(i)$ with uniform modulus of continuity $\delta(\epsilon) = \dfrac{\epsilon}{|\lambda|^{2/3}}$ for $\lambda \not= 0$ and $\delta(\epsilon) = \epsilon$ for $\lambda = 0$.
    \item[(L8)]  There are predicate symbols $\tau^{\mathrm{re}}_n, \tau^{\mathrm{im}}_n : D_n \to [-n,n]$ for the real and imaginary parts of the quasitrace, each with modulus of uniform continuity given by $\delta(\epsilon) = \left(\dfrac{\epsilon}{4\sqrt{n}}\right)^{4/3}$.
\end{itemize}

We write $\tau(x) = \tau^{\mathrm{re}}(x) + i\tau^{\mathrm{im}}(x)$, and we write $\operatorname{Re} x = \frac{1}{2}(x + x^*)$ and $\operatorname{Im} x = \frac{1}{2i}(x - x^*)$, which are terms. Subscripts for sorts are omitted when clear from context.

A quasitracial AW$^*$-algebra $(A, \tau)$ gives an $\Lqt$-structure $\bA$ in the obvious way: $D_n$ is the closed ball of radius $n$ in $A$ with the metric $\dt$, and the predicates are the real and imaginary parts of $\tau$.

The theory $\Tqt$ consists of the following axiom schemes.

\begin{enumerate}
    \item[(Q1)] Axioms expressing that the inclusions are isometric and compatible:
    \[
        d_m(\iota_{n,m} x, \iota_{n,m} y) = d_n(x,y), \qquad
        \iota_{m,p}\iota_{n,m} = \iota_{n,p}.
    \]
    All function and relation symbols commute with the inclusions, for instance $\iota(x + y) = \iota x + \iota y$.
    \item[(Q2)] Axioms for a unital $*$-algebra over $\mathbb{Q}(i)$. More precisely, the identities of a unital $*$-algebra over $\mathbb{Q}(i)$, each stated after including both sides into a common sort: associativity and commutativity of addition, associativity of multiplication, distributivity, $0 + x = x$,
    $x + (-1)x = 0$, $1x = x1 = x$, $(xy)^* = y^*x^*$, $(x + y)^* = x^* + y^*$,
    $x^{**} = x$, $(\lambda x)^* = \bar\lambda x^*$, $\lambda(\mu x) = (\lambda\mu)x$,
    $(\lambda + \mu)x = \lambda x + \mu x$, $\lambda(x + y) = \lambda x + \lambda y$ and $\lambda(xy) = (\lambda x)y = x(\lambda y)$.
    \item[(Q3)] Axioms expressing convexity. More precisely, for every $t \in \mathbb{Q} \cap [0,1]$, we have the axiom
    \[
        \sup_{x,y \in D_1} \operatorname{dist}\bigl(tx + (1-t)y, \iota_{1,2}(D_1)\bigr) = 0.
    \]
    \item[(Q4)] Axioms expressing truncation. For every $m$, we have the axiom
    \[
        \sup_{x \in D_m} \Bigl(\operatorname{dist}\bigl(x, \iota_{1,m}(D_1)\bigr) \dminus 2\operatorname{dist}\bigl(x^*x, \iota_{1,m^2}(D_1)\bigr)\Bigr) = 0.
    \]
    \item[(Q5)] Axioms for the quasitrace identities:
    \[
        \tau(1) = 1, \quad \tau(x^*x) = \tau(xx^*), 
    \]
    and
    \[
        \tau(x) = \tau(\operatorname{Re} x) + i\tau(\operatorname{Im} x), \quad \tau^{\mathrm{im}}(\operatorname{Re} x) = 0.
    \]
    \item[(Q6)] Axioms expressing single-generator linearity. For every $n$, all rational polynomials $p, q \in \mathbb{Q}[t]$ and every $r \in \mathbb{Q}$, with $h = \operatorname{Re} x$ for $x \in D_n$, we have the axioms
    \[
        \tau^{\mathrm{re}}\bigl(p(h) + q(h)\bigr) = \tau^{\mathrm{re}}(p(h)) + \tau^{\mathrm{re}}(q(h)),
    \]
    \[
        \tau^{\mathrm{re}}(r\,p(h)) = r\,\tau^{\mathrm{re}}(p(h)).
    \]
    \item[(Q7)] Axioms identifying the metric:
    \[
        d_n(x,y)^3 = \tau^{\mathrm{re}}\bigl((x - y)^*(x - y)\bigr).
    \]
    \item[(Q8)] Axioms expressing the module inequality. For all $n, m$, we have the axiom
    \[
        \sup_{x \in D_n,\, y \in D_m} \bigl(\tau^{\mathrm{re}}(y^*x^*xy) \dminus n^2\tau^{\mathrm{re}}(y^*y)\bigr) = 0.
    \]
\end{enumerate}

Note that Axiom (Q6) imposes linearity of $\tau$ only on polynomials in a single self-adjoint element.  It is not obvious to us that the set of commuting pairs is definable for $\dt$; for traces this is the question of whether pairs that almost commute in a $2$-norm are close to commuting pairs.  We recover linearity on all abelian \cstar-subalgebras from the \AWstar-structure of the models.  Axiom (Q8) plays the role of the axiom $\|xy\|_2 \leq \|x\|\|y\|_2$ for tracial von Neumann algebras~\cite{FHS14}, and Axiom (Q4) is a quantitative form of the implication $\|x^*x\| \leq 1 \Rightarrow \|x\| \leq 1$, from which the \cstar-identity is derived in Lemma~\ref{lem:cstar-structure}.  No axioms about $M_2(A)$ are needed: on an \AWstar-algebra, every function satisfying (i)–(iii) of the definition of a quasitrace extends to $M_2(A)$~\cite[Corollary~II.1.10]{BH82}.

\begin{lemma}\label{lem:monotone-calculus}
    Let $(A,\tau)$ be a quasitracial \AWstar-algebra, let $a \leq b$ be self-adjoint, and let $f : \bbR \to [0,\infty)$ be continuous and nondecreasing with $f = 0$ on $(-\infty,0]$.  Then $\tau(f(a)) \leq \tau(f(b))$.
\end{lemma}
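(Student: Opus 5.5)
The plan is to reduce the statement to a comparison of spectral projections, and then to reassemble $f(a)$ and $f(b)$ by a layer-cake approximation inside commutative \AWstar-subalgebras, where $\tau$ is linear. By Fact~\ref{fact:completion}, $A$ is an \AWstar-algebra and $\tau$ is normal. Every self-adjoint $c \in A$ therefore lies in a maximal abelian self-adjoint subalgebra, which is a commutative \AWstar-algebra. This subalgebra contains the spectral projections $\chi_{(s,\infty)}(c)$ and $\chi_{[s,\infty)}(c)$, obtained as suprema of projections in the abelian \AWstar-subalgebra. On that subalgebra $\tau$ is a state, hence linear and norm continuous.

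\emph{Step 1: projection comparison.} Fix $s \in \bbR$ and $\varepsilon > 0$, and put $p_\varepsilon = \chi_{[s+\varepsilon,\infty)}(a)$ and $q = \chi_{(s,\infty)}(b)$. Then $p_\varepsilon a p_\varepsilon \geq (s+\varepsilon)p_\varepsilon$ and $(1-q)b(1-q) \leq s(1-q)$. Set $r = p_\varepsilon \wedge (1-q)$, which exists since projections in an \AWstar-algebra form a complete lattice. Since $r \leq p_\varepsilon$ and $r \leq 1 - q$, we get
\[
(s+\varepsilon) r \leq rar \leq rbr \leq s r ,
\]
which forces $r = 0$. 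Kaplansky's parallelogram law in \AWstar-algebras then gives
\[
p_\varepsilon = p_\varepsilon - p_\varepsilon \wedge (1-q) \sim p_\varepsilon \vee (1-q) - (1-q) \leq q .
\]
Choose a partial isometry $v$ with $v^*v = p_\varepsilon$ and $vv^* \leq q$. By (Q5) and Fact~\ref{fact:haagerup-metric}(a), $\tau(p_\varepsilon) = \tau(vv^*) \leq \tau(q)$. Letting $\varepsilon \downarrow 0$ and using normality of $\tau$ on the increasing net $p_\varepsilon \nearrow \chi_{(s,\infty)}(a)$ yields
\[
\tau(\chi_{(s,\infty)}(a)) \leq \tau(\chi_{(s,\infty)}(b)).
\]
The same argument, with $s - \varepsilon$ in place of $s$ for $b$, gives $\tau(\chi_{[s,\infty)}(a)) \leq \tau(\chi_{[s,\infty)}(b))$.

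\emph{Step 2: layer cake.} Fix $N$ and the levels $k/N$ for $1 \leq k \leq N\lceil \max(\|f(a)\|,\|f(b)\|)\rceil$. Because $f$ is nondecreasing and continuous, each set $\{f > k/N\}$ is an interval of the form $(t_k,\infty)$ with $t_k \geq 0$, or is empty. Put
\[
g_N = \frac1N \sum_k \chi_{(t_k,\infty)} .
\]
Then $\|f(c) - g_N(c)\| \leq 1/N$ for $c = a, b$, where $g_N(c)$ lies in the commutative \AWstar-subalgebra containing $c$. Linearity of $\tau$ there, together with Step 1, gives
\[
\tau(g_N(a)) = \frac1N\sum_k \tau(\chi_{(t_k,\infty)}(a)) \leq \frac1N \sum_k\tau(\chi_{(t_k,\infty)}(b)) = \tau(g_N(b)).
\]
Norm continuity of the states $\tau|$ on these abelian subalgebras gives $|\tau(f(c)) - \tau(g_N(c))| \leq 1/N$. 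Letting $N \to \infty$ proves $\tau(f(a)) \leq \tau(f(b))$. The hypothesis $f = 0$ on $(-\infty,0]$ is used to ensure that only levels $k \geq 1$ appear, so the identity projection (with $t = -\infty$) never enters. The result for those projections would also be harmless, but this keeps the sums finite and positive.

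The main obstacle is Step 1. It needs the \AWstar\ lattice and the parallelogram law to convert the operator inequality $a \leq b$ into a Murray--von Neumann subequivalence of spectral projections, since $\tau$ is not additive and $f$ need not be operator monotone. One must also check that the $\varepsilon$-shift and the limit are justified by normality of $\tau$ (Fact~\ref{fact:completion}) rather than by linearity.
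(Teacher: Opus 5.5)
Your proof is correct and follows the same route as the paper. You turn $a \le b$ into a comparison of spectral projections by showing $r = 0$ and applying Kaplansky's parallelogram law, then reassemble $\tau(f(\cdot))$ by a layer-cake decomposition using linearity and norm continuity of $\tau$ on abelian subalgebras. The differences are minor: your $\varepsilon$-shift makes $r=0$ immediate but needs normality of $\tau$ to let $\varepsilon \downarrow 0$, whereas the paper shows directly that $r \le E_{\{t\}}(a)$ is orthogonal to $E_{(t,\infty)}(a)$ and so obtains $E_{(t,\infty)}(a) \precsim E_{(t,\infty)}(b)$ with no limit; and you use a finite Riemann sum where the paper uses the integral formula.
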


\begin{proof}
    First we show that $E_{(t,\infty)}(a) \precsim E_{(t,\infty)}(b)$ for every $t \in \bbR$; compare~\cite[Corollary~II.1.3]{BH82}.  Set $e = E_{(t,\infty)}(a)$, $f_0 = E_{(-\infty,t]}(b)$ and $r = e \wedge f_0$.  Since $r \leq e$ we have $r(a - t)r \geq 0$, and since $r \leq f_0$ we have $r(b - t)r \leq 0$.  Thus $a \leq b$ forces $r(a - t)r = 0$.  With $c = ((a - t)e)^{1/2}$ we get $(cr)^*(cr) = 0$, hence $(a - t)r = 0$ and $r \leq E_{\{t\}}(a)$, which is orthogonal to $e$.  Thus $r = 0$, and Kaplansky's parallelogram law~\cite[Theorem~5.4]{Kap51} gives $e = e - e \wedge f_0 \sim e \vee f_0 - f_0 \leq E_{(t,\infty)}(b)$.

    For $c \geq 0$, the quasitrace is linear and norm continuous on the abelian \AWstar-subalgebra generated by $c$ by~\cite[Proposition~II.1.6]{BH82}, so by step-function approximation we have
    \[
        \tau(c) = \int_0^{\|c\|} \tau\bigl(E_{(s,\infty)}(c)\bigr)\, ds.
    \]
    For $s \geq 0$ put $t_s = \sup\{ t \ : \ f(t) \leq s \}$.  Since $f$ is continuous and nondecreasing, we have $E_{(s,\infty)}(f(a)) = E_{(t_s,\infty)}(a)$, and similarly for $b$.  Here, $t_s$ is allowed to take the value $+\infty$, with the convention that $E_{(+\infty,\infty)}(x)=0$.  Since equivalent projections have the same quasitrace and $\tau$ is order preserving, the first paragraph gives $\tau(E_{(s,\infty)}(f(a))) \leq \tau(E_{(s,\infty)}(f(b)))$ for every $s$, and an integration finishes the proof.
\end{proof}

\begin{proposition}\label{prop:correctness1}
    For every quasitracial \AWstar-algebra $(A,\tau)$, the structure $\bA$ is an $\Lqt$-structure and a model of $\Tqt$.
\end{proposition}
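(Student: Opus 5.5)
The plan has two halves. First I will check that $\bA$ really is an $\Lqt$-structure. That means each sort is a complete metric space with the declared diameter bound, and each symbol respects its declared modulus of uniform continuity. Then I will verify the axiom schemes (Q1)–(Q8) one by one.

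\emph{The structure.} Completeness of each ball $D_n$ comes from Definition~\ref{def:quasitracial} together with Haagerup's observation quoted before it. The unit ball is $\dt$-complete, and $D_n = nD_1$ with $\dt(nx,ny)=n^{2/3}\dt(x,y)$, so every ball is complete. The diameter bound holds because, for $x,y\in D_n$, $(x-y)^*(x-y)\le 4n^2$; by order preservation (Fact~\ref{fact:haagerup-metric}(a)) this gives $\dt(x,y)^3\le 4n^2$, hence $\dt(x,y)\le(2n)^{2/3}$.

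The moduli for $\iota$, $*$ and $+$ follow from Fact~\ref{fact:haagerup-metric}(b). For multiplication I split
\[
xy-x'y'=(x-x')y+x'(y-y')
\]
and apply Fact~\ref{fact:haagerup-metric}(c). This gives $\dt(xy,x'y')\le m^{2/3}\dt(x,x')+n^{2/3}\dt(y,y')$, which yields the stated $\delta$. Scalar multiplication is the identity $\dt(\lambda x,\lambda y)=|\lambda|^{2/3}\dt(x,y)$.

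The quasitrace predicates are the main obstacle, because $\tau$ is not linear. For $x\in D_n$, the real part of $\tau(x)$ equals $\tau(\rea x)$. Write $\rea x = a_+ - a_-$. Since $a_\pm$ lie in the abelian algebra $\text{\cstar}(a)$, linearity there gives $\tau(a)=\tau(a_+)-\tau(a_-)$. Now compare $\tau(a_+)$ with $\tau(b_+)$ for $a=\rea x$, $b=\rea y$, where $\dt(a,b)\le\dt(x,y)$. My first attempt is to use Lemma~\ref{lem:monotone-calculus}: I would bound $a\le b+|a-b|$ in a suitable approximate sense, or compare via the quasitrace on $M_2(A)$, which Fact~\ref{fact:haagerup-metric}(a) supplies. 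The aim is an estimate of the form
\[
|\tau(a_+)-\tau(b_+)|\lesssim \sqrt{n}\,\dt(a,b)^{3/4},
\]
which is the source of the exponent $4/3$ in (L8). Concretely, I would pass to $M_2(A)$ and use Haagerup's continuity estimate underlying Fact~\ref{fact:haagerup-metric}(d). There $|\tau(h)-\tau(k)|$ is controlled by $\tau(|h-k|)$ plus cross terms. Cauchy–Schwarz on abelian subalgebras, as in Lemma~\ref{lem:abelian-comparison}, converts these to $\|h+k\|^{1/2}\tau((h-k)^2)^{1/4}$, which is the expected shape $\sqrt{n}\,\dt^{3/4}$. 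If the stated constant $4\sqrt n$ does not come out directly, I would track constants through Haagerup's proof. This step is where I expect the real work.

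\emph{The axioms.} (Q1), (Q2), (Q3), (Q5), (Q7) are immediate from the definitions: the $*$-algebra identities, convexity of balls, properties (i)–(ii) of a quasitrace, and $\tau(1)=1$.

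(Q6) holds because $p(h)$ and $q(h)$ lie in the abelian algebra $\text{\cstar}(1,h)$, where $\tau$ is linear by (iii).

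(Q8) follows from $y^*x^*xy\le\|x\|^2y^*y$ and order preservation.

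For (Q4), take $x\in D_m$ and put $s=\operatorname{dist}(x^*x,D_1)$. Since $x^*x\ge0$, I will truncate with functional calculus. Let $g(t)=\min(t,1)$ and $f(t)=\min(1,t^{-1/2})$. Then
\[
\operatorname{dist}(x^*x,D_1)=\dt\bigl(x^*x,g(x^*x)\bigr)=\tau\bigl(((x^*x-1)_+)^2\bigr)^{1/3},
\]
where the first equality uses Lemma~\ref{lem:monotone-calculus} and an operator-monotone comparison to see that $g(x^*x)$ is nearest up to the needed factor. Also $xf(x^*x)\in D_1$, and
\[
\dt\bigl(x,xf(x^*x)\bigr)^3=\tau\bigl(x^*x(1-f(x^*x))^2\bigr).
\]
For $t\ge1$ the elementary inequality $t(1-t^{-1/2})^2\le(t-1)^2$ holds. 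Since $x^*x(1-f(x^*x))^2$ and $((x^*x-1)_+)^2$ both lie in the abelian algebra $\text{\cstar}(x^*x)$, where $\tau$ is a positive state, $\operatorname{dist}(x,D_1)\le\dt\bigl(x,xf(x^*x)\bigr)$ is bounded by $\tau\bigl(((x^*x-1)_+)^2\bigr)^{1/3}$. Showing that this is at most $2s$ needs one lower bound: for an arbitrary $z\in D_1$, I will bound $\dt(x^*x,z)$ below by comparing with $\rea z\le1$ via Lemma~\ref{lem:monotone-calculus} applied to $t\mapsto(t-1)_+$. The factor $2$ absorbs the loss in this comparison.
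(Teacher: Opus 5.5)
Everything outside the predicates matches the paper. That includes the completeness and diameter of the sorts, the moduli of $\iota,+,\times,*$ and scalars, and (Q1)--(Q3), (Q5)--(Q8). Your (Q4) argument is also the paper's, with one fix: Lemma~\ref{lem:monotone-calculus} must be applied with $f(s)=\pos{s}^2$ to $h-1\le h-\rea w$, giving $\tau(\pos{h-1}^2)\le\tau((h-\rea w)^2)\le 2\,\dt(h,w)^3$. The unsquared $t\mapsto\pos{t-1}$ you name yields the wrong power.

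\textbf{The gap} is the modulus (L8) for $\tre_n,\tim_n$. You flag it as the main obstacle but do not prove it, and the route you sketch creates a difficulty the paper avoids.
\begin{itemize}
\item Writing $\rea x=a_+-a_-$ and comparing $\tau(a_+)$ with $\tau(b_+)$ requires bounding $\dt(a_+,b_+)$ by a function of $\dt(a,b)$. That is a Lipschitz estimate for $t\mapsto t_+$ in the quasitracial $2$-norm. For traces it rests on linearity of $\tau$, and you give no argument for a nonlinear quasitrace.
\item The variant $a\le b+|a-b|$ plus Lemma~\ref{lem:monotone-calculus} only reduces to an approximate subadditivity of $\tau$. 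That is the same continuity problem again.
\item Your claim $\dt(\rea x,\rea y)\le\dt(x,y)$ is unsupported. Fact~\ref{fact:haagerup-metric}(b) gives only $\dt(\rea x,\rea y)\le 2^{1/3}\dt(x,y)$.
\end{itemize}

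\textbf{The missing idea} is to shift rather than take positive parts. Let $h,k$ be self-adjoint with $\|h\|,\|k\|\le n$. Linearity of $\tau$ on $\mathrm{C}^*(1,h)$ and on $\mathrm{C}^*(1,k)$ gives $\tau(h)-\tau(k)=\tau(h+n)-\tau(k+n)$. The elements $h+n$ and $k+n$ are positive, have quasitrace at most $2n$, and differ by exactly $h-k$. Haagerup's continuity estimate for positive elements \cite[Lemma~3.7(4)]{Haa14} then gives
\[
    |\tau(h)-\tau(k)|\le 2\sqrt{2n}\,\dt(h,k)^{3/4}.
\]
Including the factor $2^{1/3}$ from passing to real parts,
\[
    |\tre(x)-\tre(y)|\le 2^{7/4}\sqrt{n}\,\dt(x,y)^{3/4}\le 4\sqrt{n}\,\dt(x,y)^{3/4},
\]
which is exactly the declared modulus $\delta(\epsilon)=\bigl(\epsilon/(4\sqrt n)\bigr)^{4/3}$. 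The case of $\tim$ is identical with $\operatorname{Im}$ in place of $\rea$. No passage to $M_2(A)$ and no re-tracking of constants inside Haagerup's proof is needed.
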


\begin{proof}
    The moduli for addition, multiplication and the adjoint follow from Fact~\ref{fact:haagerup-metric}(b) and (c), and the modulus for scalars from homogeneity of $\tau$ on positive elements.  For self-adjoint $h,k$ of norm at most $n$, the elements $h + n$ and $k + n$ are positive with quasitrace at most $2n$, and $\tau$ is linear on $\text{\cstar}(1,h)$ and on $\text{\cstar}(1,k)$, so~\cite[Lemma~3.7(4)]{Haa14} gives
    \[
        |\tau(h) - \tau(k)| = |\tau(h + n) - \tau(k + n)| \leq 2\sqrt{(2n)}\,\dt(h,k)^{3/4}.
    \]
    Since $\dt(\rea x, \rea y) \leq 2^{1/3}\dt(x,y)$ and $\tre(x) = \tau(\rea x)$, we obtain $|\tre(x) - \tre(y)| \leq 2^{7/4}\sqrt{n}\, \dt(x,y)^{3/4}$, and similarly for $\tim$.

    The sorts $D_n$ are complete since $(A,\tau)$ is quasitracial.

    Axioms (Q1), (Q2), (Q3), (Q5), (Q6) and (Q7) hold by construction, and (Q8) holds because $y^*x^*xy \leq \|x\|^2y^*y$ and $\tau$ is order preserving.  For (Q4), let $\|x\| \leq m$, put $h = x^*x$, let $w$ lie in the unit ball and put $k = \rea w$.  Let $g(t) = \min(1, t^{-1/2})$ for $t > 0$ and $g(0) = 1$, and put $y = xg(h)$.  Then $\|y\| \leq 1$ and
    \[
        \dt(x,y)^3 = \tau\bigl(h(1 - g(h))^2\bigr) = \tau\bigl(\pos{h^{1/2} - 1}^2\bigr) \leq \tau\bigl(\pos{h - 1}^2\bigr).
    \]
    Since $\|k\| \leq 1$ we have $h - 1 \leq h - k$, and Lemma~\ref{lem:monotone-calculus} with $f(s) = \pos{s}^2$ gives
    \[
        \tau\bigl(\pos{h - 1}^2\bigr) \leq \tau\bigl(\pos{h - k}^2\bigr) \leq \tau\bigl((h - k)^2\bigr) = \dt(h,k)^3.
    \]
    As $h - k = \rea(h - w)$, we get $\dt(h,k) \leq 2^{1/3}\dt(h,w)$, and taking the infimum over $w$ gives (Q4).
\end{proof}

\begin{theorem}\label{thm:elementary}
    The models of $\Tqt$ are exactly the structures $\bA$ associated to quasitracial \AWstar-algebras $(A,\tau)$, and $(A,\tau)$ is determined by $\bA$ up to isomorphism.  In particular, the class $\Kqt$ of quasitracial \AWstar-algebras is axiomatized by the explicit recursive theory $\Tqt$.
\end{theorem}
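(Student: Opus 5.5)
Proposition~\ref{prop:correctness1} already gives one direction, so the work is the converse: from a model $\bM$ of $\Tqt$, build a quasitracial \AWstar-algebra $(A,\tau)$ with $\bA \cong \bM$. The uniqueness clause will then follow because $A$, its algebraic operations and $\tau$ are read off from $\bM$.

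\textbf{Algebra and norm.} By (Q1) the sorts form a directed system of isometric embeddings. Let $A = \bigcup_n D_n$ be the direct limit. By (Q2), $A$ is a unital $*$-algebra over $\bbQ(i)$. Scalars in $\bbC$ are defined by $\dt$-limits of rational scalars, using the moduli (L7) and completeness of each $D_n$; this makes $A$ a complex $*$-algebra. Define $\|x\| = \inf\{r \in \bbQ_{>0} : x/r \in \text{image of } D_1\}$ (after suitable scaling). Convexity (Q3) together with the scalar maps shows that the image of $D_1$ is a convex, balanced, absorbing set. Hence $\|\cdot\|$ is a seminorm. It is a norm because $\dt(x,0)\le$ (bound in terms of $\|x\|$) follows from (Q7) and (Q8) with $y=1$. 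Submultiplicativity comes from $D_n D_m\subseteq D_{nm}$. For the \cstar-identity, (Q4) gives $\|x^*x\|\le 1\Rightarrow \mathrm{dist}(x,D_1)=0$, hence $x\in D_1$ since $D_1$ is $d_m$-closed, being complete. This is the statement deferred to Lemma~\ref{lem:cstar-structure}: together with $\|x^*x\|\le\|x\|^2$ it yields $\|x^*x\|=\|x\|^2$. Norm completeness I would obtain by showing that a norm-Cauchy sequence in $D_n$ is $\dt$-Cauchy, using (Q8), and that its $\dt$-limit is a norm limit because norm balls are $\dt$-closed. Thus $A$ is a unital \cstar-algebra whose $D_n$ are exactly the norm balls.

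\textbf{Quasitrace.} By (Q5), $\tau$ satisfies (i) and (ii), and $\tau(1)=1$. Positivity $\tau(x^*x)=d(x,0)^3\ge 0$ comes from (Q7), which also gives faithfulness. By (Q6), $\tau$ is $\bbQ$-linear on $\bbQ[h]$ for self-adjoint $h$. The continuity modulus (L8) with respect to $\dt$, combined with the estimate $\dt(p(h),f(h))\le\|p(h)-f(h)\|^{2/3}$ from (Q8), extends this to linearity on $\text{\cstar}(1,h)$. So $\tau$ is linear on singly generated abelian subalgebras, and the unit ball is $\dt$-complete because $D_1$ is a complete sort.

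\textbf{Main obstacle: full quasitrace.} The hard step is upgrading linearity from singly generated to arbitrary abelian \cstar-subalgebras, since (iii) is not directly axiomatized. I would first redo Haagerup's arguments (\cite[Lemmas~3.5--3.9]{Haa14}) for this weaker object. These use only order-preservation, which follows from (Q8)-type inequalities together with single-generator linearity, and the $\dt$-triangle inequality, which I would take directly from the metric axioms (Q7). The aim is to show via Kaplansky's characterization~\cite{Kap51} that $A$ is an \AWstar-algebra: build suprema of orthogonal families of projections as $\dt$-limits of finite sums, which converge since the $\tau$-values are summable and bounded by $1$, and build annihilator projections from range projections obtained as limits $f_n(x^*x)\to$ support, inside singly generated algebras. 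Once $A$ is \AWstar, every abelian \cstar-subalgebra $C$ sits inside a maximal abelian \AWstar-subalgebra. Such an algebra is monotone complete, so the projections of any abelian \AWstar-subalgebra form a complete Boolean algebra in which every element is approximable by step functions. A finite family of commuting projections $p_1,\dots,p_k$ lies in the algebra generated by the single self-adjoint element $\sum 2^{-j}p_j$. Hence $\tau$ is additive on projections of the masa, and by step-function approximation and $\dt$-continuity it is linear on the masa, in particular on $C$. Finally, \cite[Corollary~II.1.10]{BH82} makes $\tau$ a $2$-quasitrace, and $(A,\tau)$ is quasitracial with $\bA=\bM$.
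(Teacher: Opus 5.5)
Your proposal is correct, but it reaches the two substantive points (that $A$ is \AWstar, and that $\tau$ is linear on every abelian \cstar-subalgebra) by a different route from the paper.

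\textbf{Getting \AWstar.} The paper verifies Kaplansky's original pair of conditions \cite{Kap51}. The first is that orthogonal families of projections have suprema, which you also prove. The second is that every maximal abelian self-adjoint subalgebra is the closed span of its projections. The paper obtains this from a Borel functional calculus $\Phi_h\colon L^\infty(\mu_h)\to A$, built inside the model as $d_\tau$-limits of continuous functional calculus. You instead verify the Rickart condition, which needs only one spectral projection per element. Let $p$ be the $d_\tau$-limit of $f_n(x^*x)$, where $f_n(t)=\min(nt,1)$. Then $xp=x$. If $xy=0$, then $f_n(x^*x)y=0$, so $py=0$ by $d_\tau$-continuity of multiplication. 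The Rickart-plus-orthogonal-suprema form of Kaplansky's criterion \cite{Ber72} then applies.

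\textbf{Full linearity.} The paper stays inside the model. Commuting self-adjoint elements are norm limits of combinations of countably many commuting projections. These generate $C(K)$ for a metrizable Stone space $K$, which embeds in the Cantor set. So the algebra is $\mathrm{C}^*(1,k)$ for a single $k$, and (Q6) applies exactly with no further limit. You instead use three ingredients: the standard structure of masas in \AWstar-algebras, the binary-weight trick $\sum_j 2^{-j}p_j$ for finite families, and norm continuity of $\tau$ on bounded sets. Your version is combinatorially simpler but leans on \AWstar\ structure theory after the fact. The paper's Borel calculus lemma serves both steps at once.

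\textbf{An unnecessary step.} You can drop the preliminary plan to redo Haagerup's Lemmas 3.5--3.9. $d$ is a metric, and multiplication is $d$-continuous on bounded sets, simply because $\mathfrak{M}$ is an $\Lqt$-structure with the declared modulus (L5). Order preservation is never used before \AWstar-ness is established. It does follow from (Q8) and single-generator linearity, as you say, but it is not needed.
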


Proposition~\ref{prop:correctness1} gives one inclusion.  For the other, fix a model $\mathfrak{M}$ of $\Tqt$.  By (Q1) the inclusions are isometric embeddings, and we let $A$ be the direct limit of the sorts $D_n$.  By (Q2), $A$ is a unital $*$-algebra over $\bbQ(i)$, the predicates define a function $\tau : A \to \bbC$, and each $D_n \subseteq A$ is complete for $d$.  Since inclusions are isometric and $D_1$ is complete, $\iota(D_1)$ is closed in every $D_m$.  For rational $q > 0$ put $B(q) = \{x \in A \ : \ q^{-1}x \in D_1\}$ and define
\[
    \|x\| = \inf\{q \in \bbQ_{>0} \ : \ x \in B(q)\}.
\]
\begin{lemma}\label{lem:cstar-structure}
    After extending scalars from $\bbQ(i)$ to $\bbC$ by continuity, $(A, \|\cdot\|)$ is a unital \cstar-algebra, $D_n = \{x \in A \ : \ \|x\| \leq n\}$ for every $n$, and $\tau(x^*x) \leq \|x\|^2$ for every $x \in A$.  In particular $d(x,y) \leq \|x - y\|^{2/3}$.
\end{lemma}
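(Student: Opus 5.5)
We first show that $\|\cdot\|$ is a seminorm and that its closed unit ball is exactly $D_1$. The convexity axiom (Q3), combined with the closedness of $\iota(D_1)$ in $D_2$, shows that $D_1$ is closed under rational convex combinations. Using (Q2) together with the sort structure of scalar multiplication (L7), we get that $D_1$ is balanced for $\lambda \in \bbQ(i)$ with $|\lambda|\le 1$. Hence $B(q)$ increases with $q$, and $B(q)+B(r)\subseteq B(q+r)$. For closedness we argue as follows. If $\|x\|\le 1$, then $q^{-1}x\in D_1$ for all rational $q>1$. Moreover $d(q^{-1}x,x)^3=\tau((1-q^{-1})^2x^*x)$, which tends to $0$ by (Q6) applied to the positive element $x^*x$. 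Completeness of $D_1$ then gives $x\in D_1$. The inclusion $D_n\subseteq\{\|x\|\le n\}$ is immediate. Conversely, if $\|x\|\le n$ then $n^{-1}x\in D_1$, so $x=n(n^{-1}x)$ lies in $D_n$ via (L7) and (Q2). Multiplicativity $\|xy\|\le\|x\|\|y\|$ and $\|x^*\|=\|x\|$ follow from (L5), (L6) and the fact that $D_1\cdot D_1$ and $D_1^*$ land in $\iota(D_1)$. To see the latter, write $x = \iota x'$. Then (Q1) places $xy$ in the image of the product map on $D_1 \times D_1$, which lands in $D_1$.

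The central step is the \cstar-identity $\|x^*x\|=\|x\|^2$, and this is where (Q4) enters. It suffices to show that $\|x^*x\|\le 1$ implies $\|x\|\le 1$. Suppose $x\in D_m$ and $x^*x\in\iota(D_1)$. Then $\dist(x^*x,\iota D_1)=0$, so (Q4) gives $\dist(x,\iota D_1)=0$. Since $\iota D_1$ is closed, $x\in D_1$. Combined with submultiplicativity this yields the \cstar-identity. Definiteness of the norm comes next. If $\|x\|=0$, then $x\in B(q)$ for all $q>0$. By (Q8) with $y=1$, we get $\tau(x^*x)\le q^2$ for all $q$. Faithfulness in the form $d(x,0)^3=\tau(x^*x)$ from (Q7), together with the fact that $d$ is a genuine metric, gives $x=0$.

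The inequality $\tau(x^*x)\le\|x\|^2$ is (Q8) with $y=1$ after rescaling, since $\tau(1)=1$. This gives $d(x,y)\le\|x-y\|^{2/3}$ directly from (Q7). Completeness in the norm follows from this inequality and the $d$-completeness and $d$-closedness of the balls. A norm-Cauchy sequence lies in some $D_n$ and is $d$-Cauchy, so it $d$-converges to some $x\in D_n$. Closedness of the balls then gives norm convergence: for $k\ge N$ the differences $x_k-x_N$ lie in $\epsilon D_1$, and this passes to the $d$-limit. The same estimate lets scalars extend from $\bbQ(i)$ to $\bbC$ by norm-continuity, and the algebra identities in (Q2) persist by continuity.

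I expect the main obstacle to be the step from (Q4) to the exact \cstar-identity. One must check carefully that the infimum in $\dist(x^*x,\iota_{1,m^2}D_1)$ being zero genuinely means membership. This needs $\iota D_1$ to be $d$-closed in $D_{m^2}$, which follows from isometry (Q1) and completeness. The real difficulty is that the norm is defined from the sorts rather than from an a priori \cstar-structure. The rescaling arguments, in particular showing that $D_1$ is closed under multiplication by rationals in $[0,1]$ and that the $B(q)$ are nested, must therefore be derived purely from (Q2), (Q3) and the sort maps.
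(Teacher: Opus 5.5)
Your proposal is correct and follows essentially the same route as the paper. Typing of the function symbols gives submultiplicativity and the $*$-isometry, (Q3) gives the triangle inequality, (Q4) plus closedness of $\iota(D_1)$ gives the \cstar-identity, (Q8) with $y=1$ and (Q6)-homogeneity give $\tau(x^*x)\le\|x\|^2$ and definiteness, and closedness of the balls gives norm completeness. Your explicit (Q6)/(Q7) computation showing $\|x\|\le 1\Rightarrow x\in D_1$ spells out what the paper compresses into ``each $B(q)$ is closed.'' Your balancedness of $D_1$ for $|\lambda|\le 1$ also yields homogeneity over $\bbQ(i)$ directly, whereas the paper gets it from the \cstar-identity.
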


\begin{proof}
    Scalar multiplication sends $D_n$ into $D_1$ for $\lambda = 1/n$ and $D_1$ into $D_n$ for $\lambda = n$, so $D_n = B(n)$.  Each $B(q)$ is closed in every sort containing it, scalars of modulus at most $1$ preserve $D_1$, and hence $\|x\| \leq q$ if and only if $x \in B(q)$, for rational $q$.  In particular $D_n = \{\|x\| \leq n\}$.

    Typing of the function symbols gives $\|xy\| \leq \|x\|\|y\|$, $\|x^*\| = \|x\|$, $\|ix\| = \|x\|$ and $\|rx\| = r\|x\|$ for rational $r > 0$.  If $s > \|x\|$ and $t > \|y\|$ are rational, then (Q3) gives
    \[
        \frac{x + y}{s + t} = \frac{s}{s + t} \cdot \frac{x}{s} + \frac{t}{s + t} \cdot \frac{y}{t} \in D_1,
    \]
    so $\|x + y\| \leq \|x\| + \|y\|$.  If $x \in D_1$, then (Q8) with $y = 1$ gives $\tau(x^*x) \leq 1$, and homogeneity from (Q6) gives $\tau(x^*x) \leq \|x\|^2$ in general.  In particular $\|x\| = 0$ implies $x = 0$.

    For the \cstar-identity, let $c > 0$ be rational with $c^2 > \|x^*x\|$ and put $x' = x/c$.  Then $x'^*x' \in D_1$, so (Q4) and closedness give $x' \in D_1$, and hence $\|x\|^2 \leq \|x^*x\|$.  For $\lambda \in \bbQ(i)$ this gives $\|\lambda x\|^2 = |\lambda|^2\|x\|^2$, since $|\lambda|^2$ is rational.

    A norm-Cauchy sequence lies in some $D_N$ and is $d$-Cauchy there, since $d(x_j,x_k)^3 \leq \|x_j - x_k\|^2$.  If $x$ is its $d$-limit and $x_j - x_k \in B(\varepsilon)$ for $j,k \geq J$, closedness of $B(\varepsilon)$ gives $\|x_j - x\| \leq \varepsilon$ for $j \geq J$.  So $A$ is complete, scalar multiplication extends to $\bbC$ by continuity, and $\|1\| = 1$ since $1 \not= 0$.
\end{proof}

\begin{lemma}\label{lem:tau-properties}
    The function $\tau$ has the following properties.
    \begin{enumerate}
        \item[(a)] $\tau(1) = 1$, $\tau(x^*x) = \tau(xx^*) \geq 0$, and $\tau(x^*x) = 0$ only for $x = 0$.
        \item[(b)] $\tau(a + ib) = \tau(a) + i\tau(b)$ for self-adjoint $a,b$.
        \item[(c)] $\tau$ is norm continuous on bounded subsets of $A$.
        \item[(d)] For every self-adjoint $h \in A$, the restriction of $\tau$ to $\text{\cstar}(1,h)$ is a state.
        \item[(e)] $\tau(y^*x^*xy) \leq \|x\|^2\tau(y^*y)$ and $\tau(xyy^*x^*) \leq \|y\|^2\tau(xx^*)$ for all $x,y \in A$.
    \end{enumerate}
\end{lemma}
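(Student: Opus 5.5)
We verify (a)--(e) in turn from the axioms, using Lemma~\ref{lem:cstar-structure} throughout.

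For (a), the identities $\tau(1) = 1$ and $\tau(x^*x) = \tau(xx^*)$ are axioms (Q5), and (Q7) with $y = 0$ gives $\tre(x^*x) = d(x,0)^3 \geq 0$. Applying (Q5) to the self-adjoint element $x^*x = \rea(x^*x)$ gives $\tim(x^*x) = 0$, so $\tau(x^*x) = d(x,0)^3 \geq 0$. Faithfulness is immediate because $d$ is a metric. For (b), (Q5) gives $\tau(x) = \tau(\rea x) + i\tau(\operatorname{Im} x)$, and $\rea(a + ib) = a$, $\operatorname{Im}(a + ib) = b$ for self-adjoint $a, b$.

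For (c), the predicates $\tre_n, \tim_n$ are uniformly $d_n$-continuous on $D_n$ by their declared moduli, and Lemma~\ref{lem:cstar-structure} gives $d(x,y) \leq \|x - y\|^{2/3}$. Hence $\tau$ is norm continuous on every $D_n$, and so on every bounded set.

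Part (d) is the main step. By (Q6), together with (Q5) for the imaginary parts, $\tau$ is additive and $\bbQ$-homogeneous on real polynomials $p(h)$ with rational coefficients, for $h = \rea x$. The first difficulty is that (Q6) is stated for $h = \rea x$ with $x \in D_n$, and a sum $p(h) + q(h)$ may land in a larger sort; the inclusion axioms (Q1) handle this. By (c), $\tau$ is then real linear on the norm closure of these polynomials, which is $\text{\cstar}(1,h)_{\sa}$. By (b) it is complex linear on all of $\text{\cstar}(1,h)$. Positivity follows from (a), since a positive element of $\text{\cstar}(1,h)$ has the form $y^*y$. Finally $\tau(1) = 1$, so $\tau$ is a state there. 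The only real subtlety is density together with the norm continuity from (c), and both are already in place.

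For (e), the first inequality is (Q8) after rescaling: put $x' = x/s$ with rational $s > \|x\|$, so $x' \in D_1$. Apply (Q8) with $n = 1$, then pull $s^2$ out. This uses homogeneity $\tau(r\,z) = r\tau(z)$ for positive $z$, which holds by (d) applied to $\text{\cstar}(1,z)$. Let $s \downarrow \|x\|$. For the second inequality, set $u = x^*$ and $v = y^*$. Applying (a) to the element $vu$ gives
\[
    \tau(xyy^*x^*) = \tau(u^*v^*vu) = \tau(v u u^* v^*).
\]
The first inequality, applied with $x \to v$ and $y \to u^*$, bounds this by $\|v\|^2\tau(uu^*) = \|y\|^2\tau(x^*x)$. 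Since $\tau(x^*x) = \tau(xx^*)$ by (a), this equals $\|y\|^2\tau(xx^*)$, as required.
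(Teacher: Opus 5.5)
Your parts (a)--(d) and the first inequality in (e) are correct. In (d) you take a slightly more direct route than the paper. The paper first proves $|\tau(a)|\le\|a\|$ on $\bbQ[h]$ (from $0\le\tau((a-t)^2)$), identifies $\tau$ with the bounded extension $\phi$, and gets positivity from $\|\phi\|\le 1=\phi(1)$. You instead pass linearity to the closure directly via (c), since all approximants lie in a bounded set, and get positivity from (a) by writing $c=(c^{1/2})^*c^{1/2}$. Both routes work.

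The gap is in the second inequality of (e). You correctly rewrite $\tau(xyy^*x^*)=\tau(vuu^*v^*)$ using (a). You then apply the first inequality with $x\to v$, $y\to u^*$, but that substitution bounds $\tau\bigl((u^*)^*v^*vu^*\bigr)=\tau(uv^*vu^*)=\tau(x^*yy^*x)$. It does not bound $\tau(vuu^*v^*)=\tau(y^*x^*xy)$, and these differ in general. For example, in $M_2(\bbC)$ with the normalized trace, $x=e_{12}$ and $y=e_{22}$ give $\tau(x^*yy^*x)=0$ but $\tau(y^*x^*xy)=1/2$. To match $vuu^*v^*=(u^*v^*)^*(u^*v^*)$ to the pattern $Y^*X^*XY$ you would need $X=u^*$, $Y=v^*$, which produces the wrong constant, $\|x\|^2\tau(y^*y)$. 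Moreover, by (a) the claim $\tau(y^*x^*xy)\le\|y\|^2\tau(x^*x)$ is just a restatement of the inequality you are proving, so this step is effectively unsupported.

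The fix is to drop the detour through (a). Your first display already gives $\tau(xyy^*x^*)=\tau(u^*v^*vu)$. Apply the first inequality with $x\to v$, $y\to u$, that is, to the pair $(y^*,x^*)$ as the paper does. This yields $\tau(u^*v^*vu)\le\|v\|^2\tau(u^*u)=\|y\|^2\tau(xx^*)$ directly.
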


\begin{proof}
    Parts (a) and (b) follow from (Q5) and (Q7), and (c) from the declared modulus of the predicates and Lemma~\ref{lem:cstar-structure}.  For (d), by (Q5) and (Q6) the quasitrace is real-valued and $\bbQ$-linear on $\bbQ[h]$.  For $a \in \bbQ[h]$ and $t \in \bbQ$ we have $0 \leq \tau((a - t)^2) = \tau(a^2) - 2t\tau(a) + t^2$, so $\tau(a)^2 \leq \tau(a^2) \leq \|a\|^2$.  Thus $|\tau(a)| \leq \|a\|$ on the dense subspace $\bbQ[h]$ of $\text{\cstar}(1,h)_{\sa}$, so by (c) the restriction of $\tau$ to $\text{\cstar}(1,h)_{\sa}$ is the continuous $\bbR$-linear extension $\phi$ of its restriction to $\bbQ[h]$.  Since $\|\phi\| \leq 1 = \phi(1)$, the functional $\phi$ is positive, and with (b) $\tau$ is a state on $\text{\cstar}(1, h)$.  Part (e) follows from (Q8) by scaling, applied to $(y^*,x^*)$ for the second inequality.
\end{proof}

\begin{lemma}\label{lem:borel-calculus}
    Let $h \in A$ be self-adjoint.  There is an injective unital $*$-homomorphism $\Phi_h : L^{\infty}(\mu_h) \to A$ extending the continuous functional calculus, with $\|\Phi_h(f)\| \leq \|f\|_{\infty}$ and $\tau(\Phi_h(f)) = \int f \, d\mu_h$.  Every element of $A$ commuting with $h$ commutes with the range of $\Phi_h$.
\end{lemma}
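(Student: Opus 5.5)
We construct $\Phi_h$ as the extension of the continuous functional calculus from $C(\sigma(h))$ to $L^\infty(\mu_h)$, where $\mu_h$ is the Borel probability measure on $\sigma(h)$ representing the state $\tau|_{\text{\cstar}(1,h)}$ from Lemma~\ref{lem:tau-properties}(d). The completeness of the balls $D_n$ for $d$ replaces strong-operator completeness.

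\textbf{Step 1: extension to bounded Borel functions.} For $f \in C(\sigma(h))$ we have $\tau(f(h)^*f(h)) = \int |f|^2\,d\mu_h$, since $f(h)^*f(h) = |f|^2(h)$ lies in $\text{\cstar}(1,h)$ where $\tau$ is a state. Hence on the ball of radius $n$ of $C(\sigma(h))$ the map $f \mapsto f(h)$ satisfies
\[
d(f(h),g(h))^3 = \int |f-g|^2\,d\mu_h .
\]
Given $f \in L^\infty(\mu_h)$ with $\|f\|_\infty \le n$, choose continuous $f_j$ with $\|f_j\|_\infty \le n$ and $f_j \to f$ in $L^2(\mu_h)$; this uses Lusin's theorem together with truncation of real and imaginary parts, keeping the bound $n$, or the bound $\sqrt2 n$ followed by a radial retraction. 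Then $(f_j(h))$ is $d$-Cauchy in $D_n$. By completeness of $D_n$ it has a limit, which we define to be $\Phi_h(f)$; it is independent of the approximating sequence by the isometry formula. Lemma~\ref{lem:cstar-structure} identifies $D_n$ with the norm ball, so $\|\Phi_h(f)\| \le \|f\|_\infty$. Taking the rational $n$ close to $\|f\|_\infty$ uses the same argument with $B(q)$.

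\textbf{Step 2: algebraic properties.} Linearity and $*$-preservation follow from Fact~\ref{fact:haagerup-metric}(b), since addition and adjoint are $d$-continuous. For multiplicativity, take bounded continuous approximants $f_j \to f$ and $g_j \to g$. By Fact~\ref{fact:haagerup-metric}(c),
\[
d(f_jg_j(h),\Phi(f)\Phi(g)) \le \|f_j\|^{2/3} d(g_j(h),\Phi(g)) + d(f_j(h),\Phi(f))\,\|\Phi(g)\|^{2/3} \to 0 .
\]
Moreover $f_jg_j \to fg$ in $L^2$ with uniformly bounded norms, so $\Phi(fg) = \Phi(f)\Phi(g)$. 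The trace formula comes from $d$-continuity of $\tau$ on bounded sets, which is the declared modulus in (L8), applied to $\tau(f_j(h)) = \int f_j\,d\mu_h$. The same continuity applied to $\tau(\Phi(f)^*\Phi(f)) = \int|f|^2\,d\mu_h$ gives injectivity: if $\Phi(f) = 0$ then $f = 0$ $\mu_h$-a.e., using faithfulness from Lemma~\ref{lem:tau-properties}(a).

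\textbf{Step 3: commutation.} If $x$ commutes with $h$, then $x$ commutes with $f_j(h)$ by norm continuity. The maps $y \mapsto xy$ and $y \mapsto yx$ are $d$-continuous on bounded sets by Fact~\ref{fact:haagerup-metric}(c), so $x\Phi(f) = \lim x f_j(h) = \lim f_j(h)x = \Phi(f)x$.

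The main obstacle is Step 1: the approximants must stay in a fixed norm ball while converging in $L^2(\mu_h)$, and the $d$-limit inside $D_n$ must be identified with a norm bound via Lemma~\ref{lem:cstar-structure}. A further point is that only $\text{\cstar}(1,h)$-linearity of $\tau$ is available. This is why every step routes through $|f|^2(h) \in \text{\cstar}(1,h)$ and through the continuity statements in Fact~\ref{fact:haagerup-metric}, rather than through linearity of $\tau$ on the range of $\Phi_h$, which is established only afterwards.
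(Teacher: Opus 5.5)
Your proposal is correct and follows essentially the same route as the paper: approximate $f$ in $L^2(\mu_h)$ by uniformly bounded continuous functions, take the $d$-limit in the complete ball, and pass multiplicativity, the trace formula, injectivity and commutation to the limit using $d$-continuity on bounded sets. One small point: at this stage $\tau$ is not yet known to be a quasitrace, so the $d$-continuity of sums, adjoints and products should be taken from the declared moduli (L4)--(L6) (or from Lemma~\ref{lem:tau-properties}(a),(e)) rather than from Fact~\ref{fact:haagerup-metric}; also, injectivity needs no faithfulness, since $\Phi_h(f)=0$ already gives $\int|f|^2\,d\mu_h=\tau(0)=0$.
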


\begin{proof}
    Let $f$ be bounded Borel and let $c \geq \|f\|_{\infty}$ be rational.  Choose continuous $f_j$ on $\sigma(h)$ with $|f_j| \leq c$ and $f_j \to f$ in $L^2(\mu_h)$.  Then $f_j(h) \in B(c)$ and $d(f_j(h), f_k(h))^3 = \int |f_j - f_k|^2 \, d\mu_h \to 0$, so $f_j(h)$ converges in $d$ to an element $\Phi_h(f) \in B(c)$ which does not depend on the choice of $(f_j)$.  If also $g$ is bounded Borel with approximants $g_j$ as above, then $\|f_jg_j - fg\|_{L^2(\mu_h)} \leq c\|g_j - g\|_{L^2(\mu_h)} + \|g\|_{\infty}\|f_j - f\|_{L^2(\mu_h)} \to 0$, while $f_j(h)g_j(h) \to \Phi_h(f)\Phi_h(g)$ in $d$ because multiplication is $d$-continuous on bounded sets; hence $\Phi_h(fg) = \Phi_h(f)\Phi_h(g)$.  The same argument for sums and adjoints, together with continuity of $\tau$, shows that $\Phi_h$ is a $*$-homomorphism with $\tau(\Phi_h(f)) = \int f \, d\mu_h$, and $\tau(\Phi_h(f)^*\Phi_h(f)) = \int |f|^2 \, d\mu_h$ gives injectivity.  The norm bound holds since $c$ was arbitrary, and the commutation statement passes to the limit.
\end{proof}

\begin{lemma}\label{lem:orthogonal-sups}
    Every family of mutually orthogonal projections in $A$ has a least upper bound in the projection lattice of $A$.
\end{lemma}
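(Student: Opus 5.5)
The approach is to build the supremum as a $d$-limit of finite partial sums, using the completeness of $D_1$. Let $(p_i)_{i\in I}$ be mutually orthogonal projections in $A$. For a finite $F\subseteq I$ the element $p_F=\sum_{i\in F}p_i$ is a projection, and by Lemma~\ref{lem:cstar-structure} it lies in $D_1$. Any two projections $p_F,p_G$ commute, so they lie in a commutative \cstar-subalgebra generated by a single self-adjoint element, for instance $\sum_{i\in F\cup G} 2^{-k_i}p_i$ with distinct $k_i$. Lemma~\ref{lem:tau-properties}(d) therefore makes $\tau$ additive on these sums. In particular $\tau(p_F)=\sum_{i\in F}\tau(p_i)\le\tau(1)=1$, where the last inequality uses $1-p_F\ge0$ inside the same singly generated algebra. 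Consequently $\sum_i\tau(p_i)\le 1$, at most countably many $p_i$ are nonzero (by faithfulness, Lemma~\ref{lem:tau-properties}(a)), and $F\mapsto p_F$ is a $d$-Cauchy net. Indeed, for $F\subseteq G$ we have $d(p_G,p_F)^3=\tau(p_{G\setminus F})=\sum_{i\in G\setminus F}\tau(p_i)$.

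Since $D_1$ is $d$-complete, the net converges to some $p\in D_1$. I would then show that $p$ is a projection. Multiplication and adjoint are $d$-continuous on bounded sets, so $p^*=p$, and $p^2=\lim p_F^2=\lim p_F=p$, because $p_Fp_G\to p^2$ jointly in $d$ by the continuity moduli (L5). The same continuity argument gives $pp_i=\lim_F p_Fp_i=p_i$, so $p$ is an upper bound for every $p_i$.

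The key step is minimality: if $q$ is a projection with $q\ge p_i$ for all $i$, I must show $q\ge p$. We have $qp_F=p_F$ for every finite $F$, and passing to the $d$-limit, again using continuity of multiplication on $D_1$, gives $qp=p$. Thus $p\le q$. The real content is that the net is Cauchy in $d$, which needs $\tau$ to be additive on finite orthogonal sums. That additivity is supplied by the single-generator trick together with Lemma~\ref{lem:tau-properties}(d), so no linearity of $\tau$ beyond singly generated subalgebras is required.

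The anticipated obstacle is the bound $\tau(p_F)\le1$ combined with the Cauchy estimate. Both follow from the state property of $\tau$ on $\text{\cstar}(1,h)$ for a suitable single generator $h$ of the finite-dimensional commutative algebra spanned by $1$ and the $p_i$, $i\in F\cup G$. The limit argument itself is routine once this is in place. If one prefers sequences to nets, one can first discard the zero projections, enumerate the countably many remaining ones, and take partial sums.
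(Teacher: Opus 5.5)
Your proposal is correct and follows essentially the same route as the paper: additivity of $\tau$ on finite orthogonal sums via a single self-adjoint generator and Lemma~\ref{lem:tau-properties}(d), countability from faithfulness, $d$-Cauchy partial sums converging in the complete sort $D_1$, and minimality by passing $qp_F = p_F$ to the limit. The differences are cosmetic. You use the generator $\sum 2^{-k_i}p_i$ where the paper uses $\sum_j j\,p_{i_j}$, and a net over finite subsets where the paper uses a sequence; you also spell out why the limit is a projection and why $\tau(p_F)\le 1$, which the paper leaves implicit.
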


\begin{proof}
    Let $(p_i)$ be orthogonal nonzero projections.  Any finitely many of them, $p_{i_1}, \ldots, p_{i_N}$, lie in $\text{\cstar}(1, \sum_j j \,p_{i_j})$, so Lemma~\ref{lem:tau-properties}(d) gives $\sum_j \tau(p_{i_j}) = \tau(\sum_j p_{i_j}) \leq 1$.  Since each $\tau(p_i) > 0$, the family is countable, say $(p_i)_{i \in \bbN}$, and the partial sums $s_N = \sum_{i \leq N} p_i$ satisfy $d(s_N, s_M)^3 = \sum_{N < i \leq M} \tau(p_i)$.  They converge in $D_1$ to a projection $p$ with $pp_i = p_i$ for all $i$, and any projection $q$ with $qp_i = p_i$ for all $i$ satisfies $qs_N = s_N$ for all $N$, hence $qp = p$.
\end{proof}

\begin{lemma}\label{lem:masas}
    Every maximal abelian self-adjoint subalgebra of $A$ is the closed linear span of its projections.
\end{lemma}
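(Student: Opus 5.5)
Let $C \subseteq A$ be a maximal abelian self-adjoint subalgebra; it is a unital norm-closed \cstar-subalgebra. The plan is to show that every self-adjoint $h \in C$ is a norm limit of finite real linear combinations of projections lying in $C$. Then the closed linear span of the projections contains $C_{\sa}$, and hence $C = C_{\sa} + iC_{\sa}$.

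Fix $h \in C_{\sa}$ and apply Lemma~\ref{lem:borel-calculus} to obtain $\Phi_h : L^{\infty}(\mu_h) \to A$. For $\varepsilon > 0$, partition $[-\|h\|,\|h\|]$ into finitely many half-open intervals $I_1,\dots,I_N$ of length less than $\varepsilon$ and choose points $t_j \in I_j$. Let $s = \sum_j t_j \chi_{I_j}$, so that $\|\mathrm{id} - s\|_{\infty} \leq \varepsilon$ on $\sigma(h)$, and put $p_j = \Phi_h(\chi_{I_j})$. Each $p_j$ is a projection because $\Phi_h$ is a $*$-homomorphism. Since $\Phi_h$ extends the continuous functional calculus and $\|\Phi_h(f)\| \leq \|f\|_{\infty}$, we get
\[
    \Bigl\|h - \sum_j t_j p_j\Bigr\| = \|\Phi_h(\mathrm{id} - s)\| \leq \varepsilon .
\]
This estimate is the only place where the norm bound from Lemma~\ref{lem:borel-calculus} is used, and it is what upgrades $d$-approximation to norm approximation.

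It remains to check that each $p_j$ lies in $C$. Take any $c \in C$. It commutes with $h$, so the last clause of Lemma~\ref{lem:borel-calculus} shows that $c$ commutes with every element of the range of $\Phi_h$, and in particular with $p_j$. Thus $p_j$ is a self-adjoint element commuting with all of $C$. The \cstar-algebra generated by $C$ and $p_j$ is therefore abelian and self-adjoint, and by maximality it equals $C$, so $p_j \in C$.

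The main obstacle is contained in Lemma~\ref{lem:borel-calculus}: one needs that the Borel calculus lands inside $A$ with the norm bound, and that it preserves commutants. That lemma has already been established, so the present argument is a short consequence of it. The only other point to be careful about is that maximality is applied to abelian self-adjoint subalgebras without any closedness assumption on the algebra generated by $C$ and $p_j$. This causes no trouble, because the norm closure of an abelian $*$-algebra is again abelian.
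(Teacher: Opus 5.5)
Your proof is correct and follows the paper's argument: the paper likewise uses Lemma~\ref{lem:borel-calculus} to see that the spectral projections $\Phi_h(\chi_E)$ commute with the masa and therefore lie in it by maximality. It then approximates $h$ in norm by $\Phi_h(s)$ for a step function $s$, using the bound $\|\Phi_h(f)\| \leq \|f\|_\infty$.
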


\begin{proof}
    Let $B$ be such a subalgebra and $h \in B$ self-adjoint.  By Lemma~\ref{lem:borel-calculus} the projections $\Phi_h(\chi_E)$ commute with $B$, so they lie in $B$.  For $\delta > 0$ choose a step function $s$ with $|t - s(t)| \leq \delta$ on $\sigma(h)$; then $\|h - \Phi_h(s)\| \leq \delta$.
\end{proof}

By Lemmas~\ref{lem:orthogonal-sups} and~\ref{lem:masas} and Kaplansky's characterization~\cite[Theorem~2.3]{Kap51}, $A$ is an \AWstar-algebra.

\begin{lemma}\label{lem:full-linearity}
    The function $\tau$ is linear on every abelian \cstar-subalgebra of $A$.
\end{lemma}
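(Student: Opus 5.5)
We need to show that $\tau$ is linear on every abelian \cstar-subalgebra $C$ of $A$. The plan is to reduce to a maximal abelian self-adjoint subalgebra and then to a single self-adjoint generator, where Lemma~\ref{lem:tau-properties}(d) already gives linearity.

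\emph{Step 1: reduction to a masa.} By Zorn's lemma, $C$ is contained in a maximal abelian self-adjoint subalgebra $B$, so it suffices to prove that $\tau$ is linear on $B$. By Lemma~\ref{lem:tau-properties}(b) it is enough to prove $\bbR$-linearity on $B_{\sa}$.

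\emph{Step 2: finitely many projections.} Let $p_1,\dots,p_N$ be finitely many commuting projections in $B$. Their atoms $e_1,\dots,e_M$ are the nonzero products $\prod_j p_j^{\pm}$, where $p^+ = p$ and $p^- = 1-p$. These are mutually orthogonal projections in $B$ summing to $1$. Every real combination $\sum_j \lambda_j p_j$ is a real combination $\sum_k \mu_k e_k$ of the atoms, and so is every element of the real span. All of these lie in $\text{\cstar}(1,\sum_k k\, e_k)$, which is generated by a single self-adjoint element. By Lemma~\ref{lem:tau-properties}(d), $\tau$ is linear there. Hence $\tau$ is $\bbR$-linear on the real span $S$ of the projections of $B$: any two elements of $S$ involve finitely many projections, and those lie in one singly generated subalgebra.

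\emph{Step 3: passage to the closure.} By Lemma~\ref{lem:masas}, $S$ is norm dense in $B_{\sa}$. Given $a,b \in B_{\sa}$, we choose $a_j \to a$ and $b_j \to b$ in norm with $a_j,b_j \in S$. These sequences stay bounded, say inside $D_n$ with $n$ exceeding $\|a\|+\|b\|+1$. Norm continuity of $\tau$ on bounded sets, Lemma~\ref{lem:tau-properties}(c), then gives
\[
\tau(a+b) = \lim_j \tau(a_j+b_j) = \lim_j \bigl(\tau(a_j)+\tau(b_j)\bigr) = \tau(a)+\tau(b).
\]
Real homogeneity follows in the same way. Combined with (b), this shows $\tau$ is $\bbC$-linear on $B$, hence on $C$.

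The only point needing care is the density statement: in the proof of Lemma~\ref{lem:masas} the approximating step-function elements $\Phi_h(s)$ must lie in the real span of projections of $B$. They do, since the spectral projections $\Phi_h(\chi_E)$ commute with $B$ and so lie in $B$ by maximality. With that in hand, the argument is routine.
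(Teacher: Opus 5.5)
Your proof is correct, and it shares the paper's outer structure: pass to a masa $B$, use Lemma~\ref{lem:masas}, apply Lemma~\ref{lem:tau-properties}(d) on a singly generated subalgebra, and finish with (b). The two arguments differ at the key step.

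The paper takes the countably many projections that approximate $a$ and $b$. It identifies the \cstar-algebra they generate with $C(K)$, where $K$ is the Stone space of a countable Boolean algebra. It then embeds $K$ in the Cantor set and uses Stone--Weierstrass to conclude that this algebra equals $\text{\cstar}(1,k)$ for a single self-adjoint $k$. So $a$ and $b$ themselves lie in one singly generated subalgebra, and (d) gives linearity exactly, with no limit argument.

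You instead use only finitely many projections at a time. There the atoms give a singly generated subalgebra by a trivial finite-dimensional argument. This yields linearity on the dense real span $S$, and you extend to $B_{\sa}$ by the norm continuity of Lemma~\ref{lem:tau-properties}(c).

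Your route is more elementary: it needs no Stone duality, no metrizability of $K$, and no Cantor-set embedding. The price is that it relies on continuity of $\tau$ on bounded sets, which is available here from the declared moduli. The paper's route instead gives a structural fact independent of $\tau$: any two commuting self-adjoint elements of $A$ lie in a common singly generated \cstar-subalgebra.

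Your two points of care are handled correctly. First, the step-function approximants in Lemma~\ref{lem:masas} lie in the real span of projections of $B$. Second, the passage from $\bbR$-linearity on $B_{\sa}$ to $\bbC$-linearity via (b) is valid because $\tau$ is real on self-adjoint elements, by (Q5) or (d).
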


\begin{proof}
    Let $a,b \in A$ be commuting self-adjoint elements and let $B$ be a maximal abelian self-adjoint subalgebra containing them.  By Lemma~\ref{lem:masas}, $a$ and $b$ are norm limits of linear combinations of countably many projections in $B$.  These generate a countable Boolean algebra of commuting projections, and the \cstar-algebra $C$ they generate is $C(K)$ for its Stone space $K$, which is compact, metrizable and totally disconnected.  Hence $K$ embeds in the Cantor set, and $C = \text{\cstar}(1,k)$ for the corresponding self-adjoint coordinate function $k$ by the Stone–Weierstrass theorem.  Since $a,b \in C$, Lemma~\ref{lem:tau-properties}(d) gives $\tau(a + b) = \tau(a) + \tau(b)$ and $\tau(\lambda a) = \lambda\tau(a)$, and Lemma~\ref{lem:tau-properties}(b) completes the proof.
\end{proof}

We now have all of the ingredients to prove Theorem~\ref{thm:elementary}.

\begin{proof}[Proof of Theorem~\ref{thm:elementary}]
    By Lemmas~\ref{lem:cstar-structure}--\ref{lem:full-linearity}, $A$ is a unital \AWstar-algebra and $\tau$ is a faithful normalized function on $A$ satisfying (i)–(iii) of the definition of a quasitrace.  Since $A$ is an \AWstar-algebra, $\tau$ extends to $M_2(A)$~\cite[Corollary~II.1.10]{BH82}, so $\tau$ is a quasitrace, and its unit ball $D_1$ is $\dt$-complete.  By~\cite[Lemma~3.9]{Haa14} it is normal, so $(A,\tau)$ is a quasitracial \AWstar-algebra and $\mathfrak{M}$ is its associated structure.  Uniqueness up to isomorphism is clear, since $A$ is recovered as the direct limit of the sorts.
\end{proof}

\begin{lemma}\label{lem:definability}
    The following are definable uniformly in quasitracial \AWstar-algebras.
    \begin{enumerate}
        \item[(a)] Continuous functional calculus on self-adjoint norm balls.
        \item[(b)] The self-adjoint and positive parts of each norm ball, and the positive unit ball.
        \item[(c)] The quasitrace on positive norm balls.
        \item[(d)] The order predicate
        \[
            \ord(x,y) = \dt(\negp{y - x}, 0),
        \]
        whose zero set is $\{(x,y) \ : \ x \leq y\}$.
        \item[(e)] The order-defect predicate
        \[
            \theta(x,y) = \tau(\pos{x - y})
        \]
        on the self-adjoint unit ball.
        \item [(f)]  The set $\Proj(A)$ of projections.  More precisely, for $x \in D_1$ put
        \[
            \varphi(x) = \dt(x, x^*) + \dt(x, x^2);
        \]
        then $\dist(x, \Proj(A)) \leq 5\varphi(x)$.
    \end{enumerate}
\end{lemma}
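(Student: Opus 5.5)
Throughout I use the standard criterion for definability in continuous logic: a zero set $Z$ in a sort is definable uniformly over $\Kqt$ when there is a formula $\psi$ with $\psi = 0$ on $Z$ and $\dist(x,Z) \leq \alpha(\psi(x))$ for a fixed modulus $\alpha$ with $\alpha(t)\to 0$ as $t\to 0$. Predicates and functions are definable when they are uniform limits of formulas, or when their graphs are definable. Definability of sets is closed under composition with definable functions. The plan is to do (a) first and build the rest on it.

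For (a), fix a continuous $f$ on $[-n,n]$. By Weierstrass choose rational polynomials $p_k$ with $\|f - p_k\|_\infty \leq 2^{-k}$. Each $p_k(h)$ is a term. Lemma~\ref{lem:cstar-structure} gives $\dt(f(h),p_k(h)) \leq \|f(h)-p_k(h)\|^{2/3} \leq 2^{-2k/3}$ uniformly. So $h \mapsto f(h)$ is a uniform limit of terms on the set of self-adjoint elements of $D_n$, and it is definable once that set is.

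For (b), the self-adjoint part of $D_n$ is the image of $\rea$. We also have $\dist(x, D_{n,\sa}) \leq \dt(x,\rea x) \leq \dt(x,x^*)$, up to a constant, since $x - \rea x = \tfrac12(x-x^*)$. The positive part is the image of $h \mapsto \pos{h}$, which is a definable function by (a). Its distance bound comes from $\dt(h,\pos h)^3 = \tau(\negp{h}^2)$. Thus $\psi(x) = \dt(x,x^*) + \dt(\rea x, \pos{\rea x})$ works. The positive unit ball is the image of the unit self-adjoint ball under the same map.

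For (c) and (e), $\tau$ restricted to positive norm balls is just $\tre$. For (e), $\theta$ is the composition of $\tre$ with the definable function $(x,y)\mapsto \pos{x-y}$. For (d), $\ord$ is a composition of definable maps. Its zero set is $\{x \leq y\}$ by faithfulness: $\dt(\negp{y-x},0)=0$ forces $\negp{y-x}=0$.

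For (f), the main task is the estimate $\dist(x,\Proj(A)) \leq 5\varphi(x)$, and I expect the constant to be the main obstacle. The plan has four steps.
\begin{enumerate}
    \item Set $h = \rea x$. Then $\dt(x,h) \leq \dt(x,x^*)$, using (b) of Fact~\ref{fact:haagerup-metric} and the halving.
    \item Estimate $h - h^2$ in terms of $x - x^2$ and $x - x^*$, losing a constant factor via (b) and (c) of Fact~\ref{fact:haagerup-metric}.
    \item Let $p = \chi_{(1/2,\infty)}(h)$, which is a projection in $A$ by Lemma~\ref{lem:borel-calculus}. Pointwise on $\sigma(h)$ we have $|t-\chi(t)| \leq 2|t-t^2|$. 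Since $\tau$ is a state on the abelian algebra (Lemma~\ref{lem:tau-properties}(d) and Lemma~\ref{lem:full-linearity}), this gives $\dt(h,p)^3 = \tau(|h-p|^2) \leq 4\tau(|h-h^2|^2)$, hence $\dt(h,p) \leq 4^{1/3}\dt(h,h^2)$.
    \item Combine with the triangle inequality.
\end{enumerate}
The constants in step 2 come from $\dt(xx,hh) \leq \dt(x,h)\|x\|^{2/3} + \|h\|^{2/3}\dt(x,h)$ and $\|x\|,\|h\|\leq 1$. Tracking them gives $\dt(h,h^2) \leq \dt(x,x^2) + 3\dt(x,x^*)$, so the total is at most $4^{1/3}\cdot 4\varphi + \varphi$. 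This exceeds $5\varphi$, so the constants need care. I would first try to sharpen step 2 by writing $h^2 - h$ more cleverly, or by taking $\dt(x,h) \leq 2^{-2/3}\dt(x,x^*)$, which follows from homogeneity of $\tau$ on positives. Failing that, a weaker constant still suffices for definability.
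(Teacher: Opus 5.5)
Your route is the paper's route: functional calculus by uniform polynomial approximation, $\rea$ and $x\mapsto\pos{\rea x}$ as definable retractions, and for (f) the real part $h$, the spectral projection at $1/2$, and the pointwise bound $|t-\chi(t)|\le 2|t-t^2|$ giving $\dt(h,p)\le 4^{1/3}\dt(h,h^2)$. For (c) you restrict the predicate $\tre$ directly where the paper writes $\tau(a)=\dt(a^{1/2},0)^3$; both are fine.

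The only unfinished point is the constant in (f). The sharpening you mention already settles it, so you should carry it out rather than leave it as a contingency. Since $x-h=\tfrac12(x-x^*)$ and $\tau$ is homogeneous on positive elements, $\dt(x,h)=2^{-2/3}\dt(x,x^*)$ exactly; the paper uses the same identity. Your step 2 gives $\dt(h,h^2)\le 3\dt(x,h)+\dt(x,x^2)$, and then
\[
\dt(x,p)\le\dt(x,h)+2^{2/3}\bigl(3\dt(x,h)+\dt(x,x^2)\bigr)=\bigl(3+2^{-2/3}\bigr)\dt(x,x^*)+2^{2/3}\dt(x,x^2)\le 3.7\,\varphi(x).
\]
This is slightly better than the paper. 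Its expansion $h^2-h=(x^2-x)-xk-kx+k^2+k$ costs $4\dt(k,0)$ rather than your $3\dt(x,h)$, and yields about $4.63\,\varphi(x)$.

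Your fallback, ``a weaker constant still suffices for definability,'' would not prove the lemma as stated, and the constant matters. The bound $5\varphi$ feeds directly into the coefficients $27\geq 8\cdot 5^{3/4}$ and $80=16\cdot 5$ of axiom (F). Your unsharpened constant, $1+3\cdot 2^{2/3}\approx 5.76$, would make the verification that finite factors satisfy (F) fail.
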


\begin{proof}
    Continuous functional calculus is obtained by uniform polynomial approximation on each fixed interval.  The maps
    \[
        x \longmapsto \tfrac{1}{2}(x + x^*) \qquad \text{and} \qquad x \longmapsto \pos{\tfrac{1}{2}(x + x^*)}
    \]
    are therefore definable retractions onto the self-adjoint and positive parts.  For $a \geq 0$,
    \[
        \tau(a) = \dt(a^{1/2},0)^3,
    \]
    giving (c), and (d) and (e) follow.

    For (f), let $h$ be the real part of $x$ and $k = x - h$.  We compute that $\dt(x,h) = 2^{-2/3}\dt(x,x^*)$.  Expanding 
    \[
        h^2 - h = (x^2 - x) - xk - kx + k^2 + k
    \]
    and using Fact~\ref{fact:haagerup-metric}(c) with $\|x\|, \|k\| \leq 1$ gives
    \[
        \dt(h^2,h) \leq \dt(x^2,x) + 4\dt(k,0).
    \]
    Let $p = E_{[1/2,\infty)}(h)$ be the spectral projection of $h$ corresponding to $[1/2,\infty)$.  For $t \in [-1,1]$ we have $|t - \chi_{[1/2,\infty)}(t)| \leq 2|t^2 - t|$, so 
    \[
        (h - p)^2 \leq 4(h^2 - h)^2,
    \]
    in the abelian \AWstar-subalgebra generated by $h$, and therefore we have $\dt(h,p) \leq 4^{1/3}\dt(h^2,h)$.  Combining gives $\dist(x,\Proj(A)) \leq 5\varphi(x)$.
\end{proof}

The following lemma is inspired by the axioms for factoriality in the model theory of tracial von Neumann algebras~\cite{FHS13, FHS14}.

\begin{lemma}\label{lem:swap-estimate}
    Let $N$ be a finite \AWstar-factor with normalized quasitrace $\tau$ and let $e \in N$ be a projection. Then there is a unitary $u \in N$ such that
    \[
        \tau([e,u]^* [e,u]) = 2 \min\{\tau(e), 1 - \tau(e)\}.
    \]
\end{lemma}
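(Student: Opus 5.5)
Since $e$ and $1-e$ play symmetric roles ($[1-e,u] = -[e,u]$), we may assume $\tau(e) \leq 1/2$, so that $\tau(e) \leq \tau(1-e)$. The plan is to find a projection $f \leq 1-e$ with $f \sim e$ and take $u$ to be the self-adjoint unitary swapping $e$ and $f$ and fixing $1-e-f$; then the commutator is computed directly.

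The comparison step comes first. In a finite \AWstar-factor the projections are totally ordered by $\precsim$ (comparability in \AWstar-factors, \cite{Kap51}), and by Fact~\ref{fact:finite-factors} the quasitrace agrees with the dimension function. Hence $\tau(e) \leq \tau(1-e)$ gives $e \precsim 1-e$. We therefore obtain a partial isometry $v$ with $v^*v = e$ and $vv^* = f \leq 1-e$. (If instead $1-e \precsim e$ with strict inequality, finiteness and the dimension function rule it out; equality of quasitraces forces equivalence.) Put $g = 1 - e - f$ and
\[
    u = v + v^* + g .
\]
We must check that $u$ is a self-adjoint unitary, using $ve = v = fv$, $ef = 0$ and $gv = vg = 0$. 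Indeed $u^2 = v^*v + vv^* + g = e + f + g = 1$, since $v^2 = v f e \cdots = 0$ because $e \perp f$.

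Next compute the commutator. We have $eu = ev^* = v^*$, since $ev = e f v = 0$, $ev^* = v^*$ and $eg = 0$. Similarly $ue = ve = v$. Hence $[e,u] = v^* - v$, and
\[
    [e,u]^*[e,u] = (v - v^*)(v^* - v) = vv^* + v^*v - v^2 - (v^*)^2 = e + f ,
\]
because $v^2 = 0$. Since $e \perp f$, both lie in the abelian algebra $\text{\cstar}(1,e+2f)$, so $\tau(e+f) = \tau(e) + \tau(f)$. Moreover $\tau(f) = \tau(vv^*) = \tau(v^*v) = \tau(e)$, giving $2\tau(e) = 2\min\{\tau(e), 1-\tau(e)\}$.

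The only nontrivial input is the comparison step: deducing $e \precsim 1-e$ from $\tau(e) \leq \tau(1-e)$. I expect this to follow from comparability in \AWstar-factors together with faithfulness of the dimension function on the finite factor, as cited via Fact~\ref{fact:finite-factors}. Everything else is algebra with orthogonal projections and a single use of linearity on an abelian subalgebra.
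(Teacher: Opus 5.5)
Your proposal is correct and follows the paper's proof. Both reduce to $\tau(e)\le 1/2$, use comparison in the finite factor to obtain $f\le 1-e$ with $f\sim e$, take the self-adjoint unitary $u=v+v^*+(1-e-f)$, and compute $[e,u]^*[e,u]=e+f$. Your extra step, comparability plus faithfulness of $\tau$ to get $e\precsim 1-e$, just spells out what the paper cites as ``comparison of projections in finite factors.''
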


\begin{proof}
    Assume without loss of generality that $\tau(e) \leq 1/2$. Comparison of projections in finite factors gives a projection $f \leq 1 - e$ with $f \sim e$.  Choose $v$ with $v^*v = e$ and $vv^* = f$.  Then $u = v + v^* + (1 - e - f)$ is a self-adjoint unitary.  A simple calculation gives $[e,u] = v^* - v$, and therefore $[e,u]^* [e,u] = e + f$.  Since equivalent projections have the same quasitrace and $f$ is orthogonal to $e$, we get the desired equality.
\end{proof}

The class of finite \AWstar-factors is also elementary. We give a single extra explicit axiom.  For $x \in D_1$ put
\[
    g(x) = 2\min\{\tre(x), 1 - \tre(x)\} \dminus \sup_{y \in D_1} \tre([x,y]^*[x,y]),
\]
and let $\Tfac$ be $\Tqt$ together with the axiom
\begin{enumerate}
    \item[(F)] $\displaystyle\sup_{x \in D_1}\Bigl(g(x) \dminus \bigl(27\varphi(x)^{3/4} + 80\varphi(x)\bigr)\Bigr) = 0.$
\end{enumerate}

\begin{theorem}\label{thm:elementary-factors}
    The models of $\Tfac$ are exactly the structures associated to finite \AWstar-factors with their canonical quasitraces.  In particular, the class $K_{\mathrm{fac}}$ of quasitracial \AWstar-factors is axiomatized by the explicit recursive theory $\Tfac$.
\end{theorem}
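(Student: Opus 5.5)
The proof has two directions: every finite \AWstar-factor satisfies (F), and every model of $\Tfac$ comes from a factor. By Theorem~\ref{thm:elementary} we may work with quasitracial \AWstar-algebras $(A,\tau)$, and by Fact~\ref{fact:finite-factors} a finite factor carries a unique normalized quasitrace, which makes it quasitracial.

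\emph{Soundness.} Let $N$ be a finite factor, let $x\in D_1$, and put $\varphi=\varphi(x)$. By Lemma~\ref{lem:definability}(f) there is a projection $e$ with $\dt(x,e)\le 5\varphi$. Lemma~\ref{lem:swap-estimate} gives a unitary $u\in D_1$ with $\tre([e,u]^*[e,u])=2\min\{\tau(e),1-\tau(e)\}$.

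We compare the terms of $g(x)$ with those of $g(e)$. First, $[x,u]-[e,u]=[x-e,u]$ has $\dt$-size at most $2\cdot5\varphi$ by Fact~\ref{fact:haagerup-metric}(b),(c). Second, for norm-bounded $z,w$ of norm at most $2$, the modulus in (L8) together with (L5) controls $|\tre(z^*z)-\tre(w^*w)|$ by a multiple of $\dt(z,w)^{3/4}$. Third, $|\tre(x)-\tau(e)|\le 2^{7/4}\,\dt(x,e)^{3/4}$, using the estimate from the proof of Proposition~\ref{prop:correctness1}.

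Combining these, $g(x)\le C_1\varphi^{3/4}+C_2\varphi$, and the constants have to come out as $27$ and $80$. The linear term absorbs the cases where $\varphi$ is large, since $g\le 1$. Getting these exact constants is routine but fiddly bookkeeping.

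\emph{Completeness.} Let $(A,\tau)$ model $\Tfac$. First we show $A$ is finite, which holds for every quasitracial algebra by the remark after Fact~\ref{fact:completion}. Next we show the center is trivial. Suppose the center contains a projection $z$ other than $0,1$; since $A$ is an \AWstar-algebra, its center is generated by projections. Then $\varphi(z)=0$, while $[z,y]=0$ for all $y$, so $g(z)=2\min\{\tau(z),1-\tau(z)\}$. Faithfulness gives $g(z)>0$, contradicting (F).

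Hence $A$ is a finite \AWstar-factor. By uniqueness in Fact~\ref{fact:finite-factors}, $\tau$ is its canonical quasitrace, and Theorem~\ref{thm:elementary} identifies the structure. Recursiveness is immediate, since only one axiom is added.

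The main obstacle is the numerical estimate in the soundness direction. In particular, the predicate $\tre$ is only controlled through the $3/4$-power modulus, and the norm bounds on the commutators must be tracked across sorts $D_2$ and $D_4$ via the inclusions. I would first bound $\dt([x,u]^*[x,u],[e,u]^*[e,u])$ using Fact~\ref{fact:haagerup-metric}(c) with norms at most $2$, and then apply the Haagerup estimate from Proposition~\ref{prop:correctness1}.
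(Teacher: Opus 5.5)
Your completeness direction is the paper's argument: a central projection $p\neq 0,1$ has $\varphi(p)=0$ and $[p,y]=0$, so (F) contradicts faithfulness, and uniqueness in Fact~\ref{fact:finite-factors} identifies $\tau$. The skeleton of your soundness direction also matches: approximate $x$ by a projection via Lemma~\ref{lem:definability}(f), use Lemma~\ref{lem:swap-estimate}, and bound $g(x)$ against $g(e)$. Testing only at $y=u$ is fine.

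The gap is in the commutator estimate, and it is more than bookkeeping: the route you describe cannot produce the constants in (F). You would control $\tre([x,u]^*[x,u])-\tre([e,u]^*[e,u])$ through $\dt(z^*z,w^*w)\le 2^{5/3}\dt(z,w)\le 2^{5/3}\cdot 10\varphi$ and the $3/4$-power modulus of $\tre$ on $D_4$, which is $|\tre(a)-\tre(b)|\le 8\,\dt(a,b)^{3/4}$. That gives an error of about $8\,(2^{5/3}\cdot 10)^{3/4}\varphi^{3/4}\approx 107\,\varphi^{3/4}$ from this term alone. For small $\varphi$, a bound $C\varphi^{3/4}$ with $C>27$ does not imply $g(x)\le 27\varphi^{3/4}+80\varphi$. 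Your remark that the linear term merely absorbs large $\varphi$ also misidentifies where that term comes from. As written, your argument verifies a variant of (F) with a larger coefficient, i.e.\ a different theory than $\Tfac$.

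The missing idea is to use (Q7): $\tre(z^*z)=\dt(z,0)^3$. For $\|z\|\le 2$ one has $\dt(z,0)\le 2^{2/3}$. Since $|a^3-b^3|\le 3\max(a,b)^2|a-b|$ and $|\dt(z,0)-\dt(w,0)|\le \dt(z,w)$, the map $z\mapsto \tre(z^*z)$ is $\dt$-Lipschitz with constant $3\cdot 2^{4/3}$ on the norm ball of radius $2$. Combined with $\dt([x,y],[e,y])\le 2\dt(x,e)$, the commutator contribution is at most $6\cdot 2^{4/3}\dt(x,e)\le 16\,\dt(x,e)\le 80\varphi(x)$, uniformly in $y\in D_1$. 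This is exactly the $80\varphi$ term.

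The only $3/4$-power loss then comes from the first term of $g$. The declared $n=1$ modulus gives $|\tre(x)-\tre(e)|\le 4\dt(x,e)^{3/4}$, so the contribution is at most $8\cdot 5^{3/4}\varphi^{3/4}\le 27\varphi^{3/4}$; your $2^{7/4}$ bound, giving about $22.5\,\varphi^{3/4}$, also suffices. With this replacement your proof establishes the theorem for $\Tfac$ as defined.
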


\begin{proof}
    Let $x \in D_1$ and let $p$ be a projection.  The modulus of $\tre$ for $n = 1$ gives a contribution of at most $8\dt(x,p)^{3/4}$ from the first term of $g$.  For $y \in D_1$ we have $\dt([x,y],[p,y]) \leq 2\dt(x,p)$ by Fact~\ref{fact:haagerup-metric}(c), and $\dt(z,0) \leq 2^{2/3}$ when $\|z\| \leq 2$, so (Q7) gives
    \[
        \bigl|\tre([x,y]^*[x,y]) - \tre([p,y]^*[p,y])\bigr| \leq 3 \cdot 2^{4/3} \cdot 2\dt(x,p) \leq 16\dt(x,p).
    \]
    Hence $|g(x) - g(p)| \leq 8\dt(x,p)^{3/4} + 16\dt(x,p)$.

    Let $M$ be a finite \AWstar-factor.  Then $\bM \models \Tqt$ by Proposition~\ref{prop:correctness1}, and $g(p) = 0$ for every projection $p$ by Lemma~\ref{lem:swap-estimate}.  Given $x \in D_1$, Lemma~\ref{lem:definability}(f) gives a projection $p$ with $\dt(x,p) \leq 5\varphi(x)$, and then $g(x) \leq 8 \cdot 5^{3/4}\varphi(x)^{3/4} + 80\varphi(x) \leq 27\varphi(x)^{3/4} + 80\varphi(x)$.

    Conversely, let $\mathfrak{M} \models \Tfac$.  By Theorem~\ref{thm:elementary}, $\mathfrak{M}$ is the structure associated to a quasitracial \AWstar-algebra $(A,\tau)$.  If $p$ were a central projection other than $0$ and $1$, then $\varphi(p) = 0$ and $[p,y] = 0$ for all $y$, so (F) would give $2\min\{\tau(p), 1 - \tau(p)\} = 0$, contradicting faithfulness.  The center of an \AWstar-algebra is generated by its projections, so $A$ is a finite \AWstar-factor, and uniqueness of the normalized quasitrace on finite factors (Fact~\ref{fact:finite-factors}) implies that $\tau$ is its canonical quasitrace.
\end{proof}

In the von Neumann algebra setting, it is a useful fact that tracial ultraproducts of factors are factors.  Nagy claims the analogous result for quasitracial ultraproducts indexed by $\bbN$ \cite{Nag07}.  The following corollary gives the analogous result for arbitrary index sets.

\begin{corollary}\label{cor:factor-ultraproduct}
    A quasitracial ultraproduct of finite \AWstar-factors with their canonical quasitraces is again a finite \AWstar-factor.
\end{corollary}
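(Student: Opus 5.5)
The corollary should follow from Łoś's theorem for continuous logic together with Theorem~\ref{thm:elementary-factors}. The work is in checking that the quasitracial ultraproduct of Lemma~\ref{lem:ultraproduct} is exactly the model-theoretic ultraproduct of the associated $\Lqt$-structures.

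Let $(M_i,\tau_{M_i})_{i\in I}$ be finite \AWstar-factors with their canonical quasitraces, and let $\cU$ be an ultrafilter on $I$. By Theorem~\ref{thm:elementary-factors}, each associated structure $\bM_i$ is a model of $\Tfac$.

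\textbf{Identifying the two ultraproducts.} I first form the continuous-logic ultraproduct $\prod_\cU \bM_i$ of these many-sorted structures. Its sort $D_n$ is the metric ultraproduct of the radius-$n$ balls of the $M_i$ with the metrics $d_{\tau_i}$. The symbols are interpreted coordinatewise in the limit, which is legitimate because every symbol carries a uniform modulus of continuity that each $\bM_i$ respects.

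By the last clause of Lemma~\ref{lem:ultraproduct}, the norm balls of $\prod_\cU(M_i,\tau_{M_i})$ are exactly these metric ultraproducts. The algebraic operations on the quotient $\ell^\infty/J_\cU$ are computed coordinatewise on representatives, and so are the inclusions. The quasitrace $\tau_\cU$ is $\lim_\cU\tau_i$, so its real and imaginary parts are the ultralimit predicates. Hence $\prod_\cU\bM_i$ is, up to the obvious identification, the structure associated to the quasitracial \AWstar-algebra $\prod_\cU(M_i,\tau_{M_i})$. This is the one step that needs care: one must check that every element of the model-theoretic sort $D_n$ is represented by a bounded family $(x_i)$ with $\|x_i\|\le n$. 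That holds by definition, since the sort ultraproduct is built from elements of the $D_n$ of each factor.

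\textbf{Applying Łoś.} Łoś's theorem for continuous logic~\cite{BBHU08} gives, for every sentence $\sigma$,
\[
\sigma^{\prod_\cU\bM_i}=\lim_{i\to\cU}\sigma^{\bM_i}.
\]
Each axiom of $\Tfac$ is a condition $\sigma=0$, or, after the tacit replacement, a recursive family of basic-syntax conditions. Every such condition has value $0$ in each $\bM_i$, so it has value $0$ in the ultraproduct. Therefore $\prod_\cU\bM_i\models\Tfac$.

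\textbf{Conclusion.} Theorem~\ref{thm:elementary-factors} now says that $\prod_\cU(M_i,\tau_{M_i})$ is a finite \AWstar-factor and that $\tau_\cU$ is its canonical quasitrace.
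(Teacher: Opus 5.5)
Your proposal is correct and follows the paper's proof exactly: Lemma~\ref{lem:ultraproduct} identifies the quasitracial ultraproduct with the ultraproduct of the associated $\Lqt$-structures, \L o\'s's theorem makes it a model of $\Tfac$, and Theorem~\ref{thm:elementary-factors} finishes. One small remark: the direction you single out is, as you say, immediate from the definitions; the direction that actually needs an argument is that every element of norm at most $n$ in $\ell^{\infty}/J_{\cU}$ lifts to a family with $\sup_i\|x_i\| \leq n$, and that is precisely the ``closed ball onto closed ball'' clause of Lemma~\ref{lem:ultraproduct} that you cite.
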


\begin{proof}
    By Lemma~\ref{lem:ultraproduct}, the quasitracial ultraproduct is the ultraproduct of the associated $\Lqt$-structures.  \L o\'s's theorem then implies it is a model of $\Tfac$ and hence a finite \AWstar-factor by Theorem~\ref{thm:elementary-factors}. 
\end{proof}

\section{Reflection}\label{sec:reflection}

\begin{proposition}\label{prop:reflection}
    Let $M$ be a finite \AWstar-factor and $S \subseteq M$ be countable. Suppose that the positive unit ball of $M$ contains an increasing net $(a_i)_{i \in I}$ with no supremum.  Then there is a separable $\bN \preceq \bM$ containing $S$ and an increasing sequence $(b_n)$ from $\{a_i \ : \ i \in I\}$ with no supremum in $N_{\sa}$.
\end{proposition}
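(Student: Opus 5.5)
The plan is a Löwenheim–Skolem closure argument. We interleave two requirements: closing under witnesses for the $\Lqt$-structure, and choosing elements of the net so that every candidate supremum in the small model is defeated. The key observation is that the quasitraces $\tau(a_i)$ form a bounded increasing net of reals, because $\tau$ is order preserving. So there is $s^* = \sup_i \tau(a_i)$, and we may choose an increasing sequence $i_1 \leq i_2 \leq \cdots$ in $I$ with $\tau(a_{i_n}) \to s^*$. This sequence is the backbone of $(b_n)$; further indices are inserted above it as needed, and at the end we take a cofinal increasing sequence in $I$ of all indices used.

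We build countable sets $S = S_0 \subseteq S_1 \subseteq \cdots \subseteq M$ and indices $j_0 \leq j_1 \leq \cdots$ in $I$.

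1. At stage $k$, apply the downward Löwenheim–Skolem theorem for continuous logic in the recursive (countable) language $\Lqt$. This gives a separable $\bN_k \preceq \bM$ containing $S_k$ and $\{a_{j_0},\dots,a_{j_k}\}$.
2. Fix a countable $\dt$-dense subset $Q_k$ of the self-adjoint part of $D_1(N_k)$. For each $x \in Q_k$ that is an upper bound of all $a_{j_l}$ chosen so far, use the fact that $x$ is not a supremum of the whole net in $M$, which holds since the net has no supremum at all. Hence either $x$ fails to be an upper bound of some $a_i$, or there is an upper bound $y$ of the net in $M$ with $x \not\leq y$.
3. In the first case, add an index $i$ with $a_i \not\leq x$ to the list, placing it above the previous indices by directedness. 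In the second case, add $y$ to $S_{k+1}$. Only countably many choices are made at each stage.
4. Let $N$ be the closure of $\bigcup_k N_k$. It is an elementary substructure, since a union of an elementary chain is elementary, and it is separable. Let $(b_n)$ be the increasing sequence $a_{j_0} \leq a_{j_1} \leq \cdots$, also interleaved with the backbone $a_{i_n}$.

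The main obstacle is verifying that $(b_n)$ has no supremum in $N_{\sa}$. The defeating steps in item 2 handle only the dense countable sets $Q_k$, not arbitrary elements of $N$. The order is not $\dt$-open, so approximation alone does not transfer the statement that $x$ fails to be a supremum.

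I would try to resolve this with quasitrace control, using Lemma~\ref{lem:monotone-calculus} and the definable predicates $\ord$ and $\theta$ from Lemma~\ref{lem:definability}. Suppose $z \in N_{\sa}$ is a supremum of $(b_n)$ in $N$; we may take $\|z\|\le 1$ after truncating. Let $y_0$ be an upper bound of the net in $M$. Because $N \preceq \bM$ and the sequence lies in $N$, we can find upper bounds of $(b_n)$ inside $N$ near any definable target.

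The second point is that any element of $N$ that is an upper bound of $(b_n)$ has quasitrace at least $s^*$, since the backbone is cofinal in trace. For $z$ a supremum we would aim to show $\tau(z) = s^*$. If instead $\tau(z) > s^*$, an elementarity argument in $N$ should produce a strictly smaller upper bound. The tool is the formula $\inf_w\bigl(\ord(\cdot)\ \text{conditions}\bigr)$ evaluated in $N_k$, using the witnesses $y$ added in item 3.

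It then remains to show that an upper bound $u$ of $(b_n)$ with $\tau(u) = s^*$ is an upper bound of every $a_i$. Since $b_n \leq u$ and $a_i \leq a_{j}$ for large $j$, we have $\tau(\pos{a_i - u}) \le \tau(\pos{a_j - u})$, and I would aim to bound this by $\tau(a_j) - \tau(b_n) \to 0$ using monotonicity. Faithfulness then gives $a_i \leq u$ in $M$, so $u$ bounds the whole net. Combined with $\tau(u) = s^*$, this would make $u$ the supremum of the net in $M$, contradicting the hypothesis. This trace-minimality step needs genuine care and is the crux of the proof.
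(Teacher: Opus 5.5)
\textbf{There is a genuine gap: the trace-minimality repair cannot work.}

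Your L\"owenheim--Skolem chain in items 1--4 is a reasonable skeleton, and you locate the difficulty correctly. Defeating each point of the countable dense sets $Q_k$ qualitatively says nothing about their limit points, because ``$x$ is not a supremum'' is not a closed condition.

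The first problem is the bound $\tau(\pos{a_j-u}) \leq \tau(a_j)-\tau(b_n)$. Monotonicity (Lemma~\ref{lem:monotone-calculus}) only gives $\tau(\pos{a_j-u}) \leq \tau(a_j-b_n)$ for $b_n \leq u$ and $b_n \leq a_j$. Passing from $\tau(a_j-b_n)$ to $\tau(a_j)-\tau(b_n)$ is superadditivity of the quasitrace, and that cannot hold along the net in any instance of the hypothesis. Suppose $\tau(a_j-b_n) \leq \tau(a_j)-\tau(b_n) \leq s^*-\tau(b_n)$ for all $n$ and all $j$ beyond the index of $b_n$. Then Lemma~\ref{lem:abelian-comparison} gives $\dt(a_j,b_n)^3 \leq s^*-\tau(b_n) \to 0$. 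So the net is $\dt$-Cauchy and converges in the complete ball $D_1(M)$, and by continuity of $\ord$ its limit is its supremum, contradicting the hypothesis.

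This is the phenomenon behind Proposition~\ref{prop:half-graph}. The values $\tau(a_i)$ converge along \emph{every} bounded increasing net, so a trace-cofinal backbone carries no order information when $\tau$ is not additive. Here $\tau$ is never additive, since a finite \AWstar-factor whose quasitrace is a trace is a von Neumann algebra.

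The other two steps of the crux fail as well.
\begin{enumerate}
\item[(a)] The step ``$u$ bounds the net and $\tau(u)=s^*$, hence $u$ is the supremum'' needs $\tau(\pos{u-y}) \leq \tau(u-a_i) \to 0$ for every upper bound $y$. It therefore has the same defect.
\item[(b)] The step ``if $\tau(z)>s^*$, elementarity produces a strictly smaller upper bound'' has no mechanism behind it. The set of upper bounds of $(b_n)$ is cut out by countably many conditions $\ord(b_n,x)=0$, while elementarity only supplies approximate witnesses for single formulas.
\end{enumerate}

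\textbf{The missing idea is to make the defeat quantitative, with a uniform modulus, so that density does transfer.} Let $U_m$ be the set of self-adjoint upper bounds of the whole net with norm at most $m$. For self-adjoint $s \in D_1(M)$ put $\alpha(s)=\sup_i \ord(a_i,s)$ and $\beta_m(s)=\sup_{y\in U_m}\ord(s,y)$. A supremum of any family of functions sharing the modulus of $\ord$ has that same modulus. Moreover, $s$ is a supremum of the net exactly when $\alpha(s)=0$ and $\beta_m(s)=0$ for all $m$.

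At stage $n$, for each $s$ in the countable dense subset of $D_1(N_n)_{\sa}$, choose:
\begin{enumerate}
\item[(1)] an index $i$ with $\ord(a_i,s)>\alpha(s)-2^{-n}$;
\item[(2)] for each $m\leq n$, some $y\in U_m$ with $\ord(s,y)>\beta_m(s)-2^{-n}$.
\end{enumerate}
Put these into the next elementary substructure, and close the set of chosen indices under a fixed function $F$ with $F(i,j)\geq i,j$.

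Uniform continuity then gives two identities on all of $D_1(N)_{\sa}$, not only on the dense sets. The supremum over the countable directed set $J$ of chosen indices equals $\alpha$. The chosen $y\in N\cap U_m$ compute $\beta_m$.

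Now take $(b_n)$ cofinal in $J$, and suppose $s$ is a supremum of $(b_n)$ in $N_{\sa}$. Then $\alpha(s)=0$, so $s$ bounds the whole net. Since the net has no supremum, $\beta_m(s)>0$ for some $m$. Hence some chosen $y\in N$ is an upper bound of $(b_n)$ with $s\not\leq y$, a contradiction. No trace backbone is needed.
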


\begin{proof}
    For $m \geq 1$ let
    \[
        U_m = \{y \in M_{\sa} \ : \ \|y\| \leq m,\ a_i \leq y \text{ for all } i \in I\}.
    \]
    For self-adjoint $s \in D_1(M)$ put
    \[
        \alpha(s) = \sup_{i \in I} \ord(a_i, s), \qquad \beta_m(s) = \sup_{y \in U_m} \ord(s, y).
    \]
    These functions have the uniform continuity modulus of $\ord$.  A supremum of the net would lie in the positive unit ball, and $s$ is such a supremum exactly when $\alpha(s) = 0$ and $\beta_m(s) = 0$ for all $m$.

    Fix $F : I \times I \to I$ with $F(i,j) \geq i,j$.  Start with $J_0 = \varnothing$.  By the downward L\"owenheim--Skolem theorem, choose $\bN_0\preceq\bM$ of density character $\aleph_0$ containing $S$, and a countable dense set $S_0\subseteq D_1(N_0)_{\sa}$.  Recursively construct increasing elementary substructures $\bN_n\preceq\bM$, countable $F$-closed sets $J_n\subseteq I$, and countable dense sets $S_n\subseteq D_1(N_n)_{\sa}$.  At stage $n$, for each $s \in S_n$ choose $i(s,n)$ with
    \[
        \ord(a_{i(s,n)}, s) > \alpha(s) - 2^{-n},
    \]
    and, for $m \leq n$, choose $y(s, m, n) \in U_m$ with
    \[
        \ord(s, y(s, m, n)) > \beta_m(s) - 2^{-n}.
    \]
    Enlarge
    \[
        J_n \cup \{ i(s,n) \ : \ s \in S_n\}
    \]
    to a countable $F$-closed set $J_{n + 1} \subseteq I$.  Using the
    downward L\"owenheim–Skolem theorem, choose
    \[
        N_n \preceq N_{n + 1} \preceq M
    \]
    of density character $\aleph_0$ so that $N_{n + 1}$ contains all of the
    chosen elements $y(s, m, n)$ and, in addition,
    \[
        \{a_j \ : \ j \in J_{n + 1}\} \subseteq N_{n + 1}.
    \]
    Finally choose a countable dense set
    \[
        S_{n + 1} \subseteq D_1(N_{n + 1})_{\sa}.
    \]

    Let $\bN$ be the completion of $\bigcup_n \bN_n$ and $J = \bigcup_n J_n$.  The construction and uniform continuity give, for self-adjoint $s \in D_1(N)$,
    \[
        \sup_{j \in J} \ord(a_j, s) = \alpha(s),
    \]
    and the chosen upper bounds from $N \cap U_m$ compute $\beta_m(s)$.

    The countable directed set $J$ has an increasing cofinal sequence $(k_n)$.  Put $b_n = a_{k_n}$.  If $s \in N$ were the supremum of $(b_n)$, then $s$ would dominate every $a_j$ with $j \in J$, hence $\alpha(s) = 0$.  Thus $s$ is an upper bound for the original net.  Since the original net has no supremum, $\beta_m(s) > 0$ for some $m$, so some chosen $y \in N \cap U_m$ satisfies $\ord(s, y) > 0$. Then $y$ is an upper bound for $(b_n)$ in $N$ with $s \not\leq y$, contradicting the leastness of $s$ in $N_{\sa}$.
\end{proof}

\begin{theorem}\label{thm:elementary-reflection}
    Let $T = \Th(M)$ be the complete theory of a finite \AWstar-factor.  Then the following are equivalent:
    \begin{itemize}
        \item[(i)]  every model of $T$ is monotone complete,
        \item[(ii)]   every $\dt$-separable model of $T$ is monotone sequentially complete.
    \end{itemize}
\end{theorem}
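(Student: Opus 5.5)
The implication (i)$\Rightarrow$(ii) is immediate, because monotone completeness implies monotone sequential completeness. For (ii)$\Rightarrow$(i) we argue by contraposition and use Proposition~\ref{prop:reflection}. Suppose some model $\mathfrak{M}$ of $T$ is not monotone complete. Since $T$ contains the axioms of $\Tfac$ (they hold in $\bM$), Theorem~\ref{thm:elementary-factors} shows that $\mathfrak{M}$ is the structure associated to a finite \AWstar-factor $M'$. The first step is to reduce to increasing nets in the positive unit ball with no supremum. A bounded increasing net of self-adjoint elements lies between $-n$ and $n$ for some $n$. Adding $n$ and rescaling by $1/2n$ moves it into the positive unit ball, and this affine order isomorphism preserves the existence of suprema. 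So we may take an increasing net $(a_i)_{i\in I}$ in the positive unit ball of $M'$ with no supremum in $M'_{\sa}$.

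Next we apply Proposition~\ref{prop:reflection} to $M'$ with $S=\varnothing$. This gives a separable $\bN\preceq\mathfrak{M}$ and an increasing sequence $(b_n)$ drawn from the $a_i$ with no supremum in $N_{\sa}$. The sequence lies in $N$, since the construction puts every $a_j$ with $j\in J$ into $N$, and it is bounded in norm by $1$. Because $\bN\preceq\mathfrak{M}\models T$, we have $\bN\models T$. Thus $\bN$ is a $\dt$-separable model of $T$, namely a finite \AWstar-factor by Theorem~\ref{thm:elementary-factors}, which fails to be monotone sequentially complete. This contradicts (ii).

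The main care point is separability. Proposition~\ref{prop:reflection} produces $N$ as the completion of a countable union of elementary substructures of density character $\aleph_0$, so each sort $D_n(N)$ has a countable $\dt$-dense subset, and $N$ is $\dt$-separable in the sense of the statement. We should also check that the supremum in (ii) is taken in $N_{\sa}$ with respect to $N$'s own order. This is exactly what the proposition delivers, since the order on $N$ is the restriction of the order on $M'$: positivity is witnessed by the quantifier-free definable predicate $\ord$ of Lemma~\ref{lem:definability}(d), which elementary substructures preserve.
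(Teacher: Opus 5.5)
Your proof is correct and follows the paper's argument essentially step for step. You identify the bad model as a finite \AWstar-factor via Theorem~\ref{thm:elementary-factors}, translate and scale the net into the positive unit ball, and apply Proposition~\ref{prop:reflection} to get a $\dt$-separable elementary substructure that is a model of $T$ but not monotone sequentially complete; your remarks on separability and on suppositions being computed in $N$'s own order spell out details the paper leaves implicit.
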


\begin{proof}
    (i) $\implies$ (ii) is immediate.

    For (ii) $\implies$ (i) assume $P \models T$ is not monotone complete.  By Theorem~\ref{thm:elementary-factors}, $P$ is a finite \AWstar-factor.  By translating and scaling, we can ensure that the failure of monotone completeness occurs on the positive unit ball.  Proposition~\ref{prop:reflection} applied to $P$ yields separable $N \preceq P$ which is not monotone sequentially complete.  This contradicts (ii).
\end{proof}

\begin{corollary}\label{cor:separable-reduction}
    Every finite \AWstar-factor is monotone complete if and only if every $\dt$-separable \AWstar-factor of type $\mathrm{II}_1$ is monotone sequentially complete.
\end{corollary}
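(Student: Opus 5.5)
The forward direction is immediate, since a monotone complete algebra is in particular monotone sequentially complete. For the converse, suppose some finite \AWstar-factor $M$ is not monotone complete. By translating and scaling, the failure occurs on the positive unit ball: there is an increasing net $(a_i)$ of positive contractions with no supremum in $M_{\sa}$. We must produce a $\dt$-separable \AWstar-factor of type $\mathrm{II}_1$ which is not monotone sequentially complete. The natural route is to apply Theorem~\ref{thm:elementary-reflection} with $T = \Th(\bM)$, or more directly Proposition~\ref{prop:reflection} with $S = \varnothing$. This yields a separable $\bN \preceq \bM$ together with an increasing sequence $(b_n)$ from the net that has no supremum in $N_{\sa}$. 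By Theorem~\ref{thm:elementary-factors}, $N$ is a finite \AWstar-factor with its canonical quasitrace, because $\bN \models \Th(\bM) \supseteq \Tfac$. It is $\dt$-separable by construction.

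It remains to check that $N$ is of type $\mathrm{II}_1$. By Fact~\ref{fact:finite-factors}, $N$ is of type $\mathrm{I}_n$ or $\mathrm{II}_1$. We rule out the type $\mathrm{I}_n$ case by showing that $M$ itself cannot be of type $\mathrm{I}_n$.

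1. A type $\mathrm{I}_n$ \AWstar-factor is $M_n(\bbC)$. This follows from Kaplansky's structure theory, since the center is $\bbC$ and $M$ is then a matrix algebra over an abelian factor. In particular it is finite-dimensional and hence monotone complete.
2. So $M$ is of type $\mathrm{II}_1$.
3. Being of type $\mathrm{I}_n$ for a fixed $n$ is elementary. It can be expressed by the sentence stating that there are $n$ orthogonal projections each of quasitrace $1/n$ summing to $1$ and that there is no nonzero projection of quasitrace less than $1/n$; both properties are detectable via the definable set $\Proj(A)$ of Lemma~\ref{lem:definability}(f).
4. Since $\bN \equiv \bM$, if $N$ were $M_n(\bbC)$ then so would be $M$, a contradiction.

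Alternatively, one can argue directly that a finite-dimensional $N$ is monotone complete, which contradicts the failure of monotone sequential completeness just obtained. This direct argument is the cleanest, and it is the one I would use: it avoids writing axioms for type $\mathrm{II}_1$. Either way, $N$ is of type $\mathrm{II}_1$.

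The only delicate point is the type check, together with confirming that "no supremum in $N_{\sa}$" for the sequence $(b_n)$ means exactly that $N$ fails monotone sequential completeness. For the latter, note that the $b_n$ are positive contractions in $N$, bounded above by $1$, so they form a bounded increasing sequence in $N$ with no least upper bound, which is the definition of failure.
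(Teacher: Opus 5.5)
Your proof is correct and is essentially the paper's argument. Proposition~\ref{prop:reflection} yields a $\dt$-separable $\bN \preceq \bM$ that is not monotone sequentially complete, Theorem~\ref{thm:elementary-factors} makes $N$ a finite \AWstar-factor, and your direct observation (type $\mathrm{I}_n$ factors are finite-dimensional, hence monotone complete) together with Fact~\ref{fact:finite-factors} forces $N$ to be of type $\mathrm{II}_1$; the elementary-equivalence detour in your steps 3--4 is unnecessary, as you note.
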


\begin{proof}
    One direction is immediate.  For the other, suppose $M$ is a finite \AWstar-factor which is not monotone complete.  As in the proof of Theorem~\ref{thm:elementary-reflection}, Proposition~\ref{prop:reflection} gives a $\dt$-separable $N \preceq M$ which is not monotone sequentially complete, and $N$ is a finite \AWstar-factor by Theorem~\ref{thm:elementary-factors}.  Factors of type $\mathrm{I}_n$ are finite dimensional, hence monotone complete, so by Fact~\ref{fact:finite-factors} $N$ is of type $\mathrm{II}_1$.
\end{proof}

\section{Arithmetic encoding and absoluteness}\label{sec:absolute}

We have seen that $\Tqt$ and $\Tfac$ are explicit computable theories.  We now recall the completeness theorem of Ben Yaacov and Pedersen for continuous logic.  Let $\bbD$ denote the dyadic rationals.

\begin{fact}[{\cite[Corollary~9.8]{BYP10}}]\label{fact:completeness-theorem}
    Let $L$ be a recursive language, $T$ a recursive $L$-theory, and $\sigma$ an $L$-sentence in the basic syntax of~\cite{BYP10}.  Then
    \[
        \sup\{\sigma^M \ : \ M \models T\}
         = 
        \inf\{q \in \bbD_{>0} \ : \ T \vdash \sigma \dminus q\}.
    \]
\end{fact}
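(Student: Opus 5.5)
This statement is the completeness theorem of Ben Yaacov and Pedersen, quoted as a Fact, and the paper takes it from \cite[Corollary~9.8]{BYP10} without proof. Still, here is how I would argue it. The plan is to prove the two inequalities separately, with soundness giving one direction and the completeness theorem for continuous logic giving the other.

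Write $s = \sup\{\sigma^M : M \models T\}$ and $r = \inf\{q \in \bbD_{>0} : T \vdash \sigma \dminus q\}$.

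\emph{Soundness gives $s \leq r$.} Suppose $T \vdash \sigma \dminus q$. Soundness of the Ben Yaacov--Pedersen proof system, checked axiom by axiom and rule by rule, shows $(\sigma \dminus q)^M = 0$ in every model $M$ of $T$. Hence $\sigma^M \leq q$ for every model, so $s \leq q$. Taking the infimum over such $q$ gives $s \leq r$.

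\emph{Completeness gives $r \leq s$.} Fix a dyadic $q > s$; it suffices to show $T \vdash \sigma \dminus q'$ for dyadic $q'$ arbitrarily close to $q$.

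1. Pick dyadic $q'$ with $s < q' < q$.
2. Every model of $T$ satisfies $\sigma \dminus q' = 0$, so $\sigma \dminus q'$ is a semantic consequence of $T$.
3. By the approximate completeness theorem of \cite{BYP10}, $T \vdash (\sigma \dminus q') \dminus 2^{-k}$ for every $k$.
4. Choosing $q' + 2^{-k} \leq q$, the formal identity $(\sigma \dminus q') \dminus 2^{-k} = \sigma \dminus (q' + 2^{-k})$ gives $T \vdash \sigma \dminus (q' + 2^{-k})$.
5. Monotonicity of $\dminus$ in the subtracted constant then gives $T \vdash \sigma \dminus q$.

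So every dyadic $q > s$ is admissible in the infimum, and $r \leq s$.

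Two edge cases need checking. If $T$ has no models, the supremum should be read as $0$ and $T$ proves everything, so both sides agree. If $s$ is negative, only positive dyadics occur on the right, so the statement is to be understood with $\sigma$ taking values in $[0,1]$, as in the basic syntax.

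The main obstacle is step 3, the approximate completeness theorem itself: if $T \models \varphi$ then $T \vdash \varphi \dminus 2^{-k}$ for all $k$. That is the substance of \cite{BYP10}. It is proved through a Henkin construction of a model of any consistent theory, with formal deducibility replaced by a Pavelka-style graded version. Here I would simply invoke it, not reprove it.
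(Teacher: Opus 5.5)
Your proposal is correct and matches the paper, which gives no proof and imports the identity from \cite[Corollary~9.8]{BYP10}; your route (soundness for $\sup \leq \inf$, approximated strong completeness for $\sup \geq \inf$) is the standard derivation of that corollary. One point to state more carefully: the passage in steps 4 and 5 from $(\sigma \dminus q') \dminus 2^{-k}$ to $\sigma \dminus q$ is not a syntactic identity, because dyadic constants are composite formulas, but a provable implication in the propositional fragment of the Ben Yaacov--Pedersen system; even an approximate version of it would suffice, since the infimum absorbs the error.
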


The formulas used below employ the recursively approximable derived connectives fixed in Section~\ref{sec:language}.  To make the arithmetization literal, let $\varphi$ be any such displayed sentence and let $\varphi^{[m]}$ be its fixed basic-syntax approximants.  For a positive dyadic $q$, write
\[
    \operatorname{Prv}_T(\varphi < q)
\]
for the arithmetic assertion that there are $m \in \bbN$ and $s \in \bbD_{\geq 0}$ such that
\[
    s + 2^{-m} < q \qquad \text{and} \qquad T \vdash \varphi^{[m]} \dminus s.
\]
Since $T$ is recursive and the approximation scheme is recursive, this is a $\Sigma^0_1$ relation in the codes of $T,\varphi,q$.  Fact~\ref{fact:completeness-theorem} and the uniform approximation give
\begin{equation}\label{eq:effective-completeness}
    \operatorname{Prv}_T(\varphi<q) \quad \Longleftrightarrow \quad \sup\{\varphi^M \ : \ M \models T\}<q.
\end{equation}
Indeed, the forward implication is soundness plus the error $2^{-m}$.  Conversely, choose $m$ so that $\sup_M\varphi^M + 2\cdot 2^{-m} < q$.  Then $\sup_M(\varphi^{[m]})^M < q-2^{-m}$, so Fact~\ref{fact:completeness-theorem} gives a dyadic $s < q-2^{-m}$ with $T \vdash \varphi^{[m]} \dminus s$.

We first give an explicit sentence encoding Kaplansky's problem.  Let $\delta$ be the $\Lqt$-sentence
\[
    \delta = \frac{1}{2}\sup_{x,y \in D_1} \bigl|\tre(x^*x + y^*y) - \tre(x^*x) - \tre(y^*y)\bigr|.
\]

\begin{proposition}\label{prop:Kap-encoding}
    Let $M$ be a finite \AWstar-factor.  Then $\delta^{\bM} = 0$ if and only if $M$ is a von Neumann algebra.
\end{proposition}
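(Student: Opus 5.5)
We need to show that $\delta^{\bM}=0$ exactly when $\tau_M$ is additive on positive elements, and then use Fact~\ref{fact:finite-factors}: $M$ is a von Neumann algebra if and only if $\tau_M$ is a trace.

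For the forward direction, assume $M$ is a von Neumann algebra. Then $\tau_M$ is a trace by Fact~\ref{fact:finite-factors}, so it is linear and $\tre(x^*x+y^*y)=\tre(x^*x)+\tre(y^*y)$ for all $x,y$. Hence $\delta^{\bM}=0$. Here we interpret $x^*x+y^*y$ in $D_2$ and $\tre$ as $\tre_2$, which the axioms (Q1) make compatible.

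For the converse, assume $\delta^{\bM}=0$. Every positive $a$ with $\|a\|\le 1$ equals $x^*x$ for $x=a^{1/2}\in D_1$. So $\tau$ is additive on pairs of positive contractions, and by homogeneity of $\tau$ on positive elements it is additive on all of $A_+$.

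Next we extend linearity. For self-adjoint $h,k$, write $h=h_+-h_-$ and $k=k_+-k_-$. Note that $\tau$ is linear on $\text{\cstar}(1,h)$, so $\tau(h)=\tau(h_+)-\tau(h_-)$. The key identity is
\[
(h+k)_+ + h_- + k_- = (h+k)_- + h_+ + k_+ ,
\]
an equality of sums of positive elements. Applying additivity on $A_+$ to both sides gives $\tau(h+k)=\tau(h)+\tau(k)$. Real homogeneity on $A_{\sa}$ comes from linearity on $\text{\cstar}(1,h)$. Property (ii) in the definition of a quasitrace, $\tau(a+ib)=\tau(a)+i\tau(b)$, then makes $\tau$ complex linear on $A$.

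It remains to get the trace property. A linear map with $\tau(x^*x)=\tau(xx^*)$ satisfies $\tau(xy)=\tau(yx)$ by polarization: expand $\tau\bigl((x+i^k y^*)^*(x+i^ky^*)\bigr)$ over $k=0,\dots,3$. Thus $\tau$ is a faithful normal tracial state, and Fact~\ref{fact:finite-factors} makes $M$ a von Neumann algebra.

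No step looks seriously difficult. The main point to check is that $\delta$ really ranges over all pairs of positive contractions; this holds because square roots of positive contractions lie in $D_1$. Everything else is routine algebra from additivity on the positive cone.
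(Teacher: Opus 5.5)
Your proof is correct and follows essentially the same route as the paper. You get additivity on positive contractions from $\delta^{\bM}=0$, scale to $M_{+}$, extend to $M_{\sa}$, use (Q5)/property (ii) for complex linearity, and conclude with Fact~\ref{fact:finite-factors}. The only differences are that you pass from $M_{+}$ to $M_{\sa}$ via the Jordan-decomposition identity rather than the paper's shift $a=(a+\|a\|)-\|a\|$, and you spell out the polarization step for traciality, which the paper leaves implicit.
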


\begin{proof}
    If $\delta^{\bM} = 0$, then $\tau_M$ is additive on the positive part of the unit ball, and hence on $M_{+}$ by scaling.  It follows that $\tau_M$ is additive on $M_{\sa}$ by writing $a = (a + \|a\|) - \|a\|$.  It follows by (Q5) that $\tau_M$ is linear.  Thus $\tau_M$ is a trace and therefore $M$ is a von Neumann algebra by Fact~\ref{fact:finite-factors}.  Conversely, if $M$ is a von Neumann algebra, then $\tau_M$ is a trace and $\delta^{\bM} = 0$.
\end{proof}

For the monotone completeness conjecture we need a uniform characterization by monotone chains.  The next definition is inspired by stability theory: a monotone half-graph is an instance of the order property for the predicate $\theta$~\cite{BYU10}.

\begin{definition}\label{def:monotone-chains}
    Let $(A,\tau)$ be a quasitracial \AWstar-algebra.  A \textbf{monotone half-graph of gap $\eta > 0$} is a sequence $(c_n)$ of positive contractions such that $c_n \leq c_m$ and $\theta(c_m, c_n) \geq \eta$ whenever $n < m$.  
    
    For $\eta > 0$ and $\varepsilon \geq 0$, an \textbf{$(\eta,\varepsilon)$-chain of length $k$} is a tuple $c_1, \ldots, c_k$ of positive contractions such that $\theta(c_i, c_j) \leq \varepsilon$ and $\theta(c_j, c_i) \geq \eta$ whenever $i < j$.
\end{definition}

\begin{proposition}\label{prop:half-graph}
    If a quasitracial \AWstar-algebra $(A,\tau)$ admits no monotone half-graph of positive gap, then every increasing net in the positive unit ball of $A$ converges in $\dt$ to its supremum.  In particular $A$ is monotone complete.
\end{proposition}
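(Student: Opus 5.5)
Let $(a_i)_{i\in I}$ be an increasing net in the positive unit ball of $A$. The plan is to show the net is $\dt$-Cauchy, take its limit in the complete ball $D_1$, and check that this limit is the supremum.

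\textbf{Step 1: the net is $\dt$-Cauchy.} Suppose not. Then there are $\epsilon>0$ and, for every $i$, some $j\geq i$ with $\dt(a_j,a_i)\geq\epsilon$. I would convert this into a lower bound on $\theta(a_j,a_i)=\tau(\pos{a_j-a_i})$. Since $a_j\geq a_i$, the difference $a_j-a_i$ is a positive element of norm at most $1$, so $\pos{a_j-a_i}=a_j-a_i$. Lemma~\ref{lem:abelian-comparison} then gives
\[
\tau(a_j-a_i)\geq\dt(a_j-a_i,0)^3\geq\epsilon^3.
\]
Setting $\eta=\epsilon^3$, I would choose recursively $i_1\leq i_2\leq\cdots$ with $\theta(a_{i_{n+1}},a_{i_n})\geq\eta$. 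This bounds only consecutive terms. For $n<m$ we need $\theta(a_{i_m},a_{i_n})\geq\eta$, and here I use that $\theta(x,y)=\tau(x-y)$ is order preserving in $x$ when $x\geq y$ (Fact~\ref{fact:haagerup-metric}(a)). Hence
\[
\tau(a_{i_m}-a_{i_n})\geq\tau(a_{i_{n+1}}-a_{i_n})\geq\eta.
\]
Together with $c_n=a_{i_n}$ increasing, this produces a monotone half-graph of gap $\eta$, contradicting the hypothesis.

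\textbf{Step 2: the limit is an upper bound.} By completeness of $D_1$ (Definition~\ref{def:quasitracial}), the net converges in $\dt$ to some $a\in D_1$. I would show $a\geq a_i$ for each $i$. The zero set of $\ord$ is closed because $\ord$ is uniformly continuous (Lemma~\ref{lem:definability}(d)). For $j\geq i$ we have $\ord(a_i,a_j)=0$, so $\ord(a_i,a)=0$ in the limit, i.e.\ $a_i\leq a$. Since each $a_i\geq 0$, also $a\geq 0$.

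\textbf{Step 3: the limit is least.} Let $y$ be any self-adjoint upper bound; by scaling we may take $y$ in a fixed ball $D_n$. Then $\ord(a_j,y)=0$ for all $j$, and continuity of $\ord$ on $D_n$ (after including $a_j$ and $a$ via the isometric inclusions) gives $\ord(a,y)=0$, so $a\leq y$. Thus $a$ is the supremum and the net converges to it.

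For the \emph{in particular}, a general bounded increasing net is reduced to the positive unit ball by translating and scaling, as in Theorem~\ref{thm:elementary-reflection}. I expect the main subtle step to be Step 1's passage from consecutive gaps to all pairs $n<m$, which relies on monotonicity of $\tau$ on positive differences. The remaining steps are continuity arguments.
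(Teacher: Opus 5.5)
Your proposal is correct and follows essentially the same route as the paper: failure of $\dt$-Cauchyness gives $\dt(a_j,a_i)\geq\epsilon$ (via a triangle-inequality step you leave implicit), and Lemma~\ref{lem:abelian-comparison} together with monotonicity of $\tau$ turns the recursively chosen subsequence into a monotone half-graph of gap $\epsilon^3$. Your Steps 2 and 3 spell out the paper's one-line final step, namely completeness of $D_1$ plus continuity of $\ord$; the only unstated detail is that the $\dt$-limit is self-adjoint, which holds because $x\mapsto x^*$ is a $\dt$-isometry.
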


\begin{proof}
    Let $(a_i)$ be an increasing net of positive contractions of $A$ that is not $\dt$-Cauchy.  Then there exists $\delta > 0$ such that for all $i$, there are $j,k \geq i$ with $\dt(a_j, a_k) \geq \delta$.  By the triangle inequality, we may assume without loss of generality that $\dt(a_j, a_i) \geq \frac{\delta}{2}$.  Since $0 \leq a_j - a_i \leq 1$, Lemma~\ref{lem:abelian-comparison} gives $\eta : = (\frac{\delta}{2})^3 \leq \tau(a_j - a_i)$.  Starting from any $i_0$, choose recursively $i_{n + 1} \geq i_n$ with $\tau(a_{i_{n + 1}} - a_{i_n}) \geq \eta$.  For $n < m$, we have $a_{i_m} - a_{i_n} \geq a_{i_{n + 1}} - a_{i_n} \geq 0$.  Since $\tau$ is order-preserving, we have $\theta(a_{i_m}, a_{i_n}) = \tau(a_{i_m} - a_{i_n}) \geq \eta$.  Then $(a_{i_n})$ defines a monotone half-graph in $A$ of gap $\eta$.

    Thus, if $A$ admits no monotone half-graphs, then all increasing nets are $\dt$-Cauchy and hence converge.  Continuity of $\ord$ makes the limit agree with the supremum.
\end{proof}

\begin{lemma}\label{lem:saturated-half-graph}
    No monotone sequentially complete $\aleph_1$-saturated quasitracial \AWstar-algebra admits a monotone half-graph of positive gap.
\end{lemma}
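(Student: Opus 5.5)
Suppose toward a contradiction that $(A,\tau)$ is monotone sequentially complete and $\aleph_1$-saturated, and that $(c_n)_{n\in\bbN}$ is a monotone half-graph of gap $\eta>0$. The plan is to use saturation to stretch this sequence into an increasing sequence that keeps its gap but has an extra element on top of it. Monotone sequential completeness then forces the $\dt$-limit to sit too close to that top element, which contradicts the gap.

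The first step is to extend the sequence past $\omega$. Consider the countable set of conditions in a variable $x$ ranging over the positive unit ball, which is a definable set by Lemma~\ref{lem:definability}(b):
\[
    \ord(c_n, x) = 0, \qquad \theta(x, c_n) \geq \eta \qquad (n\in\bbN).
\]
These use the definable predicates $\ord$ and $\theta$ from Lemma~\ref{lem:definability}(d),(e), with parameters $c_n$. Each finite subset is satisfied by $x = c_m$ for $m$ large, since $c_n\le c_m$ and $\theta(c_m,c_n)\ge\eta$ for $n<m$. By $\aleph_1$-saturation, applied in the usual continuous-logic form with definable predicates and definable sorts, there is a positive contraction $c_\omega$ with $c_n \le c_\omega$ and $\tau(c_\omega - c_n) = \theta(c_\omega,c_n) \ge \eta$ for all $n$. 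Here $\theta$ equals $\tau$ of the difference because $c_\omega - c_n \geq 0$.

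The second step uses completeness. The sequence $(c_n)$ is increasing and bounded above by $c_\omega$, so monotone sequential completeness gives a supremum $s$, and $c_n \le s \le c_\omega$. The supremum alone gives nothing quantitative, so I need $\tau(s - c_n) \to 0$. Since $\tau$ is order preserving (Fact~\ref{fact:haagerup-metric}(a)) and linear on commuting pairs, the numbers $\tau(c_n)$ increase and converge to some limit $L\le 1$. However, $\tau(c_m - c_n)$ need not equal $\tau(c_m)-\tau(c_n)$, because $c_m$ and $c_n$ need not commute. So instead of taking the supremum directly, I would saturate a second time to choose the top element economically. Let $L = \lim_n \tau(c_n)$. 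Realize, again by $\aleph_1$-saturation, an $x$ in the positive unit ball with $c_n \le x$ for all $n$ and $\tau(x) \le L$. The finite subsets of these conditions are approximately satisfied by $x = c_m$, because the condition $\tau(c_m)\le L$ holds exactly. Now $\tau(x)\geq\tau(c_n)$ for every $n$, so $\tau(x) = L$.

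The obstacle is to turn $\tau(x) \approx \tau(c_n)$ into $\tau(x - c_n)$ small, since $\tau$ is not known to be additive on non-commuting elements. This is the step I expect to be hardest. My first attempt would be the quasitrace subadditivity of Haagerup type: for positive $a \le b$, bound $\tau(b - a)$ by a power of $\tau(b)-\tau(a)$, via $\dt$ and Fact~\ref{fact:haagerup-metric}(b). The point is that $\dt(b,a)$ controls $\tau(b-a)$ by Lemma~\ref{lem:abelian-comparison}, and $\dt(b, a)$ should be controllable from monotonicity.

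If that fails, I would put both conditions into the type directly and drop the comparison with the limit. Realize $x$ with $c_n\le x$ for all $n$ and with $\theta(x, c_m)\ge\eta$ for all $m$, together with a witness $y$ sitting between. Then I would argue through Proposition~\ref{prop:half-graph}'s converse: in $A$ the increasing sequence $(c_n)$ has a supremum $s$, and $s$ is a $\dt$-limit by normality of $\tau$ (Fact~\ref{fact:completion}). This gives $\tau(s - c_n)\to 0$ by Lemma~\ref{lem:monotone-calculus} and $\dt$-continuity (Fact~\ref{fact:haagerup-metric}(d)), which contradicts $\tau(s - c_n)\ge \tau(c_{n+1}-c_n)\ge\eta$. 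In fact this normality route, that the supremum in an \AWstar-algebra with normal $\tau$ is the $\dt$-limit, may already suffice without the first saturation. Saturation is then used only to guarantee an upper bound exists, namely $c_\omega$, so that the sequence is bounded and its supremum exists.
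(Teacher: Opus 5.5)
You have a genuine gap. The step your final route rests on, that the supremum $s$ of $(c_n)$ is its $\dt$-limit ``by normality of $\tau$'', is not supported by anything in the paper or in Haagerup's results, and it is essentially the content of the lemma.

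\begin{itemize}
\item Normality of $\tau$ (Fact~\ref{fact:completion}) concerns the values of $\tau$ along increasing families. At best it gives $\tau(c_n)\to\tau(s)$.
\item Turning $\tau(s)-\tau(c_n)\to 0$ into $\tau(s-c_n)\to 0$ for noncommuting $c_n$ and $s$ is exactly the nonadditivity obstacle you identified yourself.
\item Lemma~\ref{lem:monotone-calculus} and Fact~\ref{fact:haagerup-metric}(d) yield $\tau(s-c_n)\to 0$ only once $\dt$-convergence is already known, so the step is circular.
\item If suprema were automatically $\dt$-limits in every quasitracial \AWstar-algebra, the lemma would hold with no saturation hypothesis at all. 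A half-graph with supremum $s$ would converge in $\dt$, hence be $\dt$-Cauchy, contradicting the gap. That is a sign this claim carries the whole weight of the proof.
\item Your first fallback, bounding $\tau(b-a)$ by a power of $\tau(b)-\tau(a)$ for $0\le a\le b$, is also unsupported. The estimates in Fact~\ref{fact:haagerup-metric} bound $\dt$ of a sum from above and give no reverse inequality of this kind.
\end{itemize}

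The missing idea is where to place the saturated element. Realizing $c_\omega$ above the sequence is pointless: $1$ is already an upper bound, so monotone sequential completeness gives $s$ directly. Realizing $x$ above the sequence with $\tau(x)=L$ still leaves the obstacle in place. The realized element should sit between the sequence and $s$, so that it contradicts the leastness of $s$.

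The paper does exactly this. Suppose $c_n\not\to s$ in $\dt$, and pick $\delta>0$ with $\dt(c_m,s)\ge\delta$ for infinitely many $m$. The countable type $c_n\le x$ $(n\in\bbN)$, $x\le s$, $\dt(x,s)\ge\delta$ is finitely satisfied by such $c_m$. A realization is then an upper bound of $(c_n)$ that lies below $s$ and differs from $s$, which is impossible. Hence $c_n\to s$ in $\dt$. Lemma~\ref{lem:abelian-comparison} then gives $\theta(c_m,c_n)=\tau(c_m-c_n)\le\dt(c_m,c_n)^{3/2}\to 0$, contradicting the gap $\eta$.

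Your own observation $\tau(s-c_m)\ge\tau(c_{m+1}-c_m)\ge\eta$ would even let you shortcut this. The type $c_n\le x\le s$, $\theta(s,x)\ge\eta$ is finitely satisfied by $c_m$ for large $m$. Its realization satisfies $s\le x\le s$ but $\theta(s,x)\ge\eta>0$, which contradicts $s=\sup_n c_n$ directly.
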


\begin{proof}
    Suppose otherwise.  Let $A$ be a monotone sequentially complete $\aleph_1$-saturated quasitracial \AWstar-algebra and $(c_n)$ be a monotone half-graph in $A$ with gap $\eta$ and supremum $s$.  
    
    Assume toward an intermediate contradiction that $c_n$ does not converge to $s$ in $\dt$.  Then there exist $\delta > 0$ and infinitely many $m$ with $\dt(c_m, s) \geq \delta$.  The countable type
    \[
        c_n \leq x \quad (n \in \bbN), \qquad x \leq s, \qquad \dt(x,s) \geq \delta
    \]
    is finitely satisfied by $c_m$ for large $m$, so saturation gives an upper bound $x \leq s$ with $x \not= s$, which is a contradiction.
    
    Hence $(c_n)$ is $\dt$-Cauchy, and for $n < m$ Lemma~\ref{lem:abelian-comparison} gives $\theta(c_m,c_n) = \tau(c_m - c_n) \leq \dt(c_m, c_n)^{3/2}$, which tends to $0$.  This contradicts $\theta(c_m, c_n) \geq \eta$.
\end{proof}

\begin{proposition}\label{prop:uniform-chains}
    Every finite \AWstar-factor is monotone complete if and only if, for every $\eta > 0$, there are $\varepsilon > 0$ and $k$ such that no finite \AWstar-factor contains an $(\eta,\varepsilon)$-chain of length $k$.
\end{proposition}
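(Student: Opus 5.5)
Both directions go through ultraproducts and saturation, using Corollary~\ref{cor:factor-ultraproduct}, Lemma~\ref{lem:saturated-half-graph} and Proposition~\ref{prop:half-graph}.

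\emph{Forward direction.} Assume every finite \AWstar-factor is monotone complete, and suppose the uniform bound fails for some $\eta>0$. Then for each $k$ there is a finite \AWstar-factor $M_k$ containing an $(\eta,1/k)$-chain $c^{(k)}_1,\dots,c^{(k)}_k$. Take a nonprincipal ultrafilter $\cU$ on $\bbN$ and form $M=\prod_\cU(M_k,\tau_{M_k})$. By Corollary~\ref{cor:factor-ultraproduct}, $M$ is a finite \AWstar-factor, and as a countably indexed ultraproduct it is $\aleph_1$-saturated.

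Next I build a monotone half-graph in $M$. Consider the countable type in variables $(x_n)_{n\in\bbN}$ ranging over positive contractions, with conditions
\[
\theta(x_n,x_m)=0,\qquad \theta(x_m,x_n)\ge\eta \qquad (n<m).
\]
Since $\theta$ is definable by Lemma~\ref{lem:definability}(e), these are legitimate conditions. Any finite part involving $x_1,\dots,x_N$ is approximately satisfied in $M$ by the classes of the chains $c^{(k)}$ with $k\ge N$, using \L o\'s's theorem. Saturation then realizes the type.

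The zero set of $\theta(x_n,x_m)$ is exactly $x_n\le x_m$: if $\tau(\pos{x_n-x_m})=0$, faithfulness gives $\pos{x_n-x_m}=0$. So the realization is a monotone half-graph of gap $\eta$. But $M$ is monotone complete by hypothesis, so in particular monotone sequentially complete, contradicting Lemma~\ref{lem:saturated-half-graph}.

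\emph{Reverse direction.} Suppose some finite \AWstar-factor $N$ is not monotone complete. By Proposition~\ref{prop:half-graph}, $N$ has a monotone half-graph $(c_n)$ of some gap $\eta>0$. Its first $k$ terms form an $(\eta,0)$-chain of length $k$ for every $k$, since $c_i\le c_j$ gives $\theta(c_i,c_j)=0$. Such a chain is in particular an $(\eta,\varepsilon)$-chain for every $\varepsilon>0$. Hence for this $\eta$ no pair $\varepsilon,k$ works, so the uniform condition fails.

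The main obstacle is the saturation step in the forward direction. I need to check that the approximate chain conditions, $\theta\le 1/k$ and $\theta\ge\eta$, pass through \L o\'s in the limit to exact conditions $\theta=0$ and $\theta\ge\eta$. This uses that $\theta$ is a uniformly definable predicate on the positive unit ball, with a continuity modulus uniform across all models of $\Tfac$. I also need to confirm that countable quasitracial ultraproducts are $\aleph_1$-saturated in this many-sorted setting, where the type lives in the single sort $D_1$.
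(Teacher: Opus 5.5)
Your proposal is correct and follows the paper's proof essentially step for step. One direction uses the half-graph from Proposition~\ref{prop:half-graph}, whose initial segments are $(\eta,0)$-chains. The other uses a nonprincipal ultraproduct of factors carrying $(\eta,1/k)$-chains, Corollary~\ref{cor:factor-ultraproduct}, $\aleph_1$-saturation to realize a half-graph, and Lemma~\ref{lem:saturated-half-graph}. The only cosmetic difference is that the paper notes that \L o\'s's theorem, applied to the first $k$ entries of the chains for $j\ge k$, already gives exact $(\eta,0)$-chains of every length in the ultraproduct. So the type is finitely satisfiable outright, not merely approximately.
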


\begin{proof}
    Suppose the uniform bounds hold, and let $M$ be a finite \AWstar-factor which is not monotone complete.  Proposition~\ref{prop:half-graph} gives a monotone half-graph $(c_n)$ of some gap $\eta > 0$, and for every $\varepsilon$ and $k$ the tuple $c_1, \ldots, c_k$ is an $(\eta,\varepsilon)$-chain, a contradiction.

    Conversely, suppose every finite \AWstar-factor is monotone complete and the bounds fail at some $\eta > 0$.  For each $j$ choose a finite \AWstar-factor $M_j$ containing an $(\eta,1/j)$-chain of length $j$, and let $M = \prod_{\cU} M_j$ for a nonprincipal ultrafilter $\cU$ on $\bbN$.  By Corollary~\ref{cor:factor-ultraproduct}, $M$ is a finite \AWstar-factor, and it is $\aleph_1$-saturated~\cite{BBHU08}.  Taking the first $k$ entries of the chains for $j \geq k$ and applying \L o\'s's theorem gives $(\eta,0)$-chains of every length in $M$.  So the countable type in $(x_n)$ asserting that each $x_n$ is a positive contraction with $\theta(x_n,x_m) = 0$ and $\theta(x_m,x_n) \geq \eta$ for $n < m$ is finitely satisfiable, and saturation realizes it.  Since $\tau$ is faithful, the realization is a monotone half-graph of gap $\eta$.  But $M$ is monotone complete by hypothesis, so this contradicts Lemma~\ref{lem:saturated-half-graph}.
\end{proof}

We now express these chain conditions by definable sentences.  For dyadic $\eta \in (0,1]$, dyadic $\varepsilon \in [0,1]$ and $k \geq 2$, let
\[
    \psi_{\eta,\varepsilon,k} = \inf_{x_1, \ldots, x_k \in D_1} \max_{i < i' \leq k} \max\bigl\{\theta(c_i,c_{i'}) \dminus \varepsilon,\, \eta \dminus \theta(c_{i'},c_i) \bigr\},
\]
where $c_i = x_i^*x_i$.  By Lemma~\ref{lem:definability} and the conventions of Section~\ref{sec:language}, the family $\psi_{\eta,\varepsilon,k}$ has recursive basic-syntax approximants uniformly in the dyadic parameters and in $k$.

\begin{lemma}\label{lem:chain-attain}
    No finite \AWstar-factor contains an $(\eta,\varepsilon)$-chain of length $k$ if and only if there is a rational $r > 0$ with $\Tfac \models \psi_{\eta,\varepsilon,k} \geq r$.
\end{lemma}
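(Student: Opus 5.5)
The proof is a compactness argument through quasitracial ultraproducts. Here $\Tfac \models \psi_{\eta,\varepsilon,k} \geq r$ means that $\psi_{\eta,\varepsilon,k}^{\bM} \geq r$ for every finite \AWstar-factor $M$. The key observation is that $\psi^{\bM} = 0$ need not mean that $M$ contains a chain, because the infimum may fail to be attained. Attainment is exactly what ultraproducts and \L o\'s's theorem provide.

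We first translate the definition. Every positive contraction has the form $x^*x$ with $x \in D_1$: take $x = c^{1/2}$. Conversely, every $x^*x$ with $x \in D_1$ is a positive contraction. So, with $c_i = x_i^*x_i$, a tuple $x_1,\ldots,x_k \in D_1$ makes the max-expression inside $\psi_{\eta,\varepsilon,k}$ equal to $0$ exactly when $c_1,\ldots,c_k$ is an $(\eta,\varepsilon)$-chain. Here $\theta$ is the interpretation of the order-defect predicate from Lemma~\ref{lem:definability}(e), with the $\dminus$ connectives read as usual.

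For the backward direction, suppose $\Tfac \models \psi \geq r > 0$. If some finite \AWstar-factor $M$ contained an $(\eta,\varepsilon)$-chain, then taking $x_i = c_i^{1/2}$ would give $\psi^{\bM} = 0$. By Theorem~\ref{thm:elementary-factors}, $\bM \models \Tfac$, so this contradicts $\psi^{\bM} \geq r$.

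For the forward direction, argue by contraposition. Suppose no rational $r>0$ works. Then for each $j$ there is a model of $\Tfac$, which by Theorem~\ref{thm:elementary-factors} is $\bM_j$ for a finite \AWstar-factor $M_j$, with $\psi^{\bM_j} < 1/j$.

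1. Pick witnesses $x^{(j)}_1,\ldots,x^{(j)}_k \in D_1(M_j)$ at which the max-expression is less than $1/j$.
2. Form $M = \prod_{\cU} M_j$ for a nonprincipal ultrafilter $\cU$ on $\bbN$. By Corollary~\ref{cor:factor-ultraproduct}, $M$ is a finite \AWstar-factor.
3. By Lemma~\ref{lem:ultraproduct}, $\bM$ is the model-theoretic ultraproduct of the $\bM_j$. Let $x_i = [x^{(j)}_i]_j \in D_1(M)$.
4. \L o\'s's theorem applies to the quantifier-free formula $\max_{i<i'}\max\{\theta(c_i,c_{i'}) \dminus \varepsilon,\ \eta \dminus \theta(c_{i'},c_i)\}$, or to its basic-syntax approximants passed to the limit. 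It gives that this formula evaluates to $\lim_{\cU} < \lim_{\cU} 1/j = 0$ at $(x_1,\ldots,x_k)$, so it equals $0$ there.
5. Hence $c_i = x_i^*x_i$ is an $(\eta,\varepsilon)$-chain of length $k$ in the finite \AWstar-factor $M$, contradicting the hypothesis.

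The main obstacle is the legitimacy of step 4. The predicate $\theta$ is only definable, not atomic: it involves the functional calculus $\pos{\cdot}$ and $\tau$ on positive balls. So we must check that its interpretation in the ultraproduct is the $\cU$-limit of its interpretations, and that it still equals $\tau_{\cU}(\pos{c_i - c_j})$ computed in $M$.

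I would handle this by using that definable predicates are uniform limits of formulas, as fixed in Section~\ref{sec:language} and Lemma~\ref{lem:definability}. \L o\'s's theorem holds for each approximant, and passing to the uniform limit gives it for $\theta$. Correctness of the interpretation in $M$ then follows because the definition is uniform across all models of $\Tqt$, and $M$ is such a model by Theorem~\ref{thm:elementary}.
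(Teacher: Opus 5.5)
Your proof is correct and follows the paper's argument. A chain gives $\psi_{\eta,\varepsilon,k} = 0$; conversely, a nonprincipal ultraproduct of factors with $\psi^{\bM_j}_{\eta,\varepsilon,k} \to 0$ is a finite \AWstar-factor in which the infimum is attained, and the attaining tuple yields a chain. The only difference is cosmetic: you realize the witness directly as the ultraproduct of approximate witnesses, applying \L o\'s to the matrix of $\psi_{\eta,\varepsilon,k}$, whereas the paper applies \L o\'s to the sentence and then cites $\aleph_1$-saturation to attain the infimum.
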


\begin{proof}
    A chain witnesses $\psi_{\eta,\varepsilon,k} = 0$.  Conversely, if models $\bM_j$ of $\Tfac$ satisfy $\psi^{\bM_j}_{\eta,\varepsilon,k} \to 0$, then a nonprincipal ultraproduct is a model of $\Tfac$ with $\psi_{\eta,\varepsilon,k} = 0$.  It is the structure of a finite \AWstar-factor by Theorem~\ref{thm:elementary-factors}, and since it is $\aleph_1$-saturated~\cite{BBHU08} the infimum is attained, which produces a chain.
\end{proof}

For $n \in \bbN$ let $\pi_K(n)$ be the $\Sigma^0_1$ sentence
\[
    \operatorname{Prv}_{\Tfac}(\delta < 2^{-n}).
\]
For $n \in \bbN$ let $\pi_{\mathrm{MC}}(n)$ be the $\Sigma^0_1$ sentence asserting that there are dyadic $\varepsilon,r > 0$ with $\varepsilon \leq 1$, $r \leq \tfrac{1}{2}$, and an integer $k \geq 2$ such that
\[
    \operatorname{Prv}_{\Tfac}
    \bigl((2r\dminus\psi_{2^{-n},\varepsilon,k}) < r\bigr).
\]
Put
\[
    \sigma_{\mathrm{MC}}:\ \forall n \in \bbN\ \pi_{\mathrm{MC}}(n)
\]
and
\[
    \sigma_K:\ \forall n \in \bbN\ \pi_K(n).
\]
Both are $\Pi^0_2$ sentences of arithmetic.

For a unital \cstar-algebra $A$, let $\mu(A)$ be the smallest $n \geq 1$ for which $M_n(A)$ is properly infinite, and let $\nu(A)$ be the smallest $n \geq 1$ for which there exist $x_1, \ldots, x_n \in A$ with $\sum_{i = 1}^n x_i^* x_i = 1$ and $\| \sum_{i = 1}^n x_i x_i^* \| \leq 1/2$, each being $\infty$ if no such $n$ exists; thus $\nu(A) < \infty$ exactly when $A$ has no tracial state.  Milh{\o}j and R{\o}rdam \cite[Remark 4.7]{MR23} observe that a counterexample to Kaplansky's problem can be obtained as an ultraproduct unless there is a universal relation $\mu(A) \leq f(\nu(A))$ for all traceless unital \cstar-algebras $A$, where $f : \bbN \to \bbN$ is fixed.  Together with the uniqueness of the quasitrace on finite \AWstar-factors, this shows that Kaplansky's problem has a positive answer if and only if such a relation exists.  Combining this with standard continuous logic facts (in the language of unital \cstar-algebras) gives a $\Pi^{0}_{2}$ sentence for Kaplansky's conjecture.  The next theorem supplies an alternative version in the language of quasitracial \AWstar-algebras.  We know of no such precedent for the monotone completeness conjecture half of the following theorem.

\begin{theorem}\label{thm:absoluteness}
    There are $\Pi^{0}_{2}$ sentences $\sigma_{\mathrm{MC}}$ and $\sigma_K$ of arithmetic such that ZFC proves
    \[
        \sigma_{\mathrm{MC}} \quad \Longleftrightarrow \quad \text{every finite \AWstar-factor is monotone complete},
    \]
    and
    \[
        \sigma_K \quad \Longleftrightarrow\quad \text{every type $\mathrm{II}_1$ \AWstar-factor is a von Neumann algebra}.
    \]
    Consequently each assertion has the same truth value in every $\omega$-model of ZFC, in particular in $V$, in $L$, in every inner model, and in every set-generic extension of $V$.
\end{theorem}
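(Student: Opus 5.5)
The plan is to verify the two equivalences separately, using the effective completeness equivalence~\eqref{eq:effective-completeness}, and then deduce absoluteness from the fact that both sentences are arithmetic.

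\emph{Kaplansky's problem.} By~\eqref{eq:effective-completeness}, $\pi_K(n)$ holds if and only if $\sup\{\delta^{\bM} : \bM \models \Tfac\} < 2^{-n}$. Hence $\sigma_K$ holds if and only if this supremum is $0$, that is, $\delta^{\bM} = 0$ for every model of $\Tfac$. By Theorem~\ref{thm:elementary-factors} these models are exactly the finite \AWstar-factors. By Proposition~\ref{prop:Kap-encoding}, the condition therefore says that every finite \AWstar-factor is a von Neumann algebra. Type $\mathrm{I}_n$ factors are matrix algebras, and by Fact~\ref{fact:finite-factors} every finite factor has type $\mathrm{I}_n$ or $\mathrm{II}_1$. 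So this is equivalent to every type $\mathrm{II}_1$ \AWstar-factor being a von Neumann algebra.

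\emph{Monotone completeness.} By~\eqref{eq:effective-completeness}, $\pi_{\mathrm{MC}}(n)$ says that for some dyadic $\varepsilon, r$ and some $k$ we have $\sup_{\bM}(2r \dminus \psi^{\bM}) < r$. This is equivalent to $\inf_{\bM}\psi^{\bM}_{2^{-n},\varepsilon,k} > r$, where $\bM$ ranges over models of $\Tfac$. I then claim that $\exists\,\varepsilon,r,k$ with this property is equivalent to the existence of $\varepsilon>0$ and $k$ such that no finite factor contains a $(2^{-n},\varepsilon)$-chain of length $k$.
\begin{itemize}
\item[(a)] One direction: $\psi > r > 0$ in every model excludes chains, since a chain gives $\psi = 0$.
\item[(b)] The other direction: Lemma~\ref{lem:chain-attain} gives a rational $r'>0$ with $\psi \geq r'$ in all models. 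Pick a dyadic $r \leq \min(r'/2, 1/2)$, so that $\psi \geq r' > r$ as required. If the given $\varepsilon$ is not dyadic or exceeds $1$, shrink it to a dyadic $\varepsilon' \leq \min(\varepsilon,1)$; a chain for $\varepsilon'$ is a chain for $\varepsilon$, so none exists.
\end{itemize}
Thus $\sigma_{\mathrm{MC}}$ is equivalent to condition (3) restricted to $\eta = 2^{-n}$. This restriction is harmless: for arbitrary $\eta>0$ choose $2^{-n}\leq\eta$, and note that an $(\eta,\varepsilon)$-chain is a $(2^{-n},\varepsilon)$-chain. Proposition~\ref{prop:uniform-chains} then gives the equivalence with monotone completeness of all finite \AWstar-factors.

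\emph{Absoluteness.} Every step above is a theorem of ZFC, so each equivalence holds inside any model of ZFC. An $\omega$-model has the standard natural numbers. The relation $\operatorname{Prv}_{\Tfac}$ is given by a fixed recursive definition, so each arithmetic sentence $\sigma$ has the same truth value in the $\omega$-model as in the standard model. Inner models and set-generic extensions of $V$ are $\omega$-models, so the conclusion covers them too.

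The main obstacle is the bookkeeping in the monotone-completeness half: matching the shifted expression $2r \dminus \psi$ with strict inequalities, keeping the parameters dyadic and bounded, and ensuring that~\eqref{eq:effective-completeness} applies to the approximated sentences uniformly in $(\eta,\varepsilon,k)$. The substantive analytic input is already in Proposition~\ref{prop:uniform-chains} and Lemma~\ref{lem:chain-attain}.
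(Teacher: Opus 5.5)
Your proposal is correct and follows essentially the same route as the paper. The Kaplansky half goes through \eqref{eq:effective-completeness}, Theorem~\ref{thm:elementary-factors} and Proposition~\ref{prop:Kap-encoding}; the monotone-completeness half uses Lemma~\ref{lem:chain-attain}, the dyadic reduction of $\eta$ and $\varepsilon$, a choice of dyadic $r$ with $2r < r'$, and then Proposition~\ref{prop:uniform-chains}, with absoluteness of arithmetic sentences in $\omega$-models at the end. Your rewriting of $\sup(2r \dminus \psi) < r$ as $\inf \psi > r$ is only a slightly more explicit version of the paper's bookkeeping.
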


\begin{proof}
    By~\eqref{eq:effective-completeness}, $\sigma_K$ is equivalent to
    \[
        \sup\{\delta^{\bM}:\bM\models\Tfac\} = 0.
    \]
    By Theorem~\ref{thm:elementary-factors} and Proposition~\ref{prop:Kap-encoding}, this holds exactly when every finite \AWstar-factor is a von Neumann algebra; factors of type $\mathrm{I}_n$ are matrix algebras.

    For $\sigma_{\mathrm{MC}}$, it suffices to consider gaps $\eta = 2^{-n}$: if the chain bound holds for a smaller dyadic gap, then it holds for every larger gap.  The tolerance $\varepsilon$ may likewise be decreased to a positive dyadic number.

    Suppose first that no finite factor contains a $(2^{-n},\varepsilon)$-chain of length $k$.  By Lemma~\ref{lem:chain-attain} there is $r'>0$ such that
    \[
        \Tfac\models\psi_{2^{-n},\varepsilon,k} \geq r'.
    \]
    Choose a dyadic $r$ with $0 < 2r < r'$ and $r \leq \tfrac{1}{2}$.  Then
    \[
        \Tfac\models(2r\dminus\psi_{2^{-n},\varepsilon,k}) = 0,
    \]
    so~\eqref{eq:effective-completeness} gives
    \[
        \operatorname{Prv}_{\Tfac}
        \bigl((2r\dminus\psi_{2^{-n},\varepsilon,k}) < r\bigr).
    \]
    Thus $\pi_{\mathrm{MC}}(n)$ holds.

    Conversely, if $\pi_{\mathrm{MC}}(n)$ holds, then for some dyadic $\varepsilon,r > 0$ and $k \geq 2$, equation~\eqref{eq:effective-completeness} gives
    \[
        \Tfac\models(2r\dminus\psi_{2^{-n},\varepsilon,k}) < r.
    \]
    Hence
    \[
        \Tfac\models\psi_{2^{-n},\varepsilon,k} > r > 0,
    \]
    and Lemma~\ref{lem:chain-attain} shows that no finite factor contains a $(2^{-n},\varepsilon)$-chain of length $k$.  Therefore $\sigma_{\mathrm{MC}}$ is equivalent to the chain condition in Proposition~\ref{prop:uniform-chains}, proving the first equivalence.

    These equivalences are theorems of ZFC, so they hold in every model of ZFC.  An $\omega$-model of ZFC has the standard natural numbers and therefore agrees with $V$ on the arithmetic sentences $\sigma_{\mathrm{MC}}$ and $\sigma_K$.
\end{proof}

By \cite{BH82} and \cite[Section~2]{MR23}, a positive answer to Kaplansky's problem is equivalent to the statement that every unital stably finite \cstar-algebra admits a tracial state.  Blackadar and Farah show that this statement is absolute between transitive models of ZFC containing all countable ordinals~\cite{BF26}.  Theorem~\ref{thm:absoluteness} extends this absoluteness to all $\omega$-models of ZFC.

\subsection{Finite \texorpdfstring{\AWstar}{AW*}-algebras}

The model theory above only applies to finite \AWstar-algebras carrying a faithful normal scalar quasitrace, and a finite \AWstar-algebra whose center admits no faithful normal state carries none.  We pass from factors to arbitrary finite \AWstar-algebras using Boolean-valued models as in~\cite{Bell05}.  Let $A$ be a unital \AWstar-algebra with center $Z$ and let $\mathbb{B} = \Proj(Z)$.  By~\cite{Oz86, AH26}, there is a $\mathbb{B}$-name $\mathcal{M}$ such that
\[
    \bv{\mathcal{M} \text{ is an \AWstar-factor}} = 1,
\]
$A$ is the bounded part of $\mathcal{M}$, and
\[
    A \text{ is monotone complete} \iff \bv{\mathcal{M} \text{ is monotone complete}} = 1.
\]

\begin{lemma}\label{lem:finite-BV}
    If $A$ is finite, then
    \[
        \bv{\mathcal{M} \text{ is a finite \AWstar-factor}} = 1.
    \]
\end{lemma}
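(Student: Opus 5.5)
We must show that the Boolean value of ``$\mathcal{M}$ is finite'' is $1$; the factor part is already given. Finiteness of a unital \AWstar-algebra is the first-order-in-$V^{\mathbb{B}}$ statement $\forall v\,(v^*v = 1 \Rightarrow vv^* = 1)$. So it suffices to show
\[
    \bv{\forall v \in \mathcal{M}\,(v^*v = 1 \rightarrow vv^* = 1)} = 1.
\]
By the definition of Boolean values of universal statements, this is the infimum over names $v$ of $\bv{v^*v = 1}^{c} \vee \bv{vv^* = 1}$. We will reduce this to elements of $A$.

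First we reduce to bounded names. If $\bv{v^*v = 1} = b$, then on $b$ the name $v$ is an isometry, hence has norm $1$ there. By the mixing principle we can replace $v$ by the mixture $v' = v$ on $b$ and $v' = 1$ on $b^c$. Then $v'$ is a name with $\bv{\|v'\| \leq 1} = 1$, so $v'$ lies in the bounded part of $\mathcal{M}$, which is $A$. Moreover $\bv{v'^*v' = 1} = 1$. In $A$ this says $v'^*v' = 1$, since the identification of $A$ with the bounded part is a $*$-isomorphism and equality of bounded elements has Boolean value $1$ only when they are equal. Finiteness of $A$ then gives $v'v'^* = 1$, so $\bv{v'v'^* = 1} = 1$. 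Restricting to $b$, where $v = v'$, we get $b \leq \bv{vv^* = 1}$. Thus each term of the infimum is $1$.

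The main obstacle is making the identification of bounded names with $A$ precise. We need two facts:
\begin{itemize}
    \item the bounded part of $\mathcal{M}$ is closed under mixing over partitions of unity in $\mathbb{B}$;
    \item for $x, y \in A$, the Boolean value $\bv{x = y}$ is the largest central projection $z$ with $zx = zy$.
\end{itemize}
Both should be part of Ozawa's construction~\cite{Oz86}, in which $A$ is realized as the $\mathbb{B}$-cyclic bounded part and $\bv{x = y}$ is the complement of the central support of $x - y$. Using the second fact, an equation holding with value $1$ transfers back to $A$. It also lets us localize by central projections: for every central $z$, the algebra $zA$ is finite whenever $A$ is, since $zA$ is a corner of a finite \AWstar-algebra. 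This localization is the alternative route if mixing with $1$ is awkward. In that case, work with $z = b$ directly: $bv$ is an isometry in $bA$, so $bvv^* = b$.
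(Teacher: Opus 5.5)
Your proof is correct and follows the paper's argument: in both, you mix the name with $1$ off the Boolean value on which it is an isometry, obtain a bounded name (hence an element of $A$ that is an isometry), and apply finiteness of $A$. The only difference is cosmetic: the paper uses the maximum principle to pick a single witness for $\bv{\exists x\,(x^*x = 1 \wedge xx^* \neq 1)}$ and shows this value is $0$, whereas you bound each instance of the universal statement directly and so avoid the maximum principle.
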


\begin{proof}
    Let
    \[
        b = \bv{\exists x \in \mathcal{M}\, (x^*x = 1 \wedge xx^* \not= 1)}.
    \]
    By the maximum principle there is a name $x''$ with $\bv{x''^*x'' = 1 \wedge x''x''^* \not= 1} = b$.  Mixing $x''$ on $b$ with $1$ on $\neg b$ gives a name $x'$ with $\bv{x'^*x' = 1} = 1$ and $\bv{x'x'^* \not= 1} \geq b$.  Since $x'$ is internally an isometry, it has norm $1$, so it lies in the bounded part of $\mathcal{M}$ and corresponds to an element $v \in A$ with $v^*v = 1$.  Since $A$ is finite, $vv^* = 1$, so $\bv{x'x'^* = 1} = 1$ and therefore $b = 0$.
\end{proof}

Arithmetic sentences are absolute between $V$ and $V^{\mathbb{B}}$: the internal natural numbers of $V^{\mathbb{B}}$ are given by $\check\omega$, and bounded formulas with check-name parameters have Boolean value $1$ exactly when they hold in $V$~\cite{Bell05}.  In particular $\bv{\sigma_{\mathrm{MC}}} = 1$ if and only if $\sigma_{\mathrm{MC}}$ holds.

\begin{theorem}\label{thm:finite-algebras}
    ZFC proves that the following are equivalent.
    \begin{enumerate}
        \item[(a)] Every finite \AWstar-algebra is monotone complete.
        \item[(b)] Every finite \AWstar-algebra is monotone sequentially complete.
        \item[(c)] Every finite \AWstar-factor is monotone complete.
        \item[(d)] Every $\dt$-separable \AWstar-factor of type $\mathrm{II}_1$ is monotone sequentially complete.
        \item[(e)] $\sigma_{\mathrm{MC}}$.
    \end{enumerate}
    In particular these assertions have the same truth value in every $\omega$-model of ZFC.
\end{theorem}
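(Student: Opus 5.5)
We prove the cycle (a)$\Rightarrow$(b)$\Rightarrow$(c)$\Rightarrow$(d)$\Rightarrow$(e)$\Rightarrow$(a), together with (c)$\Leftrightarrow$(e). The implication (a)$\Rightarrow$(b) is trivial. For (b)$\Rightarrow$(c), we will use Proposition~\ref{prop:half-graph}, which shows that in a quasitracial \AWstar-algebra sequential completeness already suffices. If a finite factor $M$ fails monotone completeness, that proposition yields a monotone half-graph of positive gap. A sequential supremum of this half-graph exists by (b). We then pass to the ultrapower $M^{\cU}$, which is a finite factor by Corollary~\ref{cor:factor-ultraproduct}, is $\aleph_1$-saturated, and is again sequentially complete by (b). Since the half-graph embeds diagonally into $M^{\cU}$, this contradicts Lemma~\ref{lem:saturated-half-graph}. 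The implication (c)$\Rightarrow$(d) is immediate, and (d)$\Rightarrow$(c) is Corollary~\ref{cor:separable-reduction}. Finally, (c)$\Leftrightarrow$(e) is Theorem~\ref{thm:absoluteness}. So the only substantial new step is (e)$\Rightarrow$(a), or equivalently (c)$\Rightarrow$(a).

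For (e)$\Rightarrow$(a), let $A$ be a finite \AWstar-algebra with center $Z$, and put $\mathbb{B}=\Proj(Z)$. Take the Ozawa name $\mathcal{M}$ described above. Lemma~\ref{lem:finite-BV} gives $\bv{\mathcal{M}\text{ is a finite \AWstar-factor}}=1$. Since $\sigma_{\mathrm{MC}}$ is arithmetic, it holds in $V$ if and only if $\bv{\sigma_{\mathrm{MC}}}=1$. The equivalence (e)$\Leftrightarrow$(c) is a theorem of ZFC, and every theorem of ZFC has Boolean value $1$ in $V^{\mathbb{B}}$. Hence
\[
\bv{\text{every finite \AWstar-factor is monotone complete}}=1.
\]
Combining this with the value for $\mathcal{M}$ gives $\bv{\mathcal{M}\text{ is monotone complete}}=1$, and by the cited transfer result $A$ is monotone complete.

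The main point to check is that the internal statement ``every finite \AWstar-factor is monotone complete'' is really obtained inside $V^{\mathbb{B}}$ by this route. The argument needs three facts. First, the ZFC proof of (c)$\Leftrightarrow$(e) relativizes to $V^{\mathbb{B}}$ with Boolean value $1$. Second, the internal $\sigma_{\mathrm{MC}}$ coincides with the external one; this holds because the theory $\Tfac$, the approximants and the provability predicate are all coded recursively over $\check\omega$. Third, the internal notion of finiteness matches the one established in Lemma~\ref{lem:finite-BV}. With these facts in hand, the chain (a)$\Rightarrow$(b)$\Rightarrow$(c)$\Leftrightarrow$(d)$\Leftrightarrow$(e)$\Rightarrow$(a) is closed. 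Absoluteness across $\omega$-models then follows, as in Theorem~\ref{thm:absoluteness}, because (e) is arithmetic.
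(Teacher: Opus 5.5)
Your proposal is correct and follows the paper's proof. The one substantive step, (e)$\Rightarrow$(a) (equivalently (c)$\Rightarrow$(a)), is carried out exactly as in the paper: you use absoluteness of $\sigma_{\mathrm{MC}}$ to $V^{\mathbb{B}}$, then Lemma~\ref{lem:finite-BV}, then the Ozawa transfer of monotone completeness. Your ultrapower argument for (b)$\Rightarrow$(c) via Lemma~\ref{lem:saturated-half-graph} is valid, but it is unnecessary: (b)$\Rightarrow$(d) is immediate, and you already invoke Corollary~\ref{cor:separable-reduction} for (d)$\Rightarrow$(c), which is how the paper closes that part of the loop.
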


\begin{proof}
    The implications (a) $\implies$ (b) $\implies$ (d) are immediate.  Corollary~\ref{cor:separable-reduction} gives (d) $\implies$ (c), and Theorem~\ref{thm:absoluteness} gives (c) $\iff$ (e).

    It remains to prove (c) $\implies$ (a).  Let $A$ be a finite \AWstar-algebra and let $\mathcal{M}$ be as above.  By Theorem~\ref{thm:absoluteness}, (c) implies $\sigma_{\mathrm{MC}}$, so $\bv{\sigma_{\mathrm{MC}}} = 1$ by absoluteness of arithmetic sentences.  Since ZFC proves that $\sigma_{\mathrm{MC}}$ implies (c), and every axiom of ZFC has Boolean value $1$, we get
    \[
        \bv{\text{every finite \AWstar-factor is monotone complete}} = 1.
    \]
    Lemma~\ref{lem:finite-BV} gives $\bv{\mathcal{M} \text{ is monotone complete}} = 1$, so $A$ is monotone complete.

    The final assertion follows from (e), since $\sigma_{\mathrm{MC}}$ is arithmetic.
\end{proof}

\section{Real \texorpdfstring{\AWstar}{AW*}-algebras}\label{sec:real}

Until this section all \AWstar-algebras have been complex.  A real \cstar-algebra is a real Banach $*$-algebra satisfying the real \cstar axioms, and a real \AWstar-algebra is a real \cstar-algebra which is a Baer $*$-ring.  We call $R$ a real \AWstar-factor when $Z(R) = \bbR1$, and write $R_{\bbC} = R + iR$ for its complexification.

\subsection{Semi-abelian projections, complexification and order}

\begin{definition}
    A real Baer $*$-ring $B$ is \emph{semi-abelian} if every projection of $B$ is central.  A projection $e \in R$ is semi-abelian if $eRe$ is semi-abelian.  This is the notion called \emph{abelian} in Berberian and Handelman~\cite{Ber72,Han78}.  We reserve \emph{commutative} for the algebraic condition $xy = yx$.
\end{definition}

The distinction is invisible over the complex numbers but essential over the reals: $\mathbb{H}$ has only the projections $0,1$ and is therefore semi-abelian, but it is not commutative.

\begin{lemma}[Fixed-point algebras]\label{lem:fixed}
    Let $B$ be a real \cstar-algebra, let $\alpha$ be a $*$-automorphism, and let $B^{\alpha} = \{x \in B \ : \ \alpha(x) = x\}$.  Then:
    \begin{enumerate}
        \item[(i)] if $B$ is a real \AWstar-algebra, then so is $B^{\alpha}$;
        \item[(ii)] if $B$ is a normal real \AWstar-algebra, then so is $B^{\alpha}$;
        \item[(iii)] if $B$ is monotone complete, then so is $B^{\alpha}$;
        \item[(iv)] if $B$ is monotone sequentially complete, then so is $B^{\alpha}$.
    \end{enumerate}
\end{lemma}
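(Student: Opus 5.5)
The plan is to derive all four parts from one device: the conditional-expectation-free averaging map is unavailable in general (since $\alpha$ need not have finite order), so instead I would use the order/projection structure directly. First note that $B^{\alpha}$ is a norm-closed $*$-subalgebra containing $1$, hence a real \cstar-algebra. The key observation is that $\alpha$, being a $*$-automorphism, is an order automorphism of $B_{\sa}$ and a lattice automorphism of $\Proj(B)$. Consequently $\alpha$ preserves every order-theoretic construction canonically attached to a subset: suprema of bounded increasing nets, right and left projections (annihilator generators), and suprema of projection families. If $S \subseteq B^{\alpha}$, then $\alpha(S) = S$, so any canonical object determined by $S$ is fixed by $\alpha$ and hence lies in $B^{\alpha}$.

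For (i), given $S \subseteq B^{\alpha}$, let $e \in B$ be the unique projection with $\{x \in B : Sx = 0\} = eB$. Applying $\alpha$ gives $\{x : \alpha(S)x = 0\} = \alpha(e)B$, and $\alpha(S) = S$ together with uniqueness yields $\alpha(e) = e$. Then the right annihilator of $S$ in $B^{\alpha}$ equals $eB \cap B^{\alpha} = eB^{\alpha}$, since for $x \in B^{\alpha}$ we have $x = ex$ precisely when $Sx = 0$, and $ex \in B^{\alpha}$. Thus $B^{\alpha}$ is a Baer $*$-ring, and hence a real \AWstar-algebra.

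For (iii) and (iv), let $(a_i)$ be a norm-bounded increasing net (respectively sequence) in $B^{\alpha}_{\sa}$ with supremum $s$ in $B_{\sa}$. Then $\alpha(s)$ is the supremum of $(\alpha(a_i)) = (a_i)$, so $\alpha(s) = s$. Since $s$ is the least upper bound in $B$, it is a fortiori the least upper bound in the smaller set $B^{\alpha}$. For (ii), normality asserts that for an increasing net in $B^{\alpha}$ of projections, the supremum computed in $B^{\alpha}$ is also its supremum in $B^{\alpha}_{\sa}$ (or equivalently, in the formulation the paper uses for real algebras, that suprema of increasing nets of projections in the projection lattice agree with order suprema). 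I would argue as follows. Let $p$ be the projection-lattice supremum in $B$. By the argument of (i), $p$ is fixed by $\alpha$, and it is also the projection supremum in $B^{\alpha}$ because $\Proj(B^{\alpha}) \subseteq \Proj(B)$. Normality of $B$ makes $p$ the supremum in $B_{\sa}$, and the argument of (iii) then shows it is the supremum in $B^{\alpha}_{\sa}$.

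The main obstacle is the step in (ii) and (iii) where a supremum in the smaller algebra must be identified with a supremum in $B$. One direction is trivial: a $B$-supremum that lies in $B^{\alpha}$ is automatically the $B^{\alpha}$-supremum. The subtle point is that in (ii) the projection supremum taken in $B^{\alpha}$ must be shown to coincide with the one in $B$. I would handle this by noting that the $B$-supremum is $\alpha$-fixed and is an upper bound lying in $B^{\alpha}$. Any $B^{\alpha}$-projection upper bound dominates it, so the two suprema coincide, and no averaging is needed.
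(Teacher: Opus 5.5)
Your proposal is correct and follows essentially the same route as the paper. Uniqueness plus $\alpha(S)=S$ forces $\alpha$ to fix three objects computed in $B$: the projection generating $\operatorname{ann}_r^B(S)$, the projection-lattice join, and the order supremum. Each $\alpha$-fixed object then lies in $B^{\alpha}$ and is automatically the corresponding object for $B^{\alpha}$, exactly as in parts (i)--(iv) of the paper's argument.
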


\begin{proof}
    If $S \subseteq B^{\alpha}$ and $\operatorname{ann}_r^B(S) = pB$, then $\alpha$ preserves this right ideal, so $\alpha(p)B = pB$ and therefore $\alpha(p) = p$.  Hence $\operatorname{ann}_r^{B^{\alpha}}(S) = pB^{\alpha}$, proving (i).  For (ii), the join in $\Proj(B)$ of an upward-directed family from $\Proj(B^{\alpha})$ is fixed by $\alpha$ and is also the projection join in $B^{\alpha}$; normality identifies it with the order supremum.  The same uniqueness argument for suprema in the self-adjoint part proves (iii), and its sequential version proves (iv).
\end{proof}

Put
\[
    J = \begin{pmatrix} 0 & -1 \\ 1 & 0 \end{pmatrix} \in M_2(R).
\]
The fixed-point algebra of $\operatorname{Ad}J$ is
\[
    C_J = \left\{\begin{pmatrix} a & -b \\ b & a \end{pmatrix} \ : \ a,b \in R\right\}.
\]
Since $J$ is central in $C_J$, $J^2 = -1$ and $J^* = -J$, the rule $i\cdot x = Jx$ makes $C_J$ a complex \cstar-algebra, and
\[
    a + ib \longmapsto \begin{pmatrix} a & -b \\ b & a \end{pmatrix}
\]
is a complex $*$-isomorphism $R_{\bbC} \cong C_J$.

\begin{proposition}\label{prop:m2-complex}
    For every real \cstar-algebra $R$, $M_2(R)$ is a real \AWstar-algebra if and only if $R_{\bbC}$ is a complex \AWstar-algebra.
\end{proposition}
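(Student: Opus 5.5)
Both directions will go through the identification $R_{\bbC} \cong C_J = M_2(R)^{\operatorname{Ad}J}$ just established, together with the reverse identification $M_2(R) \cong$ a fixed-point algebra of a matrix algebra over $R_{\bbC}$.

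\emph{Forward direction.} Suppose $M_2(R)$ is a real \AWstar-algebra. Then $\operatorname{Ad}J$ is a $*$-automorphism of $M_2(R)$, so Lemma~\ref{lem:fixed}(i) shows that $C_J$ is a real \AWstar-algebra. Right annihilator ideals of subsets of $C_J$ are unchanged when we switch to the complex structure $i\cdot x = Jx$, because $J$ is a central unitary of $C_J$ and hence $pC_J$ is automatically a complex subspace. Therefore $C_J$ is a complex Baer $*$-ring and a complex \cstar-algebra, that is, a complex \AWstar-algebra. By the isomorphism $R_{\bbC} \cong C_J$, the same holds for $R_{\bbC}$.

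\emph{Reverse direction.} Suppose $R_{\bbC}$ is \AWstar.
\begin{enumerate}
    \item[(1)] $M_2(R_{\bbC})$ is a complex \AWstar-algebra, since matrix algebras over \AWstar-algebras are \AWstar\ (Berberian/Kaplansky). This is the standard but nontrivial input.
    \item[(2)] $M_2(R_{\bbC})$ is then also a real \AWstar-algebra: real Baer annihilators are the same sets, and the annihilator right ideals are complex anyway.
    \item[(3)] Let $\kappa$ be the conjugation $a + ib \mapsto a - ib$ on $R_{\bbC}$, applied entrywise to $M_2(R_{\bbC})$. It is a conjugate-linear $*$-automorphism, and in particular a real $*$-automorphism.
    \item[(4)] Its fixed-point algebra is exactly $M_2(R)$, so Lemma~\ref{lem:fixed}(i) gives that $M_2(R)$ is a real \AWstar-algebra.
\end{enumerate}
In fact this argument gives more than claimed: whenever $R_{\bbC}$ is \AWstar, so is $R \cong R_{\bbC}^{\kappa}$, and so is every $M_n(R)$.

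\emph{Main obstacle.} The delicate point is step (1), that \AWstar\ passes to $2\times2$ matrices in the complex case. I would cite Berberian's theorem that $M_n(A)$ is \AWstar\ whenever $A$ is \AWstar\ (Baer $*$-rings, \S62). One should also double-check that Lemma~\ref{lem:fixed}(i) applies to real automorphisms of a complex algebra viewed as real; its proof only uses that $\alpha$ is a ring $*$-automorphism, so this is fine.
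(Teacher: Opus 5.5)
Your proposal is correct and follows the paper's proof essentially step for step. In one direction you apply Lemma~\ref{lem:fixed}(i) to $\operatorname{Ad}J$ and observe that right annihilators ignore the complex structure $i\cdot x = Jx$. In the other you use matrix stability of $R_{\bbC}$, then entrywise conjugation with fixed-point algebra $M_2(R)$, then Lemma~\ref{lem:fixed}(i). The paper justifies your step (1) by splitting into the finite central summand (Berberian's matrix theorem) and the properly infinite summand (which is $*$-isomorphic to its own $2\times 2$ matrix algebra); this is how the general matrix theorem you cite is obtained.
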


\begin{proof}
    If $M_2(R)$ is real \AWstar, Lemma~\ref{lem:fixed}(i) makes $C_J$ a real \AWstar-algebra.  Right annihilators depend only on the ring multiplication, so the same projection generators work after declaring $i\cdot x = Jx$.  Thus $C_J \cong R_{\bbC}$ is complex \AWstar.

    Conversely, suppose $R_{\bbC}$ is complex \AWstar.  Then $M_2(R_{\bbC})$ is complex \AWstar: on the finite central summand this is Berberian's matrix theorem~\cite{Ber58}, while a properly infinite \AWstar-algebra is $*$-isomorphic to its $2 \times 2$ matrix algebra~\cite{Ber72}.  Entrywise canonical conjugation is a real $*$-automorphism of $M_2(R_{\bbC})$ with fixed-point algebra $M_2(R)$, so Lemma~\ref{lem:fixed}(i) applies.
\end{proof}

\begin{theorem}[Handelman complexification theorem]\label{thm:handelman}
    Let $R$ be a real \AWstar-algebra with no nonzero central semi-abelian projection.  Then $R_{\bbC}$ is a complex \AWstar-algebra.
\end{theorem}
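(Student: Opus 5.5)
By Proposition~\ref{prop:m2-complex}, it suffices to show that $M_2(R)$ is a real \AWstar-algebra. Handelman's real matrix-ring theorem~\cite{Han78} says that if a real Baer $*$-ring (here the real \AWstar-algebra $R$) has no nonzero central semi-abelian (``abelian'' in Handelman's terminology) projection, then $M_n(R)$ is a Baer $*$-ring for every $n$. A real \cstar-algebra that is a Baer $*$-ring is a real \AWstar-algebra by definition. So the plan has three steps: verify the hypotheses of Handelman's theorem for $R$; conclude that $M_2(R)$ is Baer, hence real \AWstar; and apply Proposition~\ref{prop:m2-complex} to obtain that $R_{\bbC}\cong C_J$ is complex \AWstar.

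\textbf{Step 1: Handelman's hypotheses.} Handelman's theorem is stated for Baer $*$-rings satisfying standard extra axioms: existence of square roots or a generalized comparability condition, the $2$-proper involution and positive-sum axioms, and the existence of relative inverses in the style of Berberian's (EP)/(SR) axioms. For a real \AWstar-algebra these follow as in the complex case. Each projection-generated corner $eRe$ is again real \AWstar. For a self-adjoint $x$, the commutative real \cstar-subalgebra $C^*(1,x)$ sits inside a maximal commutative self-adjoint subalgebra, which is itself real \AWstar. There we have spectral projections and square roots, which give the (EP) and (SR) axioms. The real \cstar-identity gives that $\sum x_i^*x_i=0$ forces every $x_i=0$, so the involution is $n$-proper. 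I expect this to be a matter of checking Berberian's list line by line over $\bbR$ rather than a genuine difficulty.

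\textbf{Step 2: the main obstacle.} The hardest point is matching terminology and the precise form of Handelman's hypothesis. His ``no abelian central summand'' must be read as ``no nonzero central semi-abelian projection'' in our sense, not as noncommutativity. The essential case is exactly the one where $eRe$ looks like $\bbH$: every projection there is central, but the corner is not commutative. To handle this I would first decompose $R$ by central projections into a finite part and a properly infinite part.

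On the properly infinite part, $R\cong M_2(R)$ via halving, following the real analogue of~\cite{Ber72} (the argument is purely ring-theoretic). So $M_2$ of that part is Baer with no appeal to Handelman's theorem at all.

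On the finite part, the absence of semi-abelian central summands is precisely the condition Handelman needs to run his real version of Berberian's matrix theorem. That theorem uses a dimension or comparability argument and the halving of projections in type $\mathrm{II}$ and type $\mathrm{I}_n$ ($n\ge2$) summands.

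Finally, the central decomposition commutes with taking $M_2$, and a product of Baer $*$-rings is Baer. This shows that $M_2(R)$ is a real \AWstar-algebra, and Proposition~\ref{prop:m2-complex} finishes the proof.
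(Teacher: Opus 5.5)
Your proposal is correct and follows the paper's proof: Handelman's matrix theorem makes $M_2(R)$ a Baer $*$-ring, hence a real \AWstar-algebra, and Proposition~\ref{prop:m2-complex} then makes $R_{\bbC}\cong C_J$ a complex \AWstar-algebra. The paper applies \cite[Corollary~3.12]{Han78} directly to real \AWstar-algebras with no nonzero central semi-abelian projection, so your hypothesis check and your finite/properly infinite split (using halving to get $R\cong M_2(R)$ on the properly infinite summand) are not needed, though they do no harm.
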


\begin{proof}
    Handelman's Corollary~3.12 applies exactly to real \AWstar-algebras with no nonzero central semi-abelian projection, in the terminology above, and gives that every matrix ring over $R$ is a real \AWstar-algebra~\cite[Corollary~3.12]{Han78}.  In particular $M_2(R)$ is real \AWstar, so Proposition~\ref{prop:m2-complex} gives the conclusion.
\end{proof}

Whenever Theorem~\ref{thm:handelman} applies, Albeverio, Ayupov and Abduvaitov's type-comparison theorem~\cite{AAA04} gives coincidence of the refined types $\mathrm{I}_{\mathrm{fin}},\mathrm{I}_{\infty},\mathrm{II}_1,\mathrm{II}_{\infty},\mathrm{III}$.

\begin{lemma}\label{lem:semi-selfadjoint}
    If $B$ is a semi-abelian real \AWstar-algebra, then
    \[
        B_{\sa} = Z(B)_{\sa}.
    \]
    Consequently $B$ is finite, monotone complete and normal.
\end{lemma}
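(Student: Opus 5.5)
The plan is to show first that every self-adjoint element of $B$ is central, and then to read off finiteness, monotone completeness and normality from the fact that the self-adjoint part is that of the center.

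\emph{Self-adjoint elements are central.} Let $h \in B_{\sa}$. The real \cstar-subalgebra generated by $1$ and $h$ is commutative. Let $C$ be a maximal commutative self-adjoint subalgebra of $B$ containing $h$. In a real \AWstar-algebra such a subalgebra is again a (commutative) real \AWstar-algebra, by the usual bicommutant/annihilator argument: right annihilator projections of subsets of $C$ commute with everything commuting with $C$, hence lie in $C$ by maximality. Consequently $C_{\sa}$ is the norm-closed span of its projections; equivalently, $h$ is a norm limit of finite real combinations of spectral projections of $h$ lying in $C$. Every projection of $B$ is central because $B$ is semi-abelian. Hence $h$ is a norm limit of central elements, and $h \in Z(B)$ since the center is norm closed. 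The reverse inclusion $Z(B)_{\sa} \subseteq B_{\sa}$ is trivial.

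The step I expect to need the most care is the spectral approximation in the real setting. I would handle it by a Kaplansky-type argument. For $h$ self-adjoint and $t \in \bbR$, take the right projection of $\pos{h-t}$ in $B$; it lies in the bicommutant of $h$. Step functions of these projections then approximate $h$ in norm, exactly as in the complex proof, since only the ring structure and the order on $\{h\}''$ are used.

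\emph{Consequences.} Finiteness: if $v^*v = 1$, then $vv^*$ is a projection, hence central, so $vv^* = vv^*vv^* \cdot$ gives $v = v(v^*v) = (vv^*)v$. Centrality of $vv^*$ then gives $1 = v^*v = v^*(vv^*)v = (vv^*)v^*v = vv^*$.

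Monotone completeness and normality: $Z(B)$ is a commutative real \AWstar-algebra, because central annihilators of a commutative algebra are generated by central projections and semi-abelian forces all projections to be central. A commutative real \AWstar-algebra is $C(X,\bbR)$ with $X$ Stonean. Its self-adjoint part (all of it) is monotone complete, with suprema of bounded increasing nets computed in the Dedekind complete lattice $C(X,\bbR)$. Since $B_{\sa} = Z(B)_{\sa}$ as ordered sets, with the same positive cone because the squares $x^*x$ with $x \in B$ are self-adjoint and hence central, every bounded increasing net in $B_{\sa}$ has a supremum. Thus $B$ is monotone complete.

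For normality, the projection lattice of $B$ equals that of $Z(B)$. For an increasing net of projections, the projection supremum in this Stonean Boolean algebra coincides with the order supremum in $C(X,\bbR)$, which is the supremum in $B_{\sa}$.
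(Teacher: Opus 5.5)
Your argument is the paper's. You approximate a self-adjoint element by real combinations of its spectral projections, which are central, to get $B_{\sa} = Z(B)_{\sa}$. You then read monotone completeness and normality off a Stonean $C(X,\bbR)$. Finiteness comes from centrality of $vv^*$; your computation $1 = v^*(vv^*)v = vv^*$ is the special case of the paper's observation that equivalent central projections are equal.

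One claim needs correcting. A commutative real \AWstar-algebra need not be of the form $C(X,\bbR)$, and $Z(B)$ need not be self-adjoint. For example, $\bbC$ regarded as a real algebra is a semi-abelian real \AWstar-algebra with $Z(\bbC) = \bbC \ni i$ and $i^* = -i$. So the Gelfand/Stonean step should be applied to $Z(B)_{\sa}$, as the paper does. This is a commutative real \AWstar-algebra with trivial involution, by your annihilator argument: the generating projections are central and self-adjoint. Since only $B_{\sa} = Z(B)_{\sa}$ enters monotone completeness and normality, the rest of your argument goes through unchanged.

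Also tidy the garbled phrase ``so $vv^* = vv^*vv^*\cdot$ gives'' in the finiteness step.
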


\begin{proof}
    Let $a = a^* \in B$.  By the spectral theorem for \AWstar-algebras, $a$ is uniformly approximated by finite real linear combinations of its spectral projections (equivalently, by step-function approximants formed from the support projections of $(a - t)_{+}$).  Since $B$ is semi-abelian, those projections are central.  Hence $a$ is a norm limit of central elements and is central.  The reverse inclusion is tautological.
    
    The center's self-adjoint part is a commutative real \AWstar-algebra, hence is $C(X,\bbR)$ for a Stonean space $X$~\cite{Li03}; it is monotone complete and normal.  Since the order of a real \cstar-algebra lives entirely in its self-adjoint part, the same is true of $B$.
    
    Finally, if $v^*v = 1$, then $vv^*$ is a projection and hence central.  The central projections $vv^*$ and $1$ are equivalent.  Equivalent central projections are equal: if $w^*w = p$, $ww^* = q$ and $p,q$ are central, then $w = pw$ gives $q = ww^* = pww^* = pq \leq p$, and symmetrically $p \leq q$.  Thus $vv^* = 1$ and $B$ is finite.
\end{proof}

\begin{lemma}\label{lem:semi-factor}
    A semi-abelian real \AWstar-factor is $*$-isomorphic to either $\bbR$ or $\mathbb{H}$.
\end{lemma}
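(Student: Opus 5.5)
We argue that a semi-abelian real \AWstar-factor $B$ is a real normed division algebra with involution, and then invoke Frobenius together with the Gelfand--Mazur theorem for real Banach division algebras.

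\emph{Step 1: the self-adjoint part is $\bbR1$.} By Lemma~\ref{lem:semi-selfadjoint}, $B_{\sa} = Z(B)_{\sa}$. Since $B$ is a factor, $Z(B) = \bbR1$, so $B_{\sa} = \bbR1$.

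\emph{Step 2: $B$ is a division algebra.} For every $x \in B$ the element $x^*x$ is self-adjoint and positive, so $x^*x = \lambda 1$ with $\lambda \geq 0$. By the \cstar-identity, $\lambda = \|x\|^2$. Thus $x \neq 0$ implies $x^*x = \|x\|^2 1$, and likewise $xx^* = \|x^*\|^2 1 = \|x\|^2 1$. Hence $x^{-1} = \|x\|^{-2}x^*$, so every nonzero element is invertible. Moreover $\|xy\|^2 1 = y^*x^*xy = \|x\|^2\|y\|^2 1$, so the norm is multiplicative.

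\emph{Step 3: Frobenius and Gelfand--Mazur.} A real Banach division algebra is isomorphic to $\bbR$, $\bbC$ or $\mathbb{H}$; this is the real Gelfand--Mazur theorem, or alternatively Ostrowski--Mazur, or the theorem that an absolute-valued real algebra is $\bbR$, $\bbC$, $\mathbb H$ or $\mathbb O$, where associativity excludes $\mathbb O$. Concretely, we would show that every $x$ is algebraic of degree at most $2$ over $\bbR$, and then apply Frobenius. Write $x = a + s$ with $a = \rea x \in \bbR1$ and $s^* = -s$. Then $s^*s = -s^2 = \|s\|^2 1$, so $s^2 = -\|s\|^2 1$, and $x$ satisfies a real quadratic. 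An associative real division algebra in which every element is quadratic is $\bbR$, $\bbC$ or $\mathbb{H}$ by Frobenius' theorem. In fact one can avoid any dimension hypothesis: the skew part $V = \{s : s^* = -s\}$ carries the inner product defined by $\langle s,t\rangle 1 = -\tfrac12(st + ts)$. If $V$ had three orthonormal elements $i,j,k'$ with $k' \perp ij$, then $i j k'$ would be skew-adjoint-free of scalar part, and the standard Frobenius argument yields a contradiction, which bounds $\dim V \leq 3$.

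\emph{Step 4: excluding $\bbC$.} The algebra $\bbC$ has $\bbR1$ as its center, but $\bbC$ itself is commutative, so its center is all of $\bbC$, not $\bbR1$. This contradicts factoriality. More precisely, if $\dim V = 1$ then $B = \bbR1 + \bbR s$ is commutative, so $Z(B) = B \neq \bbR1$. Hence $\dim V \in \{0, 3\}$, giving $\bbR$ or $\mathbb{H}$.

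Finally, the involution must be identified. On $\bbR$ it is the identity. On $\mathbb H$, Steps 1 and 2 show that the involution fixes $\bbR1$ and acts as $-1$ on the skew part $V$, which by the Frobenius decomposition is the span of the pure quaternions. Hence it is the standard conjugation, and the isomorphism is a $*$-isomorphism.

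The main obstacle is Step 3, namely the Frobenius classification carried out without assuming finite dimensionality a priori. The quadratic relation $s^2 = -\|s\|^2$ for skew $s$, together with the inner product on $V$, should make the classical proof go through verbatim.
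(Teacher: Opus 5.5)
Your proposal is correct and follows the paper's proof: Lemma~\ref{lem:semi-selfadjoint} and factoriality give $B_{\sa} = \bbR 1$, the \cstar-identity makes every nonzero $x$ invertible with $x^{-1} = \|x\|^{-2}x^*$, the real Gelfand--Mazur theorem gives $\bbR$, $\bbC$ or $\mathbb{H}$, and $Z(B) = \bbR 1$ rules out $\bbC$. Your extra check that the involution must be standard conjugation (its fixed points are exactly $\bbR 1$, and the skew part maps onto the pure quaternions) makes explicit the $*$-part of the isomorphism, which the paper leaves implicit. The Frobenius detour is unnecessary once Gelfand--Mazur is cited, and as sketched it would also need the observation that orthonormal skew $i,j$ produce a third orthonormal element $ij$, which is what rules out $\dim V = 2$.
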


\begin{proof}
    By Lemma~\ref{lem:semi-selfadjoint}, $R_{\sa} = \bbR 1$.  Hence for every $x \in R$ there are $\lambda,\mu \geq 0$ such that
    \[
        x^*x = \lambda1,\qquad xx^* = \mu 1.
    \]
    If $x \not= 0$, then $\lambda,\mu > 0$, and the \cstar-identity gives
    $\lambda = \|x\|^2 = \mu$.  Thus $x^{-1} = \lambda^{-1}x^*$, and $R$ is a real Banach division algebra.  The real Gelfand--Mazur theorem gives $R \cong \bbR,\bbC$ or $\mathbb{H}$; the factor condition $Z(R) = \bbR1$ excludes $\bbC$.
\end{proof}

\begin{corollary}\label{cor:factor-complex}
    The complexification of every real \AWstar-factor is a complex \AWstar-factor of the same refined type.  Consequently every real \AWstar-factor is normal.
\end{corollary}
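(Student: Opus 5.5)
The proof splits according to whether the real \AWstar-factor $R$ has a nonzero central semi-abelian projection. Since $Z(R) = \bbR 1$, the only central projections are $0$ and $1$. So either $1$ is not semi-abelian, so that $R$ has no nonzero central semi-abelian projection, or $R$ itself is semi-abelian.

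In the first case Theorem~\ref{thm:handelman} applies directly, and $R_{\bbC}$ is a complex \AWstar-algebra. To see that it is a factor, I use the isomorphism $R_{\bbC} \cong C_J$. Conjugation $\overline{a + ib} = a - ib$ is a real $*$-automorphism of $R_{\bbC}$, and it preserves the center. Hence every central element $z = a + ib$ has central real and imaginary parts $a = (z + \bar z)/2$ and $b = (z - \bar z)/(2i)$, which lie in $R$ and commute with $R$. Thus $a, b \in Z(R) = \bbR 1$, so $Z(R_{\bbC}) = \bbC 1$. The claim about refined type is exactly the type-comparison theorem of Albeverio, Ayupov and Abduvaitov~\cite{AAA04}, which is cited right after Theorem~\ref{thm:handelman} and applies whenever that theorem does.

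In the second case Lemma~\ref{lem:semi-factor} gives $R \cong \bbR$ or $R \cong \mathbb{H}$. Then $R_{\bbC}$ is $\bbC$ or $\mathbb{H} \otimes_{\bbR} \bbC \cong M_2(\bbC)$, both complex factors. Their types are $\mathrm{I}_1$ and $\mathrm{I}_2$ respectively. The real algebra $R$ is of finite type $\mathrm{I}$ because it is semi-abelian (its unit is an abelian projection in the Baer-ring sense), so the refined type $\mathrm{I}_{\mathrm{fin}}$ agrees. The one point that needs care is the case $\mathbb{H}$, where one must check that the real type classification, done with semi-abelian projections, assigns $\mathrm{I}_{\mathrm{fin}}$. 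It does, since $1$ is semi-abelian and $R$ is finite by Lemma~\ref{lem:semi-selfadjoint}.

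For normality, in the semi-abelian case Lemma~\ref{lem:semi-selfadjoint} already gives normality. In the other case, $R_{\bbC}$ is a complex \AWstar-factor, hence normal by Sait\^o--Wright~\cite{SW91}. Its conjugation $\alpha$ is a real $*$-automorphism of $R_{\bbC}$ viewed as a real \AWstar-algebra, with fixed-point algebra $R$. Lemma~\ref{lem:fixed}(ii) then transfers normality to $R$. The main obstacle here is verifying that the realification of a normal complex \AWstar-algebra is a normal real \AWstar-algebra. I expect this to hold because projections, the order and directed suprema are all defined in the self-adjoint part, which does not change under realification, and annihilators depend only on the ring structure.
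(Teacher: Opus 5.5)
Your proof is correct and follows the paper's argument. You use the same case split: the semi-abelian case goes through Lemma~\ref{lem:semi-factor}, and the case with no nonzero central semi-abelian projection goes through Theorem~\ref{thm:handelman}, \cite{AAA04}, Sait\^o--Wright and Lemma~\ref{lem:fixed}(ii). The only cosmetic difference is that you get factoriality of $R_{\bbC}$ from conjugation-invariance of the center, whereas the paper splits $[a,r] + i[b,r] = 0$ into its real and imaginary parts; the two arguments are equivalent.
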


\begin{proof}
    If the factor is semi-abelian, Lemma~\ref{lem:semi-factor} reduces to $\bbR_{\bbC} = \bbC$ and $\mathbb{H}_{\bbC} \cong M_2(\bbC)$; these are complex \AWstar-factors of finite type I and the real factors are finite-dimensional real von Neumann algebras.
    
    Otherwise the factor has no nonzero central semi-abelian projection, so Theorem~\ref{thm:handelman} makes $R_{\bbC}$ complex \AWstar.  If $a + ib$ is central in $R_{\bbC}$, then $[a,r] + i[b,r] = 0$ for every $r\in R$, hence $a,b\in Z(R) = \bbR1$; thus $R_{\bbC}$ is a factor.  Type coincidence follows from~\cite{AAA04}.  Sait\^o--Wright normality of complex \AWstar-factors~\cite{SW91}, followed by Lemma~\ref{lem:fixed}(ii), gives normality of $R$.
\end{proof}

\begin{theorem}\label{thm:faithful-state}
    Every semifinite real \AWstar-factor admitting a faithful state is a real \wstar-factor.
\end{theorem}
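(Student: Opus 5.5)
Let $R$ be a semifinite real \AWstar-factor with a faithful state $\phi$. The plan is to pass to the complexification, apply the complex faithful-state criteria of Wright and Christensen--Pedersen cited in the introduction~\cite{Wri76,CP84}, and then descend back to $R$ as a fixed-point algebra. By Corollary~\ref{cor:factor-complex}, $R_{\bbC}$ is a complex \AWstar-factor of the same refined type as $R$, so it is semifinite.

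\textbf{Semi-abelian case.} If $R$ is semi-abelian, Lemma~\ref{lem:semi-factor} gives $R\cong\bbR$ or $R\cong\mathbb H$. Both are finite-dimensional real \wstar-factors, and there is nothing to prove. So we may assume Theorem~\ref{thm:handelman} applies.

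\textbf{Transferring the state.} Extend $\phi$ to $R_{\bbC}$ by
\[
    \phi_{\bbC}(a+ib)=\phi(a)+i\phi(b).
\]
The step to check is positivity and faithfulness of $\phi_{\bbC}$. For $x=a+ib$ we have
\[
    x^*x=(a^*a+b^*b)+i(a^*b-b^*a).
\]
The imaginary part $a^*b-b^*a$ is skew-adjoint. A state on a real \cstar-algebra is hermitian and hence vanishes on skew-adjoint elements; I would verify this from positivity, using $\phi(y^*)=\phi(y)$. Therefore
\[
    \phi_{\bbC}(x^*x)=\phi(a^*a)+\phi(b^*b)\ge 0,
\]
with equality only when $a=b=0$. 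So $\phi_{\bbC}$ is a faithful state on $R_{\bbC}$.

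\textbf{Applying the complex criteria.} The complex input I would use is that a semifinite \AWstar-factor admitting a faithful state is a \wstar-factor. In the finite case, Wright~\cite[Corollary~7]{Wri76} shows that the faithful state makes the quasitrace come from a genuine trace. In the properly infinite semifinite case ($\mathrm{I}_\infty$ or $\mathrm{II}_\infty$), Christensen--Pedersen~\cite{CP84} apply. I expect this to be the main obstacle: extracting exactly this statement from the literature. In particular, one must check whether the cited results require normality of the state or only faithfulness. If normality is needed, I would obtain it from Sait\^o--Wright normality of the factor~\cite{SW91}, combined with the standard decomposition of a faithful state on a monotone complete factor into normal and singular parts. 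Failing that, I would fall back on the type $\mathrm{I}$ case, which is handled directly by Kaplansky's theorem.

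\textbf{Descending to $R$.} Once $R_{\bbC}$ is a \wstar-algebra, the canonical conjugation $a+ib\mapsto a-ib$ is a real $*$-automorphism. It is order-preserving and preserves suprema of bounded increasing nets, so it is normal. Its fixed-point algebra is $R$, which is therefore a $\sigma$-weakly closed real $*$-subalgebra of a von Neumann algebra. Such an algebra is a real \wstar-algebra, with predual given by restricting normal functionals. Since $Z(R)=\bbR1$, $R$ is a real \wstar-factor.
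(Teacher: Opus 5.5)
Your route is the same as the paper's. You dispose of the semi-abelian case by Lemma~\ref{lem:semi-factor}, use Corollary~\ref{cor:factor-complex} to make $R_{\bbC}$ a semifinite complex \AWstar-factor, move the faithful state to $R_{\bbC}$, invoke Wright and Christensen--Pedersen, and descend along canonical conjugation.

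One step fails as written: your claim that positivity forces $\phi(y^*) = \phi(y)$. Over the reals it does not, because any functional vanishing on $R_{\sa}$ can be added to a faithful state without affecting positivity, faithfulness or $\phi(1) = 1$. Concretely, on $\mathbb{H}$ the functional $\phi(a + bi + cj + dk) = a + b$ is positive, faithful and unital, yet $\phi(i) = 1 \neq -1 = \phi(i^*)$. For such $\phi$ the value $\phi_{\bbC}(x^*x) = \phi(a^*a + b^*b) + i\,\phi(a^*b - b^*a)$ need not even be real. The repair is the one the paper makes. Replace $\phi$ by $\phi_s(x) = \tfrac12(\phi(x) + \phi(x^*)) = \phi(\rea x)$. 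This functional agrees with $\phi$ on $R_+$, so it remains faithful, and it kills skew-adjoint elements. It also satisfies $|\phi_s(x)| \leq \|\rea x\| \leq \|x\|$. Then extend $\phi_s$ rather than $\phi$. If your notion of state includes $\|\phi\| = 1$, hermitianity does hold, but the reason is the norm, not positivity. For skew $k$ one has $\|1 + tk\|^2 = \|1 + t^2k^*k\| = 1 + t^2\|k\|^2$, so $|1 + t\phi(k)| \leq (1 + t^2\|k\|^2)^{1/2}$ for all real $t$, which forces $\phi(k) = 0$.

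On the step you flagged as the main obstacle, the paper applies the cited criteria of Wright (type $\mathrm{II}_1$) and Christensen--Pedersen (type $\mathrm{II}_\infty$) directly to a faithful state. Your proposed fallback through a normal/singular decomposition would be vacuous in the case that matters. A monotone complete factor carrying a nonzero normal state already has a separating family of normal states, namely its unitary conjugates, and is therefore a \wstar-factor. So in a non-\wstar\ factor the normal part of any state is zero.
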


\begin{proof}
    The semi-abelian factor case is finite-dimensional by Lemma~\ref{lem:semi-factor}.  Assume therefore that $R$ is not semi-abelian, so $R_{\bbC}$ is a complex \AWstar-factor of the same refined type by Corollary~\ref{cor:factor-complex}.
    
    Let $\varphi : R \to \bbR$ be a faithful state and put
    \[
     \varphi_s(x) = \frac{1}{2}(\varphi(x) + \varphi(x^*)).
    \]
    Then $\varphi_s$ is positive and faithful, agrees with $\varphi$ on $R_{+}$, and vanishes on skew-adjoint elements.  Moreover
    \[
     |\varphi_s(x)| = |\varphi(\rea x)| \leq \|\rea x\| \leq \|x\|,
     \qquad \varphi_s(1) = 1,
    \]
    so $\varphi_s$ is a state.  Define
    \[
     \widetilde\varphi(a + ib) = \varphi_s(a) + i\varphi_s(b).
    \]
    For $z = a + ib$,
    \[
     z^*z = a^*a + b^*b + i(a^*b - b^*a),
    \]
    and the bracketed term is skew-adjoint, so $\widetilde\varphi(z^*z) = \varphi_s(a^*a + b^*b) \geq 0$.  Faithfulness follows in the same way.  Thus $R_{\bbC}$ has a faithful state.  Type I \AWstar-factors are von Neumann factors, while Wright's results apply in type $\mathrm{II}_1$~\cite{Wri76} and Christensen and Pedersen's results apply in $\mathrm{II}_{\infty}$~\cite{CP84}; hence $R_{\bbC}$ is a complex \wstar-factor.
    
    Canonical conjugation on $R_{\bbC}$ preserves increasing suprema and therefore is ultraweakly continuous: if $\omega$ is normal, then $x\mapsto\overline{\omega(\gamma(x))}$ is again normal, first for positive $\omega$ and then by linear decomposition.  Its fixed-point algebra $R$ is therefore ultraweakly closed in $R_{\bbC}$, and hence is a real \wstar-algebra.
\end{proof}

\begin{corollary}\label{cor:finite-faithful-state}
    Every finite real \AWstar-factor admitting a faithful state is a real \wstar-factor and admits a faithful normalized real trace.
\end{corollary}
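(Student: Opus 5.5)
The plan is to combine Theorem~\ref{thm:faithful-state} with the finite-type structure, then build the trace from the complexification. Let $R$ be a finite real \AWstar-factor with a faithful state. A finite factor is semifinite, so Theorem~\ref{thm:faithful-state} applies and $R$ is a real \wstar-factor. If $R$ is semi-abelian, Lemma~\ref{lem:semi-factor} gives $R\cong\bbR$ or $\mathbb H$. In these cases the trace is explicit: the identity on $\bbR$, and on $\mathbb H$ the map $q\mapsto\rea q$, the real part of the quaternion. This map is tracial because $\rea(pq)=\rea(qp)$, it is faithful since $\rea(q^*q)=|q|^2$, and it is normalized.

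Otherwise $R$ is not semi-abelian. By Corollary~\ref{cor:factor-complex}, $R_{\bbC}$ is a complex \AWstar-factor of the same refined type, hence finite. The proof of Theorem~\ref{thm:faithful-state} shows that $R_{\bbC}$ is a complex \wstar-factor. A finite von Neumann factor carries a unique faithful normal normalized trace $\tau$ (Fact~\ref{fact:finite-factors}). Let $\gamma$ be the canonical conjugation on $R_{\bbC}$. Then $x\mapsto\overline{\tau(\gamma(x))}$ is again a normalized trace: it is complex linear, positive and tracial, since $\gamma$ is a conjugate-linear $*$-automorphism. By uniqueness it equals $\tau$, so $\tau$ is real-valued on $R$.

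The restriction $\tau|_R$ is then a real-linear, normalized, tracial functional $R\to\bbR$. It is faithful because $\tau$ is faithful on $R_{\bbC}\supseteq R$. It is also normal, since suprema of increasing nets in $R$ agree with those in $R_{\bbC}$: $R$ is ultraweakly closed. This yields a faithful normalized real trace on $R$.

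The step needing the most care is the invariance of $\tau$ under $\gamma$. One must check that the conjugated functional is positive, using $\gamma(x^*x)=\gamma(x)^*\gamma(x)$, and that it is tracial. After that, uniqueness of the trace on a finite factor closes the argument. An alternative, if one prefers to avoid uniqueness, is to symmetrize: $\tfrac12(\tau+\overline{\tau\circ\gamma})$ is a faithful $\gamma$-invariant trace, and it is real on $R$.
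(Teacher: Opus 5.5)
Your proof is correct and follows essentially the same route as the paper. Theorem~\ref{thm:faithful-state} gives the \wstar-factor conclusion, and the unique normalized trace on the finite complex \wstar-factor $R_{\bbC}$ is invariant under canonical conjugation, so it restricts to a faithful normalized real trace on $R$. Your separate treatment of the semi-abelian case is correct but not needed, since Corollary~\ref{cor:factor-complex} already covers $\mathbb{H}_{\bbC}\cong M_2(\bbC)$, whose normalized trace restricts to $q\mapsto\rea q$.
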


\begin{proof}
    The first assertion is Theorem~\ref{thm:faithful-state}.  Its complexification is a finite complex \wstar-factor by Corollary~\ref{cor:factor-complex}; the unique normalized trace is invariant under canonical conjugation and hence restricts to a faithful normalized real trace.
\end{proof}

\begin{theorem}\label{thm:real-proper-infinite}
    Every properly infinite real \AWstar-algebra is monotone sequentially complete.
\end{theorem}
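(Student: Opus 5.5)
Since $R$ is properly infinite, I will first show that $R$ has no nonzero central semi-abelian projection. By Lemma~\ref{lem:semi-selfadjoint}, a semi-abelian real \AWstar-algebra is finite. So if $z$ were a nonzero central semi-abelian projection, the corner $zR$ would be a nonzero finite central summand of $R$. Indeed, $zR = zRz$ is semi-abelian, hence finite by that lemma. This contradicts proper infiniteness, which says that every nonzero central projection is infinite. Hence Theorem~\ref{thm:handelman} applies, and $R_{\bbC}$ is a complex \AWstar-algebra.

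Next I will check that $R_{\bbC}$ is properly infinite. Let $p$ be a nonzero central projection of $R_{\bbC}$. The canonical conjugation $\gamma$ is a conjugate-linear $*$-automorphism, so it preserves the centre and the Baer structure. I want to reduce to central projections of $R$. The centre of $R_{\bbC}$ is $Z(R)_{\bbC}$, by the computation in Corollary~\ref{cor:factor-complex}, and its projections are self-adjoint. Write a central projection as $a+ib$ with $a,b\in Z(R)$. Self-adjointness forces $a=a^*$ and $b^*=-b$. Since $Z(R)_{\sa}$ is commutative, I expect that central projections can be taken to lie in $Z(R)_{\sa}$.

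If that identification fails, I will argue directly instead. Take $p$ and let $q$ be its central support. The projection $p\vee\gamma(p)$ is central, $\gamma$-fixed, and hence lies in $R$. In the worst case I will use the type-comparison theorem of \cite{AAA04}, which is cited right after Theorem~\ref{thm:handelman}: refined types coincide, and in particular $R_{\bbC}$ has no finite central summand. With $z\in R$ a central projection, $zR$ is infinite. An isometry $v\in zR$ with $vv^*\neq z$ stays an isometry in $zR_{\bbC}$, so finite summands cannot occur.

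Once $R_{\bbC}$ is known to be a properly infinite complex \AWstar-algebra, the Christensen--Pedersen theorem \cite{CP84} says it is monotone sequentially complete. The algebra $R$ is the fixed-point algebra of $\gamma$. The conjugation $\gamma$ is conjugate-linear, but it is a real $*$-automorphism of the real \cstar-algebra underlying $R_{\bbC}$. Lemma~\ref{lem:fixed}(iv) applies to that real algebra, because the order and the suprema of self-adjoint sequences do not depend on the complex structure. So $R=(R_{\bbC})^{\gamma}$ is monotone sequentially complete.

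The main obstacle is the transfer of proper infiniteness from $R$ to $R_{\bbC}$, that is, controlling the central projections of the complexification. I would rely on \cite{AAA04} for that step, with the direct isometry argument as a fallback.
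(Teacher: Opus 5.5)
Your outline matches the paper's: rule out a central semi-abelian summand, apply Theorem~\ref{thm:handelman}, show $R_{\bbC}$ is properly infinite, invoke Christensen--Pedersen, and descend along the canonical conjugation $\gamma$. Your appeal to Lemma~\ref{lem:fixed}(iv) is exactly the paper's last step. Your first step is a harmless variant. You play finiteness of semi-abelian algebras (Lemma~\ref{lem:semi-selfadjoint}) against the definition of proper infiniteness, whereas the paper uses the halving theorem; both rest on the fact that equivalent central projections are equal.

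The weak point is the transfer of proper infiniteness to $R_{\bbC}$. Your expectation that central projections of $R_{\bbC}$ lie in $Z(R)_{\sa}$ is false, and commutativity of $Z(R)_{\sa}$ does not help. Suppose $Z(R)$ contains a skew-adjoint $J$ with $J^2=-1$, for instance $R=B(H)$ with $H$ infinite-dimensional, regarded as a real algebra. Then $p=\tfrac12(1-iJ)$ is a central projection of $R_{\bbC}$ with $\gamma(p)=1-p$, so $p\notin R$. Your isometry fallback only covers central projections that already lie in $R$, and the $p\vee\gamma(p)$ idea is left unfinished. As written, this step therefore rests entirely on the citation of \cite{AAA04}.

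It can be closed directly. Suppose a nonzero central projection $p$ of $R_{\bbC}$ were finite. Then $\gamma(p)$ is finite, and so is $z=p\vee\gamma(p)=p+(1-p)\gamma(p)$, being an orthogonal sum of finite central projections. But $z$ is a nonzero $\gamma$-fixed central projection, so it lies in $R$ and is infinite there by proper infiniteness. An isometry in $zR$ witnessing this is also one in $zR_{\bbC}$, a contradiction.

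The paper's route is shorter still. The halving witnesses $p_1+p_2=1$, $p_1\sim p_2\sim 1$ from $R$ persist in $R_{\bbC}$. For every nonzero central projection $c$ of $R_{\bbC}$ this gives $c=cp_1+cp_2$ with $cp_j\sim c$, so $c$ is infinite, without ever identifying $Z(R_{\bbC})$.
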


\begin{proof}
    Let $R$ be properly infinite.  The halving theorem gives orthogonal projections $p_1,p_2$ with $p_1 + p_2 = 1$ and $p_j \sim 1$~\cite{Ber72}.  If $0 \not= z \in Z(R)$ were semi-abelian, central compression would give two orthogonal projections $zp_1,zp_2$ in $zR$, each equivalent to the central unit $z$.  Equivalent central projections are equal, contradicting orthogonality.  Thus $R$ has no nonzero central semi-abelian projection.
    
    Theorem~\ref{thm:handelman} makes $R_{\bbC}$ complex \AWstar, and the same halving witnesses show it is properly infinite.  Christensen--Pedersen therefore makes $R_{\bbC}$ monotone sequentially complete~\cite{CP84}.  A bounded increasing sequence in $R_{\sa}$ has a supremum in $(R_{\bbC})_{\sa}$; canonical conjugation fixes the sequence and hence its supremum, which therefore lies in $R_{\sa}$.
\end{proof}

\begin{lemma}\label{lem:semi-split}
    For every real \AWstar-algebra $R$ there is a largest central projection $z$ such that $zR$ is semi-abelian.  The complementary algebra $(1 - z)R$ has no nonzero central semi-abelian projection.
\end{lemma}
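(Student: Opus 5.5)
The plan is to define $z$ as the supremum of all central semi-abelian projections and show that this supremum is again semi-abelian. Let $\mathcal{Z} = \{e \in \Proj(Z(R)) \ : \ eRe = eR \text{ is semi-abelian}\}$. The center of a real \AWstar-algebra is a commutative real \AWstar-algebra, so its projection lattice is complete. Put $z = \sup \mathcal{Z}$, computed in $\Proj(Z(R))$. Recall that in an \AWstar-algebra the supremum of central projections taken in $\Proj(R)$ is central and agrees with the one taken in the center. By construction $z$ dominates every central semi-abelian projection. So two things remain: $zR$ is semi-abelian, and $(1-z)R$ contains no nonzero central semi-abelian projection.

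For the first point, I would first show that $\mathcal{Z}$ is closed under finite joins and under passing to smaller central projections.
\begin{itemize}
\item \emph{Smaller projections.} If $e \in \mathcal{Z}$ and $f \leq e$ is central, then every projection $p$ of $fR$ is a projection of $eR$. Hence $p$ commutes with $eR \supseteq fR$.
\item \emph{Finite joins.} For $e, f \in \mathcal{Z}$, write $e \vee f = e + (f - ef)$ as an orthogonal sum of central projections in $\mathcal{Z}$. A projection $p$ of $(e \vee f)R$ splits as $p = ep + (f - ef)p$. Each piece is a projection in the respective summand, so it is central there, and hence $p$ is central in $(e \vee f)R$.
\end{itemize}

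By Zorn's lemma I would then choose a maximal orthogonal family $(e_i)$ in $\mathcal{Z}$. A standard argument shows that $\sup_i e_i = z$: if $z - \sup_i e_i$ were nonzero, it would fail to be orthogonal to some $e \in \mathcal{Z}$. Then $e(z - \sup_i e_i)$ would be a nonzero member of $\mathcal{Z}$, orthogonal to every $e_i$, contradicting maximality.

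Now let $p$ be a projection in $zR$ and $x \in zR$. For each $i$, the element $e_i p$ is a projection of $e_iR$, so it commutes with $e_i x$. This gives
\[
e_i(px - xp) = 0 \quad \text{for all } i.
\]
In an \AWstar-algebra, an element annihilated by an orthogonal family of central projections is annihilated by their supremum. Indeed, the right annihilator of $\{px - xp\}$ is generated by a projection $q$ with $q \geq e_i$ for all $i$, so $q \geq z$. Therefore $z(px - xp) = 0$, that is, $px = xp$, and $z \in \mathcal{Z}$. This annihilator-and-supremum step is the one I expect to need the most care. I would handle it using the Baer property directly, as just indicated, rather than any order continuity, so that no normality assumption enters.

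For the second point, let $w \leq 1 - z$ be a nonzero central projection with $wR$ semi-abelian. Then $w \in \mathcal{Z}$, so $w \leq z$. Combined with $w \leq 1 - z$ this forces $w = 0$, a contradiction. Hence $(1-z)R$ has no nonzero central semi-abelian projection. Finally, $z$ is the largest central projection with $zR$ semi-abelian, since every such projection lies in $\mathcal{Z}$.
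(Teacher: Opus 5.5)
Your proof is correct and follows the paper's approach: take a maximal orthogonal family of nonzero central semi-abelian projections, let $z$ be its join, check band by band that every projection of $zR$ is central, and use maximality for the complement. The differences are minor. You start from $z=\sup\mathcal{Z}$, so maximality of $z$ is immediate, and you get centrality of $p$ from the right annihilator of $px-xp$; the paper instead uses $p=\bigvee_i z_ip$ together with the fact that joins of central projections are central. Your finite-joins step is never used.
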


\begin{proof}
    Choose a maximal orthogonal family $(z_i)$ of nonzero central semi-abelian projections and let $z = \bigvee_i z_i$.  If $p \leq z$ is a projection, then $z_i p$ is a projection in the semi-abelian algebra $z_iR$, hence central in $z_iR$ and therefore central in $R$.  Since $p = \bigvee_i z_i p$, it follows that $p$ is central.  Thus $zR$ is semi-abelian.  Maximality gives the complementary assertion, and also shows that $z$ is the largest such central projection.
\end{proof}

\begin{theorem}\label{thm:real-normal}
    Every real \AWstar-algebra is normal.
\end{theorem}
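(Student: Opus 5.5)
We decompose $R$ with Lemma~\ref{lem:semi-split}: let $z$ be the largest central projection with $zR$ semi-abelian, so that $R \cong zR \oplus (1-z)R$. Here \emph{normal} means that for every upward-directed family of projections, the projection supremum in $\Proj(R)$ is its least upper bound in $R_{\sa}$. We will show that both summands are normal and that normality passes to the direct sum. For the passage, note that the order, the projection lattice and suprema all split coordinatewise across a central decomposition: a self-adjoint $y$ is an upper bound for $(p_i)$ exactly when $zy$ and $(1-z)y$ are upper bounds for $(zp_i)$ and $((1-z)p_i)$. Likewise $\bigvee p_i = z\bigvee zp_i + (1-z)\bigvee(1-z)p_i$, because multiplication by a central projection is a lattice homomorphism on $\Proj(R)$ in a Baer $*$-ring.

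For the summand $zR$, Lemma~\ref{lem:semi-selfadjoint} already states that a semi-abelian real \AWstar-algebra is normal. It is worth recording why: $(zR)_{\sa} = Z(zR)_{\sa} \cong C(X,\bbR)$ for a Stonean space $X$. Every projection of $zR$ lies in this commutative monotone complete algebra, so the projection join of a directed family equals its order supremum computed in $C(X,\bbR)$. Since the self-adjoint part of $zR$ is exactly $Z(zR)_{\sa}$, there are no further upper bounds to check.

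For $(1-z)R$, which has no nonzero central semi-abelian projection, Theorem~\ref{thm:handelman} makes $B = ((1-z)R)_{\bbC}$ a complex \AWstar-algebra. We then invoke the theorem of the author and Hanson~\cite{AH26} that every complex \AWstar-algebra is normal. Canonical conjugation $\gamma$ is a real $*$-automorphism of $B$ whose fixed-point algebra is $(1-z)R$, so Lemma~\ref{lem:fixed}(ii) shows that $(1-z)R$ is a normal real \AWstar-algebra.

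The main obstacle is this last transfer step. One must check that the join of projections in the real algebra agrees with the join in $B$; this follows from the annihilator argument in Lemma~\ref{lem:fixed}(i), since $\gamma$ fixes the join. One must also check that the supremum in $B_{\sa}$ of a $\gamma$-invariant family is itself $\gamma$-fixed. This holds because $\gamma$ is an order automorphism of $B_{\sa}$, so that supremum is also the supremum in $((1-z)R)_{\sa}$, which is all Lemma~\ref{lem:fixed}(ii) needs. If instead one wanted to rely only on the factor case of Sait\^o--Wright, this step would be the genuine difficulty. With~\cite{AH26} available, the proof reduces to combining these observations.
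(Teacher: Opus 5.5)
Your proposal is correct and takes essentially the same route as the paper. You split off the maximal central semi-abelian band via Lemma~\ref{lem:semi-split} and use Lemma~\ref{lem:semi-selfadjoint} there; on the complement you combine Theorem~\ref{thm:handelman}, the complex normality theorem of~\cite{AH26} and Lemma~\ref{lem:fixed}(ii), and then you recombine across the central direct sum. Your extra checks fill in details the paper leaves implicit, namely the coordinatewise splitting of joins and upper bounds, and the $\gamma$-invariance of the projection join and of the order supremum.
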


\begin{proof}
    Split $R = zR\oplus(1 - z)R$ as in Lemma~\ref{lem:semi-split}.  The semi-abelian summand is normal by Lemma~\ref{lem:semi-selfadjoint}.  The complementary summand has \AWstar-complexification by Theorem~\ref{thm:handelman}; the complex normality theorem~\cite{AH26} and Lemma~\ref{lem:fixed}(ii) descend normality.  Normality recombines across a central direct sum.
\end{proof}

\begin{proposition}\label{prop:mc-descent}  
    If $R_{\bbC}$ is monotone complete, then $R$ is monotone complete.
\end{proposition}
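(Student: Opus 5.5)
The plan is to argue exactly as in Lemma~\ref{lem:fixed}(iii): view $R$ as the fixed-point algebra of the canonical conjugation $\gamma(a + ib) = a - ib$ on $R_{\bbC}$, and descend suprema by uniqueness. First I record that $\gamma$ is a real $*$-automorphism of $R_{\bbC}$ (conjugate-linear over $\bbC$, but a real-linear $*$-automorphism of the underlying real \cstar-algebra), with $(R_{\bbC})^{\gamma} = R$ and $((R_{\bbC})_{\sa})^{\gamma} = R_{\sa}$. Since $\gamma$ is a real $*$-automorphism, it preserves positivity ($\gamma(x^*x) = \gamma(x)^*\gamma(x)$), and hence it is an order automorphism of $(R_{\bbC})_{\sa}$ with inverse $\gamma$.

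Next, let $(a_i)$ be a bounded increasing net in $R_{\sa}$. Viewed in $(R_{\bbC})_{\sa}$, it is still bounded and increasing, because the order on $R_{\sa}$ is the restriction of the order on $(R_{\bbC})_{\sa}$: an element of $R$ positive in $R_{\bbC}$ has spectrum in $[0,\infty)$, and spectrum is computed in the complexification. By hypothesis it has a supremum $s$ in $(R_{\bbC})_{\sa}$. Since $\gamma$ is an order automorphism fixing each $a_i$, $\gamma(s)$ is also a least upper bound of $(a_i)$, so $\gamma(s) = s$ and $s \in R_{\sa}$.

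Finally, $s$ is the supremum in $R_{\sa}$. It is an upper bound lying in $R_{\sa}$, and any upper bound $y \in R_{\sa}$ is an upper bound in $(R_{\bbC})_{\sa}$, hence $s \leq y$. Since the order of $R$ lives in $R_{\sa}$, this gives monotone completeness of $R$.

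The only step needing care is the compatibility of orders, namely that $R_+ = R \cap (R_{\bbC})_+$. Here I would use that for self-adjoint $a \in R$, the spectrum of $a$ in $R$ (defined via the complexification in real \cstar-theory) is the spectrum in $R_{\bbC}$, together with the real \cstar\ characterization of positivity by spectrum. Everything else is formal.
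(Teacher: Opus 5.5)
Your proposal is correct and is essentially the paper's proof: the paper simply notes that canonical conjugation is a real $*$-automorphism of $R_{\bbC}$ with fixed-point algebra $R$ and invokes Lemma~\ref{lem:fixed}(iii), whose proof is the same uniqueness-of-suprema argument you write out. You also make explicit the check that the order on $R_{\sa}$ is the restriction of the order on $(R_{\bbC})_{\sa}$, which the paper leaves implicit.
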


\begin{proof}
    Canonical conjugation on $R_{\bbC}$ is a real $*$-automorphism with fixed-point algebra $R$, so Lemma~\ref{lem:fixed}(iii) applies.
\end{proof}

\begin{theorem}\label{thm:real-complex-mc}
    ZFC proves that every real \AWstar-algebra is monotone complete if and only if every complex \AWstar-algebra is monotone complete.
\end{theorem}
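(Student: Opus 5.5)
The two implications are handled separately; the forward direction is the substantial one.

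\emph{From real to complex.} Assume every real \AWstar-algebra is monotone complete, and let $A$ be a complex \AWstar-algebra. Regard $A$ as a real \cstar-algebra $A_{\bbR}$ by restricting scalars. Right annihilators depend only on the ring multiplication, so $A_{\bbR}$ is a real Baer $*$-ring and hence a real \AWstar-algebra. Monotone completeness concerns only the order on $A_{\sa}$. This self-adjoint part is the same set with the same order whether $A$ is viewed as real or complex. So monotone completeness of $A_{\bbR}$ is exactly monotone completeness of $A$, and this direction is immediate.

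\emph{From complex to real.} Assume every complex \AWstar-algebra is monotone complete, and let $R$ be a real \AWstar-algebra.
\begin{enumerate}
    \item Split $R = zR \oplus (1 - z)R$, where $z$ is the largest central semi-abelian projection of Lemma~\ref{lem:semi-split}.
    \item The summand $zR$ is semi-abelian, so it is monotone complete by Lemma~\ref{lem:semi-selfadjoint}.
    \item The summand $(1 - z)R$ has no nonzero central semi-abelian projection. By Theorem~\ref{thm:handelman}, its complexification $((1 - z)R)_{\bbC}$ is a complex \AWstar-algebra.
    \item By hypothesis $((1 - z)R)_{\bbC}$ is monotone complete, so Proposition~\ref{prop:mc-descent} makes $(1 - z)R$ monotone complete.
    \item Recombine across the central direct sum. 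A bounded increasing net in $R_{\sa}$ splits into its compressions by $z$ and $1 - z$. Each compression has a supremum in its summand, and the sum of the two suprema is the supremum in $R$, because the order on $R_{\sa}$ is the product order.
\end{enumerate}

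Both directions are argued purely algebraically inside ZFC, with no set-theoretic hypotheses, so the equivalence is a theorem of ZFC.

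The main obstacle is the recombination in step 5, together with checking that Theorem~\ref{thm:handelman} applies to the summand $(1 - z)R$ rather than to $R$ itself. For the first point, one must verify that $x \mapsto (zx, (1 - z)x)$ is an order isomorphism $R_{\sa} \cong (zR)_{\sa} \times ((1 - z)R)_{\sa}$; this holds because $z$ is central. For the second, one must check that $(1 - z)R$ is itself a real \AWstar-algebra, since a corner by a projection of a Baer $*$-ring is again Baer. Its central semi-abelian projections are central semi-abelian projections of $R$ lying under $1 - z$, and these vanish by maximality of $z$. I would state both points explicitly and cite~\cite{Ber72} for the corner property.
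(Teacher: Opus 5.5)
Your proposal is correct and follows the paper's proof: the real-to-complex direction uses the common self-adjoint order, and the converse splits $R = zR \oplus (1-z)R$ via Lemma~\ref{lem:semi-split}, applies Lemma~\ref{lem:semi-selfadjoint} to the semi-abelian band, and applies Theorem~\ref{thm:handelman} with Proposition~\ref{prop:mc-descent} to the complement. One wording slip: you call the ``forward'' direction the substantial one, but in the statement as written the forward implication is real-to-complex, which is the immediate direction.
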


\begin{proof}
    One direction is immediate because a complex \AWstar-algebra and its underlying real algebra have the same self-adjoint part and order.  Conversely assume every complex \AWstar-algebra is monotone complete and let $R$ be real \AWstar.  Split $R = zR\oplus(1 - z)R$.  The semi-abelian summand is monotone complete by Lemma~\ref{lem:semi-selfadjoint}.  The complementary summand has monotone-complete complexification by Theorem~\ref{thm:handelman} and the hypothesis; Proposition~\ref{prop:mc-descent} descends the supremum structure.
\end{proof}

\begin{corollary}\label{cor:finite-real-mc}
    ZFC proves that the following are equivalent:
    \begin{enumerate}
        \item[(i)] every finite real \AWstar-algebra is monotone complete;
        \item[(ii)] every finite complex \AWstar-algebra is monotone complete;
        \item[(iii)] $\sigma_{\mathrm{MC}}$.
    \end{enumerate}
    Consequently the finite real monotone-completeness conjecture has the same truth value in every $\omega$-model of ZFC.
\end{corollary}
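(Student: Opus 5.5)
The plan is to prove (ii) $\Leftrightarrow$ (iii) using Theorem~\ref{thm:finite-algebras}, which already gives (ii) $\Leftrightarrow$ (iii) inside ZFC. The remaining work is (i) $\Leftrightarrow$ (ii). Once that is in place, the absoluteness claim follows exactly as in Theorem~\ref{thm:absoluteness}: $\sigma_{\mathrm{MC}}$ is arithmetic, so it has the same truth value in every $\omega$-model.

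For (i) $\Rightarrow$ (ii), let $A$ be a finite complex \AWstar-algebra and regard it as a real \AWstar-algebra. Right annihilators are unchanged by forgetting the complex structure. Finiteness is also unchanged, since it concerns isometries, and the self-adjoint part and its order agree. So monotone completeness of the underlying real algebra is the same as that of $A$, exactly as in the easy direction of Theorem~\ref{thm:real-complex-mc}.

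For (ii) $\Rightarrow$ (i), let $R$ be a finite real \AWstar-algebra. Split $R = zR \oplus (1-z)R$ by Lemma~\ref{lem:semi-split}. The summand $zR$ is monotone complete by Lemma~\ref{lem:semi-selfadjoint}. The summand $S = (1-z)R$ has no nonzero central semi-abelian projection, so $S_{\bbC}$ is a complex \AWstar-algebra by Theorem~\ref{thm:handelman}. The key point is that $S_{\bbC}$ is finite. For this I would use the realization $S_{\bbC} \cong C_J \subseteq M_2(S)$, where $M_2(S)$ is real \AWstar by Handelman's Corollary~3.12 as quoted in the proof of Theorem~\ref{thm:handelman}. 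An isometry in $C_J$ is then an isometry in $M_2(S)$, so finiteness of $S_{\bbC}$ follows from finiteness of $M_2(S)$.

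I expect finiteness of $M_2(S)$ to be the main obstacle. I would try to deduce it from finiteness of $S$ via the real analogue of the statement that matrix algebras over finite Baer $*$-rings with enough structure are finite. The route would be that $1\otimes e_{11} \sim 1\otimes e_{22}$ and these are orthogonal, combined with additivity of finiteness for equivalent orthogonal finite projections in Baer $*$-rings satisfying the parallelogram law (Berberian~\cite{Ber72}).

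If that route fails, a fallback is to use the coincidence of refined types from~\cite{AAA04}, which applies whenever Theorem~\ref{thm:handelman} does. Since $S$ is finite, meaning of type $\mathrm{I}_{\mathrm{fin}}$ or $\mathrm{II}_1$, the complexification $S_{\bbC}$ has the same finite type and is therefore finite.

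With $S_{\bbC}$ finite, hypothesis (ii) makes $S_{\bbC}$ monotone complete. Proposition~\ref{prop:mc-descent} then makes $S$ monotone complete. Finally, monotone completeness recombines across the central direct sum, because suprema can be computed coordinatewise. This gives (i).
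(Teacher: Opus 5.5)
Your proposal is correct and follows the paper's proof. It derives (ii)$\Leftrightarrow$(iii) from Theorem~\ref{thm:finite-algebras} and (i)$\Rightarrow$(ii) from the common self-adjoint order, and proves (ii)$\Rightarrow$(i) via the split of Lemma~\ref{lem:semi-split}, Lemma~\ref{lem:semi-selfadjoint}, Theorem~\ref{thm:handelman}, finiteness of the complexification, and Proposition~\ref{prop:mc-descent}. The paper gets finiteness of $S_{\bbC}$ exactly by your fallback (type comparison~\cite{AAA04}), so you can drop the detour through finiteness of $M_2(S)$, which would need a real Baer $*$-ring additivity result for finite projections that the paper never establishes.
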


\begin{proof}
    The equivalence of (ii) and (iii) is Theorem~\ref{thm:finite-algebras}.  The implication (i)$\Rightarrow$(ii) again uses the common self-adjoint order.  Conversely split a finite real $R$ by Lemma~\ref{lem:semi-split}.  The semi-abelian part is monotone complete; the complementary complexification is finite by type comparison and hence monotone complete under (ii), so Proposition~\ref{prop:mc-descent} applies.
\end{proof}

\subsection{Matrix stability and a quaternionic counterexample}

\begin{theorem}[Matrix-stability criterion]\label{thm:matrix-stability}
    Let $z$ be the maximal central semi-abelian projection of a real \AWstar-algebra $R$.  The following are equivalent:
    \begin{enumerate}
        \item[(i)] $R_{\bbC}$ is a complex \AWstar-algebra;
        \item[(ii)] $M_n(R)$ is a real \AWstar-algebra for some $n \geq 2$;
        \item[(iii)] $M_n(R)$ is a real \AWstar-algebra for every $n \geq 2$;
        \item[(iv)] $(zR)_{\bbC}$ is a complex \AWstar-algebra.
    \end{enumerate}
    Thus the obstruction to complexification and matrix stability is concentrated in the central semi-abelian summand, which need not be commutative.
\end{theorem}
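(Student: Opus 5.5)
The plan is to reduce everything to Proposition~\ref{prop:m2-complex} and the central splitting $R = zR \oplus (1-z)R$ of Lemma~\ref{lem:semi-split}. By Theorem~\ref{thm:handelman} applied to $(1-z)R$, the complexification $((1-z)R)_{\bbC}$ is complex \AWstar. Handelman's Corollary~3.12 gives more: every $M_n((1-z)R)$ is real \AWstar. Both complexification and matrix amplification commute with central direct sums: $R_{\bbC} = (zR)_{\bbC} \oplus ((1-z)R)_{\bbC}$ and $M_n(R) = M_n(zR) \oplus M_n((1-z)R)$. A direct sum of two algebras is (real or complex) \AWstar\ if and only if each summand is, since annihilators and their projection generators split componentwise. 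Hence each of (i)--(iii) for $R$ is equivalent to the same condition for $zR$. In particular (i)$\Leftrightarrow$(iv) follows at once.

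For the matrix conditions, Proposition~\ref{prop:m2-complex} gives (i)$\Leftrightarrow$[$M_2(R)$ is real \AWstar]. The implication (iii)$\Rightarrow$(ii) is trivial. For (ii)$\Rightarrow$(i), suppose $M_n(R)$ is real \AWstar\ for some $n\ge 2$.
\begin{itemize}
\item If $n$ is even, write $M_n(R) = M_2(M_{n/2}(R))$. Proposition~\ref{prop:m2-complex} makes $M_{n/2}(R)_{\bbC} = M_{n/2}(R_{\bbC})$ complex \AWstar.
\item In general, the corner $e_{11}M_n(R)e_{11} \cong R$ and the corner $(e_{11}+e_{22})M_n(R)(e_{11}+e_{22}) \cong M_2(R)$ are corners of a Baer $*$-ring by projections. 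Corners of (real) \AWstar-algebras are \AWstar\ by the standard Kaplansky argument, which works verbatim over $\bbR$. So $M_2(R)$ is real \AWstar, and Proposition~\ref{prop:m2-complex} gives (i).
\end{itemize}
The second bullet already suffices for all $n$, so the even case is not needed.

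For (i)$\Rightarrow$(iii), assume $R_{\bbC}$ is complex \AWstar. Then $M_n(R_{\bbC})$ is complex \AWstar\ for every $n$, by the same reasoning used in Proposition~\ref{prop:m2-complex}: Berberian's matrix theorem on the finite central summand, and $M_n(A)\cong A$ on the properly infinite summand (by halving, $A\cong M_{2^k}(A)$, and corners then handle general $n$). Entrywise canonical conjugation is a real $*$-automorphism of $M_n(R_{\bbC})$ with fixed-point algebra $M_n(R)$, so Lemma~\ref{lem:fixed}(i) shows $M_n(R)$ is real \AWstar.

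The main obstacle is the matrix step for arbitrary $n$ on the properly infinite part, where $M_n(A)\cong A$ for odd $n$ must be justified. If that is awkward, the fallback is to note that $M_n(A)$ is a corner of $M_{2^k}(A)$ with $2^k \ge n$, and that $M_{2^k}(A)$ is \AWstar\ by iterating the $2\times 2$ case; corners of \AWstar-algebras are \AWstar, so this gives the claim without any isomorphism. The closing remark, that the obstruction is concentrated on the central semi-abelian summand and that this summand need not be commutative, then follows from the reduction to $zR$ together with the example of $\mathbb H$ (or $\ell^\infty(\bbN,\bbR)+c_0(\bbN,\mathbb H)$), which is semi-abelian but not commutative.
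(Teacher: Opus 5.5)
Your proof is correct and follows essentially the same route as the paper. You get (i)$\Rightarrow$(iii) from matrix stability of complex \AWstar-algebras plus Lemma~\ref{lem:fixed}(i), (ii)$\Rightarrow$(i) from the $e_{11}+e_{22}$ corner and Proposition~\ref{prop:m2-complex}, and (i)$\Leftrightarrow$(iv) from the central splitting with Theorem~\ref{thm:handelman} on $(1-z)R$, exactly as the paper does; your treatment of odd $n$ on the properly infinite summand (as a corner of $M_{2^k}(A)$) just makes explicit a step the paper covers by citation.
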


\begin{proof}
    (i)$\Rightarrow$(iii): each $M_n(R_{\bbC})$ is complex \AWstar, by Berberian on the finite central summand and matrix stability of the properly infinite summand~\cite{Ber58,Ber72}.  Entrywise conjugation has fixed-point algebra $M_n(R)$, so Lemma~\ref{lem:fixed}(i) applies.  (iii)$\Rightarrow$(ii) is immediate.  If (ii) holds, the corner of $M_n(R)$ cut out by $e_{11} + e_{22}$ is a real \AWstar-algebra naturally isomorphic to $M_2(R)$, and Proposition~\ref{prop:m2-complex} gives (i).
    
    For (i)$\Leftrightarrow$(iv), split $R = zR\oplus(1 - z)R$.  By Lemma~\ref{lem:semi-split} and Theorem~\ref{thm:handelman}, $((1 - z)R)_{\bbC}$ is always complex \AWstar.  The \AWstar property passes to and recombines across central direct summands.
\end{proof}

The next example shows that the semi-abelian obstruction is genuinely noncommutative.

\begin{proposition}[Quaternionic sequence algebra]\label{prop:quat-example}
    Let
    \[
        R = \{(q_n) \in \ell^{\infty}(\bbN,\mathbb{H}):\operatorname{dist}(q_n,\bbR) \to 0\} = \ell^{\infty}(\bbN,\bbR) + c_0(\bbN,\mathbb{H}).
    \]
    Then:
    \begin{enumerate}
        \item[(i)] $R$ is a finite semi-abelian real \AWstar-algebra, $R_{\sa} = Z(R) = \ell^{\infty}(\bbN,\bbR)$, and no nonzero central summand of $R$ is commutative;
        \item[(ii)] under $\mathbb{H}_{\bbC} \cong  M_2(\bbC)$,
        \[
         R_{\bbC} \cong \ell^{\infty}(\bbN,\bbC)I_2 + c_0(\bbN,M_2(\bbC)),
        \]
        and this algebra is not even Rickart;
        \item[(iii)] fixing a quaternionic imaginary unit $I$, the map
        \[
         \delta(q)_n = Iq_n - q_nI
        \]
        is a bounded outer $*$-derivation of $R$;
        \item[(iv)] $Z(R)$ is a real \wstar-algebra, but $R$ is not a real \wstar-algebra.
    \end{enumerate}
    In particular, $R$ is type I in the real Baer classification and gives a negative answer to the problem of removing the complexification hypothesis from Ayupov's real type I \wstar-center criterion~\cite{Ayu04typeI}; it also gives a negative answer to the universal innerness question for derivations raised in~\cite{DR21}.
\end{proposition}

\begin{proof}
    The algebra is a norm-closed unital real \cstar-subalgebra of $\prod_n\mathbb{H}$.  A self-adjoint quaternion is real.  Commuting with every element supported at the $n$th coordinate forces the $n$th coordinate to be real, so $R_{\sa} = Z(R) = \ell^{\infty}(\bbN,\bbR)$.  The projections are exactly the $0$--$1$ sequences and are central.
    
    For $S\subseteq R$ let
    \[
     E = \{n:s_n = 0\text{ for every }s\in S\},\qquad p = 1_E.
    \]
    Since $\mathbb{H}$ is a division algebra, $sx = 0$ for every $s\in S$ exactly when $x_n = 0$ outside $E$.  Hence $\operatorname{ann}_r(S) = pR$, so $R$ is \AWstar.  It is finite by Lemma~\ref{lem:semi-selfadjoint}.  Every nonzero central projection dominates some coordinate projection $e_n$, and $e_nR \cong \mathbb{H}$ is noncommutative; hence no nonzero central summand is commutative.
    
    Complexifying coordinatewise gives the displayed algebra $A = R_{\bbC}$.  Let $a\in A$ have $a_n = n^{-1}e_{11}$.  If $\operatorname{ann}_r^A(a) = pA$ for a projection $p$, then $e_{11}p_n = 0$ for every $n$.  The element supported at coordinate $n$ with value $e_{22}$ belongs to $\operatorname{ann}_r^A(a)$, so $p_ne_{22} = e_{22}$.  These relations force $p_n = e_{22}$ for every $n$.  But the constant sequence $e_{22}$ is not in $A$, since $\operatorname{dist}(e_{22},\bbC I_2) = 1/2$.  Thus $A$ is not Rickart.
    
    For the derivation, $\|Iq_n-q_nI\| \leq 2\operatorname{dist}(q_n,\bbR)$, so $\delta(R)\subseteq c_0(\bbN,\mathbb{H})$.  It is a bounded real-linear $*$-derivation.  If $\delta = [a,\cdot]$ with $a\in R$, testing against all quaternions supported at coordinate $n$ shows that $a_n-I$ commutes with $\mathbb{H}$, hence $a_n-I\in\bbR$.  Therefore $\operatorname{dist}(a_n,\bbR) = 1$ for every $n$, contradicting $a\in R$.
    
    Finally $Z(R) = \ell^{\infty}(\bbN,\bbR)$ is a real \wstar-algebra.  If $R$ were a real \wstar-algebra, the bounded net
    \[
     s_F = \sum_{n\in F}e_nI\qquad(F\subseteq\bbN\text{ finite})
    \]
    would have an ultraweakly convergent subnet.  If its limit were $s\in R$, ultraweak continuity of multiplication by each $e_n$ would give $e_ns = e_nI$ for every $n$, so $s_n = I$ for all $n$, contrary to the definition of $R$.
\end{proof}

\begin{corollary}\label{cor:typeI-corrected}
    Let $R$ be a real type I \AWstar-algebra.  Then
    \[
     R\text{ is a real }\mathrm{W}^*\text{-algebra}
    \]
    if and only if $Z(R)$ is a real \wstar-algebra and $R$ has bounded central mixing over $\Proj(Z(R)_{\sa})$.
\end{corollary}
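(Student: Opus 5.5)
We read ``bounded central mixing over $\Proj(Z(R)_{\sa})$'' as the following gluing property: for every orthogonal family $(z_i)$ of central projections with $\bigvee_i z_i = 1$ and every norm-bounded family $(x_i)$ in $R$, there is $x \in R$ with $z_i x = z_i x_i$ for all $i$. This element is automatically unique, since the annihilator of all $z_i$ is zero. The plan is to prove necessity directly, and to prove sufficiency by splitting $R$ along Lemma~\ref{lem:semi-split} and treating the two summands separately.

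\emph{Necessity.} Suppose $R$ is a real \wstar-algebra. The center is ultraweakly closed, so $Z(R)$ is a real \wstar-algebra. For mixing, take $(z_i)$ and a bounded $(x_i)$ as above. The finite partial sums $\sum_{i\in F} z_ix_i$ are bounded by $\sup_i\|x_i\|$ because the $z_i$ are central and orthogonal. They converge strongly, and hence ultraweakly, in a normal real representation. The limit $x$ lies in $R$ and satisfies $z_ix = z_ix_i$.

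\emph{Sufficiency.} Let $z$ be the maximal central semi-abelian projection and write $R = zR \oplus (1-z)R$. Being a real \wstar-algebra recombines across a central direct sum, and both hypotheses pass to central summands. It therefore suffices to treat each summand.

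\begin{enumerate}
\item[(a)] On $(1-z)R$, Theorem~\ref{thm:handelman} makes $((1-z)R)_{\bbC}$ a complex \AWstar-algebra. By type comparison~\cite{AAA04} it is type I. Its center is $Z((1-z)R)_{\bbC}$, which is a complex \wstar-algebra. This is exactly the setting of Ayupov's criterion with the complexification hypothesis~\cite{Ayu04typeI}, which makes $(1-z)R$ a real \wstar-algebra. Alternatively, one can invoke the complex type I theorem and then descend through canonical conjugation, as at the end of the proof of Theorem~\ref{thm:faithful-state}. Mixing is not needed on this summand.

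\item[(b)] On $B = zR$, Lemma~\ref{lem:semi-selfadjoint} gives $B_{\sa} = Z(B)_{\sa} =: C$, which is a commutative real \wstar-algebra. For every $x \in B$, both $x^*x$ and $xx^*$ lie in $C$. I would use an exhaustion argument to choose a maximal orthogonal family $(z_i)$ of central projections on each of which $B$ is locally free over $C$. Concretely, on each $z_iB$ I want skew elements $u_1,\dots,u_d$ with $d \in \{0,1,3\}$ such that
\[
    u_k^*u_k = z_i, \qquad u_ku_l = -u_lu_k \quad (k \neq l), \qquad u_ku_l = \pm u_m
\]
with quaternionic relations, and $z_iB = \bigoplus_k Cz_iu_k$. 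This would give
\[
    z_iB \cong Cz_i,\quad Cz_i\otimes\bbC, \quad\text{or}\quad Cz_i\otimes\mathbb H,
\]
each of which is a real \wstar-algebra. To produce the $u_k$, I would take a skew element $y$, use the Borel calculus of $y^*y \in C$ to normalize it to a partial isometry with central support, and then iterate inside the commutant of what has already been built. This imitates the Frobenius argument fiberwise, the fibers being the models of Lemma~\ref{lem:semi-factor}. Bounded mixing then identifies $B$ with the $\ell^\infty$-direct sum $\bigoplus_i z_iB$, since every bounded family is glued and gluing is unique. An $\ell^\infty$-sum of real \wstar-algebras is a real \wstar-algebra.
\end{enumerate}

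The main obstacle is step (b), specifically the local normalization. I have to show that every nonzero central projection dominates a nonzero central projection on which $B$ has one of the three free forms. The difficulty is that skew elements need not have a constant ``norm profile'' across the center, so the normalization $y \mapsto y\,(y^*y)^{-1/2}$ can only be carried out on a central piece where $y^*y$ is bounded below. I would handle this with the Borel calculus in the commutative \AWstar-algebra $C$, cutting by the central spectral projections of $y^*y$. Proposition~\ref{prop:quat-example} shows that mixing is exactly what is needed here: it is used once, to glue these normalized pieces, and without it $c_0$-type algebras like that example survive.
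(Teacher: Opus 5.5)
Your proof is correct, but it takes a different route from the paper's. The paper proves both directions through the complexification. For necessity, a real \wstar-algebra has a \wstar-complexification, so it has bounded central mixing by Lemma~\ref{lem:complex-mixing}. For sufficiency, bounded central mixing makes $R_{\bbC}$ an \AWstar-algebra by Theorem~\ref{thm:central-mixing}, and Ayupov's type I criterion is then applied to all of $R$ at once.

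You prove necessity directly, by strong summation of $\sum_i z_ix_i$ in a normal representation. For sufficiency you split along Lemma~\ref{lem:semi-split}. On $(1-z)R$, Theorem~\ref{thm:handelman} already gives the \AWstar-complexification, so mixing is not used there. On the semi-abelian band you prove an explicit structure theorem: locally $C$, $C\otimes\bbC$ or $C\otimes\mathbb{H}$, glued by mixing. That step needs no Ayupov-type result at all.

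The paper's argument is shorter, but it rests on the Boolean-valued bounded-section construction behind Theorem~\ref{thm:central-mixing}. Yours is elementary and shows exactly where mixing enters, namely only in the $\ell^\infty$-gluing on $zR$. It also makes visible that Proposition~\ref{prop:quat-example} is the only kind of obstruction.

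Two points in step (b) need to be stated correctly.

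First, the next generator must be taken in the \emph{anticommutant} of $u$, not its commutant. The commutant of $u$ in $eB$ is $eC+eCu$ and contains no new skew direction. Use the $C$-valued form $\langle k,l\rangle=-\frac{1}{2}(kl+lk)$ on skew elements and replace a skew $k$ by $k-\langle k,u\rangle u$, which anticommutes with $u$. If this is always $0$ on the band $e$, then $eB=eC\oplus eCu$, which is your $d=1$ case.

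Second, stopping at $d=3$ needs an argument. Suppose $k$ is skew and orthogonal to $u$, $v$ and $w=uv$. Anticommuting with $u$ and $v$ gives $kw=wk$, while orthogonality to $w$ gives $kw=-wk$. Hence $wk=0$ and $k=w^*wk=0$. Alternatively, use the fibers $B_t\cong\bbR,\bbC,\mathbb{H}$ from Lemma~\ref{lem:semi-fibers} and the proof of Lemma~\ref{lem:deriv-basic}. Lemma~\ref{lem:semi-factor} concerns factors, not these fibers, so it is not the right citation here.
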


\begin{proof}
    A real \wstar-algebra has \wstar-complexification and therefore bounded central mixing.  Conversely, bounded central mixing is equivalent to $R_{\bbC}$ being \AWstar by Theorem~\ref{thm:central-mixing} below.  Ayupov's type I theorem then applies~\cite{Ayu04typeI}.
\end{proof}

\subsection{Central mixing and real Boolean-valued reduction}

Put $Z_0 = Z(R)_{\sa}$ and $\mathcal{B} = \Proj(Z_0)$.

\begin{lemma}\label{lem:z0}
    The algebra $Z_0$ is a commutative real \AWstar-algebra.  If $X$ is the Stone space of $\mathcal{B}$, then $X$ is Stonean and $Z_0 \cong  C(X,\bbR)$.  The Boolean algebra $\mathcal{B}$ is complete, and joins of families in $\mathcal{B}$ agree whether computed in $Z_0$ or in $R$.
\end{lemma}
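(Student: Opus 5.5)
Our plan has three steps: show that $Z_0$ is a commutative real \AWstar-algebra inside $R$, identify it with $C(X,\bbR)$ via real Gelfand theory, and compare joins in $Z_0$ with joins in $R$. The key input is the standard fact that the center of a Baer $*$-ring is a Baer $*$-ring whose projections are the central projections, with annihilators cut out by central projections. Concretely, let $S \subseteq Z_0$. Then $\operatorname{ann}_r^R(S) = pR$ for a projection $p \in R$. Since $S$ is central, this right ideal is also a left ideal: $u\,pR = pR$ for every unitary $u \in R$, i.e.\ $upu^*$ and $p$ generate the same right ideal. Hence $upu^* = p$, and since a unital \cstar-algebra is spanned by its unitaries (real case: every element of norm $<1$ is a mean of unitaries, by the Russo--Dye argument, which works verbatim for real \cstar-algebras), $p$ is central. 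Being a projection, $p$ is self-adjoint, so $p \in Z_0$. Intersecting with $Z_0$ gives $\operatorname{ann}_r^{Z_0}(S) = pZ_0$. Thus $Z_0$ is a Baer $*$-ring, and it is a norm-closed commutative real \cstar-subalgebra of $R$. So $Z_0$ is a commutative real \AWstar-algebra, and $\mathcal{B} = \Proj(Z_0)$ is exactly the set of central projections of $R$.

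For the Stone space, I would use real Gelfand duality. Since $Z_0$ is commutative with every element self-adjoint, it is isomorphic to $C(Y,\bbR)$ for a compact Hausdorff $Y$; this is the real commutative Gelfand--Naimark theorem for algebras with trivial involution, e.g.\ via the complexification $Z_0 + iZ_0 \cong C(Y)$. The Baer property of $C(Y,\bbR)$ forces $Y$ to be Stonean. Indeed, for an open $U \subseteq Y$, take $S = \{f \geq 0 : \operatorname{supp} f \subseteq U\}$. Then $\operatorname{ann}(S)$ is generated by the characteristic function of a clopen set, and that set must be $Y \setminus \overline{U}$; so $\overline{U}$ is clopen. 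Clopen sets of $Y$ correspond to projections, so $\mathcal{B}$ is the clopen algebra of the Stonean space $Y$. That clopen algebra is complete, and by Stone duality $Y \cong X$. Alternatively, one could cite~\cite{Li03} as in Lemma~\ref{lem:semi-selfadjoint}.

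It remains to compare joins. Let $(z_i) \subseteq \mathcal{B}$, and let $z$ be its join in $\mathcal{B}$. Completeness of $\mathcal{B}$ follows directly from the Baer property: $1 - z$ is the generator of $\operatorname{ann}_r^{Z_0}(\{z_i\})$. In $R$, the join of the $z_i$ in $\Proj(R)$ is $1 - q$, where $qR = \operatorname{ann}_r^R(\{z_i\})$. By the first paragraph, this $q$ is central and equals the $Z_0$-generator, so $q = 1 - z$ and the two joins agree. The main point needing care is justifying centrality of the annihilator projection in the real setting. The cleanest route avoids unitaries: for each $x \in R$, $xpR \subseteq pR$ (the annihilator of central elements is a two-sided ideal) gives $xp = pxp$. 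Applying this to $x^*$ and taking adjoints gives $px = pxp$. Hence $xp = px$, with no Russo--Dye needed. I would present that version.
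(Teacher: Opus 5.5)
Your proposal is correct and is essentially the paper's argument: the right annihilator of a set of central elements is a two-sided ideal, so its generating projection is central (your $xp = pxp$ argument, applied to $x$ and $x^*$, supplies exactly the justification the paper leaves implicit), which gives the Baer property of $Z_0$ and the agreement of joins, while real Gelfand theory gives $C(X,\bbR)$ with $X$ Stonean, which you verify directly where the paper cites Li. You were right to drop the unitary detour, because your claim that Russo--Dye holds verbatim for real \cstar-algebras is false: the only unitaries of $C([0,1],\bbR)$ are $\pm 1$, so unitaries need not span a real \cstar-algebra.
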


\begin{proof}
    If $S\subseteq Z_0$ and $\operatorname{ann}_r^R(S) = eR$, centrality of $S$ makes $eR$ a two-sided ideal, hence $e$ is central and $\operatorname{ann}_r^{Z_0}(S) = eZ_0$.  Thus $Z_0$ is commutative real \AWstar and real Gelfand theory gives $C(X,\bbR)$ with $X$ Stonean~\cite{Li03}.  The same two-sided-ideal argument identifies projection joins in $Z_0$ and $R$.
\end{proof}

\begin{definition}
    We say that $R$ has \emph{bounded central mixing over $\mathcal{B}$} if, whenever $(b_i)_{i\in I}$ is a partition of $1$ in $\mathcal{B}$ and $(x_i)_{i\in I}\subseteq R$ is uniformly norm bounded, there is $x\in R$ such that $b_ix = b_ix_i$ for all $i$.
\end{definition}

\begin{lemma}\label{lem:complex-mixing}
    If $R_{\bbC}$ is a complex \AWstar-algebra, then $R$ has bounded central mixing over $\mathcal{B}$.
\end{lemma}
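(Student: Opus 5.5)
The plan is to use the standard mixing property of complex \AWstar-algebras over central projections, and then return to $R$ by averaging with canonical conjugation. Let $(b_i)$ be a partition of $1$ in $\mathcal{B}$ and let $(x_i)\subseteq R$ satisfy $\|x_i\|\le C$. By Lemma~\ref{lem:z0}, each $b_i$ is a central projection of $R$. It is then also central in $R_{\bbC}$, since it commutes with $R$ and with $iR$. Moreover, $\bigvee_i b_i = 1$ holds in $R_{\bbC}$. Indeed, suppose a projection $q\in R_{\bbC}$ satisfies $qb_i=b_i$ for all $i$. Canonical conjugation $\gamma$ fixes every $b_i$, so $\gamma(q)$ dominates all $b_i$ as well. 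The join in $R_{\bbC}$ is therefore fixed by $\gamma$ and lies in $R$. Since joins in $\mathcal{B}$ agree with joins in $R$, this join equals $1$.

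The key step is the complex mixing lemma. In a complex \AWstar-algebra $A$, given orthogonal central projections $b_i$ with $\bigvee b_i=1$ and a bounded family $(y_i)$, there is $y\in A$ with $b_iy=b_iy_i$ and $\|y\|\le \sup\|y_i\|$. I would prove this by reducing to positive contractions. Write each $y_i$ as a combination of four positive elements of norm at most $C$ (real and imaginary parts, each split into positive and negative parts), so it suffices to treat positive $y_i$. For positive $y_i$, use Kaplansky's theory: $A$ decomposes as the \AWstar-direct sum $\bigoplus_i b_iA$ of its central summands, and the bounded elements $(b_iy_i)$ define an element of $\prod_i b_iA\cong A$. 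Concretely, this identification $A\cong\prod b_iA$ for a central partition of unity is a standard fact for \AWstar-algebras; see Kaplansky~\cite{Kap51} and Berberian~\cite{Ber72}. I expect this to be the main obstacle. If a clean citation is unavailable, I would instead take $y$ to be the supremum of $\sum_{i\in F}b_iy_i$ over finite $F$. This reduces to the commutative case only when the $y_i$ commute, so I would rather cite the direct-product decomposition of an \AWstar-algebra over a central partition of unity.

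Given $y\in R_{\bbC}$ with $b_iy=b_ix_i$, put $x=\tfrac12(y+\gamma(y))$, which lies in $R$. Since $\gamma(b_i)=b_i$ and $\gamma(x_i)=x_i$, we get $b_i\gamma(y)=\gamma(b_iy)=\gamma(b_ix_i)=b_ix_i$. Hence $b_ix=b_ix_i$ for every $i$. This is exactly bounded central mixing, with the bound $\|x\|\le\|y\|$ coming for free.
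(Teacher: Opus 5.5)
Your proposal is correct and follows the paper's route. You check that the partition remains a central partition of $1$ in $R_{\bbC}$, invoke Berberian's complex central mixing (direct-product) theorem, and descend via canonical conjugation. The paper's proof is the same with that first check left implicit; its only difference is that it uses uniqueness of central mixtures to get $\gamma(y)=y$, where you average $\tfrac12(y+\gamma(y))$, which is equally valid and avoids invoking uniqueness.
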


\begin{proof}
    The join of a central partition from $R$ remains $1$ in $R_{\bbC}$.  The standard central mixing theorem in a complex \AWstar-algebra~\cite{Ber72} gives a mixture $x \in R_{\bbC}$.  Canonical conjugation gives a second mixture with the same pieces; uniqueness of central mixtures forces $\bar x = x$, so $x \in R$.
\end{proof}

\begin{proposition}[Bounded-section construction]\label{prop:bounded-section}
    If $R$ has bounded central mixing over $\mathcal{B}$, there is an internal unital real \cstar-algebra $\mathcal{M}$ in the Boolean-valued universe $V^{\mathcal{B}}$ and a unital $Z_0$-linear real $*$-isomorphism
    \[
     R \cong \operatorname{bd}(\mathcal{M}),
    \]
    where $\operatorname{bd}(\mathcal{M})$ is the bounded global-section algebra.  Moreover
    \[
     x\in Z(R)\Longleftrightarrow \bv{\widehat x\in Z(\mathcal{M})} = 1,
     \qquad
     \bv{Z(\mathcal{M})_{\sa} = \bbR1} = 1.
    \]
\end{proposition}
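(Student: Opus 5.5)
The plan is to run the Ozawa-type construction of \cite{Oz86} in the real setting, modelling each element of $R$ as a global section over the Stonean space $X$ of Lemma~\ref{lem:z0}.

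\textbf{Construction of $\mathcal{M}$.} First define the $\mathcal{B}$-valued distance on $R$ by
\[
    \bv{\|x - y\| \leq r} = \bigvee\{b \in \mathcal{B} : \|b(x-y)\| \leq r\}.
\]
Using Lemma~\ref{lem:z0}, this join is computed in $Z_0$, and the supremum is attained, so $\|b(x-y)\| \leq r$ holds at the join. This follows from the fact that $\|b x\| = \sup_i \|b_i x\|$ for an orthogonal family $(b_i)$ with join $b$. To prove that fact, decompose $b$ into the pieces where $\|b_ix\|$ is small. Then apply the real \AWstar identity that a central projection annihilating all $b_i x$ annihilates $bx$, which is a right-annihilator argument.

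This gives a $\mathcal{B}$-valued pseudometric. Let $\mathcal{M}_0$ be the associated $\mathcal{B}$-valued set of elements of $R$, and let $\mathcal{M}$ be, internally, its metric completion. The ring operations, adjoint and $\bbR$-scalars are $Z_0$-linear and satisfy $\bv{\|xy\|\le\|x\|\|y\|}=1$, so they descend to $\mathcal{M}$. The real \cstar-identity $\|x^*x\| = \|x\|^2$ holds locally: $\|bx^*x\| = \|bx\|^2$ for central $b$. Together with the Boolean-valued transfer principle, this makes $\mathcal{M}$ internally a unital real \cstar-algebra.

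\textbf{The isomorphism.} A bounded global section of $\mathcal{M}$ is, by the maximum principle, locally approximated by elements of $R$. Concretely, there are partitions $(b_i)$ of unity and $x_i\in R$, uniformly bounded, with $\bv{\|\xi - \widehat{x_i}\| < \varepsilon}\geq b_i$. Bounded central mixing lets us glue these into a single $y_\varepsilon\in R$ with $\bv{\|\xi-\widehat{y_\varepsilon}\|<\varepsilon}=1$. The approximants then form a norm-Cauchy sequence in $R$, because $\sup_b$ over the cover controls the global norm. Its limit $x$ satisfies $\widehat x = \xi$, so $x\mapsto\widehat x$ is surjective onto $\operatorname{bd}(\mathcal{M})$. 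Injectivity and isometry follow from $\|x\| = \sup\{r : \bv{\|\widehat x\|\geq r}\neq 0\}$. The map is $Z_0$-linear because $Z_0\cong C(X,\bbR)$ corresponds to internal real scalars, namely the bounded names for reals.

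\textbf{Center.} If $x\in Z(R)$, then $\widehat x$ commutes with all $\widehat y$, and these are internally dense, so $\bv{\widehat x\in Z(\mathcal{M})}=1$. Conversely, internal centrality transfers back through the isomorphism. For the last claim, take an internal self-adjoint central $\xi$. Its bounded truncations are sections, hence come from self-adjoint central elements of $R$, which lie in $Z_0$. Elements of $Z_0$ are exactly the internal real scalars, giving $\bv{Z(\mathcal{M})_{\sa}=\bbR1}=1$. Unbounded names are handled by internally truncating $\xi$ with $\min(\max(\xi,-n),n)$.

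\textbf{Expected obstacle.} The main difficulty is the surjectivity step, specifically justifying that the locally chosen approximants can be taken uniformly norm bounded so that bounded central mixing applies. The first attempt would replace each $x_i$ by its internal truncation via functional calculus on the real part and imaginary parts. Since these are nonnormal in general, the plan is to use $x\,g(x^*x)$ with $g(t)=\min(1,(C/t)^{1/2})$ as in Proposition~\ref{prop:correctness1}.
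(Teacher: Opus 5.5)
Your proposal is correct. It runs the same Ozawa-type construction as the paper, but with a different key lemma.

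The paper works with the $Z_0$-valued norm $\|x\|_{Z_0}=\inf\{a\in(Z_0)_+ : x^*x\le a^2\}$. It proves the order-theoretic attainment $x^*x\le\|x\|_{Z_0}^2$ by gluing near-optimal bounds $a_i$ over a central partition. This exhibits $R$ as a $Z_0$-normed module in which bandwise inequalities can be glued.

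You work directly with the truth values $\bv{\|x-y\|\le r}$. You prove attainment in the form $\|bx\|=\sup_i\|b_ix\|$, for a central partition $(b_i)$ of $b$, by a right-annihilator argument. Cleanly: $(bx^*x-r^2)_+$ is killed by every $b_i$, hence by $b=\bigvee_i b_i$ by Lemma~\ref{lem:z0}, hence vanishes. This uses no mixing at all. The two formulations carry the same information, since $\bv{\|\widehat x\|\le r}=E_{[0,r]}(\|x\|_{Z_0})$.

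In your version, bounded central mixing enters only at surjectivity. The paper compresses that step into the phrase ``bounded central mixing makes the usual mixing completion complete''. You give it in more detail than the paper does.

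Two small points.

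First, your anticipated obstacle dissolves by your own attainment lemma. From $\bv{\|\xi\|\le C}=1$ and $\bv{\|\xi-\widehat{x_i}\|<\varepsilon}\ge b_i$ you get $\|b_ix_i\|\le C+\varepsilon$. Since $b_ix_i$ agrees with $x_i$ on $b_i$, you can replace $x_i$ by $b_ix_i$, and no functional-calculus truncation is needed. Your $g$ would in any case give the bound $\sqrt C$, not $C$.

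Second, the \cstar-identity alone does not make a real Banach $*$-algebra real \cstar: $\bbC$ with the identity involution satisfies it. You should also transfer invertibility of $1+\xi^*\xi$. This holds because $\|(1+x^*x)^{-1}\|\le1$ in $R$, and that bound survives mixing and internal limits. The paper's phrase ``the real \cstar identities'' passes over the same point.
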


\begin{proof}
    This is the real form of Ozawa's bounded-section construction, compare Katz~\cite{Oz86,Katz13}.  For completeness, define the $Z_0$-valued norm
    \[
     \|x\|_{Z_0} = \inf\{a\in (Z_0)_{+} \ : \ x^*x \leq a^2\}.
    \]
    Lemma~\ref{lem:z0} gives the infimum.  We first check that this infimum is attained in the order-theoretic sense needed below.  Fix $x\in R$ and put
    \[
        S_x = \{a\in (Z_0)_{+} \ : \ x^*x \leq a^2\},\qquad c = \inf S_x.
    \]
    We claim that $x^*x \leq c^2$.  Since $c$ is a lower bound for $S_x$, we have $c \leq a$ for every $a\in S_x$.

    Fix $\varepsilon>0$.  For $a\in S_x$, let
    \[
        b_a = E_{[0,\varepsilon]}(a-c)\in\mathcal B,
    \]
    the spectral projection of the positive central element $a-c$ corresponding to $[0,\varepsilon]$.  We claim that
    \[
        \bigvee_{a\in S_x}b_a = 1.
    \]
    Otherwise, with
    \[
        0 \not= e = 1-\bigvee_{a\in S_x}b_a,
    \]
    we have $e(a - c) \geq \varepsilon e$ for every $a \in S_x$.  Hence
    \[
        c + \varepsilon e \leq a \qquad (a \in S_x),
    \]
    because this inequality holds on the band $e$, while on $1 - e$ it reduces to $c \leq a$.  Thus $c + \varepsilon e$ is a lower bound for $S_x$ strictly larger than $c$, contradicting the definition of $c$ as the greatest lower bound.

    Refine the cover $(b_a)_{a \in S_x}$ to a central partition $(e_i)_{i \in I}$ of $1$, with $e_i \leq b_{a_i}$ for suitable $a_i \in S_x$.  Then
    \[
        0 \leq e_i(a_i - c) \leq \varepsilon e_i,
    \]
    and therefore
    \[
        e_i a_i \leq e_i(c + \varepsilon).
    \]
    The family $(e_i a_i)_{i \in I}$ is uniformly bounded.  By bounded central mixing there is $a_{\varepsilon} \in Z_0$ such that
    \[
        e_i a_{\varepsilon} = e_i a_i\qquad(i\in I).
    \]
    Since $a_i \in S_x$, bandwise
    \[
        e_i x^*x \leq e_i a_i^2 = e_i a_{\varepsilon}^2,
    \]
    whence $x^*x \leq a_{\varepsilon}^2$.  Also $a_{\varepsilon} \leq c + \varepsilon$, so
    \[
        x^*x \leq (c + \varepsilon)^2.
    \]
    Letting $\varepsilon \downarrow 0$ and using norm closedness of the positive cone gives $x^*x \leq c^2$.  Thus the infimum defining $\|x\|_{Z_0}$ has the required order property.

    Bandwise inequalities can therefore be checked on a central partition
    and glued back in $R$, while bounded central mixing makes the usual
    mixing completion complete.  The resulting internal norm satisfies the real \cstar identities and its bounded descent is exactly $R$.
    
    Centrality ascends and descends by rescaling arbitrary internal elements into bounded ones.  A bounded internally central self-adjoint element descends to $Z_0$, which is precisely the bounded descent of the internal real scalars; rescaling removes the boundedness restriction and gives the displayed self-adjoint-center identity.
\end{proof}

\begin{proposition}\label{prop:aw-ascent}
    Let $\mathcal{M}$ be an internal unital real \cstar-algebra in $V^{\mathcal{B}}$ and put $A = \operatorname{bd}(\mathcal{M})$.  Then
    \[
     A \text{ is real }\mathrm{AW}^* \Longleftrightarrow \bv{\mathcal{M} \text{ is real }\mathrm{AW}^*} = 1.
    \]  
\end{proposition}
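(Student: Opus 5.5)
The plan is to follow the complex case of Ozawa's correspondence~\cite{Oz86}, using the bounded-descent dictionary throughout. A bounded internal object with Boolean value $1$ corresponds to an element of $A = \operatorname{bd}(\mathcal{M})$. Projections of $A$ correspond exactly to names $\widehat p$ with $\bv{\widehat p \text{ is a projection}} = 1$. Every internal element is a central mixture, over a partition of $1$ in $\mathcal{B}$, of rescaled bounded elements. Elements of $\mathcal{B}$ act on $A$ as central projections, and mixtures over central partitions of bounded families exist in $A$, since bounded central mixing is built into $\operatorname{bd}$. I will use Kaplansky's characterization in its real form: a real \cstar-algebra is real \AWstar{} if and only if every family of orthogonal projections has a supremum and every maximal commutative self-adjoint subalgebra is generated by its projections. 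The alternative is to work directly with right annihilators, using that $\operatorname{ann}_r(S) = pA$ is a first-order statement. I will prefer the annihilator formulation, since it transfers most cleanly.

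\emph{($\Leftarrow$).} Suppose $\bv{\mathcal{M}\text{ is real AW}^*} = 1$ and let $S \subseteq A$. Form the internal set $\widehat S$ named by $\{\widehat s : s \in S\}$. By the maximum principle there is a name $\widehat p$ with $\bv{\widehat p \text{ is a projection and } \operatorname{ann}_r(\widehat S) = \widehat p\,\mathcal{M}} = 1$. The element $\widehat p$ is bounded, so it descends to a projection $p \in A$. If $x \in A$ satisfies $sx = 0$ for all $s \in S$, then internally $\widehat x \in \operatorname{ann}_r(\widehat S)$, so $\bv{\widehat p\widehat x = \widehat x} = 1$ and hence $px = x$. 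Conversely $sp = 0$ holds internally for each $s$, hence in $A$. Therefore $\operatorname{ann}_r^A(S) = pA$.

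\emph{($\Rightarrow$).} Suppose $A$ is real \AWstar. Let $\mathcal{S}$ be an internal subset of $\mathcal{M}$. Replacing each element by its rescaling $x(1+x^*x)^{-1/2}$ does not change annihilators, so I may assume $\mathcal{S}$ lies in the internal unit ball. Let $S \subseteq A$ be the set of bounded global sections $s$ with $\bv{\widehat s \in \mathcal{S}} = 1$, after mixing with $0$ off the Boolean value $\bv{\mathcal{S}\neq\varnothing}$. Let $p$ be the projection with $\operatorname{ann}_r^A(S) = pA$. Note that $\operatorname{ann}_r^A(S)$ is invariant under central projections from $\mathcal{B}$. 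It also follows that $p$ commutes with $\mathcal{B}$; this uses Lemma~\ref{lem:z0}, or simply that $bp$ is again in the annihilator.

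The key step is to show $\bv{\operatorname{ann}_r(\mathcal{S}) = \widehat p\,\mathcal{M}} = 1$. The inclusion $\supseteq$ holds because every internal element of $\mathcal{S}$ is locally, on a partition, equal to some $\widehat s$ with $s \in S$, obtained by mixing. For $\subseteq$, suppose $b = \bv{\exists y\,(y\in\operatorname{ann}_r(\mathcal{S}) \wedge \widehat p y \neq y)} \neq 0$. The maximum principle gives a witness, which I take bounded after rescaling. Mixing it with $0$ off $b$ gives $y \in A$ with $sy = 0$ for all $s \in S$; here the mixing is needed because each $s$ is checked bandwise. Then $py = y$ in $A$, which contradicts the fact that $\bv{\widehat p\widehat y \neq \widehat y} \geq b$. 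This argument needs the $\mathcal{B}$-valued inequality to hold on a nonzero band, and that follows from the definition of Boolean values of equality in the bounded descent.

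I expect the main obstacle to be this last bookkeeping: making precise that every internal element, set, and witness can be realized on a partition by bounded global sections in $A$, and that equalities checked in $A$ lift to Boolean value $1$. Concretely, this requires $\bv{\widehat x = \widehat y} = \bigvee\{e \in \mathcal{B} : ex = ey\}$, so that an equality in $A$ gives Boolean value $1$ and $A$ is closed under bounded mixtures. I would isolate this as a preliminary claim, which follows from the construction of $\operatorname{bd}(\mathcal{M})$ as in Proposition~\ref{prop:bounded-section}, and then run both directions through it.
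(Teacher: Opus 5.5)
Your proposal is correct and takes essentially the same route as the paper's proof. For ($\Leftarrow$) you ascend an external subset of $A$ and descend the internal annihilator projection; for ($\Rightarrow$) you normalize an internal subset, descend its bounded members, and ascend the projection generating the external annihilator. Your version also spells out the mixing bookkeeping that the paper leaves implicit: adjoining $0$ to $\mathcal{S}$, rescaling the witness, and invariance of right annihilators under rescaling. The last of these holds because $x(1+x^*x)^{-1/2} = (1+xx^*)^{-1/2}x$.
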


\begin{proof}
    Ozawa's annihilator argument is scalar-field independent~\cite{Oz86}.  If $\mathcal{M}$ is internally \AWstar, ascend an external subset of $A$ and descend the internal projection generating its annihilator.  Conversely normalize an arbitrary internal subset without changing its annihilator, descend all bounded members to $A$, and ascend the projection generating the external annihilator.
\end{proof}

\begin{lemma}[Internal center dichotomy]\label{lem:center-dichotomy}
    Let $N$ be a unital real \cstar-algebra with $Z(N)_{\sa} = \bbR 1$.  Then either $Z(N) = \bbR 1$, or there is a central element $J$ with
    \[
     J^* = -J,\qquad J^2 = -1,\qquad Z(N) = \bbR 1 \oplus \bbR J \cong \bbC.
    \]
    If $N$ is real \AWstar, then in the second case multiplication by $J$ makes $N$ a complex \AWstar-factor.
\end{lemma}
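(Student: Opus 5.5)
The plan is to analyse $Z(N)$ directly by splitting central elements into self-adjoint and skew-adjoint parts, and then to transport the \AWstar-structure along the complex structure given by $J$.

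\textbf{Step 1: splitting the center.} First observe that $Z(N)$ is closed under the involution, since $zx = xz$ for all $x$ implies $x^*z^* = z^*x^*$ for all $x$. Hence every $z \in Z(N)$ splits as $z = \rea z + k$ with $\rea z = \tfrac12(z+z^*)$ and $k = \tfrac12(z - z^*)$, both central. By hypothesis $\rea z \in \bbR 1$. So $Z(N) = \bbR 1 + Z_{\mathrm{sk}}$, where $Z_{\mathrm{sk}}$ is the real space of skew-adjoint central elements, and the sum is direct.

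\textbf{Step 2: the skew part is at most one-dimensional.} For $k \in Z_{\mathrm{sk}}$, the element $k^2 = -k^*k$ is central and self-adjoint, so $k^2 = -\lambda 1$. The \cstar-identity gives $\lambda = \|k^*k\| = \|k\|^2$. If $Z_{\mathrm{sk}} = 0$ we are in the first case. Otherwise pick $k \neq 0$ and put $J = k/\|k\|$, so that $J^* = -J$ and $J^2 = -1$.

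For any other $l \in Z_{\mathrm{sk}}$, the product $lJ$ is central. Since $(lJ)^* = J^*l^* = Jl = lJ$, it is also self-adjoint, so $lJ = \mu 1$ for some $\mu \in \bbR$. Multiplying on the right by $-J$ gives $l = -\mu J$. Thus $Z_{\mathrm{sk}} = \bbR J$, and $Z(N) = \bbR 1 \oplus \bbR J \cong \bbC$ via $a + bJ \mapsto a + ib$.

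\textbf{Step 3: the complex structure.} Now assume in addition that $N$ is real \AWstar\ and that we are in the second case. Define the complex scalar action by $(a+ib)\cdot x = (a+bJ)x$. Centrality of $J$ together with $J^2 = -1$ makes $N$ a complex algebra. The relation $J^* = -J$ gives $(\lambda x)^* = \bar\lambda x^*$.

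The point needing care is the norm. I would check that $\|(a+bJ)x\| = |a+ib|\,\|x\|$ by writing $a + bJ = r(\cos\theta + \sin\theta\, J)$. The element $u = \cos\theta + \sin\theta\, J$ is a central unitary, since $u^*u = (\cos\theta - \sin\theta J)(\cos\theta + \sin\theta J) = 1$. Hence $\|ux\| = \|x\|$, because $\|ux\|^2 = \|x^*u^*ux\| = \|x\|^2$. Consequently $N$ is a complex Banach $*$-algebra satisfying the \cstar-identity, that is, a complex \cstar-algebra.

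\textbf{Step 4: \AWstar-property and factoriality.} Projections and right annihilators depend only on the ring structure and the involution, as in the proof of Proposition~\ref{prop:m2-complex}. A right ideal $pN$ is automatically a complex subspace, because $J$ is central. So the real \AWstar-property yields the complex one.

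Finally, the complex center equals the ring center $Z(N) = \bbR 1 \oplus \bbR J$, which is exactly $\bbC\cdot 1$ for the new scalar action. Therefore $N$ is a complex \AWstar-factor.

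The only step I expect to need genuine care is the compatibility of the norm with complex scalars in Step 3. It is handled by the observation that $a+bJ$ is a positive multiple of a central unitary.
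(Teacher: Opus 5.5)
Your proof is correct and follows the paper's route: normalize a nonzero central skew-adjoint element to obtain $J$, show any other central skew-adjoint element is a real multiple of $J$ because its product with $J$ is central and self-adjoint, and use that declaring $i = J$ leaves right annihilators unchanged. You also verify details the paper leaves implicit: the norm is complex-homogeneous because $\cos\theta + \sin\theta\,J$ is a central unitary, and the complex center is $\bbC 1$, which makes the factor claim explicit (the only tacit input is $k^*k \geq 0$ in a real C$^*$-algebra, needed to get $\lambda = \|k\|^2$).
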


\begin{proof}
    A nonzero central skew-adjoint element $x$ has $x^*x = -x^2 \in \bbR 1$; after normalization it gives $J$.  Any other central skew-adjoint element $y$ has $Jy$ central self-adjoint and hence scalar, so $y$ is a real multiple of $J$.  Declaring $i = J$ does not alter right annihilators.
\end{proof}

\begin{theorem}\label{thm:central-mixing}
    Let $R$ be a real \AWstar-algebra.  The following are equivalent:
    \begin{enumerate}
        \item[(i)] $R_{\bbC}$ is a complex \AWstar-algebra;
        \item[(ii)] $M_2(R)$ is a real \AWstar-algebra;
        \item[(iii)] $R$ has bounded central mixing over $\mathcal{B}$;
        \item[(iv)] in $V^{\mathcal{B}}$ there is an internal real \AWstar-algebra $\mathcal{M}$ such that $R \cong \operatorname{bd}(\mathcal{M})$ $Z_0$-linearly and $\bv{Z(\mathcal{M})_{\sa} = \bbR1} = 1$.
    \end{enumerate}
\end{theorem}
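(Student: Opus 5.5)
We prove the cycle (i)$\Leftrightarrow$(ii), (i)$\Rightarrow$(iii)$\Rightarrow$(iv)$\Rightarrow$(i). The first equivalence is Proposition~\ref{prop:m2-complex}, and (i)$\Rightarrow$(iii) is Lemma~\ref{lem:complex-mixing}. For (iii)$\Rightarrow$(iv), Proposition~\ref{prop:bounded-section} gives an internal unital real \cstar-algebra $\mathcal{M}$ in $V^{\mathcal{B}}$ with $R \cong \operatorname{bd}(\mathcal{M})$ $Z_0$-linearly and $\bv{Z(\mathcal{M})_{\sa} = \bbR1} = 1$. Since $R$ is real \AWstar, Proposition~\ref{prop:aw-ascent} shows that $\mathcal{M}$ is internally real \AWstar, which is exactly (iv).

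The main step is (iv)$\Rightarrow$(i), where the plan is to work internally. By Lemma~\ref{lem:center-dichotomy}, applied inside $V^{\mathcal{B}}$, it is forced that either $Z(\mathcal{M}) = \bbR1$ or $\mathcal{M}$ carries a central $J$ making it complex. By the maximum principle, let $b$ be the Boolean value of the second alternative. Then $\mathcal{M}$ splits internally along $b$, and externally $R$ splits along the central projection $b \in \mathcal{B}$.

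On $b$, the element $J$ descends, since it has norm $1$, to a central skew-adjoint $j \in bR$ with $j^2 = -b$. Then $bR$ is already a complex \AWstar-algebra, and $(bR)_{\bbC} \cong bR \oplus \overline{bR}$ through the idempotents $\tfrac12(1 \mp i j)$. A direct sum of two complex \AWstar-algebras is \AWstar, so this piece is handled.

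On $1-b$, $\mathcal{M}$ is internally a real \AWstar-factor. Corollary~\ref{cor:factor-complex}, which is a theorem of ZFC and hence has Boolean value $1$, makes $\mathcal{M}_{\bbC}$ an internal complex \AWstar-algebra. Its bounded part is $((1-b)R)_{\bbC}$: an internal element $a+ib$ is bounded exactly when $a$ and $b$ are, because $\|a\|,\|b\| \leq \|a+ib\| \leq \|a\|+\|b\|$. Ozawa's ascent, which is the complex version of Proposition~\ref{prop:aw-ascent}, then shows that $((1-b)R)_{\bbC}$ is complex \AWstar. Recombining across the central summands gives (i).

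I expect the main obstacle to be the identification of $\operatorname{bd}(\mathcal{M}_{\bbC})$ with $(\operatorname{bd}\mathcal{M})_{\bbC}$ together with the internal complexification. This requires checking that complexification is absolute enough inside $V^{\mathcal{B}}$, and that mixtures of bounded complex sections split into real and imaginary bounded mixtures.

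As a fallback, one could avoid the internal dichotomy by applying Theorem~\ref{thm:matrix-stability}. On the band $1-z$ complexification always works, so only the semi-abelian summand $zR$ needs attention. There, using Lemma~\ref{lem:semi-selfadjoint}, each internal factor is $\bbR$, $\bbC$ or $\mathbb{H}$, and the complexification is internally finite-dimensional and hence \AWstar.
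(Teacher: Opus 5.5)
Your proposal is correct and follows essentially the paper's route: the same cited results give (i)$\Leftrightarrow$(ii), (i)$\Rightarrow$(iii) and (iii)$\Rightarrow$(iv), and (iv)$\Rightarrow$(i) goes, as in the paper, through the internal center dichotomy, Corollary~\ref{cor:factor-complex} on the real-factor band, the ``algebra plus its conjugate'' splitting on the complex band, and the identity $\operatorname{bd}(\mathcal{M}_{\bbC}) \cong \operatorname{bd}(\mathcal{M})_{\bbC}$. The differences are minor: you descend $J$ and split $(bR)_{\bbC}$ externally rather than internally, you skip the redundant (iv)$\Rightarrow$(iii), and your norm comparison $\|a\|,\|b\|\leq\|a+ib\|\leq\|a\|+\|b\|$ justifies the bounded-section identity, which the paper only asserts.
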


\begin{proof}
    Proposition~\ref{prop:m2-complex} gives (i)$\Leftrightarrow$(ii), Lemma~\ref{lem:complex-mixing} gives (i)$\Rightarrow$(iii), and Propositions~\ref{prop:bounded-section}--\ref{prop:aw-ascent} give (iii)$\Rightarrow$(iv).  The mixing principle gives (iv)$\Rightarrow$(iii).
    
    Assume (iv).  Apply Lemma~\ref{lem:center-dichotomy} internally.  On the band where $Z(\mathcal{M}) = \bbR1$, Corollary~\ref{cor:factor-complex} transfers internally and makes $\mathcal{M}_{\bbC}$ a complex \AWstar-factor.  On the complementary band $\mathcal{M}$ itself is a complex \AWstar-factor when scalar multiplication is defined by the central $J$, and its real complexification is the direct sum of that factor and its conjugate.  Thus $\bv{\mathcal{M}_{\bbC}\text{ is complex }\mathrm{AW}^*} = 1$.  Ozawa's bounded-section theorem and the identity
    \[
     \operatorname{bd}(\mathcal{M}_{\bbC}) \cong \operatorname{bd}(\mathcal{M})_{\bbC}
    \]
    now give (i).
\end{proof}

\begin{corollary}\label{cor:center-split}
    Under the equivalent conditions of Theorem~\ref{thm:central-mixing}, there is a central projection $c$ such that $Z(cR) = Z(cR)_{\sa}$, while $(1 - c)R$ contains a central $J_0$ with
    \[
     J_0^* = -J_0,\qquad J_0^2 = -(1 - c).
    \]
    Thus $(1 - c)R$, with $ix = J_0x$, is a complex \AWstar-algebra whose underlying real algebra is $(1 - c)R$.
\end{corollary}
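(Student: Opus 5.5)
The plan is to obtain the splitting from the internal center dichotomy (Lemma~\ref{lem:center-dichotomy}) applied inside $V^{\mathcal{B}}$, and then descend. By Theorem~\ref{thm:central-mixing}(iv) we have an internal real \AWstar-algebra $\mathcal{M}$ with $R \cong \operatorname{bd}(\mathcal{M})$ $Z_0$-linearly and $\bv{Z(\mathcal{M})_{\sa} = \bbR 1} = 1$. Let
\[
c = \bv{Z(\mathcal{M}) = \bbR 1} \in \mathcal{B}.
\]
Since $\mathcal{B}$ is complete (Lemma~\ref{lem:z0}), this Boolean value is a central projection of $R$. Internally, Lemma~\ref{lem:center-dichotomy} gives
\[
\bv{Z(\mathcal{M}) = \bbR1 \ \vee\ \exists J \in Z(\mathcal{M})\,(J^* = -J \wedge J^2 = -1)} = 1,
\]
so the second disjunct has Boolean value at least $1 - c$.

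Next I construct $J_0$. By the maximum principle there is a name $J'$ such that, on $1 - c$, $J'$ is internally a central skew-adjoint square root of $-1$. Mix $J'$ on $1 - c$ with $0$ on $c$ to get a name $J''$. Internally $J''$ has norm at most $1$, so it lies in the bounded part; let $J_0 \in R$ be the corresponding element. The relations $J_0^* = -J_0$ and $J_0^2 = -(1-c)$ hold with Boolean value $1$, hence in $R$, because the isomorphism is $Z_0$-linear and equalities of bounded sections are checked bandwise. Centrality of $J_0$ follows from the displayed equivalence $x \in Z(R) \iff \bv{\widehat x \in Z(\mathcal{M})} = 1$ in Proposition~\ref{prop:bounded-section}.

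For $Z(cR) = Z(cR)_{\sa}$, take $x \in Z(cR)$. Then $x$ is central in $R$, so $\bv{\widehat x \in Z(\mathcal{M})} = 1$. Below $c$ the internal center is $\bbR1$, so $\bv{\widehat x^* = \widehat x} \geq c$. Since $x = cx$, we also have $\bv{\widehat x = 0} \geq 1 - c$. Hence $x^* = x$ with Boolean value $1$, and so $x^* = x$ in $R$.

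Finally, on $(1-c)R$ the element $J_0$ is central with $J_0^2 = -(1-c)$ and $J_0^* = -J_0$. Setting $ix = J_0x$ therefore makes $(1-c)R$ a complex $*$-algebra. The norm is unchanged: $J_0$ is a central unitary of the corner, so $\|(\alpha + \beta J_0)x\|^2 = (\alpha^2 + \beta^2)\|x\|^2$ by the \cstar-identity. It is thus a complex \cstar-algebra. It is a central summand of the real \AWstar-algebra $R$, so it is real \AWstar, and right annihilators do not depend on the scalar field, as in Lemma~\ref{lem:center-dichotomy}. Hence it is complex \AWstar.

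The main obstacle is the descent of $J_0$. Lemma~\ref{lem:center-dichotomy} must be applied inside $V^{\mathcal{B}}$, which requires the internal $\mathcal{M}$ to satisfy its hypotheses with value $1$; this is exactly what Theorem~\ref{thm:central-mixing}(iv) provides. One must also check that the descended element satisfies the relations exactly rather than only up to band decompositions, which is where bounded central mixing is used.
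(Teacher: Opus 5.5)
Your proposal is correct and follows the paper's own argument: set $c = \bv{Z(\mathcal{M}) = \bbR 1}$, apply Lemma~\ref{lem:center-dichotomy} inside $V^{\mathcal{B}}$, and descend the central imaginary unit from the band $1-c$. Where the paper's proof is a single sentence, you also write out the maximum-principle and mixing step (with $0$ on $c$), the bandwise checks of the relations and of $Z(cR) = Z(cR)_{\sa}$, and the verification that $ix = J_0x$ gives a complex \cstar-norm with unchanged annihilators.
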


\begin{proof}
    Take $c = \bv{Z(\mathcal{M}) = \bbR 1}$ in the Boolean-valued representation and descend the central imaginary unit on the complementary band.
\end{proof}

\begin{proposition}\label{prop:finite-internal}
    In Theorem~\ref{thm:central-mixing}, if $R$ is finite then $\bv{\mathcal{M}\text{ is finite}} = 1$.
\end{proposition}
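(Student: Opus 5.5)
The plan is to copy the proof of Lemma~\ref{lem:finite-BV} in the real setting. Let $\mathcal{M}$ be the internal real \AWstar-algebra from Theorem~\ref{thm:central-mixing}(iv), with $R \cong \operatorname{bd}(\mathcal{M})$ $Z_0$-linearly. Put
\[
    b = \bv{\exists x \in \mathcal{M}\,(x^*x = 1 \wedge xx^* \not= 1)} \in \mathcal{B}.
\]
It suffices to show $b = 0$, since $\bv{\mathcal{M}\text{ is finite}} = \neg b$.

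First, by the maximum principle in $V^{\mathcal{B}}$ there is a name $x''$ with
\[
    \bv{x''^*x'' = 1 \wedge x''x''^* \not= 1} = b.
\]
Next, mix $x''$ on $b$ with the unit $1$ on $\neg b$. This gives a name $x'$ with $\bv{x'^*x' = 1} = 1$ and $\bv{x'x'^* \not= 1} \geq b$. By the internal real \cstar-identity, $x'$ has internal norm $1$ with Boolean value $1$. So $x'$ is a bounded global section and descends to some $v \in \operatorname{bd}(\mathcal{M}) \cong R$.

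Then the identification is used to transport the relations. It is a unital real $*$-isomorphism under which Boolean values of equations between bounded sections correspond to the central projections on which the equations hold. Hence $\bv{x'^*x' = 1} = 1$ gives $v^*v = 1$ in $R$. Finiteness of $R$ gives $vv^* = 1$, which ascends to $\bv{x'x'^* = 1} = 1$. Therefore $b \leq \bv{x'x'^* \not= 1} = 0$.

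The main obstacle is justifying this ascent and descent of equations in the real setting. Concretely, one needs that for bounded sections $y,w$ the value $\bv{y = w}$ is the largest $e \in \mathcal{B}$ with $ey = ew$, so that $\bv{y = w} = 1$ exactly when $y = w$ in $R$. I would read this off the bounded-section construction of Proposition~\ref{prop:bounded-section}. There, elements of $\mathcal{M}$ are mixtures of elements of $R$, and the internal distance is the $Z_0$-valued norm $\|y - w\|_{Z_0}$. Equality with value $1$ then means $\|y - w\|_{Z_0} = 0$, which forces $(y - w)^*(y - w) \leq 0$ and hence $y = w$. Boundedness of $x'$ is needed so that it lies in $\operatorname{bd}(\mathcal{M})$ rather than only in the mixing completion. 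This follows because $\|x'\|_{Z_0}^2 = \|x'^*x'\|_{Z_0} = 1$ holds internally.
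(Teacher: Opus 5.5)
Your proposal is correct and is essentially the paper's proof. The paper likewise uses the maximum and mixing principles to produce a globally defined norm-one internal isometry that is proper on the offending band, descends it to $R$ through the bounded-section identification, and contradicts finiteness. Your extra justification that $\bv{y = w} = 1$ forces $y = w$ in $R$ for bounded sections, via $\|y - w\|_{Z_0} = 0$ and the attained infimum of Proposition~\ref{prop:bounded-section}, makes explicit a step the paper leaves implicit.
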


\begin{proof}
    If a nonzero Boolean band forced a proper internal isometry, the maximum and mixing principles would produce a globally defined internal isometry of norm one which is proper on that band.  It descends to a proper isometry in $R$, contradicting finiteness.
\end{proof}

\subsection{Quasitraces, traces and norm-one center projections}

Write $Z_0(R) = Z(R)_{\sa}$.  We first construct directly the self-adjoint-center-valued quasitrace used below.

\begin{theorem}\label{thm:real-center-quasitrace}
    Let $R$ be a finite real \AWstar-algebra.  There is a unique map
    \[
        T_R : R \longrightarrow Z_0(R)
    \]
    such that $T_R|_{Z_0(R)} = \mathrm{id}$, $T_R(hx) = hT_R(x)$ for all $h \in Z_0(R)$ and $x \in R$, $T_R$ is linear on commutative real \cstar-subalgebras,
    \[
        T_R(x^*x) = T_R(xx^*) \geq 0,
    \]
    with $T_R(x^*x) = 0$ only for $x = 0$, and
    \[
        T_R(a + b) = T_R(a) \qquad (a^* = a,\ b^* = -b).
    \]
\end{theorem}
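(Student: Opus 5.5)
The plan is to split $R$ centrally as in Lemma~\ref{lem:semi-split}, $R = zR \oplus (1-z)R$, build $T$ separately on each summand, and glue. Both existence and uniqueness are compatible with this splitting: the required $Z_0$-homogeneity $T_R(hx) = hT_R(x)$ applied with $h = z$ and $h = 1-z$ forces $T_R(x) = T_{zR}(zx) + T_{(1-z)R}((1-z)x)$. It therefore suffices to treat the semi-abelian summand and the complexifiable summand.

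\emph{Semi-abelian summand.} Let $B = zR$. By Lemma~\ref{lem:semi-selfadjoint} we have $B_{\sa} = Z_0(B)$, and $B$ is finite. The conditions force
\[
    T(x) = T(\rea x) = \rea x,
\]
since $\rea x$ is self-adjoint, hence lies in $Z_0(B)$, where $T$ is the identity. So define $T_B(x) = \rea x$.

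\begin{itemize}
    \item \emph{$Z_0$-homogeneity:} for central self-adjoint $h$, we have $\rea(hx) = h\rea x$.
    \item \emph{Trace identity:} $x^*x$ and $xx^*$ are central self-adjoint, so we need $x^*x = xx^*$. Work in the Stonean picture $Z_0 \cong C(X,\bbR)$, or directly: $x^*x$ commutes with $x$ because it is central, so $x$ is normal, giving $x^*x = xx^*$.
    \item \emph{Faithfulness:} $T(x^*x) = x^*x$, which vanishes only for $x=0$.
    \item \emph{Linearity on commutative subalgebras:} $\rea$ is real linear.
\end{itemize}

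Uniqueness is the displayed forced formula. This summand is essentially formal.

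\emph{Complexifiable summand.} Let $C = (1-z)R$. By Theorem~\ref{thm:handelman}, $C_{\bbC}$ is a complex \AWstar-algebra, and it is finite by type comparison~\cite{AAA04}, as used in Corollary~\ref{cor:finite-real-mc}. Invoke the complex center-valued quasitrace $T_{\bbC} : C_{\bbC} \to Z(C_{\bbC})$ of Blackadar--Handelman~\cite{BH82} (normalized, faithful, $Z$-linear on the center, and unique). Let $\gamma$ be canonical conjugation. Then $\gamma\circ T_{\bbC}\circ\gamma$ satisfies the same axioms, so uniqueness gives $\gamma$-invariance. Hence $T_{\bbC}$ maps $C$ into $Z(C_{\bbC})^{\gamma} = Z(C)$, and its self-adjoint values lie in $Z_0(C)$.

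Define
\[
    T_C(x) = \rea T_{\bbC}(\rea x) = T_{\bbC}(\rea x) \qquad (x \in C),
\]
the second equality because $T_{\bbC}$ is self-adjoint valued on self-adjoints. Then:

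\begin{itemize}
    \item The identities $T_C(x^*x) = T_C(xx^*) \ge 0$, faithfulness, and $T_C|_{Z_0} = \mathrm{id}$ restrict from the complex ones.
    \item The skew-part condition holds by construction.
    \item Linearity on a commutative real \cstar-subalgebra $D$ follows by passing to $D_{\bbC} \subseteq C_{\bbC}$, which is abelian.
\end{itemize}

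For uniqueness on $C$, extend a candidate $T'$ to $C_{\bbC}$ by $T'(a+ib) = T'(a) + iT'(b)$. This extension is a complex center-valued $1$-quasitrace:

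\begin{itemize}
    \item Its values lie in $Z_0 + iZ_0 \subseteq Z(C_{\bbC})$.
    \item The trace identity on $C_{\bbC}$ follows as in the proof of Theorem~\ref{thm:faithful-state}, since $(a+ib)^*(a+ib) = a^*a + b^*b + (\text{skew})$ and the skew part is killed.
\end{itemize}

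Then invoke the uniqueness of the Blackadar--Handelman center-valued quasitrace on finite complex \AWstar-algebras.

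\emph{Main obstacle.} The hardest step is this uniqueness on $C$. Two points need care. First, the extended $T'$ must be checked to be linear on abelian subalgebras of $C_{\bbC}$, not just on complexifications of commutative subalgebras of $C$. Second, the complex uniqueness theorem may be phrased for $2$-quasitraces. The second point is handled by~\cite[Corollary~II.1.10]{BH82}, as in the scalar case.

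For the first point, I would avoid general abelian subalgebras. Instead I would argue uniqueness directly on projections: center-valued quasitraces agree with the center-valued dimension function on projections, because they are determined by halving and comparison together with $Z_0$-homogeneity. Agreement on projections then passes to all self-adjoint elements by spectral approximation inside the commutative real subalgebras they generate, using norm continuity (which follows from positivity and linearity there). This last route needs only the real hypotheses and avoids the extension issue entirely.
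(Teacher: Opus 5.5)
Your proof follows the paper's. The central split of Lemma~\ref{lem:semi-split}, the formula $T(x)=\rea x$ on the semi-abelian summand, and $T_{\bbC}(\rea x)$ on the complement (conjugation-equivariant by uniqueness of $T_{\bbC}$) are all as in the paper. Uniqueness is likewise proved by agreement on projections followed by spectral approximation.

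The only real divergence is how you get agreement on projections. The paper uses the Boolean-valued representation (Theorem~\ref{thm:central-mixing}, Proposition~\ref{prop:finite-internal}) to reduce to finite real or complex factors, and computes there: ranks of minimal projections over $\bbR$, $\bbC$ or $\mathbb{H}$ in type~I, and rational dimensions in type~$\mathrm{II}_1$. You invoke the classical comparison-theoretic uniqueness of the center-valued dimension function directly in $(1-z)R$. This avoids transferring the factor computation through the Boolean-valued model. The price is that you must carry out the comparison argument (generalized comparability, central bands) in the real algebra yourself. Halving alone will not do: on the finite type~I band, e.g.\ $M_3(\bbR)$ where $1$ is not halvable, the minimal-projection rank count is what pins down the values. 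Also, do the spectral approximation in a maximal commutative $*$-subalgebra containing $a$, which contains its spectral projections. The \cstar-algebra generated by $a$ need not contain them.

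Two slips to fix:
\begin{enumerate}
\item Your abandoned extension route fails earlier than you say. For $w=a+ib$, the trace identity for the extended $T'$ reads $T'(a^*a+b^*b)=T'(aa^*+bb^*)$. The computation in the proof of Theorem~\ref{thm:faithful-state} gives only positivity. Deducing this identity from $T'(a^*a)=T'(aa^*)$ and $T'(b^*b)=T'(bb^*)$ would need additivity of $T'$ on non-commuting elements. That is exactly why the paper, and your final argument, work on projections instead.
\item On the semi-abelian summand, the inference ``$x^*x$ is central, hence commutes with $x$, hence $x$ is normal'' is invalid as stated: the unilateral shift is quasinormal but not normal. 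Instead write $x=a+k$ with $a\in Z_0(B)$ and $k^*=-k$. Then $x^*x=a^2-k^2=xx^*$, as in Lemma~\ref{lem:semi-fibers}.
\end{enumerate}
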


\begin{proof}
    Split $R = zR \oplus R_1$ by Lemma~\ref{lem:semi-split}, and put
    \[
        A = (R_1)_{\bbC}.
    \]
    By Theorem~\ref{thm:handelman} and real/complex type comparison, $A$ is a finite complex \AWstar-algebra.  Let $T_A : A \to Z(A)$ be its canonical normalized center-valued quasitrace.  If $\gamma$ denotes the canonical conjugation on $A$, uniqueness of $T_A$ gives
    \[
        T_A(\gamma(a)) = \gamma(T_A(a)).
    \]
    Define
    \[
        T_R(x_0 \oplus x_1) = \rea x_0 \oplus T_A(\rea x_1)
        \qquad(x_0 \in zR,\ x_1 \in R_1).
    \]
    By Lemma~\ref{lem:semi-selfadjoint}, $\rea x_0$ lies in $Z(zR)_{\sa}$.  Also $\rea x_1$ is fixed by $\gamma$, so the equivariance above shows that $T_A(\rea x_1)$ is central, self-adjoint and fixed by $\gamma$, hence belongs to $Z(R_1)_{\sa}$.  Thus $T_R$ takes values in $Z_0(R)$.

    The defining properties now follow bandwise.  On $zR$ they follow from Lemma~\ref{lem:semi-selfadjoint}.  On $R_1$, positivity, faithfulness and the identity at $x^*x$ and $xx^*$ follow from the corresponding properties of $T_A$.  If $C \subseteq R_1$ is a commutative real \cstar-subalgebra, then its complexification inside $A$ is commutative, so linearity of $T_A$ there gives real linearity of $T_R$ on $C$.  Central homogeneity follows in the same way, and the displayed skew-annihilation identity is immediate from the definition.

    It remains to prove uniqueness.  Let $S : R \to Z_0(R)$ be another map with these properties.  On the semi-abelian summand $zR$, Lemma~\ref{lem:semi-selfadjoint} gives
    \[
        S(x) = S(\rea x) = \rea x,
    \]
    so $S = T_R$ there.

    On $R_1$, consider the restrictions of $S$ and $T_R$ to projections.  Both are positive, normalized, finitely additive on orthogonal projections, invariant under equivalence, and compatible with central bands.  Through the Boolean-valued representation of Theorem~\ref{thm:central-mixing} and Proposition~\ref{prop:finite-internal}, these properties reduce on each Boolean band to the usual normalized projection dimension in a finite factor.  On the real-center band the internal algebra is a finite real factor; on the complementary band its central imaginary unit makes it a finite complex factor.  In finite type $\mathrm{I}$, equivalence of minimal projections forces the value $\operatorname{rank}(p)/n$, with rank over $\bbR$, $\bbC$ or $\mathbb H$ as appropriate.  In type $\mathrm{II}_1$, comparison with projections of rational dimensions $m/n$ and $(m + 1)/n$ and then letting $n\to\infty$ forces the usual dimension.  Hence $S$ and $T_R$ agree on every projection of $R_1$.

    Every self-adjoint element of $R_1$ is uniformly approximated inside its abelian \AWstar-subalgebra by finite real linear combinations of spectral projections.  Since both maps are linear and norm continuous on that abelian algebra, they agree on all self-adjoint elements.  Both maps discard the skew-adjoint part, so they agree on all of $R_1$.  This proves uniqueness.
\end{proof}

Rakhimov and Rakhmonova state a related full-center-valued construction for finite real \AWstar-algebras~\cite{RR24}; the formulation above is self-contained and uses the self-adjoint center, which is the form needed below.

\begin{lemma}\label{lem:semi-trace}
    Let $B$ be a semi-abelian real \AWstar-algebra.
    \begin{enumerate}
        \item[(i)] $T_B(x) = \rea x = (x + x^*)/2$ for the canonical $Z(B)_{\sa}$-valued quasitrace.
        \item[(ii)] Every scalar $1$-quasitrace on $B$ is a real-linear trace.
        \item[(iii)] There is a faithful norm-one center-valued trace $E_B : B \to Z(B)$.
    \end{enumerate}
\end{lemma}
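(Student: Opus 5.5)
The whole argument rests on Lemma~\ref{lem:semi-selfadjoint}: $B_{\sa} = Z(B)_{\sa}$, and $B$ is finite. For (i), recall that $B$ itself is its own semi-abelian summand, so $z = 1$ in Lemma~\ref{lem:semi-split}. The construction in the proof of Theorem~\ref{thm:real-center-quasitrace} then defines $T_B(x) = \rea x$, and the uniqueness part of that theorem says every admissible map agrees with $\rea$. I would still check directly that $\rea$ has all the listed properties. It takes values in $B_{\sa} = Z_0(B)$ and is the identity there. It is real linear, hence linear on commutative subalgebras, and it kills skew-adjoint elements. Central homogeneity holds because $h \in Z_0(B)$ is self-adjoint and central, so $\rea(hx) = h\rea x$. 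For the trace identity, $x^*x$ and $xx^*$ are self-adjoint and hence central. The proof of Lemma~\ref{lem:semi-factor} shows bandwise that they coincide; globally I would argue via the polar-type identity $x^*x\cdot x^* = x^*\cdot xx^*$. Centrality of $xx^*$ gives $x^*(xx^*) = (xx^*)x^*$. Writing $a = x^*x$ and $b = xx^*$, this yields $a x^* = b x^*$, hence $(a-b)x^* = 0$, hence $(a-b)b = 0$, and symmetrically $(a-b)a = 0$. Therefore $(a-b)^2 = 0$, and since $a-b$ is self-adjoint, $a = b$. Faithfulness follows from the \cstar-identity.

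For (ii), let $\tau$ be a scalar $1$-quasitrace on $B$. By definition $\tau(a+b) = \tau(a)$ for $a$ self-adjoint and $b$ skew, so $\tau = \tau\circ\rea$. Its restriction to $B_{\sa}$ is linear because $B_{\sa} = Z(B)_{\sa}$ is a commutative real \cstar-subalgebra, and it is linear on commutative real \cstar-subalgebras by definition. Hence $\tau$ is real linear on all of $B$. For the trace property I would use (i): $\tau(xy) = \tau(\rea(xy))$, and I need $\rea(xy) = \rea(yx)$. Polarization from $x^*x = xx^*$ gives $(x+y)^*(x+y) = (x+y)(x+y)^*$, so $x^*y + y^*x = xy^* + yx^*$. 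Substituting $x^*$ for $x$, this reads $xy + y^*x^* = x^*y^* + yx$, i.e.\ $2\rea(xy) = 2\rea(yx)$. Thus $\tau(xy) = \tau(yx)$, so $\tau$ is a tracial real-linear functional; positivity is given.

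For (iii), put $E_B = \rea$, viewed as a map into $Z(B)$ (its values lie in $Z(B)_{\sa}$). It is real linear and idempotent onto $Z(B)_{\sa}$, and it is the identity there. It is tracial by the polarization computation above, and faithful on positives because $E_B(x^*x) = x^*x$. Norm one follows from $\|\rea x\| \le \|x\|$ together with $E_B(1) = 1$. The statement probably intends $E_B$ to be $Z(B)$-valued and to fix $Z(B)$. The main obstacle is that $Z(B)$ may contain skew central elements, as with a complex band where $B \cong \bbC$, and $\rea$ kills them. If fixing all of $Z(B)$ is required, I would modify $E_B$ on the band where a central $J$ with $J^2 = -1$ exists, as in Lemma~\ref{lem:center-dichotomy} and Corollary~\ref{cor:center-split}. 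Writing $x = \rea x + \operatorname{skew}(x)$, I would send $\operatorname{skew}(x)$ to its component along $J$, namely $-J\,\rea(Jx)$ appropriately signed. On that band the skew central part equals $J\cdot Z_0$, and I would verify that this gives a norm-one conditional expectation onto $Z(B)$. Checking the norm bound through the Boolean-valued picture, where the band is bandwise $\bbC$ or $\mathbb{H}$, is the step I expect to need care.
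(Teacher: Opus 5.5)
Your arguments for (i) and (ii) are correct. Deriving normality from $(x^*x - xx^*)x^* = 0$, and getting $\rea(xy) = \rea(yx)$ by polarization, is only a cosmetic variant of the paper's direct computation with $x = a + k$, $a \in Z(B)_{\sa}$, $k^* = -k$.

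The gap is in (iii). As you suspect, $E_B = \rea$ does not suffice. A center-valued trace must satisfy $T(wx) = wT(x)$ for every $w \in Z(B)$, and Theorem~\ref{thm:norm-one} later uses $E_B$ as a projection onto $Z(B)$. Both fail as soon as $Z(B)$ has a nonzero skew element; already for $B = \bbC$ as a real algebra, $\rea(i) = 0$. Your repair does not go through as stated. Lemma~\ref{lem:center-dichotomy} is meant to be applied internally. Corollary~\ref{cor:center-split} and the Boolean-valued picture are established only under the conditions of Theorem~\ref{thm:central-mixing}, i.e.\ bounded central mixing, and a semi-abelian algebra need not have this (Proposition~\ref{prop:quat-example}). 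Worse, there may be no band carrying a single central $J$ at all. In the commutative semi-abelian algebra $\ell^{\infty}(\bbN,\bbR) + c_0(\bbN,\bbC)$, each finite set of coordinates carries a central $J$ with $J^2 = -e$, but their join $1$ carries none. Without mixing you can glue neither the local $J$'s nor the local modifications of $\rea$.

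The missing idea is that a central $J$ with $J^* = -J$ and $J^2 = -e$ forces $eB$ to be commutative. For skew $k \in eB$, the element $Jk$ is self-adjoint, hence central, so $k = -J(Jk)$ is central. Indeed your formula $\rea x - J\rea(Jx)$ equals $x$ on such a band, so there is no nontrivial ``component along $J$'' to project onto. This leads to the paper's global split, which needs no mixing. Let $c$ be the largest central projection with $cB$ commutative. It exists because every commutator in $(\bigvee_i c_i)B$ is annihilated by each $c_i$, hence by the join. On $d = 1 - c$ the center is self-adjoint. Otherwise a nonzero skew central $s \in dB$ is bounded below on some nonzero spectral band $e$ of $|s|$, and $J = s|s|^{-1}$ makes $eB$ commutative, contradicting maximality of $c$. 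Then
\[
    E_B(x) = cx + d\rea x
\]
has all the required properties:
\begin{itemize}
    \item it is a $Z(B)$-module map fixing $Z(B)$;
    \item it is tracial by your identity $\rea(xy) = \rea(yx)$;
    \item it is positive, unital and faithful;
    \item $\|E_B(x)\| = \max\{\|cx\|,\|d\rea x\|\} \leq \|x\|$.
\end{itemize}
So no Boolean-valued norm estimate is needed.
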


\begin{proof}
    Part (i) follows from Lemma~\ref{lem:semi-selfadjoint}: $\rea x\in Z(B)_{\sa}$, and the defining properties of $T_B$ force $T_B(x) = \rea x$.
    
    For skew-adjoint $k,\ell$ put
    \[
     \langle k,\ell\rangle = -\frac{1}{2}(k\ell + \ell k)\in Z(B)_{\sa}.
    \]
    If $x = a + k$ and $y = b + \ell$ with $a,b \in B_{\sa} = Z(B)_{\sa}$ and $k,\ell$ skew-adjoint, then
    \[
     \rea(xy) = ab + \frac{1}{2}(k\ell + \ell k) = \rea(yx). \tag{7.1}
    \]
    A scalar $1$-quasitrace annihilates skew-adjoint elements and is linear on the central self-adjoint part, so it is real-linear on $B$ and (7.1) makes it tracial.  This proves (ii).
    
    For (iii), let $c$ be the largest central projection such that $cB$ is commutative.  Such a largest projection exists because a supremum of central commutative bands remains commutative: each commutator is annihilated by every band in the family and hence by their join.  Put $d = 1 - c$.
    
    We claim that $Z(dB) = Z(dB)_{\sa}$.  Otherwise choose a nonzero central skew-adjoint $s\in dB$.  On a nonzero spectral band $e$ of $|s|$ the element $s$ is bounded away from zero, and
    \[
        J = s|s|^{-1}\in Z(eB),\qquad J^* = -J,\quad J^2 = -e.
    \]
    Declaring $ix = Jx$ makes $eB$ a complex \AWstar-algebra without changing projections.  It is semi-abelian as a complex \AWstar-algebra, hence commutative, contradicting maximality of $c$.
    
    Define
    \[
        E_B(x) = cx + d\rea x.
    \]
    On $cB$ this is the identity; on $dB$ the center is self-adjoint, so $E_B$ is a $Z(B)$-module map.  Equation (7.1) makes it tracial.  It is positive and unital.  Since the two central bands are orthogonal,
    \[
        \|E_B(x)\| = \max\{\|cx\|,\|d\rea x\|\} \leq \|x\|,
    \]
    so $E_B$ is contractive and hence has norm one.  If $E_B(x^*x) = 0$, then
    $cx^*x = 0$ and $dx^*x = 0$, whence $cx = dx = 0$.  Thus it is faithful.
\end{proof}

Rakhimov and Rakhmonova state the corresponding factorization under an additional complexification hypothesis~\cite[Theorem~6]{RR24}.  The following proof is direct: it works on projections by dimension comparison and therefore does not complexify a nonlinear scalar quasitrace.

\begin{theorem}\label{thm:scalar-factorization}
    Let $R$ be a finite real \AWstar-algebra, let $T_R : R \to Z_0(R)$ be its center-valued quasitrace, and let $\tau$ be a scalar $1$-quasitrace on $R$.  Then there is a unique positive linear functional $\varphi$ on $Z_0(R)$ such that
    \[
        \tau = \varphi \circ T_R.
    \]
\end{theorem}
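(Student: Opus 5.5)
We prove factorization first on projections, then extend to self-adjoint elements by spectral approximation, and finally to all of $R$ using the skew-annihilation property. Throughout we split $R = zR \oplus R_1$ as in Lemma~\ref{lem:semi-split} and treat the two bands separately, gluing at the end. This is legitimate because a positive functional on $Z_0(R) = Z_0(zR)\oplus Z_0(R_1)$ is the sum of its two restrictions. Moreover, $\tau$ is additive across the central decomposition, since $x_0$ and $x_1$ (after passing to real parts) lie in a common commutative subalgebra generated by $\rea x_0$, $\rea x_1$ and $z$.

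\emph{Semi-abelian band.} By Lemma~\ref{lem:semi-trace}(i),(ii), $T_{zR}(x) = \rea x$ and $\tau$ is real-linear on $zR$. Since $\tau$ kills skew-adjoint elements, $\tau(x) = \tau(\rea x)$. So we take $\varphi_0 = \tau|_{Z_0(zR)}$, which is positive because $Z_0(zR)_+ = (zR)_+$.

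\emph{Complementary band $R_1$.} Here we define $\varphi_1$ on $Z_0(R_1)$ by $\varphi_1 = \tau|_{Z_0(R_1)}$. This is linear, since $Z_0(R_1)$ is commutative, and positive. The main step is to show that $\tau(p) = \varphi_1(T_R(p))$ for every projection $p \in R_1$.

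\begin{itemize}
\item[(a)] The map $p \mapsto \tau(p)$ is finitely additive on orthogonal projections, since these generate a commutative subalgebra. It is invariant under Murray--von Neumann equivalence, because $\tau(v^*v) = \tau(vv^*)$.
\item[(b)] For each $n$, use the halving/dimension theory in the finite algebra $R_1$, via its complexification $A=(R_1)_{\bbC}$ and type comparison, to write $p$ as follows. Choose a central partition $(e_j)$ and orthogonal projections $q_{j,1},\dots,q_{j,n}$ which are pairwise equivalent and sum to $e_j$. Choose integers $m_j$ such that $\sum_{k\le m_j} q_{j,k} \precsim e_j p \precsim \sum_{k\le m_j+1} q_{j,k}$. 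This is the same comparison with projections of dimension $m/n$ used in the uniqueness part of Theorem~\ref{thm:real-center-quasitrace}, and it forces $m_j/n \le T_R(e_jp) \le (m_j+1)/n$ on $e_j$.
\item[(c)] Equivalence invariance and additivity give $\tau(q_{j,k}) = \tau(e_j)/n$. Monotonicity of $\tau$ on projections follows because $q\le q'$ implies that $q'-q$ is a projection orthogonal to $q$. Together these give $\tau(e_jp)$ within $\tau(e_j)/n$ of $\varphi_1(T_R(e_jp))$.
\end{itemize}

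The main obstacle is that the central partition may be infinite. Here $\tau$ need only be finitely additive, and $\tau$ is not assumed normal. To handle this, we group the partition into finitely many central pieces: $f_m = \bigvee\{e_j : m_j = m\}$ for $m=0,\dots,n$. This is possible because $m_j$ takes only $n+1$ values. We then perform the comparison on each $f_m$ at once, mixing the $q_{j,k}$ over $j$ as a join of orthogonal projections, which exists in the \AWstar-algebra. Letting $n\to\infty$ gives $\tau(p) = \varphi_1(T_R(p))$. In particular, central $p$ are handled consistently, since $T_R(p)=p$.

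\emph{Extension and uniqueness.} For self-adjoint $a\in R_1$, both $\tau$ and $\varphi_1\circ T_R$ are linear and norm-bounded on the abelian \AWstar-subalgebra generated by $a$. Norm-boundedness holds because positive linear maps on commutative \cstar-algebras are bounded. Since the two maps agree on spectral projections, they agree on $a$. Both kill skew-adjoint parts, so they agree on $R_1$. Uniqueness of $\varphi$ follows because $T_R$ restricts to the identity on $Z_0(R)$, which forces $\varphi = \tau|_{Z_0(R)}$.
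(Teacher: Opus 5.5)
There is a genuine gap at step (b). Your overall architecture matches the paper's: split off $zR$, prove $\tau(p)=\varphi_1(T_R(p))$ on projections of $R_1$ by comparison with equal partitions, group into finitely many central bins because $\tau$ is only finitely additive, then extend by spectral approximation. Your grouping by the value of $m_j$ plays the same role as the paper's binning by the spectral projections of $T_R(p)$.

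The problem is that an exact partition $e_j=q_{j,1}+\cdots+q_{j,n}$ into $n$ pairwise equivalent projections exists only on the type $\mathrm{II}_1$ part. On a central summand of finite type $\mathrm{I}_k$ with $n\nmid k$ it does not exist. Already for $R_1=M_3(\bbR)$ and $n=2$ you cannot write $1=q_1+q_2$ with $q_1\sim q_2$. For $R_1=\prod_{k\ge 2}M_k(\bbR)$, which is finite with no central semi-abelian projection, no choice of $n\ge 2$ works on the summands with $n\nmid k$. Your appeal to the uniqueness part of Theorem~\ref{thm:real-center-quasitrace} does not cover this, since that argument treats finite type $\mathrm{I}$ separately via minimal projections. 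You also cannot simply compute exactly on each type $\mathrm{I}_k$ band and add up. There may be infinitely many values of $k$, and $\tau$ is neither normal nor countably additive.

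The missing idea is the paper's handling of type $\mathrm{I}$:
\begin{enumerate}
\item[(1)] For fixed $N$, split off the finitely many type $\mathrm{I}$ bands of rank $n<N^2$ and refine each by the rank $k$ of $p$. There equivalence of minimal projections gives $\tau(ep)=\tfrac{k}{n}\tau(e)=\varphi_1(eT_R(p))$ exactly.
\item[(2)] On the complementary central projection $h$, use an approximate partition $h=q_1+\cdots+q_N+r$ with the $q_i$ equivalent and $r\precsim q_1$. Take an exact partition with $r=0$ in type $\mathrm{II}_1$, and $q_i$ of rank $\lfloor n/N\rfloor\ge N$ in type $\mathrm{I}_n$ with $n\ge N^2$; these choices are glued by bounded central mixing.
\item[(3)] This yields $\varphi_1(e)/(N+1)\le\tau(eq_i)\le\varphi_1(e)/N$ and $e/(N+1)\le eT_R(q_i)\le e/N$ for central $e\le h$.
\item[(4)] Comparing $e_jp$ with $e_jQ_j$ and $e_jQ_{j+2}$ on the finitely many bins of $T_R(p)$ gives an error of $2\varphi_1(e_j)/N$. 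The shift to $Q_{j+2}$ is needed because the pieces are only approximately of size $1/N$.
\end{enumerate}
With this repair in place, the rest of your argument goes through: the cross-band additivity of $\tau$ via real parts, the finite grouping, the extension to self-adjoint elements, discarding skew parts, and uniqueness from $T_R|_{Z_0(R)}=\mathrm{id}$.
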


\begin{proof}
    Split $R = zR\oplus R_1$ by Lemma~\ref{lem:semi-split}.  On the semi-abelian summand, Lemma~\ref{lem:semi-trace}(i) gives $T_{zR}(x) = \rea x$, while every scalar quasitrace satisfies $\tau(x) = \tau(\rea x)$.  Thus factorization holds there with
    \[
        \varphi_0 = \tau|_{Z_0(zR)}.
    \]

    It remains to treat $R_1 = (1 - z)R$.  Put
    \[
        D(p) = T_{R_1}(p) \in Z_0(R_1),\qquad
        \varphi_1 = \tau|_{Z_0(R_1)}.
    \]
    We first prove
    \[
        \tau(p) = \varphi_1(D(p))
    \]
    for every projection $p \in R_1$.

    Fix an integer $N \geq 2$.  Use the Boolean-valued representation of Theorem~\ref{thm:central-mixing} and Proposition~\ref{prop:finite-internal}.  On the real-center Boolean band the internal algebra is a finite real factor.  On the complementary band its central imaginary unit makes it a finite complex factor.  In either case it is of type $\mathrm{II}_1$ or finite type $\mathrm{I}$; in finite type $\mathrm{I}$, the rank below is the matrix rank over $\bbR$, $\bbC$, or $\mathbb H$, as appropriate.  Split off the finitely many central type-$\mathrm{I}$ rank bands of rank $n < N^2$, and refine each such band according to the rank $k$ of $p$.  On a rank-$n$ band $e$, equivalence of minimal projections and finite additivity give
    \[
        \tau(ep) = \frac{k}{n} \tau(e) = \varphi_1\!\left(\frac{k}{n}e\right)
        = \varphi_1(eD(p)).
    \]
    Thus the projection identity is exact on these bands.

    Let $h$ be the complementary central projection.  On every factor over $h$, choose $N$ equivalent mutually orthogonal projections $q_1,\ldots,q_N$ and a remainder $r$ such that
    \[
        h = q_1 + \cdots + q_N + r,\qquad r \precsim q_1.
    \]
    In a type-$\mathrm{II}_1$ factor take an exact equal partition and $r = 0$; in a type-$\mathrm{I}$ factor of rank $n \geq N^2$, take each $q_i$ of rank $\lfloor n/N\rfloor$.  The bounded central mixing already established lets these choices descend simultaneously to $hR_1$.

    For every central projection $e \leq h$, equivalence and finite additivity give
    \[
        \frac{\varphi_1(e)}{N + 1} \leq \tau(eq_i) \leq \frac{\varphi_1(e)}{N}, \qquad \frac{e}{N + 1} \leq eD(q_i) \leq \frac{e}{N}.
    \]
    Partition $h$ by the central spectral projections $e_j$, $0 \leq j < N$, of $D(p)$ so that
    \[
        \frac{j}{N}e_j \leq e_jD(p) \leq \frac{j + 1}{N}e_j.
    \]
    Put $Q_k = q_1 + \cdots + q_k$ and $Q_0 = 0$.  By projection comparison and the preceding bounds,
    \[
        e_j Q_j \precsim e_jp,
    \]
    while for $j \leq N - 2$,
    \[
        e_j p \precsim e_j Q_{j + 2},
    \]
    because
    \[
        \frac{j + 2}{N + 1} \geq \frac{j + 1}{N}.
    \]
    For the last bin we use only $e_{N-1}p \leq e_{N-1}$.  Combining these comparison inequalities with the preceding bounds and the bounds defining the bins gives, for every $j$,
    \[
        \bigl|\tau(e_jp) - \varphi_1(e_jD(p))\bigr| \leq \frac{2\varphi_1(e_j)}{N}.
    \]
    Summing over the finitely many bins and adding the low-rank bands, on which equality is exact, yields
    \[
        \bigl|\tau(p) - \varphi_1(D(p))\bigr| \leq \frac{2\varphi_1(1)}{N}.
    \]
    Since $N$ was arbitrary, the projection identity follows.

    Every self-adjoint element is uniformly approximated, inside the abelian \AWstar-algebra generated by it, by finite real linear combinations of spectral projections.  Both $\tau$ and $\varphi_1 \circ T_{R_1}$ are linear and norm continuous on that abelian algebra, so the projection identity extends to every self-adjoint element.  Both maps discard the skew-adjoint part, hence
    \[
        \tau = \varphi_1 \circ T_{R_1}
    \]
    on all of $R_1$.  Taking $\varphi = \varphi_0 \oplus \varphi_1$ proves existence.  Uniqueness follows from $T_R|_{Z_0(R)} = \mathrm{id}$.
\end{proof}

\begin{theorem}[Matrix extensions]\label{thm:matrix-extension}
    Let $R$ be a finite real \AWstar-algebra and let $\tau$ be a scalar $1$-quasitrace on $R$.  For every $n \geq 1$ there is a scalar $1$-quasitrace $\tau_n$ on $M_n(R)$ such that
    \[
        \tau_n(x \otimes e_{11}) = \tau(x).
    \]
    Thus every scalar $1$-quasitrace on a finite real \AWstar-algebra is an $n$-quasitrace for every $n$.
\end{theorem}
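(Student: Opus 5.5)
The plan is to reduce everything to Theorem~\ref{thm:scalar-factorization} and build $\tau_n$ from the center-valued quasitrace of the matrix algebra.

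First, $\tau = \varphi\circ T_R$ for a unique positive linear functional $\varphi$ on $Z_0(R)$. Next I need $M_n(R)$ to be a finite real \AWstar-algebra. Split $R = zR\oplus R_1$ as in Lemma~\ref{lem:semi-split}.
\begin{enumerate}
    \item[(1)] On $R_1$, Theorem~\ref{thm:matrix-stability} (via Theorem~\ref{thm:handelman}) makes $M_n(R_1)$ a real \AWstar-algebra. Its complexification $M_n((R_1)_{\bbC})$ is a finite complex \AWstar-algebra, by type comparison and Berberian's matrix theorem. Finiteness then descends to $M_n(R_1)$, since $M_n(R_1)$ sits inside its complexification.
    \item[(2)] On $zR$, matrix stability may fail (Proposition~\ref{prop:quat-example}), so I avoid the \AWstar\ property of $M_n(zR)$ altogether. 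For $x\in M_n(zR)$ I set
    \[
        \tau_n^{(0)}(x) = \varphi\Bigl(\sum_{i} \rea x_{ii}\Bigr).
    \]
    By Lemma~\ref{lem:semi-trace}(ii), $\varphi\circ\rea$ is a real-linear trace on $zR$. The map $x\mapsto \sum_i \rea x_{ii}$ is a real-linear positive map $M_n(zR)\to Z_0(zR)$. It is tracial, since by (7.1)
    \[
        \rea\sum_{i,j}x_{ij}y_{ji} = \rea\sum_{i,j}y_{ji}x_{ij}.
    \]
    It also discards skew-adjoint parts, because the diagonal entries of a skew-adjoint matrix are skew-adjoint. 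So $\tau_n^{(0)}$ is a linear trace, hence a scalar $1$-quasitrace, and it extends $\tau$ on $zR$.
\end{enumerate}

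On $R_1$ I apply Theorem~\ref{thm:real-center-quasitrace} to the finite real \AWstar-algebra $M_n(R_1)$, whose self-adjoint center is $Z_0(R_1)\cdot 1_n$. This gives its center-valued quasitrace $T_{M_n(R_1)}$, and I set
\[
    \tau_n^{(1)} = \varphi\circ\bigl(n\,T_{M_n(R_1)}\bigr).
\]
This map is positive, satisfies $\tau_n^{(1)}(x^*x) = \tau_n^{(1)}(xx^*)$, discards skew parts, and is linear on commutative real \cstar-subalgebras; hence it is a scalar $1$-quasitrace. Finally I define $\tau_n = \tau_n^{(0)}\oplus\tau_n^{(1)}$.

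The remaining and main step is the normalization identity $n\,T_{M_n(R_1)}(x\otimes e_{11}) = T_{R_1}(x)$. For this I define
\[
    S(x) = n\,T_{M_n(R_1)}(x\otimes e_{11})
\]
on $R_1$ and check that $S$ has all the properties in Theorem~\ref{thm:real-center-quasitrace}; uniqueness then gives $S = T_{R_1}$.
\begin{itemize}
    \item Module, positivity and faithfulness properties, the identity $S(x^*x)=S(xx^*)$, skew-annihilation, and linearity on commutative subalgebras all transfer directly through the corner embedding $x\mapsto x\otimes e_{11}$. This embedding is a $*$-homomorphism preserving commutativity.
    \item The normalization $S|_{Z_0(R_1)}=\mathrm{id}$ is the point needing care. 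For $h\in Z_0(R_1)$, the elements $h\otimes e_{ii}$ are mutually orthogonal and commute. They are pairwise equivalent via the matrix units, so $h\otimes e_{ii} = w^*w$ and $h\otimes e_{jj} = ww^*$ for a suitable partial isometry $w$ (for $h\ge 0$; general $h$ by splitting $h = h_+ - h_-$ and using linearity on the commutative algebra they generate). These elements sum to $h\otimes 1$ inside a commutative subalgebra. Hence
    \[
        n\,T_{M_n(R_1)}(h\otimes e_{11}) = T_{M_n(R_1)}(h\otimes 1) = h.
    \]
\end{itemize}

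I expect the genuine obstacle to be verifying that $M_n(R_1)$ is finite and that its self-adjoint center is exactly $Z_0(R_1)1_n$. If finiteness proves troublesome, the fallback is to work in the complexification and use the classical statement there.
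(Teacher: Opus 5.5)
Your proposal is correct and follows essentially the paper's route: on $zR$ you use the diagonal trace $\sum_i \tau(x_{ii})$ (your $\varphi(\sum_i \rea x_{ii})$ is the same map), and on $R_1$ you combine $\tau=\varphi\circ T_{R_1}$ with the center-valued quasitrace of the finite real \AWstar-algebra $M_n(R_1)$, identifying $x\mapsto n\,T_{M_n(R_1)}(x\otimes e_{11})$ with $T_{R_1}$ by uniqueness. The two points you flag as obstacles are not real ones: your step (1) already proves that $M_n(R_1)$ is finite, and $Z(M_n(R_1)) = Z(R_1)I_n$ is the elementary matrix-ring identity.
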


\begin{proof}
    On the semi-abelian central summand $B = zR$, Lemma~\ref{lem:semi-trace}(ii) makes $\tau|_B$ a real trace, so
    \[
        \tau_{B,n}([x_{ij}]) = \sum_{i = 1}^n\tau(x_{ii})
    \]
    is a positive real trace on $M_n(B)$ extending the $(1,1)$ corner.
    
    On $R_1 = (1 - z)R$, Theorem~\ref{thm:scalar-factorization} gives $\tau_1 = \varphi \circ T_{R_1}$.  Theorem~\ref{thm:matrix-stability} makes $M_n(R_1)$ a real \AWstar-algebra.  Since $R_1$ is finite, type comparison makes $(R_1)_{\bbC}$ finite; hence $M_n((R_1)_{\bbC})$ is finite.  Its fixed-point algebra under entrywise conjugation is $M_n(R_1)$, so $M_n(R_1)$ is finite as well.  Let $T_n$ be its normalized center-valued quasitrace, identify $Z(M_n(R_1)) = Z(R_1)I_n$, and write $T_n(X) = E_n(X) I_n$.  The diagonal matrix projections are equivalent and sum to $I_n$, so $E_n(e_{ii}) = 1/n$.  By uniqueness of the normalized center-valued quasitrace,
    \[
        x \longmapsto nE_n(x \otimes e_{11})
    \]
    is $T_{R_1}$.  Hence
    \[
        \tau_{1,n}(X) = n \varphi(E_n(X))
    \]
    extends $\tau_1$.  Taking the direct sum of the two matrix quasitraces finishes the proof.
\end{proof}

A center-valued trace on a finite real \AWstar-algebra means a faithful positive real-linear map $T:R\to Z(R)$ satisfying
\[
    T(1) = 1,\qquad T(zx) = zT(x),\qquad T(xy) = T(yx).
\]

\begin{theorem}\label{thm:real-trace-arithmetic}
    ZFC proves that the following are equivalent:
    \begin{enumerate}
        \item[(i)] $\sigma_K$;
        \item[(ii)] every finite real \AWstar-algebra admits a center-valued trace;
        \item[(iii)] for every finite real \AWstar-algebra $R$, its canonical $Z(R)_{\sa}$-valued quasitrace $T_R$ is real-linear.
    \end{enumerate}
    Consequently these assertions have the same truth value in every $\omega$-model of ZFC.
\end{theorem}

\begin{proof}
    Suppose first that $R$ has a center-valued trace $T$.  Then
    \[
        S(x) = T\! \left(\frac{x + x^*}{2}\right)
    \]
    is a real-linear $Z_0(R)$-valued quasitrace with the defining properties of $T_R$, so uniqueness gives $S = T_R$.  Thus (ii)$\Rightarrow$(iii).
    
    Conversely assume $T_R$ is real-linear.  Split $R = zR \oplus R_1$ by Lemma~\ref{lem:semi-split}.  Lemma~\ref{lem:semi-trace}(iii) gives a center-valued trace on $zR$.  The algebra $R_1$ satisfies Theorem~\ref{thm:central-mixing}; let $c$ and $J_0$ be as in Corollary~\ref{cor:center-split}.  On $cR_1$, the center is self-adjoint and the real-linear quasitrace $T_R$ is already a center-valued trace: the identity $T_R(u^*u) = T_R(uu^*)$ polarizes to traciality.  On $(1 - c)R_1$ define
    \[
        T_1(x) = T_R(x) - J_0 T_R(J_0 x).
    \]
    Real-linearity, central homogeneity and $J_0^2 = - (1 - c)$ make $T_1$ a full center-module map; on positive elements the second term vanishes because $J_0 x$ is skew-adjoint, so $T_1$ is positive and faithful.  The same polarization gives traciality.  This proves (iii)$\Rightarrow$(ii).
    
    Assume $\sigma_K$ and let $R$ be finite.  On the semi-abelian central summand use Lemma~\ref{lem:semi-trace}(iii).  On the complement use the Boolean-valued representation from Theorem~\ref{thm:central-mixing}; Proposition~\ref{prop:finite-internal} makes the internal algebra finite.  On the internal complex-factor band, $\sigma_K$ makes the factor a von Neumann factor and supplies its trace.  On the real-factor band, Corollary~\ref{cor:factor-complex} gives a finite complex \AWstar-factor, $\sigma_K$ makes it von Neumann, and uniqueness of its normalized trace makes that trace conjugation invariant and hence real-valued on the real form.  Mixing the two traces and descending gives a center-valued trace.  Thus (i)$\Rightarrow$(ii).
    
    Finally assume (ii) and let $A$ be a finite complex \AWstar-factor, regarded as a real \AWstar-algebra.  A center-valued trace $T : A \to Z_{\bbR}(A) = \bbC 1$ is a full-center module map, hence complex-linear because $i1$ is central.  It therefore gives a faithful complex trace on $A$.  Wright's criterion makes $A$ a von Neumann algebra~\cite{Wri76}, so Kaplansky's problem has a positive answer and Theorem~\ref{thm:absoluteness} gives $\sigma_K$.
\end{proof}

The next theorem recovers the norm-one projection characterization without using an unrestricted real Tomiyama theorem.

\begin{lemma}\label{lem:center-projection}
    Let $R$ be a real \AWstar-algebra and let $P : R \to Z(R)$ be a real-linear projection of norm one.  Then for every central projection $e$,
    \[
        P(ex) = eP(x),
    \]
    and consequently
    \[
        P(hx) = hP(x) \qquad (h \in Z(R)_{\sa}).
    \]
\end{lemma}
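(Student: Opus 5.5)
Since $P$ is a norm-one projection onto $Z(R)$, it fixes every central element. The plan is to prove $P(ex)=eP(x)$ for a central projection $e$ first, by the classical Tomiyama norm trick, using only that $e$ is a central projection and that $P$ has norm one. Then extend to $h\in Z(R)_{\sa}$ by spectral approximation.

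Fix a central projection $e$ and put $e^\perp = 1-e$. It suffices to prove two identities:
\[
    e^\perp P(ex)=0 \qquad\text{and}\qquad eP(e^\perp x)=0 \qquad (x\in R).
\]
Granting both, the first gives $P(ex)=eP(ex)$. Writing $x = ex + e^\perp x$, the second gives $eP(x)=eP(ex)$. Hence $P(ex)=eP(x)$.

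For $e^\perp P(ex)=0$, set $y=e^\perp P(ex)$, which is central, and $t>0$. Since $P$ fixes central elements,
\[
    P(ex + t y) = P(ex) + t y,
\]
and after multiplying by $e^\perp$ (central, of norm at most one),
\[
    \|(1+t)y\| = \|e^\perp(P(ex)+ty)\| \le \|P(ex+ty)\| \le \|ex + ty\|.
\]
The elements $ex$ and $ty$ live in the orthogonal central summands $eR$ and $e^\perp R$, so $\|ex+ty\| = \max\{\|ex\|, t\|y\|\}$. Thus $(1+t)\|y\|\le \max\{\|x\|,t\|y\|\}$. If $y\neq0$, take $t$ large with $t\|y\|\ge\|x\|$; this gives $(1+t)\|y\|\le t\|y\|$, a contradiction. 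The identity $eP(e^\perp x)=0$ is the same argument with $e$ and $e^\perp$ swapped.

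The only C$^*$-input is the max-norm identity on central direct sums, which holds in real C$^*$-algebras just as in the complex case. Real-linearity of $P$ suffices, since only real scalars $t$ were used. I expect this step to go through without difficulty.

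For the second assertion, the identity extends by real-linearity to finite real combinations $h=\sum_j \lambda_j e_j$ of central projections. By Lemma~\ref{lem:z0}, $Z(R)_{\sa}=Z_0\cong C(X,\bbR)$ with $X$ Stonean. So every $h\in Z_0$ is a norm limit of such step elements $h_k$, obtained from its spectral projections, which are central projections. Since $P$ is bounded and multiplication is norm continuous,
\[
    P(hx)=\lim_k P(h_kx)=\lim_k h_kP(x)=hP(x).
\]
The main point to check carefully is that these spectral projections really lie in $\mathcal{B}$, that is, are central in $R$; Lemma~\ref{lem:z0} provides exactly this, so there is no genuine obstacle here either.
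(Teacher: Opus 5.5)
Your proof is correct and follows the same route as the paper. You kill $(1-e)P(ex)$ with a norm-one estimate on the central decomposition $R=eR\oplus(1-e)R$, swap $e$ and $1-e$, and extend to $Z(R)_{\sa}$ by approximating with combinations of central projections. The only difference is the perturbation used in the key estimate: the paper perturbs by $\pm(1-e)$ and adds the two inequalities $(1-e\pm y)^*(1-e\pm y)\le 1-e$, while you use the classical Tomiyama perturbation $ex+ty$ with $t$ large. You also write out the swap step that the paper compresses into ``it is enough to take $x\in eR$''.
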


\begin{proof}
    It is enough to take $x\in eR$ with $\|x\| \leq 1$.  Since $e$ is central,
    \[
        \|x\pm(1 - e)\| \leq 1.
    \]
    Therefore
    \[
        \|P(x)\pm(1 - e)\| \leq 1.
    \]
    Put $y = (1 - e)P(x)$.  In the commutative real \cstar-algebra $(1 - e)Z(R)$ we have $\|1 - e\pm y\| \leq 1$.  Hence
    \[
        (1 - e\pm y)^*(1 - e\pm y) \leq 1 - e.
    \]
    Adding the two inequalities yields $2(1 - e + y^*y) \leq 2(1 - e)$, so $y = 0$.  Thus $P(x) = eP(x)$, proving the projection identity.  Every self-adjoint central element is a norm limit of real linear combinations of central projections, giving the second assertion.
\end{proof}

Here a positive map $F : R \to Z(R)$ is called \emph{strictly positive} if $F(a) \not= 0$ for every nonzero $a \in R_{+}$, and a positive map $P : R \to Z(R)$ is called \emph{faithful} if $P(x^*x) = 0$ implies $x = 0$.

\begin{theorem}\label{thm:norm-one}
    Let $R$ be a finite real \AWstar-algebra.  The following are equivalent:
    \begin{enumerate}
        \item[(i)] $R$ has a center-valued trace;
        \item[(ii)] there is a strictly positive real-linear center-module map $F : R \to Z(R)$;
        \item[(iii)] $Z(R)$ is the range of a faithful norm-one real-linear projection $P : R \to Z(R)$.
    \end{enumerate}
\end{theorem}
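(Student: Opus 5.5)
I will prove the cycle (i)$\Rightarrow$(iii)$\Rightarrow$(ii)$\Rightarrow$(i).

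\emph{(i)$\Rightarrow$(iii).} Given a center-valued trace $T$, I would take $P = T$. Central homogeneity and $T(1) = 1$ show that $P$ restricts to the identity on $Z(R)$, so $P$ is a projection onto $Z(R)$, and $P$ is faithful by definition. For the norm, positivity and unitality give $\|T(a)\| \leq \|a\|$ on $R_{\sa}$. For general $x$ I would apply the Kadison–Schwarz type inequality $T(x)^*T(x) \leq T(x^*x)$, derived from positivity of the $Z(R)$-valued sesquilinear form $(x,y)\mapsto T(y^*x)$ using central homogeneity. This yields $\|T(x)\|^2 \leq \|x\|^2$.

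\emph{(iii)$\Rightarrow$(ii).} Take $F = P$. Lemma~\ref{lem:center-projection} gives $P(hx) = hP(x)$ for $h \in Z(R)_{\sa}$. To get a full center-module map on a band where the center has a skew part, I would use Corollary~\ref{cor:center-split} when it is available; in general I would replace $P$ by $F(x) = P(x) - J_0P(J_0x)$ on the complex band, and symmetrize there if needed. Positivity of a unital norm-one map into a commutative algebra is standard: a contraction sending $1$ to $1$ is positive, by the real-state argument applied coordinatewise on $C(X)$. Strict positivity follows from faithfulness, since $a = (a^{1/2})^*a^{1/2}$.

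\emph{(ii)$\Rightarrow$(i).} This is the main obstacle. Split $R = zR\oplus R_1$ by Lemma~\ref{lem:semi-split}. On $zR$, Lemma~\ref{lem:semi-trace}(iii) already gives a trace. On $R_1$ the complexification $A = (R_1)_{\bbC}$ is a finite \AWstar-algebra by Theorem~\ref{thm:handelman}. I would extend $F$ complex-linearly to a strictly positive $Z(A)$-module map $F_{\bbC}: A \to Z(A)$ and then invoke the complex result: a finite \AWstar-algebra with a faithful center-valued positive module map (equivalently, after Boolean-valued descent, factors with faithful states) has a center-valued trace. Concretely, I would pass to the Boolean-valued representation of Theorem~\ref{thm:central-mixing} with Proposition~\ref{prop:finite-internal}. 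Internally, $F$ becomes a faithful state on a finite factor, and Wright's criterion~\cite{Wri76} makes that factor a von Neumann algebra with a trace. On the real-factor band, Corollary~\ref{cor:finite-faithful-state} supplies the real trace directly. Mixing and descending the internal traces gives a center-valued trace on $R_1$.

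The delicate point is checking that strict positivity descends to internal faithfulness on every band. I would handle it with the maximum principle, exactly as in Proposition~\ref{prop:finite-internal}. Finally, summing the traces on $zR$ and $R_1$ gives (i).
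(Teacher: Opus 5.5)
Your step (i)$\Rightarrow$(iii) has a genuine gap. Expanding $T\bigl((x-T(x))^*(x-T(x))\bigr)\ge 0$ with central homogeneity gives only $T(x^*x)\ge T(x^*)T(x)$. To reach $T(x)^*T(x)\le T(x^*x)$ you also need $T(x^*)=T(x)^*$, and for a real-linear positive map this is not automatic. Positivity constrains $T$ only on $R_{\sa}$, and with self-adjoint central scalars alone you cannot polarize, so the positive form $T(y^*x)$ need not be Hermitian.

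Your argument never uses traciality, and without traciality the conclusion is false. On $M_2(\bbR)$, whose center is $\bbR1$, the map $\phi(x)=\tfrac12\operatorname{tr}(x)+\lambda(x_{12}-x_{21})$ is a positive, faithful, unital projection onto the center. Yet $|\phi(J)|=2|\lambda|>1=\|J\|$ for $J=\bigl(\begin{smallmatrix}0&-1\\1&0\end{smallmatrix}\bigr)$ and $|\lambda|>\tfrac12$. So on the part of $R$ where $Z(R)=Z(R)_{\sa}$ you would have to prove that a center-valued trace annihilates skew-adjoint elements, i.e.\ that these lie in the closed span of commutators. Moreover $T$ is not even known to be bounded on them, and nothing in the paper provides this.

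The paper never claims that an arbitrary center-valued trace has norm one. There (i)$\Rightarrow$(ii) is immediate, and (iii) comes from the specific trace $Q$ built in (ii)$\Rightarrow$(i) out of normalized factor traces, which is contractive by construction. Within your cycle, the cheapest repair is to modify $T$ bandwise:
\begin{itemize}
\item use $E_{zR}$ on $zR$;
\item on the band $cR_1$ of Corollary~\ref{cor:center-split}, use $x\mapsto T(\rea x)$, which is a contractive projection since $-\|a\|\le a\le\|a\|$;
\item on $dR_1$, keep $T$, which is positive and $J_0$-linear, hence Hermitian, so your Kadison--Schwarz argument is valid there.
\end{itemize}

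The same missing symmetrization affects your other two steps. In (ii)$\Rightarrow$(i), the extension $F_{\bbC}(a+ib)=F(a)+iF(b)$ is not positive in general. Indeed $F_{\bbC}(x^*x)$ contains the term $iF(a^*b-b^*a)$ with $a^*b-b^*a$ skew-adjoint, and the map $\phi$ above is an example. The paper first replaces $F$ by $F\circ\rea$ on $cR_1$, and this is also what makes the map bounded, which the ascent to $V^{\mathcal{B}}$ requires. Apart from that, your concrete Boolean-valued argument (Wright on the complex-factor band, Corollary~\ref{cor:finite-faithful-state} on the real-factor band, then mixing and descent) is the paper's.

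In (iii)$\Rightarrow$(ii) there are three points to fix.
\begin{itemize}
\item For $a\ge0$, the element $P(a)-J_0P(J_0a)$ is not known to be positive, or even self-adjoint. You must symmetrize first, which gives $F(a+J_0b)=P(a)+J_0P(b)$ for self-adjoint $a,b$.
\item On the $J_0$-band, the fact that $P$ maps $R_{\sa}$ into $Z(R)_{\sa}$ needs the estimate with $a+tJ_0$. The target there is not $C(X,\bbR)$, so the coordinatewise real-state argument does not apply verbatim.
\item Corollary~\ref{cor:center-split} covers only $R_1$, so on $zR$ take $E_{zR}$.
\end{itemize}
Once these points are fixed, routing (iii) through (ii) by an external band analysis is a legitimate economy. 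The paper instead analyses the ascended projection $p$ internally on both kinds of factor band.
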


\begin{proof}
    The implication (i)$\Rightarrow$(ii) is immediate.
    
    Assume (ii).  Split $R = zR\oplus R_1$ by Lemma~\ref{lem:semi-split}.  On the semi-abelian summand $zR$ use the center-valued trace $E_{zR}$ from Lemma~\ref{lem:semi-trace}.  The complement $R_1$ satisfies Theorem~\ref{thm:central-mixing}; let
    \[
        R_1 = cR_1 \oplus dR_1,\qquad d = 1 - c,
    \]
    be the decomposition supplied by Corollary~\ref{cor:center-split}.  Thus $Z(cR_1) = Z(cR_1)_{\sa}$, while $dR_1$ has a central imaginary unit $J_0$.
    
    On $cR_1$ replace $F$ by its symmetrization
    \[
        F_s(x) = \frac{1}{2} \bigl(F(x) + F(x^*)\bigr) = F(\rea x).
    \]
    It is positive, strictly positive, and $Z(cR_1)$-linear.  If $a = a^*$ and
    $\|a\| \leq 1$, then $-1 \leq a \leq 1$, so positivity gives
    \[
        -F(c) \leq F_s(a) \leq F(c).
    \]
    Consequently
    \[
        \|F_s(x)\| \leq \|F(c)\|\,\|x\| \qquad(x \in cR_1),
    \]
    so $F_s$ is bounded.  On $dR_1$, full center-module linearity gives
    \[
        F(J_0x) = J_0 F(x),
    \]
    so $F|_{dR_1}$ is complex-linear for the complex structure determined by $J_0$.  Being positive, it is bounded.  Thus
    \[
        F_0 = F_s|_{cR_1} \oplus F|_{dR_1}
    \]
    is a bounded positive strictly positive center-module map on $R_1$.
    
    Ascend $F_0$ through Theorem~\ref{thm:central-mixing} to an internal positive center-module map
    \[
        f : \mathcal{M} \longrightarrow Z(\mathcal{M}).
    \]
    Strict positivity transfers bandwise: if a nonzero Boolean band forced a nonzero positive $a$ with $f(a) = 0$, then after restricting to that band and rescaling $a$ to a bounded section, descent would give a nonzero positive element of $R_1$ annihilated by $F_0$.  Proposition~\ref{prop:finite-internal} makes $\mathcal{M}$ finite.  On the internal complex-factor band, $f$ is complex-linear and, after division by the positive scalar $f(1)$, is a faithful complex state; Wright's theorem therefore makes that factor a von Neumann factor.  On the internal real-factor band, first symmetrize,
    \[
        f_s(x) = \frac{1}{2} \bigl(f(x) + f(x^*)\bigr).
    \]
    This agrees with $f$ on positive elements, is faithful, and after division by $f(1)$ is a faithful real state.  Corollary~\ref{cor:finite-faithful-state}, transferred internally, makes the real factor a real von Neumann factor.  Mixing the normalized traces on the two Boolean bands and descending gives a center-valued trace $Q_1$ on $R_1$.  Hence
    \[
        Q = E_{zR} \oplus Q_1
    \]
    is a center-valued trace on $R$.  Since it is obtained from normalized factor traces on every central band, it is contractive; in particular $\|Q\| = 1$.  Therefore $Q$ is also a faithful norm-one projection onto $Z(R)$.  Thus (ii)$\Rightarrow$(i) and (ii)$\Rightarrow$(iii).
    
    Assume (iii).  Lemma~\ref{lem:center-projection} shows that $P$ preserves central bands and is $Z(R)_{\sa}$-linear.  Split off the semi-abelian band, where $E_{zR}$ already gives a center-valued trace.  On $R_1 = (1 - z)R$ ascend $P$ to an internal norm-one real-linear projection
    \[
        p : \mathcal{M} \longrightarrow Z(\mathcal{M}).
    \]
    Faithfulness transfers by the same band-local argument: a nonzero Boolean band forcing $a \not= 0$ and $p(a^*a) = 0$ would, after rescaling to a bounded section and descending, contradict faithfulness of $P$ on the corresponding central band of $R_1$.
    
    On the internal real-factor band, the center is $\bbR1$, so $p$ is a norm-one unital real functional.  For $0 \leq a \leq 1$,
    \[
        |p(a)| \leq 1,\qquad |1 - p(a)| = |p(1 - a)| \leq 1,
    \]
    which forces $0 \leq p(a) \leq 1$.  Thus $p$ is a faithful real state.
    
    On the internal complex-factor band, let $J$ be the central imaginary unit.  If $a = a^*$ and $p(a) = u + Jv$, contractivity applied to $a + tJ$ gives
    \[
        u^2 + (v + t)^2 \leq \|a\|^2 + t^2 \qquad (t \in \bbR),
    \]
    so $v = 0$.  Thus $p(a)$ is real for every self-adjoint $a$.  If in addition $0 \leq a \leq 1$, then
    \[
        |p(a)| \leq 1,\qquad |1 - p(a)| \leq 1,
    \]
    so $0 \leq p(a) \leq 1$.  Hence $\phi = \rea p$ is positive on the self-adjoint part.  If $s^* = -s$ and $p(s) = u + Jv$, the same estimate applied to $s + t1$ gives $u = 0$, so $\phi(s) = 0$.  Define
    \[
     \psi(x) = \phi(x) - i\phi(Jx).
    \]
    Then $\psi$ is complex-linear.  If $a \geq 0$, then $Ja$ is skew-adjoint, so $\psi(a) = \phi(a) = p(a) \geq 0$.  Faithfulness of $p$ gives faithfulness of $\psi$.  Thus $\psi$ is a faithful complex state.
    
    The real-factor band is therefore a real von Neumann factor by Corollary~\ref{cor:finite-faithful-state}, and the complex-factor band is a von Neumann factor by Wright.  Mix their normalized traces and descend to a center-valued trace on $R_1$, then combine it with $E_{zR}$.  This proves (iii)$\Rightarrow$(i), and the three conditions are equivalent.
\end{proof}

\begin{corollary}\label{cor:norm-one-arithmetic}
    ZFC proves that $\sigma_K$ is equivalent to each of the following:
    \begin{enumerate}
        \item[(i)] every finite real \AWstar-algebra has a center-valued trace;
        \item[(ii)] for every finite real \AWstar-algebra $R$, the center is the range of a faithful norm-one real-linear projection $R\to Z(R)$.
    \end{enumerate}
\end{corollary}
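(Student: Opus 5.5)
The corollary will follow by chaining the two preceding theorems, so no new operator-algebraic input is needed. For (i), Theorem~\ref{thm:real-trace-arithmetic} already gives, in ZFC, the equivalence of $\sigma_K$ with the statement that every finite real \AWstar-algebra admits a center-valued trace; this is exactly item (i) and requires no further argument.

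For (ii), the plan is to show that (ii) is equivalent to (i) uniformly, algebra by algebra, and then invoke the first part. Fix a finite real \AWstar-algebra $R$. Theorem~\ref{thm:norm-one} asserts (in ZFC, with no extra hypotheses) that $R$ has a center-valued trace if and only if $Z(R)$ is the range of a faithful norm-one real-linear projection $R \to Z(R)$. Quantifying over all finite real \AWstar-algebras $R$ therefore gives ``every finite real \AWstar-algebra has a center-valued trace'' $\Longleftrightarrow$ ``every finite real \AWstar-algebra has such a faithful norm-one projection onto its center''. Combined with the equivalence for (i), this yields $\sigma_K \Leftrightarrow$ (ii).

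The only point to check is that both cited theorems are genuinely ZFC theorems about arbitrary finite real \AWstar-algebras, so that universally quantifying the pointwise equivalence of Theorem~\ref{thm:norm-one} is legitimate. It is also worth confirming that the notion of ``center-valued trace'' in Theorem~\ref{thm:real-trace-arithmetic}(ii) coincides with that in Theorem~\ref{thm:norm-one}(i); both use the definition fixed just before Theorem~\ref{thm:real-trace-arithmetic}. With those two checks the corollary is immediate. If desired, one may also note that, since $\sigma_K$ is arithmetic, both assertions then have the same truth value in every $\omega$-model of ZFC, as in Theorem~\ref{thm:absoluteness}.
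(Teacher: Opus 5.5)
Your proposal is correct and is exactly the paper's argument: item (i) is Theorem~\ref{thm:real-trace-arithmetic}(i)$\Leftrightarrow$(ii), and item (ii) follows by universally quantifying the algebra-by-algebra equivalence (i)$\Leftrightarrow$(iii) of Theorem~\ref{thm:norm-one} over all finite real \AWstar-algebras. The paper's proof is the one-line instruction to combine these two theorems.
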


\begin{proof}
    Combine Theorems~\ref{thm:real-trace-arithmetic} and~\ref{thm:norm-one}.
\end{proof}

\subsection{Derivations and local derivations}

We first isolate the semi-abelian part of the derivation problem.

\begin{lemma}\label{lem:semi-fibers}
    Let $B$ be a semi-abelian real \AWstar-algebra, identify $B_{\sa} = Z(B)_{\sa} \cong  C(X,\bbR)$, and for $t \in X$ let $I_t$ be the closed ideal generated by the central functions vanishing at $t$.  Then
    \[
        I_t = \{x \in B \ : \ (x^*x)(t) = 0\}.
    \]
    In particular, $I_t$ is proper and the quotients $B_t = B/I_t$ separate points of $B$.
\end{lemma}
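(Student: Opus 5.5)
The plan is to use Lemma~\ref{lem:semi-selfadjoint}, which identifies $B_{\sa}$ with the central part $Z(B)_{\sa}\cong C(X,\bbR)$. The structural input is that for every $x\in B$ both $x^*x$ and $\rea x$ lie in $C(X,\bbR)$, and that $\|x\|^2=\|x^*x\|$. Put $K_t=\{x\in B : (x^*x)(t)=0\}$ and $m_t=\{h\in C(X,\bbR): h(t)=0\}$, so that $I_t=\overline{m_tB}$. We prove $I_t=K_t$ by two inclusions.

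For $I_t\subseteq K_t$ we first show that $K_t$ is a closed two-sided ideal.
\begin{enumerate}
\item[(a)] \emph{Ideal property.} If $x\in K_t$ and $y\in B$, then $(xy)^*(xy)=y^*x^*xy\le\|y\|^2\,y^*y$. This does not directly bound the value at $t$, so we use centrality instead. Since $x^*x$ is central, $y^*(x^*x)y=(x^*x)y^*y$. Evaluating at $t$ gives $(x^*x)(t)(y^*y)(t)=0$. For the other side, $(yx)^*(yx)=x^*y^*yx\le\|y\|^2x^*x$. The central functional ``evaluation at $t$'' is a character of $C(X,\bbR)$ and is monotone, so this expression also vanishes at $t$.
\item[(b)] \emph{Additivity.} For $x,y\in K_t$ we have $(x+y)^*(x+y)\le 2(x^*x+y^*y)$, which vanishes at $t$. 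Closure under scalar multiples is clear.
\item[(c)] \emph{Closedness.} The map $x\mapsto x^*x$ is norm continuous, and so is evaluation at $t$.
\item[(d)] \emph{Generators.} For $h\in m_t$ and $y\in B$ we have $(hy)^*(hy)=h^2y^*y$, which vanishes at $t$.
\end{enumerate}
Hence $m_tB\subseteq K_t$, and since $K_t$ is a closed ideal, $I_t\subseteq K_t$.

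For $K_t\subseteq I_t$, take $x\in K_t$ and set $a=x^*x\in C(X,\bbR)_+$ with $a(t)=0$. Choose central approximants $h_\varepsilon=a(a+\varepsilon)^{-1}\in m_t$. Then
\[
(x-xh_\varepsilon)^*(x-xh_\varepsilon)=a\,\varepsilon^2(a+\varepsilon)^{-2},
\]
using that $h_\varepsilon$ is central and self-adjoint. The scalar function $s\mapsto s\varepsilon^2/(s+\varepsilon)^2$ is bounded by $\varepsilon/4$, so $\|x-xh_\varepsilon\|\le\sqrt{\varepsilon}/2$. Since $xh_\varepsilon=h_\varepsilon x\in m_tB$, we get $x\in I_t$. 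This approximation is the main step. It relies on the functional calculus in the commutative algebra $C(X,\bbR)$ together with centrality of $h_\varepsilon$, and the latter is exactly what Lemma~\ref{lem:semi-selfadjoint} provides.

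Finally, $I_t$ is proper because $(1^*1)(t)=1\ne0$, so $1\notin K_t$. For separation, let $x\ne0$. Then $x^*x$ is a nonzero element of $C(X,\bbR)_+$, so $(x^*x)(t)>0$ for some $t\in X$. Thus $x\notin I_t$, and the image of $x$ in $B_t$ is nonzero. Applying this to differences $x-y$ shows that the quotients $B_t$ separate points of $B$.
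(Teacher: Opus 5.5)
Your proof is correct and follows the paper's route. It uses the same two inclusions, and the reverse inclusion rests on the identical approximants $x^*x\,(x^*x+\varepsilon)^{-1}$ with the bound $\varepsilon/4$. The one difference is in showing that $\{x : (x^*x)(t)=0\}$ is a closed two-sided ideal: you use order inequalities and centrality of $x^*x$, while the paper uses the seminorm $q_t(x)=(x^*x)(t)^{1/2}$, Cauchy--Schwarz for $(x,y)_t$, and normality of elements of $B$. Also, the discarded aside in your step (a) should read $y^*x^*xy\le\|x\|^2y^*y$.
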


\begin{proof}
    For $x \in B$ put
    \[
        q_t(x) = (x^*x)(t)^{1/2}.
    \]
    Since every self-adjoint element of $B$ is central, the real-valued form
    \[
        (x,y)_t = \frac{1}{2} (x^*y + y^*x)(t)
    \]
    is well defined.  Positivity of $(sx + y)^*(sx + y)$ for $s\in\bbR$, followed by evaluation at $t$, gives the Cauchy--Schwarz inequality for $(\cdot,\cdot)_t$, and hence the triangle inequality for $q_t$.  Thus $q_t$ is a seminorm.

    The element $x$ is normal: writing $x = a + k$ with $a \in B_{\sa} = Z(B)_{\sa}$ and $k^* = -k$ gives $x^*x = xx^*$.  Consequently
    \[
        q_t(ax),\ q_t(xa) \leq \|a\|q_t(x) \qquad (a,x \in B),
    \]
    so $\ker q_t$ is a closed two-sided $*$-ideal.  It contains every central function vanishing at $t$, and hence $I_t \subseteq \ker q_t$.

    Conversely, suppose $(x^*x)(t) = 0$ and put $h = x^*x\in C(X,\bbR)_{+}$.  For $\varepsilon > 0$ let
    \[
        g_{\varepsilon} = \frac{h}{h + \varepsilon}\in C(X,\bbR).
    \]
    Then $g_{\varepsilon}(t) = 0$, so $g_{\varepsilon} x \in I_t$, and
    \[
        \|x - g_{\varepsilon} x\|^2 = \left\|\frac{\varepsilon^2h}{(h + \varepsilon)^2}\right\| \leq \frac{\varepsilon}{4}.
    \]
    Hence $x \in I_t$.  This proves the displayed identity.  Since $1\notin I_t$, the ideal is proper.  Finally, if $x \not= 0$, then $x^*x$ is a nonzero positive central function, so $(x^*x)(t) > 0$ for some $t$ and the image of $x$ in $B_t$ is nonzero.
\end{proof}

\begin{lemma}\label{lem:deriv-basic}
    Let $B$ be a semi-abelian real \AWstar-algebra and let $D$ be a derivation on $B$.  Then $D$ is bounded, $D$ vanishes on $B_{\sa}$, and $D$ carries skew-adjoint elements to skew-adjoint elements.  Moreover, for skew-adjoint $x,y$,
    \[
        \langle D(x),y\rangle + \langle x,D(y)\rangle = 0, \qquad \langle x,y\rangle = -\frac{1}{2} (xy + yx).
    \]
\end{lemma}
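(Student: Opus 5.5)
Throughout, write $Z_0 = Z(B)_{\sa} = B_{\sa}$ (Lemma~\ref{lem:semi-selfadjoint}).

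\emph{Vanishing on $B_{\sa}$.} For a central $z$ and any $x$, apply $D$ to $zx = xz$ to get
\[
D(z)x + zD(x) = D(x)z + xD(z), \qquad\text{so}\qquad [D(z),x] = 0 .
\]
Hence $D(Z(B)) \subseteq Z(B)$. For a projection $p$ we have $p = p^2$, so $D(p) = 2pD(p)$ because $D(p)$ commutes with $p$. Multiplying by $p$ gives $pD(p) = 2pD(p)$, so $pD(p) = 0$ and therefore $D(p) = 0$. Since $B_{\sa}$ is the norm closure of real spans of projections, $D$ vanishes on those spans. To extend to all of $B_{\sa}$ I need continuity, so I would prove boundedness first or in parallel.

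\emph{Boundedness, the main obstacle.} My first attempt uses $Z_0$-linearity. From $D(z) = 0$ for central projections we get $D(zx) = zD(x)$ for such $z$, and hence for real combinations of them. The plan is to localize at the fibres $B_t$ of Lemma~\ref{lem:semi-fibers}. Each $B_t$ is a real \cstar-algebra whose self-adjoint part is $\bbR$, because central functions are evaluated at $t$. So $B_t$ is $\bbR$, $\bbC$ or $\mathbb H$, with norm $q_t$: the argument of Lemma~\ref{lem:semi-factor} shows it is a normed division algebra, though completeness is not needed. The first step is to show $D(I_t) \subseteq I_t$; this uses $D(g_\varepsilon x) = g_\varepsilon D(x)$, which requires $D(g_\varepsilon) = 0$.

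That requirement is circular, so instead I would invoke the standard automatic continuity of derivations on real \cstar-algebras, in the spirit of Sakai and Ringrose. The real version follows by complexifying the derivation on the Banach $*$-algebra level, or by running Ringrose's argument verbatim. Once $D$ is bounded, the density argument above gives $D|_{B_{\sa}} = 0$.

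\emph{Skew-adjoint elements and the bilinear identity.} Write $x^\# = D(x^*)^*$; this is again a derivation, since
\[
D((xy)^*)^* = (D(y^*)x^* + y^*D(x^*))^* = xD(y^*)^* + D(x^*)^*y .
\]
It agrees with $D$ on $B_{\sa}$ (both vanish there). For skew-adjoint $k$, we have $k^2 \in B_{\sa}$, so
\[
0 = D(k^2) = D(k)k + kD(k).
\]
Also $\rea D(k)$ is central, so the self-adjoint part of $D(k)k + kD(k)$ equals $2\rea D(k)\,k$ plus the anticommutator of the skew part with $k$. Decompose $D(k) = a + s$ with $a \in Z_0$ and $s^* = -s$. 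Then $ak + ka = 2ak$ is skew, while $sk + ks$ is self-adjoint (indeed $(sk)^* = ks$). So $2ak = 0$ and $sk + ks = 0$, giving $ak = 0$.

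To conclude $a = 0$, apply the same identity to $k + z$ with $z \in Z_0$ arbitrary. Since $D(z) = 0$, $D(k+z) = D(k)$, but $k + z$ is not skew, so the argument does not apply directly. Instead use $k + \lambda$ for a general skew-adjoint $k'$: the relation $ak' = 0$ holds for all skew $k'$ with $D(k') = a + s'$. Alternatively, work fibrewise: in $B_t \cong \bbC$ or $\mathbb H$ there is an invertible skew element near the image of $k$ on the support of $a$. I would instead use $D(k^*k) = 0$, i.e.
\[
D(k)^*k + k^*D(k) = 0,
\]
which gives $-ak + sk - ka - ks = -2ak + [s,k] = 0$. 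Combined with $sk + ks = 0$, this yields $ak = sk$. The self-adjoint part of $ak = sk$ reads $0 = sk$ (since $ak$ is skew and $sk$ is self-adjoint when they anticommute), so $ak = 0$ and $sk = 0$.

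Finally, $a = \rea D(k)$ satisfies $a = \rea D(k)\cdot \mathrm{supp}$, while $D$ kills the support projection of $k^*k$. Writing $k = u|k|$-type approximations via $D(e_\varepsilon k) = e_\varepsilon D(k)$ with $e_\varepsilon$ the spectral projections of $k^*k$ gives $a\,e_\varepsilon = 0$ off $\{k = 0\}$. On $\{k = 0\}$, $D(k)$ is killed by the same support argument. Hence $D(k)$ is skew-adjoint.

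The bilinear identity then follows by applying $D$ to $xy + yx \in B_{\sa}$, which gives
\[
D(x)y + xD(y) + D(y)x + yD(x) = 0 .
\]
This is exactly $\langle D(x),y\rangle + \langle x,D(y)\rangle = 0$.
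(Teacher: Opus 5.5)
Your argument is correct in substance. For boundedness and for $D|_{B_{\sa}}=0$ it matches the paper: complexify and apply Sakai's theorem, note that $D$ kills projections (all of which are central), then use density of real spans of projections in $B_{\sa}$.

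For the other two assertions you take a different and more elementary route. The paper uses boundedness and $Z(B)_{\sa}$-linearity to push $D$ down to the fibers $B_t=B/I_t$ of Lemma~\ref{lem:semi-fibers}. It identifies each fiber with $\bbR$, $\bbC$ or $\mathbb{H}$ via the real Gelfand--Mazur theorem. There it checks, from $D_t(u^2)=0$, that $D_t(u)$ is imaginary and orthogonal to $u$, polarizes, and lifts back using that the fibers separate points.

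You work globally instead. The form identity is literally $D(xy+yx)=0$, because $xy+yx\in B_{\sa}$ for skew $x,y$. For skew-adjointness you write $D(k)=a+s$ with $a=\rea D(k)$ central and $s$ skew. You then split $0=D(k^2)=2ak+(sk+ks)$ into its skew part $2ak$ and its self-adjoint part $sk+ks$. A support-projection argument then replaces the use of the division algebra $\mathbb{H}$ in the fiber.

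Your route needs neither Lemma~\ref{lem:semi-fibers} nor Gelfand--Mazur. The paper's route sets up fiber machinery that it reuses for the converse, Lemma~\ref{lem:fiber-der}.

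You should delete the middle of your skew-adjointness paragraph; it is wrong, and it is not needed once you have $ak=0$. Three points:
\begin{enumerate}
\item The expression $D(k)^*k+k^*D(k)$ is not $D(k^*k)$ unless $D$ is already known to be a $*$-map.
\item In fact $D(k^*k)=D(k^*)k+k^*D(k)=-D(k^2)$ carries no new information. Your displayed expression also equals $-(sk+ks)$, not $-2ak+[s,k]$.
\item If $s$ and $k$ anticommute, then $(sk)^*=ks=-sk$, so $sk$ is skew, not self-adjoint.
\end{enumerate}

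State the final step without $\varepsilon$-approximations. Let $e$ be the left projection of $k$, so $ek=k$ and the left annihilator of $k$ is $B(1-e)$. Since $e$ is central, $D(e)=0$ and $D(k)=D(ek)=eD(k)$, whence $a=ea$. On the other hand $ak=0$ gives $a=a(1-e)$. Hence $a=ea=e\,a(1-e)=0$, and $D(k)$ is skew-adjoint.
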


\begin{proof}
    The complex-linear extension of $D$ to the \cstar-complexification of $B$ is a derivation, hence is bounded by Sakai's theorem~\cite{Sak60}; therefore $D$ is bounded.  Every central projection $e$ satisfies
    \[
        D(e) = D(e^2) = D(e)e + eD(e) = 2eD(e),
    \]
    so $D(e) = 0$.  By Lemma~\ref{lem:semi-selfadjoint}, every self-adjoint element is a norm limit of finite real linear combinations of central projections.  Hence
    \[
        D|_{B_{\sa}} = 0.
    \]
    In particular,
    \[
        D(hx) = hD(x) \qquad (h \in Z(B)_{\sa}). \tag{7.2}
    \]
    
    Let $X$ be the Stone space of $Z(B)_{\sa} \cong C(X,\bbR)$ and let $I_t$ and $B_t$ be as in Lemma~\ref{lem:semi-fibers}.  Equation (7.2) and boundedness show that $D(I_t) \subseteq I_t$, so $D$ descends to a derivation $D_t$ on $B_t$.
    
    Every self-adjoint element of $B_t$ is scalar.  Hence, for $0 \not= x \in B_t$,
    \[
        x^*x = \lambda 1,\qquad xx^* = \mu 1
    \]
    with $\lambda = \mu = \|x\|^2 > 0$, so $x$ is invertible.  The real Gelfand--Mazur theorem gives
    \[
        B_t \cong \bbR,\ \bbC\ \text{or}\ \mathbb{H}.
    \]
    On $\bbR$ and $\bbC$ every real-linear derivation is zero.  On $\mathbb{H}$, if $u$ is imaginary then $u^2 \in \bbR 1$, so
    \[
        0 = D_t(u^2) = D_t(u)u + uD_t(u).
    \]
    Writing $D_t(u)$ as the sum of its real and imaginary parts shows that $D_t(u)$ is imaginary and orthogonal to $u$.  Applying the same identity to $u + v$ and polarizing yields
    \[
        \langle D_t(u),v\rangle + \langle u,D_t(v)\rangle = 0
    \]
    for imaginary $u,v$.  Thus on every fiber $D_t$ preserves the skew-adjoint part and is skew for the displayed form.
    
    The fibers separate points by Lemma~\ref{lem:semi-fibers}.  Therefore the two fiberwise identities lift to $B$, proving the result.
\end{proof}

For skew-adjoint $x,y \in B$ put
\[
    \langle x,y\rangle = -\frac{1}{2} (xy + yx)\in Z(B)_{\sa}.
\]
Then $\langle x,x\rangle = x^*x$, so the form is nondegenerate.

\begin{lemma}\label{lem:fiber-der}
    Let $B$ be semi-abelian and let $L : B \to B$ be a real-linear map such that
    \begin{enumerate}
        \item[(i)] $L|_{B_{\sa}} = 0$ and $L$ carries skew-adjoint elements to skew-adjoint elements;
        \item[(ii)] $L(hx) = hL(x)$ for $h \in Z(B)_{\sa}$;
        \item[(iii)] for skew-adjoint $x,y$,
        \[
            \langle Lx,y\rangle + \langle x,Ly\rangle = 0.
        \]
    \end{enumerate}
    Then $L$ is a derivation.
\end{lemma}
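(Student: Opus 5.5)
We verify the Leibniz rule $L(xy) = L(x)y + xL(y)$ directly, splitting each argument into self-adjoint and skew-adjoint parts. By Lemma~\ref{lem:semi-selfadjoint}, $B_{\sa} = Z(B)_{\sa}$. Write $x = a + k$ and $y = b + \ell$ with $a,b \in Z(B)_{\sa}$ and $k,\ell$ skew-adjoint. Real bilinearity of both sides of the Leibniz identity reduces the problem to four cases. If both factors are central self-adjoint, both sides vanish by (i). If one factor is central self-adjoint, say $x = a$, then $L(ay) = aL(y)$ by (ii), and $L(a)y = 0$ by (i). The same argument with (ii) applied to $ya = ay$ handles the case where the other factor is central self-adjoint. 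The only substantive case is therefore $x = k$, $y = \ell$, both skew-adjoint.

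For skew-adjoint $k,\ell$, decompose the product into its symmetric and antisymmetric parts:
\[
    k\ell = -\langle k,\ell\rangle + \tfrac12[k,\ell].
\]
The first term lies in $Z(B)_{\sa}$. The commutator is skew-adjoint, since $[k,\ell]^* = [\ell^*,k^*] = [\ell,k] = -[k,\ell]$. By (i), $L(k\ell) = \tfrac12 L([k,\ell])$. The target is
\[
    L(k)\ell + kL(\ell) = -\langle Lk,\ell\rangle - \langle k,L\ell\rangle + \tfrac12\bigl([Lk,\ell] + [k,L\ell]\bigr),
\]
and the central part vanishes by (iii). So the whole lemma reduces to the single identity
\[
    L([k,\ell]) = [Lk,\ell] + [k,L\ell] \qquad (k,\ell \text{ skew-adjoint}),
\]
which says that $L$ is a derivation of the Lie algebra of skew-adjoint elements for the commutator bracket.

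To prove this identity I would pass to the fibres $B_t = B/I_t$ of Lemma~\ref{lem:semi-fibers}. By (ii) and continuity, $L(I_t) \subseteq I_t$. Continuity needs justification: either $L$ is assumed bounded, or, if boundedness is not available, I would show $L(hx) = hL(x)$ with $h(t) = 0$ lands in $I_t$ and use the description of $I_t$ via $g_\varepsilon$. Thus $L$ descends to a real-linear $L_t$ on $B_t \cong \bbR$, $\bbC$ or $\mathbb H$, and the fibres separate points. On $\bbR$ and $\bbC$ the skew-adjoint part is commutative of dimension at most one, so both sides of the identity vanish. On $\mathbb H$, the imaginary quaternions with the commutator bracket are $2\times$ the cross product on $\bbR^3$. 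Condition (iii) says that $L_t$ restricted to $\operatorname{Im}\mathbb H$ is skew-symmetric for the Euclidean inner product $\langle\cdot,\cdot\rangle$. A skew-symmetric map of $\bbR^3$ has the form $v \mapsto w \times v$, and the Jacobi identity shows that such maps are derivations of the cross product. Hence the identity holds on every fibre, and therefore on $B$.

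The main obstacle is making the fibre reduction rigorous. Two points need care: that $L$ preserves each $I_t$ (this needs some continuity of $L$, or an argument that $L(I_t) \subseteq I_t$ from (ii) alone), and that conditions (i)–(iii) descend to $B_t$. The descent is clear once $L_t$ is defined, because the form $\langle\cdot,\cdot\rangle$ evaluated at $t$ is exactly the form on the fibre.
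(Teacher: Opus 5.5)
You reduce the statement, correctly, to the Lie identity $L([k,\ell])=[Lk,\ell]+[k,L\ell]$ for skew-adjoint $k,\ell$. Your fibrewise argument then matches the paper's: skew-symmetric maps on $\operatorname{Im}\mathbb{H}\cong\bbR^3$ are $v\mapsto w\times v$, hence derivations, and the $\bbR$ and $\bbC$ fibres are trivial. The gap is the step you flag yourself. The lemma does not assume $L$ is bounded, so ``by (ii) and continuity, $L(I_t)\subseteq I_t$'' is unjustified. Without $L(I_t)\subseteq I_t$ there is no map $L_t$ on $B_t$, so the fibre argument cannot start. Your fallback does not close this. Condition (ii) gives $L(hx)=hL(x)\in I_t$ only for generators $hx$ with $h(t)=0$ and their finite sums. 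The approximation $g_\varepsilon x\to x$ only writes a general $x\in I_t$ as a norm limit of such elements, and passing to that limit is exactly the continuity you lack.

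The missing idea is that (iii) itself controls $L$. The paper uses (iii) to prove boundedness first. If skew $x_n\to0$ and $Lx_n\to y$, then $\langle y,w\rangle=-\lim_n\langle x_n,Lw\rangle=0$ for every skew $w$, and taking $w=y$ gives $y^*y=0$. The closed graph theorem then makes $L$ bounded, and (ii) yields the invariance. You can also get the invariance directly, fibre by fibre, with no topology. Let $k\in I_t$ be skew and apply (iii) with $y=Lk$: this gives $(Lk)^*(Lk)=\langle Lk,Lk\rangle=-\langle k,L(Lk)\rangle$. The right-hand side is a central self-adjoint element of the ideal $I_t$, so it vanishes at $t$, and Lemma~\ref{lem:semi-fibers} gives $Lk\in I_t$. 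Since $I_t$ is a $*$-ideal and $L$ kills $B_{\sa}$, this yields $L(I_t)\subseteq I_t$. With that step supplied, the rest of your proof goes through.
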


\begin{proof}
    First $L$ is automatically bounded.  If $x_n \to 0$ are skew-adjoint and $Lx_n \to y$, then for every skew-adjoint $w$,
    \[
        \langle y,w\rangle = \lim_n\langle Lx_n,w\rangle = -\lim_n\langle x_n,Lw\rangle = 0.
    \]
    Taking $w = y$ gives $y = 0$.  Since $L$ vanishes on the self-adjoint part, the closed graph theorem applies.
    
    Let $X$ be the Stone space of $Z(B)_{\sa} \cong C(X,\bbR)$ and let $I_t$ and $B_t$ be as in Lemma~\ref{lem:semi-fibers}.  Every self-adjoint element of $B_t$ is scalar, because it is represented by the self-adjoint part of a lift and $B_{\sa} = Z(B)_{\sa}$.  Hence every nonzero $x \in B_t$ satisfies
    \[
        x^*x = \lambda 1,\qquad xx^* = \mu 1,
    \]
    with $\lambda = \mu = \|x\|^2>0$, and is invertible.  The real Gelfand--Mazur theorem gives
    \[
        B_t \cong \bbR,\ \bbC\text{ or }\mathbb{H}.
    \]
    By (ii) and boundedness, $L$ descends to a linear map $L_t$ on every fiber and preserves the displayed skew form.  On $\bbR$ there is nothing to prove; on $\bbC$ the skew part is one-dimensional, so a skew-symmetric operator is zero; on $\mathbb{H}$ the skew part is the Euclidean space $\operatorname{Im}\mathbb{H} \cong \bbR^3$, and every skew-symmetric operator has the form $u \mapsto [a,u]$ for some imaginary quaternion $a$.  Thus every $L_t$ is a derivation.
    
    Therefore the derivation identity for $L$ holds in every central fiber.  Since the fibers separate points by Lemma~\ref{lem:semi-fibers}, the derivation defect vanishes in $B$.  Thus $L$ is a derivation.
\end{proof}

\begin{proposition}\label{prop:semi-local}
    Every local derivation and every 2-local derivation on a semi-abelian real \AWstar-algebra is a derivation.
\end{proposition}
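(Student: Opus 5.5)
The plan is to verify the three hypotheses of Lemma~\ref{lem:fiber-der} for a local or $2$-local derivation $\Delta$ on $B$. Recall the definitions. A local derivation is a linear map $\Delta$ such that for each $x$ there is a derivation $D_x$ with $\Delta(x)=D_x(x)$. A $2$-local derivation is a (not a priori linear) map such that for each pair $x,y$ there is a derivation $D_{x,y}$ with $\Delta(x)=D_{x,y}(x)$ and $\Delta(y)=D_{x,y}(y)$. Throughout, the derivations $D_x$ and $D_{x,y}$ are controlled by Lemma~\ref{lem:deriv-basic}: each vanishes on $B_{\sa}$, preserves skew-adjoint elements, satisfies $D(hx)=hD(x)$ for $h\in Z(B)_{\sa}$, and is skew for $\langle\cdot,\cdot\rangle$.

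For a local derivation, linearity is given, and most of (i) is pointwise. If $a\in B_{\sa}$, then $\Delta(a)=D_a(a)=0$. If $x$ is skew-adjoint, then $\Delta(x)=D_x(x)$ is skew-adjoint.

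For (ii) and (iii) I would reduce to the fibers $B_t$ of Lemma~\ref{lem:semi-fibers}, since these separate points.
\begin{itemize}
\item For (ii), note that $\Delta(hx)=D_{hx}(hx)=hD_{hx}(x)$. This is not obviously $hD_x(x)$, because two different derivations appear. I would use instead that $\Delta$ is bounded and that it preserves the ideals $I_t$. The latter holds because $x\in I_t$ gives $x=\lim g_\varepsilon x$, and $\Delta(g_\varepsilon x)=g_\varepsilon D(x)\in I_t$. Boundedness should follow from the closed-graph argument of Lemma~\ref{lem:fiber-der}, once (iii) is known in a pointwise form.
\item For (iii), the local information gives $\langle \Delta x,x\rangle=\langle D_x x,x\rangle=0$ for every skew-adjoint $x$, by skewness of $D_x$. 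Polarizing with $x+y$ and using linearity of $\Delta$ yields $\langle\Delta x,y\rangle+\langle x,\Delta y\rangle=0$.
\end{itemize}
With (iii) established, the closed-graph argument gives boundedness. For (ii), fix $t$ and a central $h$. Write $h=h(t)+(h-h(t))$. The second term is a central function vanishing at $t$, so $(h-h(t))x\in I_t$ and $\Delta$ maps it into $I_t$. Hence $\Delta(hx)\equiv h(t)\Delta(x)\equiv h\Delta(x)$ modulo $I_t$, and separation of the fibers gives (ii). Lemma~\ref{lem:fiber-der} then concludes.

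For a $2$-local derivation, the main obstacle is linearity. Homogeneity and the pointwise facts are immediate. $\Delta(a)=0$ on $B_{\sa}$, and $\Delta$ preserves skew-adjoint elements. Taking the pair $(x,y)$ and using skewness of $D_{x,y}$ gives $\langle\Delta x,y\rangle+\langle x,\Delta y\rangle=0$ directly for skew-adjoint $x,y$. For additivity, write $x=a+k$ with $a$ self-adjoint and $k$ skew-adjoint. Using the pair $(x,k)$ gives $\Delta(x)=D(a)+D(k)=\Delta(k)$, so it suffices to prove additivity on the skew part. For skew-adjoint $k_1,k_2$ and any skew-adjoint $w$, apply the pairs $(k_1+k_2,w)$, $(k_1,w)$ and $(k_2,w)$:
\[
\langle \Delta(k_1+k_2),w\rangle=-\langle k_1+k_2,\Delta w\rangle=\langle\Delta k_1+\Delta k_2,w\rangle .
\]
Nondegeneracy of the form, via $\langle y,y\rangle=y^*y$, gives additivity. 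Linearity over $\bbR$ then follows from additivity together with $\Delta(\lambda x)=D_{x,\lambda x}(\lambda x)=\lambda\Delta(x)$. The identity (iii) is already in hand, so the same fiber argument gives boundedness and (ii), and Lemma~\ref{lem:fiber-der} finishes.
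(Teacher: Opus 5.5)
Your proposal is correct. Your treatment of hypotheses (i) and (iii) of Lemma~\ref{lem:fiber-der}, and of additivity and homogeneity for $2$-local maps, matches the paper's proof.

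The genuine difference is in how you obtain central homogeneity (ii). The paper gets it in one line from (7.3) and the $Z(B)_{\sa}$-bilinearity of the form. For skew $x,y$,
\[
\langle \Delta(hx)-h\Delta(x),y\rangle=-h\bigl(\langle x,\Delta(y)\rangle+\langle\Delta(x),y\rangle\bigr)=0,
\]
and one then takes $y$ to be the skew-adjoint difference. On the self-adjoint part both sides vanish.

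You instead proceed in three steps:
\begin{enumerate}
\item[(a)] You get boundedness from the closed-graph argument, which uses only (i) and (iii).
\item[(b)] You show $\Delta(I_t)\subseteq I_t$, using the approximants $g_\varepsilon x$ from Lemma~\ref{lem:semi-fibers} and the $Z(B)_{\sa}$-linearity (7.2) of the single witness at $g_\varepsilon x$.
\item[(c)] You conclude by separation of the fibers.
\end{enumerate}
This is longer, and it needs boundedness and the fiber machinery before (ii) is available. The paper's argument for (ii) is purely algebraic and is available as soon as (7.3) holds.

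On the other hand, your route does not use the skew form for (ii). It shows more generally that a bounded linear map is $Z(B)_{\sa}$-linear whenever each of its values is attained by some $Z(B)_{\sa}$-linear witness. That is a more portable principle, although here it buys nothing the paper's one-line computation does not already give.
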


\begin{proof}
    Let first $\Delta$ be a local derivation; by definition it is real-linear.  Lemma~\ref{lem:deriv-basic} applied to a local witness at each point gives $\Delta|_{B_{\sa}} = 0$ and shows that $\Delta$ preserves skew-adjointness.  For skew $x$, choose a derivation $D_x$ with $\Delta(x) = D_x(x)$.  Since $D_x$ is skew with respect to $\langle\cdot,\cdot\rangle$,
    \[
        \langle\Delta(x),x\rangle = 0.
    \]
    Polarizing and using linearity gives
    \[
        \langle\Delta(x),y\rangle + \langle x,\Delta(y)\rangle = 0. \tag{7.3}
    \]
    For $h \in Z(B)_{\sa}$ and skew $x,y$, (7.3) gives
    \[
        \langle\Delta(hx) - h\Delta(x),y\rangle = -h\langle x,\Delta(y)\rangle-h\langle\Delta(x),y\rangle = 0.
    \]
    The difference is skew-adjoint, so taking $y$ equal to that difference proves $\Delta(hx) = h\Delta(x)$.  Lemma~\ref{lem:fiber-der} now makes $\Delta$ a derivation.
    
    Now let $\Delta$ be 2-local.  It is homogeneous.  The same pointwise argument gives $\Delta(h) = 0$ for self-adjoint $h$ and shows that skew elements are sent to skew elements.  Given skew $x,y$, choose one derivation witnessing the pair; then (7.3) holds.  For skew $x,y,w$,
    \[
    \begin{aligned}
        \langle\Delta(x + y)-\Delta(x)-\Delta(y),w\rangle
        & = -\langle x + y,\Delta(w)\rangle + \langle x,\Delta(w)\rangle + \langle y,\Delta(w)\rangle\\
        & = 0.
    \end{aligned}
    \]
    Taking $w$ equal to the skew difference proves additivity on the skew part.  If $a = a^*$ and $k^* = -k$, a derivation witnessing the pair $(a + k,k)$ vanishes at $a$, so $\Delta(a + k) = \Delta(k)$.  Thus $\Delta$ is real-linear on $B$.  The same calculation as above gives $Z(B)_{\sa}$-linearity, and Lemma~\ref{lem:fiber-der} again applies.
\end{proof}

\begin{lemma}\label{lem:central-deriv}
    Let $e$ be a central projection in an algebra $A$.
        \begin{enumerate}
            \item[(i)] Every derivation $D$ satisfies $D(e) = 0$ and $D(eA) \subseteq eA$.
            \item[(ii)] Every 2-local derivation $\Delta$ on a real \AWstar-algebra satisfies $\Delta(ex) = e\Delta(x)$; its restriction to $eA$ is 2-local.
            \item[(iii)] Every linear local derivation preserves $eA$, and its restriction is local.
        \end{enumerate}
\end{lemma}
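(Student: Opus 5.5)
We prove the three parts in order, each by direct computation with the central idempotent $e$; the only nonformal step is in (ii).

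For (i), the identity $D(e) = D(e^2) = D(e)e + eD(e)$ together with centrality of $e$ gives $D(e) = 2eD(e)$. Multiplying by $e$ yields $eD(e) = 2eD(e)$, so $eD(e) = 0$ and hence $D(e) = 0$. Then for $x \in A$,
\[
    D(ex) = D(e)x + eD(x) = eD(x) \in eA,
\]
which gives $D(eA) \subseteq eA$ and also $D(ex) = eD(x)$.

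For (iii), let $\Delta$ be a linear local derivation. For each $x$ choose a derivation $D_x$ with $\Delta(x) = D_x(x)$. Applying (i) to the witness $D_{ex}$ gives
\[
    \Delta(ex) = D_{ex}(ex) = eD_{ex}(x) \in eA,
\]
so $\Delta$ preserves $eA$. The restriction is local because the compression $y \mapsto eD_y(y)$ of any witness to $eA$ is a derivation of $eA$, since $D_y$ commutes with multiplication by $e$ by (i), and it agrees with $\Delta$ at every $y \in eA$.

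For (ii), let $\Delta$ be $2$-local, choose a derivation $D$ with $D(ex) = \Delta(ex)$ and $D(x) = \Delta(x)$, and apply (i):
\[
    \Delta(ex) = D(ex) = eD(x) = e\Delta(x).
\]
This identity settles (ii) up to the restriction claim. For the restriction, given $x, y \in eA$, take a witness $D$ for the pair and replace it by $eDe$ restricted to $eA$; this is a derivation of $eA$ by (i), and it agrees with $\Delta$ at $x$ and $y$ because $\Delta(x) = e\Delta(x)$. The main obstacle is only bookkeeping: one must be careful that a single witness handles both points $x$ and $ex$ simultaneously, and that the \AWstar hypothesis is not actually needed beyond ensuring that $e$ is a genuine central projection, so that compressions of derivations remain derivations on $eA$.
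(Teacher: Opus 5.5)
Your proposal is correct and follows the paper's proof essentially verbatim: (i) via $D(e)=2eD(e)$, (ii) via a single witness for the pair $(ex,x)$ followed by restricting pairwise witnesses to $eA$, and (iii) via locality combined with (i). The only addition in the paper is that in (iii) it also records $\Delta(ex)=e\Delta(x)$. This follows from your argument applied to both $e$ and $1-e$, together with linearity and the splitting $A=eA\oplus(1-e)A$.
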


\begin{proof}
    For (i), $D(e) = D(e^2) = D(e)e + eD(e) = 2eD(e)$; multiplying by $e$ gives $eD(e) = 0$ and hence $D(e) = 0$.  The invariance follows from $D(ex) = eD(x)$.  For (ii), use a derivation witnessing the pair $(ex,x)$ and then restrict pairwise witnesses to $eA$.  For (iii), locality shows $\Delta(eA) \subseteq eA$; linearity and the central direct-sum decomposition give $\Delta(ex) = e\Delta(x)$.
\end{proof}

\begin{theorem}\label{thm:local-derivations}
    Every local derivation on a real \AWstar-algebra is a derivation, and every 2-local derivation on a real \AWstar-algebra is a derivation.
\end{theorem}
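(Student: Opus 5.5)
We will reduce Theorem~\ref{thm:local-derivations} to the semi-abelian case, Proposition~\ref{prop:semi-local}, and to known complex results, using the central splitting of Lemma~\ref{lem:semi-split}. Let $R$ be a real \AWstar-algebra, let $z$ be its maximal central semi-abelian projection and put $R_1 = (1-z)R$, so $R = zR\oplus R_1$. Let $\Delta$ be a local or $2$-local derivation on $R$. By Lemma~\ref{lem:central-deriv}(ii),(iii), $\Delta(zx) = z\Delta(x)$ and $\Delta((1-z)x) = (1-z)\Delta(x)$. The restrictions $\Delta|_{zR}$ and $\Delta|_{R_1}$ are again local, respectively $2$-local, derivations of the summands. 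For local derivations, linearity is part of the definition, so Lemma~\ref{lem:central-deriv}(iii) applies. Since derivations, and hence the derivation identity, decompose over a central direct sum, it suffices to prove that each restriction is a derivation. On $zR$ this is exactly Proposition~\ref{prop:semi-local}.

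On $R_1$ we use that $A = (R_1)_{\bbC}$ is a complex \AWstar-algebra by Theorem~\ref{thm:handelman}. We would then show that every derivation of $R_1$ is inner, implemented by some $a \in R_1$. The plan runs as follows.
\begin{enumerate}
    \item[(a)] Extend a derivation $D$ of $R_1$ complex-linearly to $A$; it is bounded by Sakai's theorem.
    \item[(b)] Invoke innerness of derivations on complex \AWstar-algebras (Olesen's theorem) to get $b \in A$ with $D = [b,\cdot]$.
    \item[(c)] Since $D$ commutes with canonical conjugation $\gamma$, the element $\gamma(b)$ also implements $D$. Hence $a = \tfrac12(b + \gamma(b)) \in R_1$ implements $D$ on $R_1$.
\end{enumerate}

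For a local derivation $\Delta$ of $R_1$, the complex-linear extension $\Delta_{\bbC}(x+iy) = \Delta(x) + i\Delta(y)$ is a local derivation of $A$. Indeed, we need a witness at $x+iy$, and the point is to choose one derivation $D$ of $R_1$ with $D(x) = \Delta(x)$ and $D(y) = \Delta(y)$ simultaneously. This is not given by locality alone. We therefore plan instead to apply the complexified argument via Proposition~\ref{prop:m2-complex}: embed $R_{1,\bbC} \cong C_J \subseteq M_2(R_1)$. A local derivation of $R_1$ induces an entrywise map on $M_2(R_1)$, and we verify that this is local. Witnesses for a single matrix still require simultaneous witnesses at four entries, so this is where the main obstacle lies. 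The fallback is to use the known results directly. Local derivations on complex \AWstar-algebras (indeed on von Neumann and Baer-type $*$-algebras) are derivations, by the Kadison--Johnson theorem: a bounded local derivation on a \cstar-algebra is a derivation. Johnson's theorem gives boundedness and the derivation property for local derivations on \cstar-algebras without needing \AWstar-structure. Its real \cstar-analogue follows because the real proof is the same. Alternatively, one complexifies via $\Delta_{\bbC}(x+iy)=\Delta(x)+i\Delta(y)$ and checks the weaker condition that Johnson's argument actually uses, namely that $\Delta$ annihilates the relevant commutant ideals, which is preserved.

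For $2$-local derivations on $R_1$, we would first use pairwise witnesses to get homogeneity. Then, as in the complex \AWstar\ theory (Ayupov--Kudaybergenov for $2$-local derivations on \AWstar-algebras), we would use that each witness is inner, $D = [a_{x,y},\cdot]$ with $a_{x,y}\in R_1$ by the innerness step above. From this we would deduce additivity by the standard argument on the projection lattice. That argument establishes additivity on finite sums of orthogonal projections, then on self-adjoint elements by spectral approximation together with the uniform bound $\|a_{x,y}\|$ controlled via the central trace-free normalization. Finally it passes to all of $R_1$ using $\Delta(a+k)$ witnesses, as in the proof of Proposition~\ref{prop:semi-local}. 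Once $\Delta$ is linear it is a local derivation, and the previous step applies. The expected hardest part is this additivity argument on $R_1$ in the type $\mathrm{II}$ and $\mathrm{III}$ cases, where we would transfer the complex proof through $C_J$ and conjugation invariance of the implementing elements.
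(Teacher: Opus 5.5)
Your splitting into $zR\oplus R_1$, the use of Proposition~\ref{prop:semi-local} on $zR$, and the innerness of derivations of $R_1$ (Sakai, Olesen, then averaging with $\gamma$) all match the paper. The gap is what you do with local and $2$-local maps on $R_1$.

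\textbf{Local derivations.} You correctly note that the complex-linear extension need not be local, because a witness at $x+iy$ would need one derivation that works at $x$ and at $y$ simultaneously. Your fallback, a real Kadison--Johnson theorem ``because the real proof is the same'', is not justified. The complex arguments use $A=A_{\sa}+iA_{\sa}$; for instance, Kadison's proof relies on density of the span of projections. Over $\bbR$, the real span of projections lies inside $R_{\sa}$. Skew-adjoint elements are never reached, and these are exactly where quaternionic coefficients and central units $J$ with $J^2=-1$ live. Your alternative also fails: zero-product conditions such as $a\Delta(b)c=0$ for $ab=bc=0$ do not transfer to $R_{\bbC}$, since $ab=0$ there does not split into real components.

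\textbf{$2$-local derivations.} Your last step, using ``$\Delta(a+k)$ witnesses as in Proposition~\ref{prop:semi-local}'', breaks down. That trick used the fact that derivations vanish on $B_{\sa}$ (Lemma~\ref{lem:deriv-basic}), which holds only in the semi-abelian case. On $R_1$, a witness $D$ for the pair $(a+k,k)$ only gives $\Delta(a+k)-\Delta(k)=D(a)$, and $D(a)$ has no relation to $\Delta(a)$. Your bound on implementers through a ``central trace-free normalization'' also needs a linear center-valued trace. Such a trace is missing on the properly infinite part. On type $\mathrm{II}_1$ its existence is precisely Kaplansky's problem (Theorem~\ref{thm:real-trace-arithmetic}).

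\textbf{The missing idea.} The paper handles this with a trace-based Jordan argument, used only on the finite type~I band $eR_1$, where a linear trace genuinely exists and quaternionic coefficients must be allowed.
\begin{enumerate}
\item[(1)] Restrict the conjugation-equivariant center-valued trace of $(eR_1)_{\bbC}$ to a faithful, cyclic, real-linear map $T:eR_1\to Z(eR_1)$.
\item[(2)] Every witness is inner by your step (c), so $T(\Delta(x)x)=T(\Delta(x)x^2)=0$.
\item[(3)] Polarizing the first identity gives $T(\Delta(x)y)=-T(x\Delta(y))$.
\item[(4)] Take the $t$-coefficient of the second identity at $x+ty$; together with faithfulness of $T$ this yields the Jordan identity.
\item[(5)] Bre\v{s}ar's theorem then shows $\Delta$ is a derivation.
\end{enumerate}
For $2$-local maps, a common witness for the pair $(x,y)$ gives the polarized identity without assuming additivity. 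Faithfulness of $T$ then yields additivity directly, with no spectral approximation and no complex linearity of $\Delta$. On the remaining central types, the paper appeals to the real-\AWstar\ arguments of Karimov and of Dadakhodjaev--Rakhimov. These apply once $R_1$ is known to have \AWstar-complexification and only inner derivations.
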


\begin{proof}
    Split $R = zR \oplus R_1$ by Lemma~\ref{lem:semi-split}.  Lemma~\ref{lem:central-deriv} preserves the two summands for both kinds of local map, and Proposition~\ref{prop:semi-local} handles $zR$.  The algebra $R_1$ has \AWstar-complexification by Theorem~\ref{thm:handelman}.

    The proofs of the local-derivation theorem of Karimov~\cite{Kar24} and the $2$-local theorem of Dadakhodjaev--Rakhimov~\cite{DR21} apply on the complementary central types.  We record a direct replacement for the finite type-$\mathrm{I}$ step, where quaternionic coefficient algebras must be allowed.

    Let $e$ be the maximal central projection of $R_1$ for which $eR_1$ is finite type $\mathrm{I}$.  The complexification of $eR_1$ is a finite type-$\mathrm{I}$ complex \AWstar-algebra and has its canonical faithful center-valued trace.  Conjugation equivariance lets this trace restrict to a faithful real-linear cyclic map
    \[
        T : eR_1 \longrightarrow Z(eR_1).
    \]
    Every derivation witness on $eR_1$ is inner: extend it complex-linearly to the complexification, use Olesen's theorem~\cite{Ole74}, and take the real part of an implementer exactly as in Proposition~\ref{prop:inner-complement} below.

    Suppose first that $\Delta$ is a real-linear local derivation on $eR_1$.  A derivation witnessing $\Delta$ at $x$ gives
    \[
        T(\Delta(x)x) = 0, \qquad T(\Delta(x)x^2) = 0. \tag{7.4}
    \]
    Polarizing the first identity yields
    \[
        T(\Delta(x)y) = -T(x\Delta(y)) \qquad (x,y \in eR_1). \tag{7.5}
    \]
    Taking the coefficient of $t$ in the second identity of (7.4), applied to $x + ty$, and then using (7.5), gives
    \[
        T\bigl((\Delta(x^2) - \Delta(x)x - x\Delta(x))y\bigr) = 0 \qquad (x,y \in eR_1).
    \]
    Taking $y$ to be the adjoint of the parenthesized defect and using faithfulness of $T$ gives
    \[
        \Delta(x^2) = \Delta(x)x + x\Delta(x).
    \]
    Thus $\Delta$ is a Jordan derivation.  Since a \cstar-algebra is semiprime, Bre\v{s}ar's theorem makes $\Delta$ a derivation~\cite{Bre88}.

    Now let $\Delta$ be $2$-local on $eR_1$.  A common derivation witness for a pair $(x,y)$ gives (7.5) without any prior additivity.  Hence, for arbitrary $z$,
    \[
    \begin{aligned}
        T\bigl((\Delta(x + y) - \Delta(x) - \Delta(y))z\bigr)
        & = -T((x + y)\Delta(z)) + T(x\Delta(z)) + T(y\Delta(z))\\
        & = 0.
    \end{aligned}
    \]
    Taking $z$ to be the adjoint of the displayed difference and using faithfulness proves additivity.  Homogeneity follows directly from $2$-locality.  A witness for the pair $(x,x^2)$ gives the Jordan identity, and Bre\v{s}ar's theorem again gives a derivation.

    Thus both kinds of local map are derivations on the finite type-$\mathrm{I}$ band and, by the cited arguments, on its complement.  Recombining the central summands proves the theorem.
\end{proof}

The conclusion cannot be strengthened to innerness.

\begin{proposition}\label{prop:inner-complement}
    Let $z$ be the maximal central semi-abelian projection of a real \AWstar-algebra $R$.  Every derivation on $(1 - z)R$ is inner.  Equivalently, every derivation on $R$ has the form
    \[
        D = D_0 \oplus \operatorname{ad}a
    \]
    with $D_0$ a derivation on $zR$ and $a \in(1 - z)R$.  In general $D_0$ need not be inner.
\end{proposition}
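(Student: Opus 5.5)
The plan is to prove innerness on $R_1 = (1 - z)R$ by complexifying, invoking the complex innerness theorem, and then descending the implementer to $R_1$ by averaging against canonical conjugation. The decomposition of an arbitrary derivation then follows from Lemma~\ref{lem:central-deriv}(i). The counterexample clause is Proposition~\ref{prop:quat-example}(iii).

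\emph{Step 1: complexify the derivation.} By Lemma~\ref{lem:semi-split} and Theorem~\ref{thm:handelman}, $A = (R_1)_{\bbC}$ is a complex \AWstar-algebra. Given a derivation $D$ of $R_1$, define $\widetilde D(x + iy) = D(x) + iD(y)$. This is a complex-linear derivation of $A$ and commutes with the canonical conjugation $\gamma$, so $\gamma\widetilde D\gamma = \widetilde D$. By Sakai's theorem~\cite{Sak60} it is bounded. The derivation theorem for complex \AWstar-algebras (Olesen~\cite{Ole74}) then gives $b \in A$ with $\widetilde D = [b,\cdot\,]$.

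\emph{Step 2: descend the implementer.} This is the main obstacle, because the implementer $b$ need not itself lie in $R_1$. Conjugating by $\gamma$ shows that $\gamma(b)$ also implements $\widetilde D$, since $\gamma(b)$ acts through $\gamma \widetilde D \gamma = \widetilde D$. Hence $a = \tfrac12(b + \gamma(b))$ is $\gamma$-fixed, so $a \in R_1$, and $\widetilde D = [a,\cdot\,]$ because implementers form an affine space over $Z(A)$. Restricting to $R_1$ gives $D = \operatorname{ad} a$. I would verify two points here:
\begin{itemize}
\item[(a)] The averaging step uses only that $b - \gamma(b) \in Z(A)$, which holds because both elements implement the same derivation.
\item[(b)] Olesen's theorem does not require extra hypotheses such as a separable predual. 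If it does, I would replace it with the \AWstar\ version via Kadison--Ringrose/Sakai-type arguments for \AWstar-algebras, or reduce through the Boolean-valued representation of Theorem~\ref{thm:central-mixing} to the factor case, where inner derivations of \AWstar-factors can be handled internally.
\end{itemize}

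\emph{Step 3: decompose and exhibit the counterexample.} For a general derivation $D$ on $R$, Lemma~\ref{lem:central-deriv}(i) gives $D(z) = 0$ and shows that $D$ preserves $zR$ and $(1 - z)R$. So $D = D_0 \oplus D_1$, and Step 2 gives $D_1 = \operatorname{ad} a$ with $a \in (1 - z)R$. Conversely, every map of this form is a derivation. For the final sentence, take $R = \ell^\infty(\bbN,\bbR) + c_0(\bbN,\mathbb H)$. It is semi-abelian by Proposition~\ref{prop:quat-example}(i), so $z = 1$, and the derivation $\delta$ of Proposition~\ref{prop:quat-example}(iii) is outer. Thus $D_0$ need not be inner.
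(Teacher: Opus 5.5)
Your proposal is correct and follows the paper's proof. The paper likewise uses Theorem~\ref{thm:handelman} to complexify, takes an implementer $c = a + ib$ from Olesen's theorem, and notes that $[b,\cdot\,]$ must vanish on the real form; this is exactly your averaging $a = \tfrac12(c + \gamma(c))$, and your decomposition and quaternionic counterexample match the paper's (the hedge in (b) is unnecessary, since Olesen's theorem holds for arbitrary \AWstar-algebras).
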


\begin{proof}
    By Theorem~\ref{thm:handelman}, $((1 - z)R)_{\bbC}$ is complex \AWstar.  A derivation $D_1$ on $(1 - z)R$ extends complex-linearly to a derivation on the complexification.  Olesen's theorem~\cite{Ole74} gives an implementer $c = a + ib$.  For $x \in (1 - z)R$,
    \[
        D_1(x) = [a,x] + i[b,x]
    \]
    lies in the real form, so $[b,x] = 0$ for every $x$ and $D_1 = [a,\cdot]$.  The quaternionic sequence algebra in Proposition~\ref{prop:quat-example} supplies an outer derivation on a semi-abelian algebra.
\end{proof}

Thus the innerness assertion is false, but the derivation-only parts of the local and 2-local statements are valid without any complexification hypothesis.

\end{document}